%% file: main.tex
\documentclass{article}
\usepackage[preprint]{neurips_2026}

\usepackage[utf8]{inputenc} 
\usepackage[T1]{fontenc}    

\usepackage{url}            
\usepackage{booktabs}       
\usepackage{bm}
\usepackage{amsfonts}       
\usepackage{nicefrac}       
\usepackage{microtype}      
\usepackage{xcolor}         
\usepackage{tikz}
\usepackage{pgfplots}
\pgfplotsset{compat=1.18}
\usetikzlibrary{shapes.geometric, arrows.meta, positioning, fit, calc}
\usepackage{ifthen}
\usepackage{amssymb}
\usepackage{hyperref}
\usepackage{aliascnt}
\usepackage{algorithm}
\usepackage[noend]{algpseudocode}
\usepackage{appendix}
\usepackage{enumitem}
\usepackage{subcaption}
\usepackage{natbib}
\usepackage{tcolorbox}
\tcbuselibrary{breakable}
\usepackage{multirow}
\usepackage{mathtools}
\usepackage{cleveref}
\usepackage{adjustbox}
\usepackage{amsmath}
\usepackage{amsthm}
\usepackage{xspace}
\usepackage{pifont}
\usepackage[colorinlistoftodos]{todonotes}
\usepackage[framemethod=TikZ]{mdframed}
\usepackage{soul}
\usepackage[normalem]{ulem}

\DeclareRobustCommand{\stagecirc}[1]{%
  \tikz[baseline=(char.base)]{%
    \node[draw,circle,inner sep=0pt,minimum size=1.5ex] (char) {\scriptsize #1};%
  }%
}
\sethlcolor{lightgray!25}

\newcommand{\grayhl}[1]{\hl{#1}}
\mdfdefinestyle{takeaway}{%
    middlelinecolor =   none,
    middlelinewidth =   1pt,
    backgroundcolor =   teal!10,
    roundcorner     =   5pt,
}
\usepackage{subcaption} 

\usepackage{minitoc}

\input{defns}

\newtheoremstyle{normaltheorem}
  {3pt}   
  {3pt}   
  {\upshape} 
  {}      
  {\bfseries} 
  {.}     
  {.5em}  
  {}      
  
\theoremstyle{normaltheorem}
\newtheorem{theorem}{Theorem}[section]
\newtheorem*{theorem*}{Theorem}

\newaliascnt{lemma}{theorem}
\newtheorem{lemma}[lemma]{Lemma}
\aliascntresetthe{lemma}
\crefname{lemma}{lemma}{lemmas}
\Crefname{lemma}{Lemma}{Lemmas}

\newaliascnt{proposition}{theorem}
\newtheorem{proposition}[proposition]{Proposition}
\aliascntresetthe{proposition}
\crefname{proposition}{proposition}{propositions}
\Crefname{proposition}{Proposition}{Propositions}

\newaliascnt{corr}{theorem}
\newtheorem{corr}[corr]{Corollary}
\aliascntresetthe{corr}
\crefname{corr}{corollary}{corollaries}
\Crefname{corr}{Corollary}{Corollaries}

\newaliascnt{remark}{theorem}
\newtheorem{remark}[remark]{Remark}
\aliascntresetthe{remark}
\crefname{remark}{remark}{remarks}
\Crefname{remark}{Remark}{Remarks}

\newaliascnt{definition}{theorem}
\newtheorem{definition}[definition]{Definition}
\aliascntresetthe{definition}
\crefname{definition}{definition}{definitions}
\Crefname{definition}{Definition}{Definitions}

\newaliascnt{example}{theorem}
\newtheorem{example}[example]{Example}
\aliascntresetthe{example}
\crefname{example}{example}{examples}
\Crefname{example}{Example}{Examples}

\newtheorem{assumption}{Assumption}
\crefname{assumption}{assumption}{assumptions}
\Crefname{assumption}{Assumption}{Assumptions}

\newcommand{\hyperobjective}{F}

\newcommand{\gibbs}{\mu}

\newcommand{\gibbstheta}{\gibbs_\theta^\lambda}
\newcommand{\poly}{\text{poly}}
\newcommand{\ud}{\mathrm{d}}
\newcommand{\PLcirc}{\texttt{P\L$^\circ$}}

\newcommand{\dimk}{\texttt{k}}

\newcommand{\R}{\mathbb{R}}
\newcommand{\calS}{\mathcal{S}}
\newcommand{\calT}{\mathcal{T}}
\newcommand{\calN}{\mathcal{N}}

\renewcommand{\d}{\mathrm{d}}

\newcommand{\BoN}{\texttt{BoN}\xspace}
\newcommand{\holder}{{H\"older}}
\newcommand{\softmax}{\operatorname{softmax}}

\newcommand{\selectedminimizerset}{{\texttt{O}}}

\newcommand{\minimaselection}{{minima-selection}\xspace}

\title{Select-then-differentiate: Solving Bilevel Optimization with Manifold Lower-level Solution Sets
}
\author{
Saeed Masiha\textsuperscript{1}
\quad Zebang Shen\textsuperscript{2}
\quad Negar Kiyavash\textsuperscript{1}
\quad Niao He\textsuperscript{2}\\
\textsuperscript{1}EPFL School of Management of Technology, Station 5, 1015 Lausanne, Switzerland\\
\textsuperscript{2}ETH Department of Computer Science, Universit\"atstrasse 6, 8092 Z\"urich, Switzerland\\
\texttt{\{mohammadsaeed.masiha,negar.kiyavash\}@epfl.ch}\\
\texttt{\{zebang.shen,niao.he\}@inf.ethz.ch}
}
\date{}

\begin{document}

\doparttoc
\faketableofcontents
\maketitle

\begin{abstract}
We study optimistic bilevel optimization when the lower-level problem has a non-isolated manifold of minimizers. In this setting, the hyper-objective may be non-differentiable because the upper-level criterion must choose among multiple lower-level solutions. Under a local Polyak--{\L}ojasiewicz (P{\L}) condition, we show that differentiability does not require the lower-level solution set to be a singleton: uniqueness of the optimistic selection is sufficient. This yields an explicit pseudoinverse-based hyper-gradient formula extending the classical singleton-minimizer result. We further characterize the regularity of the hyper-objective: non-degeneracy of the selected minimizer along the solution manifold yields local smoothness, while failure of uniqueness can create many non-differentiable points and failure of non-degeneracy can destroy all positive H\"older regularity of the hyper-gradient. Motivated by this theory, we propose HG-MS, a select-then-differentiate method combining explicit optimistic selection with efficient pseudoinverse-based hyper-gradient computation. Despite the nonconvex nature of optimistic selection over the lower-level solution manifold, we show that HG-MS converges to a stationary point of the optimistic objective with complexity governed by the intrinsic dimension of the solution manifold rather than its ambient dimension.
Empirically, we test a practical variant of HG-MS for matched-budget LLM source reweighting. This variant preserves the select-then-differentiate principle and obtains the best GSM8K/MATH scores across the tested backbones, along with competitive or best MT-Bench instruction-following results.
\end{abstract}

\section{Introduction}\label{sec:intro}
Many modern machine learning tasks involve an upper objective that evaluates the outcome of an inner training procedure. Such tasks can be formulated as bilevel optimization. Canonical examples include hyperparameter optimization and meta-learning \citep{feurer2019hyperparameter,liu2021investigating,bertinetto2018meta,finn2017maml}, differentiable neural architecture search \citep{liu2019darts}, bilevel formulations in reinforcement learning \citep{hong2023two}, and more recently,  bilevel formulations for LLM data reweighting, safe fine-tuning, and preference alignment \citep{pan2025scalebio,shen2025seal,jian2025spo}.
The progress in aforementioned applications has been driven by the ability to compute
\emph{hyper-gradients}, which allows us to directly optimize the hyper-objective.

A standard assumption underlying much of the bilevel optimization literature is that the lower-level problem has a \emph{unique} minimizer. In this singleton regime, the lower-level solution map is single-valued, and implicit differentiation yields a tractable hyper-gradient formula \citep{pedregosa2016hypergrad,franceschi2017forward,lorraine2020optimizing,huang2024optimal,liu2022bome,kwon2023penalty}. However, this assumption is often violated in the nonconvex and overparameterized settings that dominate modern applications. Neural-network training objectives admit many global minimizers, which often are not isolated points but rather come from connected sets with manifold structure \citep{draxler2018essentially,garipov2018loss,nguyen2019connected}. In such regimes, the lower-level solution map is set-valued, and the hyper-objective critically depends on lower-level \textit{minima selection}.
For instance, in recent LLM data reweighting formulations \citep{pan2025scalebio,shen2025seal}, the lower-level objective is typically the training loss, while the upper-level objective is evaluated on a validation set. Distinct lower-level minimizers can therefore attain nearly identical training loss but result in very different validation performance. This makes \emph{optimistic bilevel optimization} a natural formulation: among all lower-level minimizers, select the one that is best for the upper level.

 In this paper, we focus on the optimistic bilevel optimization problem  
\begin{equation}\label{eq:intro_blo}
\min_{\theta \in \Theta}\ F(\theta)
\qquad\text{where}\qquad
F(\theta):=\min_{x\in\calS(\theta)} f(\theta,x),
\qquad
\calS(\theta):=\arg\min_{x\in\R^d} g(\theta,x),
\end{equation}
with compact convex $\Theta \subseteq \R^m$. We refer to the inner optimization over $\calS(\theta)$ as \emph{minima selection} and define $\selectedminimizerset(\theta) := \arg\min_{x\in\calS(\theta)} f(\theta,x)$.
In particular, when $\selectedminimizerset(\theta)$ is a singleton, we denote the unique optimistic selection as $x^\star(\theta)$.
 In general, without additional structure, $F$ need not be continuous, let alone differentiable~\citep{dontchev2009implicit,arbel2022non}.

\textbf{Why existing theory is insufficient.} Most existing hyper-gradient analyses establish differentiability of the hyper-objective only when the lower-level solution set is a \emph{singleton}, \(\calS(\theta)=\{x^\star(\theta)\}\) (so minima selection is trivial), and when the lower-level loss \(g(\theta,\cdot)\) satisfies {locally quadratic growth} around \(x^\star(\theta)\) \citep{pedregosa2016hypergrad,franceschi2017forward,lorraine2020optimizing,huang2024optimal,liu2022bome,kwon2023penalty} (see Appendix~\ref{append:penalty-diff} for details). Recent extensions go beyond the singleton-minimizer setting \citep{xiao2023generalized,shen2025penalty}, but require \(g\) to be \emph{convex} in the lower-level variable \(x\), a strong and often impractical assumption. In overparameterized models, \(\calS(\theta)\) is typically neither singleton nor convex; consequently, existing results leave open whether the optimistic hyper-objective \(F\) is differentiable, and hence whether its hyper-gradient is well defined. Due to space constraints, a more detailed comparison with related work is deferred to Appendix~\ref{sec:related_work}.

\textbf{Our starting point: manifold-structured lower-level minimizers.} 
Motivated by the loss landscapes of overparameterized deep networks \citep{venturi2018spurious,liu2022loss}, we assume  the lower-level loss \(g\) satisfies the \(\PLcirc\) condition of \citep{gong2024poincare} with respect to the lower-level variable \(x\). This assumption implies a structured non-singleton regime: the lower-level solution set \(S(\theta)\) is an \emph{embedded submanifold} of \(\mathbb{R}^d\), and existing results further imply that the optimistic hyper-objective \(F\) is Lipschitz continuous \citep{masiha2025superquantile}. But Lipschitzness alone does not justify hyper-gradient methods \citep{chen2023bilevel}.
This leads to the main technical question of the paper: \grayhl{When lower-level minimizers form a manifold, what additional conditions make the optimistic hyper-objective differentiable or smooth, and hence a good candidate for hyper-gradient optimization?}

\textbf{Main insight: uniqueness of the optimistic selection suffices.}
The main source of nonsmoothness is the instability of the minima selection. When $\selectedminimizerset(\theta)$ contains multiple points, small perturbations of $\theta$ can change which minimizer is selected, making $F$ non-differentiable even if the map $\theta\mapsto\calS(\theta)$ varies smoothly. Thus, uniqueness of the optimistic selection is the key regularity condition.
Under the local \PLcirc\ condition, we show that if $\selectedminimizerset(\theta)=\{x^\star(\theta)\}$, then $F$ is continuously differentiable, even when $\calS(\theta)$ is non-singleton and non-convex. The resulting hyper-gradient admits an explicit pseudoinverse-based formula that strictly generalizes the classical singleton-minimizer formula.
\begin{theorem*}[Regularity; Informal]\label{thm:diff_F}
    Suppose that $g$ satisfies the local \(\PLcirc\) condition (see \Cref{ass:plcirc}).
    \begin{itemize}[leftmargin=*]
        \item If the optimistic minimizer is unique, i.e., $\selectedminimizerset(\theta)=\{x^\star(\theta)\}$, $F$ is continuously differentiable, and
        \begin{align}\label{eq:hg_formula}
            \nabla F(\theta)
            =
            \nabla_{\theta}f(\theta,x^\star(\theta))
            -
            \nabla^{2}_{\theta x}g(\theta,x^\star(\theta))
            \big[\nabla^{2}_{xx}g(\theta,x^\star(\theta))\big]^{\dagger}
            \nabla_{x}f(\theta,x^\star(\theta)),
        \end{align}
        where
$\big[\nabla^{2}_{xx}g(\theta,x^\star(\theta))\big]^{\dagger} = \mathbf{N}_\theta \big[ \mathbf{N}_\theta^\top \nabla^{2}_{xx}g(\theta,x^\star(\theta)) \mathbf{N}_\theta \big]^{-1} \mathbf{N}_\theta^\top,$
and \(\mathbf{N}_\theta\) is an orthonormal basis for the normal space of \(\calS(\theta)\) at \(x^\star(\theta)\); see \Cref{remark_pseudoinverse}.

        \item If, in addition, $x^\star(\theta)$ is non-degenerate (see \Cref{def:degenerate-optimistic}), then $\nabla\hyperobjective$ is Lipschitz continuous.
    \end{itemize}
\end{theorem*}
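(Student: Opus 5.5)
We would prove the first bullet by a Danskin-type argument on the manifold. This reduces the optimistic selection to a smooth parametrized minimization over a moving submanifold.

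\textbf{Step 1 (local structure).} Under the local \PLcirc\ condition (\Cref{ass:plcirc}), near $(\theta_0,x^\star(\theta_0))$ the set $\calS(\theta)$ is a $C^1$ (indeed $C^2$) embedded submanifold of constant dimension $\dimk$. Its normal space equals the range of $\nabla^2_{xx}g(\theta,x)$, and the Hessian is positive definite on that normal space. The latter follows because P\L{} forces quadratic growth transversally, so the rank of $\nabla^2_{xx}g$ is locally constant and equal to $d-\dimk$.

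Locally $\calS(\theta)=\{x:\mathbf{N}_\theta^\top\nabla_x g(\theta,x)=0\}$. Applying the implicit function theorem to the map $(\theta,x)\mapsto \mathbf N^\top \nabla_x g$, we can choose $\mathbf N$ as a smooth local frame for the range of the Hessian at nearby points. This gives a local $C^1$ family of diffeomorphisms $\Phi_\theta:\calS(\theta_0)\cap U\to\calS(\theta)\cap U'$. Differentiating the defining equation shows that the normal component of the velocity $\partial_\theta\Phi_\theta(x)$ is
\[
v=-\big[\nabla^2_{xx}g\big]^{\dagger}\,\nabla^2_{x\theta}g ,
\]
while the tangential component is arbitrary, depending on the chart.

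\textbf{Step 2 (localization of the selection).} $F$ is continuous: it is Lipschitz by the existing result cited in the introduction. Using compactness near $\calS(\theta_0)$ and outer semicontinuity of $\calS$, uniqueness of $\selectedminimizerset(\theta_0)$ implies that every selection $x_\theta\in\selectedminimizerset(\theta)$ converges to $x^\star(\theta_0)$ as $\theta\to\theta_0$. Hence $F(\theta)=\min_{y\in \calS(\theta_0)\cap U} f(\theta,\Phi_\theta(y))$ for $\theta$ near $\theta_0$. This is a minimum of a $C^1$ function $h(\theta,y)$ over a fixed compact set, and its minimizer at $\theta_0$ is unique.

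\textbf{Step 3 (Danskin and the formula).} Danskin's theorem now yields differentiability with
\[
\nabla F(\theta_0)=\partial_\theta h(\theta_0,y^\star).
\]
By the chain rule this equals $\nabla_\theta f+(\partial_\theta\Phi)^\top\nabla_x f$. First-order optimality of $x^\star$ on $\calS(\theta_0)$ says that $\nabla_x f$ lies in the normal space. The tangential component of the velocity therefore drops out, and substituting $v$ gives \eqref{eq:hg_formula} with the stated pseudoinverse representation (\Cref{remark_pseudoinverse}). Continuity of $\nabla F$ follows because $x^\star(\theta)\to x^\star(\theta_0)$ by Step 2, and the right-hand side is continuous in $(\theta,x)$ along $\calS$.

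\textbf{Step 4 (Lipschitz gradient).} Non-degeneracy (\Cref{def:degenerate-optimistic}) means the Riemannian Hessian of $y\mapsto h(\theta_0,y)$ at $y^\star$ is positive definite. The implicit function theorem applied to the Riemannian first-order condition, a $C^1$ equation in $(\theta,y)$, then makes $\theta\mapsto x^\star(\theta)$ locally Lipschitz. Composing with the Lipschitz formula gives local Lipschitzness of $\nabla F$, and compactness of $\Theta$ globalizes it.

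\textbf{Main obstacle.} We expect Step 1 to be the hardest part: extracting from \PLcirc\ a smooth, uniform-in-$\theta$ local parametrization of $\calS(\theta)$ together with constant rank of the Hessian. This requires enough smoothness of $g$, which we would first try to obtain from the paper's assumptions via the Morse--Bott structure implied by P\L{}.
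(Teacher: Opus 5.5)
Your plan follows the paper's route almost step for step: an implicit-function chart built from the normally projected gradient, localization of the selection, Danskin on a fixed compact coordinate set, the chain rule with \(\nabla_x f(\theta,x^\star)\) normal to \(\calS(\theta)\), and the implicit function theorem on the restricted first-order condition. However, the step you flag as the main obstacle is a genuine gap, and it is not a smoothness issue.

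Applying the implicit function theorem to \(U_{\calN_0}^\top\nabla_x g(\theta,x)=0\) near \((\theta_0,x_0)\) yields a \(\dimk\)-dimensional graph \(\widetilde\calS(\theta)\cap\mathcal W\). It gives only the inclusion \(\calS(\theta)\cap\mathcal W\subseteq\widetilde\calS(\theta)\cap\mathcal W\): points of \(\widetilde\calS(\theta)\) have vanishing \emph{normal} gradient, nothing more. For instance, take \(g(\theta,x)=(\|x\|^2-1)^2+\theta x_1\) and \(x_0=(1,0)\). For small \(\theta>0\) the set \(\widetilde\calS(\theta)\cap\mathcal W\) is the curve \(4(\|x\|^2-1)x_1=-\theta\), while \(\calS(\theta)\) is a single point near \((-1,0)\). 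This \(g\) violates the \(\theta\)-uniformity in \Cref{ass:plcirc}, which is exactly where the missing inclusion must come from.

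Your representation \(F(\theta)=\min_y f(\theta,\Phi_\theta(y))\) needs the reverse inclusion \(\Phi_\theta(y)\in\calS(\theta)\). Otherwise the chart minimum can undercut \(F(\theta)\) at points that are not lower-level minimizers, and Danskin differentiates the wrong function. The paper (\Cref{lem:parametric-S-chart}) closes this using \(\dim\calS(\theta)=\dimk\) for all \(\theta\) and \(\calS(\theta)\cap\mathcal W\neq\emptyset\) (local Hausdorff continuity). The set of coordinates \(u\) with \(\psi(\theta,u)\in\calS(\theta)\) is then nonempty and relatively closed, since \(\calS(\theta)\) is compact. It is also open, because an inclusion of one \(\dimk\)-dimensional embedded manifold into another is a local diffeomorphism by the inverse function theorem. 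Hence it equals the connected coordinate domain.

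Relatedly, keep the frame fixed at \(x_0\) rather than taking ``a smooth frame for the range of the Hessian at nearby points''. \(\nabla^2_{xx}g\) has rank \(d-\dimk\) only on \(\calS(\theta)\); for \(g=(\|x\|^2-1)^2\) the tangential eigenvalue \(4(\|x\|^2-1)\) is nonzero off the circle. So that range is not a rank-\((d-\dimk)\) bundle in an ambient neighbourhood.

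The remaining steps are sound, with a few details to state.
\begin{itemize}
\item Continuity of \(\nabla F\) in Step 3 requires running the Danskin argument at every \(\theta\) near \(\theta_0\), i.e.\ uniqueness on a neighbourhood, as in \Cref{thm:F-diff-unique}. Uniqueness at \(\theta_0\) alone gives only differentiability at \(\theta_0\).
\item In Step 4, the implicit function theorem for \(\nabla_y h(\theta,y)=0\) needs \(h\in\mathcal C^2\). That requires a \(\mathcal C^2\) chart (available from \(g\in\mathcal C^3\)) and \(f\in\mathcal C^2\), not merely the \(\mathcal C^1\) family you state.
\item You must also explain why the implicit-function branch is the optimistic minimizer. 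Your Step 2 supplies this: every selection from \(\selectedminimizerset(\theta)\) tends to \(x_0\), is an interior critical point of \(h(\theta,\cdot)\), and so coincides with the implicit-function solution by local uniqueness. This is a valid alternative to the paper's strict convexity on a small coordinate ball plus a positive value gap away from \(x_0\).
\item Localizing through continuity of \(F\) is a fine substitute for the paper's competitor \(\psi(\theta_n,0)\).
\item Globalizing by compactness alone is correct, since a locally Lipschitz map on a compact set is Lipschitz.
\end{itemize}
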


Our uniqueness condition is \emph{strictly weaker} than the standard singleton lower-level solution assumption. We further show that our conditions are nearly necessary: if $\selectedminimizerset(\theta)$ is not a singleton, then $F$ can be non-differentiable at \emph{infinitely many} points in an \emph{arbitrarily small} neighborhood; even under unique selection, if $x^\star$ is degenerate, then $\nabla\hyperobjective$ can fail to be $\alpha$-\holder\ continuous for \emph{any} $\alpha>0$. Together, these results give a sharp regularity picture for optimistic bilevel optimization in the manifold regime.


\textbf{Algorithmic consequence: select, then differentiate.}
Our regularity theory suggests a simple, practical algorithmic principle for bilevel optimization in the regime when the lower-level solution set exhibits manifold structure and the minima selection yields a unique minimizer:  one should first \emph{select} the optimistic minimizer, and only then \emph{differentiate}.
To this end, we propose \emph{Hyper-Gradient with Minima-Selection} (\textsc{HG-MS}); see \Cref{alg:hg-minsel-lmc}. 
The method proceeds iteratively in two stages.
    First, it approximates the minima selection using a \emph{Best-of-N} (\BoN) strategy: it draws $N$ samples from the Gibbs measure $\gibbstheta(\mathrm{d}x)\propto \exp(-g(\theta,x)/\lambda)\,\mathrm{d}x,$ and outputs $\hat{x}_N^\lambda(\theta)$, the one achieving the smallest upper-level value $f(\theta, \cdot)$, as a proxy for $x^\star(\theta)$.
    Note that when $\lambda \to 0$, $\gibbstheta$ concentrates around $\calS(\theta)$, allowing the procedure to explore the lower-level solution manifold. 
    Second, the method computes the hyper-gradient $\nabla \hyperobjective$ by plugging $\hat{x}_N^\lambda(\theta)$ in \Cref{eq:hg_formula}, which is then used to update $\theta$.

For convergence analysis, we interpret \textsc{HG-MS} as an inexact projected gradient method applied to the hyper-objective \(F\). The hyper-gradient error decomposes into three terms: (a) sampling error from \(\gibbstheta\), (b) \BoN selection error with finite \(N\), and (c) approximation error of the pseudo-inverse in \cref{eq:hg_formula}.  In particular, we show that the \BoN selector satisfies $\forall \theta \in \Theta$, $\mathbb{E}[\|x^\star(\theta) - \hat{x}_N^\lambda(\theta)\|^2] = \mathcal{O}(N^{-\frac{1}{\dimk}} + \lambda^{\frac{1}{2}}).$
 Here, $\dimk$ denotes the common dimension of the lower-level solution manifold $\{\calS(\theta)\}_{\theta \in \Theta}$,  rather than the ambient dimension $d$. There is an accuracy-efficiency tradeoff:  smaller \(\lambda\) and larger \(N\) make the \BoN selection more accurate, yet increase the  computational cost due to slower sampling from the Gibbs measure and the need for more samples from the Gibbs measure.  Balancing the tradeoff yields our complexity guarantee: we show that \textsc{HG-MS} finds an $\epsilon$-stationary point of the hyper-objective $\hyperobjective$ using $\mathcal{O}(\poly(\epsilon^{-\dimk}))$  queries to the oracle for $\partial_x g(\theta,\cdot)$. Note that the $\epsilon^{-\dimk}$ dependence is necessary in the worst case due to the nonconvex nature of the problem \citep{nemirovskij1983problem}.

\textbf{Empirical evidence.} 
HG-MS arises naturally from our hypergradient formula, which also underlies the canonical update analyzed in our theory. At LLM scale, however, fully computing this hypergradient is costly. We therefore evaluate a scalable select-and-differentiate instantiation of HG-MS, which preserves the core mechanism while making large-scale experiments feasible. This allows us to assess the practical impact of the HG-MS selection mechanism in crucial real-world applications.
We evaluate HG-MS primarily on LLM source-reweighting tasks and two MNIST bilevel benchmarks. The LLM experiments are the main practical testbed: under matched LoRA fine-tuning budgets, HG-MS attains the best GSM8K/MATH scores across the Llama-3-8B, Qwen-2-7B, Gemma-2-9B, and Qwen2.5-32B math settings, and is competitive or best on MT-Bench instruction following. In two-source GPT-2 data-composition experiments it improves final test loss over all baselines; MNIST hyper-cleaning \citep{ren2018reweight} and imbalanced-loss tuning provide multi-seed evidence that explicit selection improves upper-level optimization and test performance.

\textbf{Contributions.} In summary, we make three contributions:
\begin{itemize}[leftmargin=*]
\item \textbf{Differentiability of the hyper-objective under manifold-structured lower-level minimizers.}
We characterize when the optimistic hyper-objective is differentiable (and smooth) in the regime where the lower-level problem has a manifold of minimizers, showing that unique optimistic selection (and non-degeneracy) is  the key condition.

\item \textbf{A hyper-gradient method with \BoN minima selection and oracle complexity guarantee.}
We propose \textsc{HG-MS}, that combines \BoN minima selection with approximate hyper-gradient computation, achieving first-order stationarity guarantees with  oracle complexity governed by the intrinsic dimension of the lower-level solution manifold. 

\item \textbf{{Empirical support for the select-then-differentiate principle.}}
We instantiate HG-MS in LLM source-reweighting tasks, where explicit minima selection changes the learned source mixtures and improves matched-budget downstream performance in several mathematical-reasoning and instruction-following settings. Complementary MNIST hyper-cleaning and imbalanced-loss tuning experiments provide multi-seed evidence that the same select-then-differentiate principle improves optimization stability and test performance relative to standard hyper-gradient and penalty/alternating baselines.
\end{itemize}
\paragraph{Notations}
For any nonempty closed convex set \(C\subset\R^m\), we write $\mathrm{Proj}_{C}(u) :=\arg\min_{v\in C}\|v-u\|$.
For any embedded \(\mathcal{C}^{2}\) submanifold \(\mathcal{M}\subset\R^{d}\) and \(x\in\mathcal{M}\), we write
\(\calT_{x}\mathcal{M}\) and \(\calN_{x}\mathcal{M}\) for the tangent and normal spaces, and
\(P_{\calT_x\mathcal{M}}\), \(P_{\calN_x\mathcal{M}}\) for the corresponding orthogonal projectors.
When \(\mathcal{M}=\calS(\theta)\), we abbreviate \(\calT_x^\theta := \calT_x\calS(\theta)\) and \(\calN_x^\theta := \calN_x\calS(\theta)\).
For probability measures \(\nu\) and \(\mu\), we write \(R_2(\nu\|\mu)\) for the order-$2$ R\'enyi divergence.
Additional notations are collected in \Cref{append:notation-guide}.

\section{Local PL Condition and the Manifold Solution Set}\label{sec:prelim}
We now describe the \(\PLcirc\) condition introduced by \citet{gong2024poincare}, which is our central structural assumption.
Besides the standard {local} Polyak--{\L}ojasiewicz (P\L) inequality around the local minimizers, the extra requirement in the \(\PLcirc\) condition ensures that all local minima are connected (hence are all global minima). 
This supports efficient sampling-based approximations of the global optimal set\footnote{{If $\calS(\theta)$ has at least two connected components, then the mixing time of Langevin sampling of $\gibbstheta$ scales exponentially in $\lambda^{-1}$; see, e.g., \citet{menz2014poincare}.}}.

\begin{definition}[Local P\L]\label{def_local_PL}
Let \(\mathcal{M}\) be the collection of all local minima of \(g\in\mathcal{C}^{1}(\R^{d},\R)\).
We say that \(g\) is \emph{locally P\L} if there exists \(\mu>0\) such that for any connected component
\(\mathcal{M}'\subseteq \mathcal{M}\), there exists an open neighborhood \(\mathcal{N}(\mathcal{M}')\supset \mathcal{M}'\) satisfying
\[
g(x) - \min_{x' \in \mathcal{N}(\mathcal{M}')} g(x')
\;\leq\;
(2\mu)^{-1}\,\|\nabla g(x)\|^{2},
\, \forall x \in \mathcal{N}(\mathcal{M}').
\]
\end{definition}

\begin{definition}[\(\PLcirc\) condition]\label{def_mu_PL_assum}
A function \(g:\R^d\to\R\) satisfies the \(\PLcirc\) condition if: (1) \(g\in\mathcal{C}^{2}\) is locally P\L, and \(g\in\mathcal{C}^{4}\) on every neighborhood \(\mathcal{N}(\mathcal{M}')\); (2) Let \(\mathcal{N}(\mathcal{M}) := \cup_{\mathcal{M}'}\mathcal{N}(\mathcal{M}')\). For any \(x\in \R^{d}\setminus \mathcal{N}(\mathcal{M})\), if \(\nabla g(x)=0\), then \(\nabla^{2}g(x)\prec 0\); (3) The collection of all local minima \(\mathcal{M}\) is contained in a compact set $\mathcal{V}$.
\end{definition}

We impose \(\PLcirc\) uniformly over the parameter \(\theta\) in the lower-level problem.
\begin{tcolorbox}[colback=gray!20, colframe=gray!50, boxrule=0pt, arc=0mm, left=0mm, right=0mm, top=0mm, bottom=0mm]
\begin{assumption}[Lower-level \(\PLcirc\)]\label{ass:plcirc}
There exists a constant \(\mu>0\) and a compact domain $\mathcal{V} \subseteq \mathbb{R}^d$, both independent of $\theta$, such that for every \(\theta\in\Theta\), the function \(g(\theta,\cdot)\) satisfies the \(\PLcirc\) condition.
\end{assumption}
\end{tcolorbox}

\Cref{ass:plcirc} has both geometrical (manifold structure) and analytical (normal nondegeneracy) implications on the lower-level solution set $\calS(\theta)$, summarized as follows.

\begin{proposition}[Characterizations of $\calS(\theta)$]\label{prop:geometry}
Under \Cref{ass:plcirc}, the following hold for every \(\theta\in\Theta\):
(i) \(\calS(\theta)\) is a connected compact \(\mathcal{C}^{2}\) embedded submanifold of \(\R^{d}\) without boundary
\citep[Prop.~3, Cor.~1]{gong2024poincare}. In particular, all local minima of \(g(\theta,\cdot)\) are global minima.
(ii) For any \(x\in\calS(\theta)\), the Hessian \(H(\theta,x):=\nabla_{xx}^{2} g(\theta,x)\) satisfies $\ker H(\theta,x) \;=\; \calT_x^\theta$ and $\langle v, H(\theta,x)v\rangle \;\ge\; c\,\|v\|^2$ for all $v\in \calN_x^\theta$
	for some constant \(c>0\) that is uniform over \(\theta\in\Theta\) and \(x\in\calS(\theta)\) \citep[Cor.~2.17]{rebjock2024fast}.
Moreover, assuming $g\in \mathcal{C}^{3}$, there exists an integer \(\dimk\) such that
\(
\dim(\calS(\theta))=\dimk, \forall \theta\in\Theta
\), \citep[Lem.~3.5]{masiha2025superquantile}.
\end{proposition}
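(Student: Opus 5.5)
Part (i) is taken from \citet{gong2024poincare}, so I will only sketch why it holds and then concentrate on (ii) and the constant-dimension claim. Fix $\theta$ and write $g=g(\theta,\cdot)$. By condition (3) of \Cref{def_mu_PL_assum}, the set of local minima is bounded. Condition (2) says every critical point outside $\mathcal{N}(\mathcal{M})$ is a strict local maximum, which gives a mountain-pass style obstruction: two distinct connected components of minima would force a saddle-type critical point, i.e.\ one that is neither a minimum nor has negative-definite Hessian, outside $\mathcal{N}(\mathcal{M})$. So the minima form a single component, on which $g$ is constant, and all local minima are global. The manifold structure then follows from local P\L{} on $\mathcal{N}(\mathcal{M}')$: near $\calS(\theta)$ the gradient flow projection onto the minimizer set is $\mathcal{C}^{1}$ (or better), which exhibits $\calS(\theta)$ as a $\mathcal{C}^{2}$ embedded submanifold. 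It is compact because it is closed and lies in $\mathcal{V}$.

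For (ii), take $x\in\calS(\theta)$. Since $g$ is constant on $\calS(\theta)$ and $\nabla g=0$ there, differentiating $\nabla g(\gamma(t))=0$ along any curve $\gamma$ in $\calS(\theta)$ gives $H\gamma'(0)=0$, hence $\calT_x^\theta\subseteq\ker H$.

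For the reverse inclusion and the curvature bound, I would use the standard equivalence between P\L{} and quadratic growth, $g(y)-g^\star\ge \tfrac{\mu}{2}\,\mathrm{dist}(y,\calS(\theta))^2$ near $\calS(\theta)$. Take $v\in\calN_x^\theta$ and put $y=x+tv$; for small $t$ we have $\mathrm{dist}(y,\calS)=t\|v\|+o(t)$. A Taylor expansion gives $\tfrac{t^2}{2}\langle v,Hv\rangle+o(t^2)\ge\tfrac{\mu}{2}t^2\|v\|^2+o(t^2)$, so $\langle v,Hv\rangle\ge\mu\|v\|^2$. Thus $H$ is positive definite on the normal space. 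Combined with $H\succeq0$, which holds at a minimum, and the symmetry of $H$, this forces $\ker H=\calT_x^\theta$. The constant $c=\mu$ is uniform over $\theta$ because $\mu$ is uniform in \Cref{ass:plcirc}; this is the content of \citet[Cor.~2.17]{rebjock2024fast}.

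For the constant dimension, at each $x\in\calS(\theta)$ we have $\dim\calS(\theta)=d-\operatorname{rank}H(\theta,x)$, and the nonzero eigenvalues are at least $c$. I would show that $\theta\mapsto\calS(\theta)$ is continuous in the Hausdorff sense. Using this, together with the continuity of $H$, which needs $g\in\mathcal{C}^{3}$ jointly, and the spectral gap between $0$ and $c$, the rank of $H$ cannot jump under small perturbations of $(\theta,x)$. So $\dimk(\theta)$ is locally constant, and since $\Theta$ is convex and hence connected, it is constant.

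The main obstacle is this Hausdorff continuity of $\calS(\theta)$. My approach would be to show that points of $\calS(\theta')$ lie within $O(\|\theta-\theta'\|)$ of $\calS(\theta)$ by applying quadratic growth and the P\L{} inequality to $g(\theta,\cdot)$ evaluated at minimizers of $g(\theta',\cdot)$, with the compact set $\mathcal{V}$ and uniform $\mu$ controlling all constants.
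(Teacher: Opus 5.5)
Your argument is correct, but it takes a different route from the paper. The paper's proof of this proposition is purely by citation: (i) from \citet{gong2024poincare}, (ii) from \citet[Cor.~2.17]{rebjock2024fast}, and the constant dimension from \citet[Lem.~3.5]{masiha2025superquantile}. You likewise defer (i), but you prove the rest directly.
\begin{itemize}
\item For (ii), local P\L{} gives local quadratic growth with the same $\mu$, via the gradient-flow length bound. This is valid for starting points close enough to the compact set $\calS(\theta)$ that the flow stays in $\mathcal N(\mathcal M')$. A second-order expansion along $x+tv$, $v\in\calN_x^\theta$, then gives $\langle v,Hv\rangle\ge\mu\|v\|^2$, and $H\calT_x^\theta=0$ together with symmetry gives $\ker H=\calT_x^\theta$.
\item For the constant dimension, you use $\dim\calS(\theta)=d-\operatorname{rank}H$, the spectral gap $\{0\}\cup[c,\infty)$, Weyl's inequality, and connectedness of $\Theta$.
\end{itemize}
Your route buys an explicit constant $c=\mu$. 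Its uniformity in $\theta$ is then immediate from \Cref{ass:plcirc}, even though the neighborhoods $\mathcal N(\mathcal M')$ need not be uniform. It also shows the dimension claim needs only joint continuity of $\nabla^2_{xx}g$; your remark that this "needs $g\in\mathcal C^3$" is unnecessary. The citation route buys brevity and the full Rebjock--Boumal package: equivalence of P\L, error bound, quadratic growth and Morse--Bott, including the manifold structure of the solution set.

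Two things to tighten.

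First, the step you flag as the main obstacle is easier than you plan. The rank argument only needs the one-sided statement that points of $\calS(\theta')$ lie close to $\calS(\theta)$ for $\theta'$ near $\theta$. This follows from compactness of $\mathcal V$: limits $\bar x$ of $x_n\in\calS(\theta_n)$ satisfy $g(\theta,\bar x)\le g(\theta,y)$ for all $y$. You need this localization anyway before your quantitative $O(\|\theta-\theta'\|)$ route. P\L{} and quadratic growth for $g(\theta,\cdot)$ hold only on $\mathcal N(\mathcal M)$, not at an arbitrary $x'\in\calS(\theta')$. The rate and the other Hausdorff direction are superfluous.

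Second, your mountain-pass sketch for (i) silently needs $d\ge2$ and a Palais--Smale/coercivity condition not contained in \Cref{def_mu_PL_assum} as written.
\begin{itemize}
\item $g(x)=(x^2-1)^2$ on $\R$ satisfies conditions (1)--(3), with a strict local maximum at $0$, yet has two minimizing components.
\item $x_1^2(1+x_2)^3+x_2^2$ on $\R^2$ has a single critical point, a strict local minimum, yet no global minimum.
\end{itemize}
Since you and the paper both take (i) from \citet{gong2024poincare}, this does not affect the comparison. Still, the sketch should not be read as a proof.
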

\begin{remark}
Under the lower-level local \PLcirc\ condition and mild regularity on $f$ and $g$, the hyper-objective function is Lipschitz continuous; see
\citep[Lem.~3.2]{masiha2025superquantile}. 
While this implies that $F$ is differentiable almost everywhere, it remains insufficient for our purpose.  
\end{remark}

\section{Local Differentiability of the Hyper-objective}\label{sec:hyp_diff}
Under the \PLcirc\ condition, we identify the \emph{worst-case} necessary conditions for the hyper-objective $\hyperobjective$ to be continuously differentiable and smooth, i.e. Lipschitz gradient.
To preserve as much generality as possible, all assumptions are made only locally around a fixed point $\theta_0$; Consequently, all conclusions below are local, holding only in a neighborhood of $\theta_0$.
All formal proofs for this section are given in Appendix~\ref{append_hyper_diff}.
This section is structured as follows:
\begin{itemize}[leftmargin=*]
    \item Through the counterexample in \Cref{ex:kink}, we show that a tie in \minimaselection, i.e., a non-singleton set $\selectedminimizerset(\theta)$, can make $F$ non-differentiable, even if both $f$ and $g$ are $\mathcal{C}^\infty$.
    \item We show in \Cref{thm:F-diff-unique} that the uniqueness of optimistic minimizer is sufficient to obtain $F\in\mathcal C^1$, together with the explicit hyper-gradient formula (\ref{eq:hg_formula}).
    \item Through the counterexample in \Cref{ex:unique-nonsmooth}, we show that the unique optimistic minimizer condition alone does not guarantee $\alpha$-\holder\ continuity of $\nabla F$ for any $\alpha>0$ (including Lipschitz continuity, which corresponds to $\alpha=1$).
    \item We show that when enhancing the unique optimistic minimizer condition with the non-degeneracy condition (\Cref{def:degenerate-optimistic}), the selected minimizer admits a $\mathcal C^1$ local branch and $\nabla \hyperobjective$ is locally Lipschitz (see \Cref{thm_hyp_obj_diff}).
\end{itemize}

\textbf{Non-unique minima-selection can create kinks in the hyper-objective.}
Even when $\calS(\theta)$ is a $\mathcal{C}^\infty$ connected manifold and varies smoothly with $\theta$, minimizing $f(\theta,x)$ over $\calS(\theta)$ can introduce kinks in $F$ whenever the optimistic minimizer is not unique.
The example below shows that, in an \emph{arbitrarily small} neighborhood, $F$ may have \emph{infinitely} many non-differentiable points despite $f$, $g$, and $\calS(\theta)$ being $\mathcal{C}^\infty$; the singularities arise solely from switching among non-unique minimizers.

    \begin{figure*}[t]
		\centering
		\begin{subfigure}{0.43\linewidth}
			\centering
			\begin{tikzpicture}[scale=0.95, >=Stealth]
			\def\r{1.25}
			\draw[->] (-1.6,0) -- (1.6,0) node[right] {$x_1$};
			\draw[->] (0,-1.6) -- (0,1.6) node[above] {$x_2$};
			\draw[thick, black] (0,0) circle (\r);
			\node[black] at (0,1.45) {$\calS(\theta)$};
			\filldraw[blue!80!black] (\r,0) circle (1.8pt) node[below right] {$x^+(\theta)$};
			\filldraw[red!80!black] (-\r,0) circle (1.8pt) node[below left] {$x^-(\theta)$};
			\draw[black, ->, thick] (0.25,0.40) -- (\r,0);
			\node[black, align=left, font=\scriptsize, color=blue!80!black] at (0.62,0.92) {$a(\theta)<0$\\ selects $x^+$};
			\draw[black, ->, thick] (-0.25,-0.40) -- (-\r,0);
			\node[black, align=right, font=\scriptsize, color=red!80!black] at (-0.62,-0.92) {$a(\theta)>0$\\ selects $x^-$};
		\end{tikzpicture}
		\caption{Two competing optimistic minimizers.}
	\end{subfigure}
		\hfill
		\begin{subfigure}{0.5\linewidth}
			\centering
				\begin{tikzpicture}
						\begin{axis}[
							width=0.88\linewidth,
							height=4.2cm,
							xmin=0.08, xmax=0.36,
							declare function={
								r(\th)=sqrt(1+\th*\th);
								a(\th)=exp(-1/(\th*\th))*sin(deg(10/\th));
							},
							ymin=-3.2e-3, ymax=3.2e-3,
							xlabel={$\theta$},
							ylabel={value},
							tick label style={font=\scriptsize},
							label style={font=\scriptsize},
							legend style={font=\tiny, draw=none, fill=none, at={(0.02,0.98)}, anchor=north west},
							samples=900,
							domain=0.08:0.36,
							grid=major,
							grid style={line width=.1pt, draw=gray!20},
							axis x line=middle,
							axis y line=left,
							unbounded coords=jump,
							ytick={-3.2e-3,-1.6e-3,0,1.6e-3,3.2e-3},
							scaled y ticks=false,
							yticklabel style={/pgf/number format/fixed,/pgf/number format/precision=5},
						]
                        \addplot[yellow!70!white, very thick, opacity=0.7] {-r(x)*abs(a(x))};
							\addlegendentry{$F(\theta)$}
						\addplot[blue!80!black, semithick, dashed] {r(x)*a(x)};
						\addlegendentry{$\;\;r(\theta)a(\theta)$}
						\addplot[red!80!black, semithick, dashdotted] {-r(x)*a(x)};
						\addlegendentry{$-r(\theta)a(\theta)$}
							\pgfplotsinvokeforeach{9,...,39}{
								\addplot[black, only marks, mark=*, mark size=1.2pt, forget plot]
									coordinates {({10/(#1*pi)},0)};
							}
						\end{axis}
					\end{tikzpicture}
					\caption{Two smooth branches and the resulting hyper-objective (kinks at ties $\theta_k=10/(k\pi)$, a countably infinite set accumulating at $0$, marked in black).}
				\end{subfigure}
			\caption{\textbf{Illustration of \Cref{ex:kink}.}
			Left: selection switches between two endpoints of $\calS(\theta)$ according to the sign of $a(\theta)$.
		Right: $F(\theta)=-r(\theta)|a(\theta)|$ is obtained by taking the smaller value between the two branches $\pm r(\theta)a(\theta)$, creating kinks at ties $a(\theta)=0$ (black dots).}
		\label{fig:ex-kink}
	\end{figure*}
\begin{example}\label{ex:kink}
Let $\theta\in\R$ and $x=[x_1,x_2]\in\R^2$ and write $r(\theta)=\sqrt{1+\theta^2}$.
Define $a(\theta):= e^{-1/\theta^2}\!\sin(10/\theta)$ for $\theta \neq 0$ and $a(0) = 0$.
Consider the functions $f(\theta,x):=a(\theta)\,x_1 + x_2^2$ and $g(\theta,x):=\big(\|x\|^2-(r(\theta))^2\big)^2$, both of which belong to $\mathcal C^\infty(\R \times \R^2)$.
The lower-level solution set is the circle
\(
\calS(\theta)=\{x\in\R^2:\ \|x\|=r(\theta)\}.
\)
On $\calS(\theta)$,
$f(\theta,x)=a(\theta) x_1+r(\theta)^2-x_1^2$, which is a concave quadratic in $x_1$. Therefore, for $a(\theta)\neq 0$, the minimizer is unique and given by
\(
x^\star(\theta)= [ - \mathrm{sign}(a(\theta)) r(\theta), 0 ];
\)
at any $\theta$ with $a(\theta)=0$, we have
\(
\arg\min_{x\in\calS(\theta)} f(\theta,x)=\{[-r(\theta),0],[r(\theta),0]\},
\) i.e., the selected optimistic minimizer is \emph{not unique}. Consequently, $F(\theta)=\min_{x\in\calS(\theta)} f(\theta,x)=-r(\theta)\,|a(\theta)|$. Since $a(\theta)=0$ at $\theta_k:=10/(k\pi)$ for each $k\in\mathbb{Z}\setminus\{0\}$, $F$ is not differentiable at every $\theta_k$ (a countably infinite set accumulating at $0$); see \Cref{fig:ex-kink}.
\end{example}


\textbf{Uniqueness in \minimaselection implies differentiability.}
The preceding example suggests that non-uniqueness of the optimistic minimizer is the main obstruction to differentiability. Our first positive result shows that, under the \PLcirc\ geometry summarized in \Cref{prop:geometry}, uniqueness is sufficient to recover $\mathcal C^1$ regularity of $F$ and an explicit hyper-gradient formula.%

\begin{theorem}[$\mathcal C^1$ regularity of the hyper-objective under uniqueness]\label{thm:F-diff-unique}
Let \Cref{ass:plcirc} hold and assume $f \in \mathcal C^1$ and $g \in \mathcal C^3$.
Fix $\theta_0\in\Theta$ and assume there exist an open set $\mathcal U$ containing $\theta_0$ such that, for all $\theta\in\mathcal U$, \(\arg\min_{x\in\calS(\theta)} f(\theta,x)\) is a singleton.
Then there exists an open neighborhood $\mathcal U_0\subseteq\mathcal U$ of $\theta_0$ such that $F \in \mathcal{C}^1(\mathcal U_0)$.
Moreover, for all $\theta\in\mathcal U_0$, the hyper-gradient is given by \cref{eq:hg_formula}.
\end{theorem}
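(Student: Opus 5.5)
Our plan is to reduce the problem to a smoothly parametrized family of manifolds and then apply a Danskin-type envelope argument on a fixed reference manifold. The reduction is a local diffeomorphism that carries a neighborhood of $\calS(\theta_0)$ onto neighborhoods of $\calS(\theta)$. By \Cref{prop:geometry}, every $\calS(\theta)$ is a compact connected $\mathcal C^2$ submanifold of fixed dimension $\dimk$. On $\calS(\theta_0)$ the Hessian $H(\theta_0,x)$ is uniformly positive definite on $\calN_x^{\theta_0}$ and vanishes on $\calT_x^{\theta_0}$.

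We first construct a parametrization $\Phi(\theta,x)$ with $\Phi(\theta_0,x)=x$ and $\Phi(\theta,\cdot):\calS(\theta_0)\to\calS(\theta)$ a bijection. Concretely, for $x\in\calS(\theta_0)$ and $v\in\calN_x^{\theta_0}$ near $0$, we consider the equation
\[
P_{\calN_x^{\theta_0}}\nabla_x g(\theta,x+v)=0 .
\]
Its $v$-Jacobian at $(\theta_0,x,0)$ is $P_{\calN}H(\theta_0,x)|_{\calN}$, which is invertible with constant $c$. The implicit function theorem, applied using $g\in\mathcal C^3$ and compactness of $\calS(\theta_0)$ to get uniform radii, gives a $\mathcal C^1$ map $v(\theta,x)$. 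We then set $\Phi(\theta,x)=x+v(\theta,x)$. We must check that $\Phi(\theta,x)\in\calS(\theta)$, i.e.\ that the tangential gradient component also vanishes. For this we use local P\L: points in the tube where the normal gradient vanishes are minimizers. One way to argue is through the gradient-flow projection onto $\calS(\theta)$, or through the fact that $\calS(\theta)$ is a graph over $\calS(\theta_0)$ in normal coordinates for $\theta$ near $\theta_0$; the latter follows from Hausdorff continuity of $\calS(\theta)$ (P\L\ gives $\mathrm{dist}(x,\calS(\theta))\lesssim\|\nabla_x g(\theta,x)\|$) together with equal dimension and the uniform normal curvature bound. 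Surjectivity onto $\calS(\theta)$ follows from the same Hausdorff continuity. We expect this step to be the main obstacle. The difficulty is to ensure that $\Phi(\theta,\cdot)$ is a $\mathcal C^1$ parametrization covering all of $\calS(\theta)$, with uniform neighborhoods and no spurious solutions of the normal equation.

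With $\Phi$ in hand, we write $F(\theta)=\min_{y\in\calS(\theta_0)}\tilde f(\theta,y)$, where $\tilde f(\theta,y):=f(\theta,\Phi(\theta,y))$ is $\mathcal C^1$ jointly. The domain $\calS(\theta_0)$ is compact and independent of $\theta$. Uniqueness of $\selectedminimizerset(\theta)$ transfers to a unique minimizer $y^\star(\theta)=\Phi(\theta,\cdot)^{-1}(x^\star(\theta))$. Danskin's theorem, using compactness and the unique argmin, gives differentiability with $\nabla F(\theta)=\nabla_\theta\tilde f(\theta,y^\star(\theta))$. Continuity of $\nabla F$ follows because $y^\star$ is continuous: a unique minimizer of a jointly continuous function over a fixed compact set depends continuously on the parameter, by Berge's maximum theorem. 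We take $\mathcal U_0$ to be the neighborhood on which $\Phi$ is defined, intersected with $\mathcal U$.

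It remains to identify the formula. By the chain rule,
\[
\nabla_\theta\tilde f=\nabla_\theta f+(\partial_\theta\Phi)^\top\nabla_x f .
\]
Differentiating $\nabla_x g(\theta,\Phi(\theta,y))=0$ in $\theta$ gives $\nabla^2_{x\theta}g+H\,\partial_\theta\Phi=0$. Hence the normal component of $\partial_\theta\Phi$ is determined as $-\mathbf N_\theta(\mathbf N_\theta^\top H\mathbf N_\theta)^{-1}\mathbf N_\theta^\top\nabla^2_{x\theta}g$, which is $-H^\dagger\nabla^2_{x\theta}g$ by \Cref{prop:geometry}(ii). Here we also use that $\nabla^2_{x\theta}g$ has range in $\calN$, since $\ker H=\calT$ and $H$ is symmetric. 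The tangential part of $\partial_\theta\Phi$ is arbitrary, but it is annihilated when paired with $\nabla_x f(\theta,x^\star)$. The reason is that first-order optimality of $x^\star(\theta)$ on the manifold $\calS(\theta)$ gives $P_{\calT^\theta_{x^\star}}\nabla_x f=0$. Substituting yields \cref{eq:hg_formula}.
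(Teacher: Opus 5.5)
Your argument is correct in outline but takes a different route from the paper. The paper never parametrizes all of $\calS(\theta)$. It builds only a local normal chart $\psi(\theta,u)$ around the selected point $x_0=x^\star(\theta_0)$ (\Cref{lem:parametric-S-chart}). It then uses uniqueness at $\theta_0$ and compactness of $\mathcal V$ to show $x^\star(\theta)\to x_0$, so that $F(\theta)=\min_{u\in U}f(\theta,\psi(\theta,u))$ over a small compact coordinate ball, and applies the Danskin lemma there.

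You instead build a global normal graph $\Phi(\theta,\cdot):\calS(\theta_0)\to\calS(\theta)$ and run Danskin over the fixed compact manifold $\calS(\theta_0)$. This removes the localization of $x^\star(\theta)$ entirely. It also yields the stronger fact that $\calS(\theta)$ is a $\mathcal C^1$ normal graph over $\calS(\theta_0)$. The price is global bookkeeping:
\begin{itemize}
\item a $\mathcal C^1$ normal frame on the $\mathcal C^2$ manifold $\calS(\theta_0)$;
\item implicit-function-theorem radii uniform over $\calS(\theta_0)$;
\item injectivity of $\Phi(\theta,\cdot)$, which follows from the positive reach of $\calS(\theta_0)$.
\end{itemize}
Your hyper-gradient computation is the same as the paper's.

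The step you flag as the main obstacle is also the crux of the paper's chart lemma. Your first suggested justification does not work as stated. Local P\L\ only says that \emph{critical} points in the P\L\ neighborhood of $g(\theta,\cdot)$ are minimizers. It says nothing about points where only the normal component of $\nabla_x g$ vanishes. Moreover, the P\L\ neighborhoods are not assumed uniform in $\theta$, which also undercuts deriving Hausdorff continuity from the P\L\ error bound.

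The clean closure is topological, in two directions.
\begin{itemize}
\item Inclusion $\calS(\theta)\subseteq M_\theta:=\Phi(\theta,\calS(\theta_0))$. Upper semicontinuity of $\theta\mapsto\calS(\theta)$ (limits of lower-level minimizers are minimizers, by compactness of $\mathcal V$) puts $\calS(\theta)$ inside the uniform tube for $\theta$ near $\theta_0$. Nearest-point projection plus uniqueness in the implicit function theorem then give the inclusion, which is your surjectivity claim.
\item Equality. $\calS(\theta)$ and $M_\theta$ are compact manifolds of the same dimension $\dimk$ (\Cref{prop:geometry}). By invariance of domain, $\calS(\theta)$ is open and closed in $M_\theta$. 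Since $M_\theta$ is homeomorphic to the connected $\calS(\theta_0)$ and $\calS(\theta)\neq\emptyset$, we get $\calS(\theta)=M_\theta$. Thus every $\Phi(\theta,x)$ is a lower-level minimizer.
\end{itemize}
Connectedness of $\calS(\theta_0)$, which you record at the outset but never use, is what does the work here. In your global version it replaces the local Hausdorff lower continuity that the paper cites to guarantee $\calS(\theta)\cap\mathcal W\neq\emptyset$. The curvature bound you mention is not needed.
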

\begin{proof}[Proof sketch]
We sketch the proof in three steps; see Appendix~\ref{append:F-diff-unique} for the full argument.

\smallskip\noindent
\emph{Step 1: parameterize \(\calS(\theta)\) locally by solving the normal equations.}
Let \(x_0:=x^\star(\theta_0)\), and abbreviate
\(\calT_0:=\calT_{x_0}^{\theta_0}\) and \(\calN_0:=\calN_{x_0}^{\theta_0}\).
Choose orthonormal bases \(U_{\calT_0}\) and \(U_{\calN_0}\) spanning \(\calT_0\) and \(\calN_0\). Write
\[
x=x_0+U_{\calT_0}u+U_{\calN_0}v
\]
and consider
\[
\Phi(\theta,u,v)
:=
U_{\calN_0}^{\top}\nabla_x g(\theta,x_0+U_{\calT_0}u+U_{\calN_0}v).
\]
By the normal nondegeneracy implied by \Cref{ass:plcirc} (see \Cref{prop:geometry}),
\[
D_v\Phi(\theta_0,0,0)
=
U_{\calN_0}^{\top}\nabla^2_{xx}g(\theta_0,x_0)U_{\calN_0}
\]
is invertible. The implicit function theorem gives a \(\mathcal C^2\) map \(v=\varphi(\theta,u)\) solving
\(\Phi(\theta,u,\varphi(\theta,u))=0\) near \((\theta_0,0)\). Defining
\[
\psi(\theta,u):=x_0+U_{\calT_0}u+U_{\calN_0}\varphi(\theta,u),
\]
we obtain a local chart for the normal critical set. The \(\PLcirc\) geometry, the fixed local dimension of
\(\calS(\theta)\), and local Hausdorff continuity then identify this chart with the nearby branch of the true minimizer
manifold \(\calS(\theta)\).

\smallskip\noindent
\emph{Step 2: reduce to a fixed-domain minimization.}
By uniqueness of the optimistic minimizer and compactness of the lower-level solution set, \(x^\star(\theta)\) remains in
this chart patch for all \(\theta\) close to \(\theta_0\). After possibly shrinking the parameter neighborhood, choose a
compact coordinate set \(U\subset\mathbb R^{\dimk}\), contained in the chart domain, that contains the coordinate
\(u^\star(\theta)\) of the selected minimizer for all such \(\theta\). The optimistic problem becomes the fixed-domain
minimization
\[
F(\theta)=\min_{u\in U}\tilde f(\theta,u),
\qquad
\tilde f(\theta,u):=f(\theta,\psi(\theta,u)).
\]

\smallskip\noindent
\emph{Step 3: apply an envelope argument and compute the hyper-gradient.}
We apply the Danskin-type envelope lemma in Appendix~\ref{append:F-diff-unique}, namely
\Cref{lem:envelope-unique}. For a value function \(F(\theta)=\min_{u\in U}\phi(\theta,u)\) over a fixed compact set,
uniqueness of the minimizer allows one to differentiate \(F\) by differentiating only \(\phi\) with respect to
\(\theta\) at the minimizing point; no derivative of the argmin map is needed. Applying this lemma pointwise, and then
using continuity of the unique minimizer on the fixed compact domain, yields the local \(\mathcal C^1\) regularity of
\(F\). Finally, differentiating the lower-level stationarity identity
\(\nabla_x g(\theta,\psi(\theta,u))=0\) gives the pseudoinverse hyper-gradient formula \eqref{eq:hg_formula}.
\end{proof}
\begin{remark} \label{remark_pseudoinverse}
For compactness, denote \(\mathbf{H}:=\nabla^{2}_{xx}g(\theta,x^\star(\theta))\), and let $\mathbf{N}_\theta$ and $\mathbf{T}_\theta$ be orthogonal bases that span \(\calN_{x^\star(\theta)}^\theta\) and \(\calT_{x^\star(\theta)}^\theta\), respectively.
Under the \(\PLcirc\) condition, \Cref{prop:geometry} implies \(\mathbf{N}_\theta^\top H \mathbf{N}_\theta \succ 0\);
Besides, we have \(\mathbf{T}_\theta^\top \mathbf{H} \mathbf{T}_\theta = 0\).
Consequently, $\mathbf{N}_\theta\left(\mathbf{N}_\theta^\top \mathbf{H} \mathbf{N}_\theta\right)^{-1} \mathbf{N}_\theta^\top$ coincides with the Moore--Penrose pseudoinverse of $\mathbf{H}$.
\end{remark}

\textbf{Uniqueness in \minimaselection alone is insufficient for \holder\ continuity of $\nabla F$.}
From an optimization perspective, however, $\mathcal C^1$ regularity is not enough.
For a non-asymptotic analysis, we typically need $\nabla F$ to be $\alpha$-\holder\ continuous for some $\alpha > 0$.
However, \Cref{ex:unique-nonsmooth} shows that uniqueness alone is \emph{insufficient} to guarantee this property.
This occurs when the upper-level objective restricted to the minimizer manifold has zero intrinsic second-order
curvature at the selected point\footnote{One might instead try to require positivity of the ambient Hessian \(\nabla^2_{xx}f\) on tangent vectors, but this is not
an intrinsic condition, as \Cref{ex:unique-nonsmooth} demonstrates.}. Therefore, the restricted objective
\(f(\theta,\cdot)|_{\calS(\theta)}\) must be non-degenerate, for example through its Riemannian Hessian.

\begin{definition}[Degenerate and non-degenerate optimistic minimizers]\label{def:degenerate-optimistic}
Consider the setting of \Cref{thm:F-diff-unique}.
Let \(\bar f_\theta:=f(\theta,\cdot)|_{\calS(\theta)}\). We say that $x^*(\theta)$ is a
\emph{non-degenerate optimistic minimizer} for $\theta \in \mathcal{U}_0$ if the Riemannian Hessian\footnote{See
\Cref{eq:riem-hess-restricted} in Appendix~\ref{append:regularity-assumption} for the convention used here.} of the restricted
objective is positive definite at \(x^*(\theta)\), i.e., $\big\langle v,\mathrm{Hess}_{\calS(\theta)}\bar f_\theta(x^*(\theta))[v]\big\rangle>0$ for all $v\in\calT_{x^*(\theta)}^{\theta}\setminus\{0\}$.
Equivalently, \(f(\theta,\cdot)\) has strictly positive intrinsic second-order derivative along every tangent direction
of the manifold \(\calS(\theta)\) at \(x^*(\theta)\).
If this condition fails, we call $x^*(\theta)$ a \emph{degenerate optimistic minimizer}. 
\end{definition}

The next example shows that this intrinsic condition is not cosmetic: uniqueness can hold for every \(\theta\) while
\(\nabla F\) has no positive H\"older modulus, and a condition based only on the ambient tangent Hessian would miss the
obstruction.

\begin{example}[Uniqueness does not imply local \texorpdfstring{$\alpha$}{alpha}-\holder\ continuity of $\nabla F$]\label{ex:unique-nonsmooth}
Consider the following smooth construction. For $\theta\in\R$ and $x=(x_1,x_2)\in\R^2$, let
\[
a(\theta):=
\begin{cases}
\frac14 \exp({-1/\theta^2})\sin(10/\theta), & \theta\neq 0,\\
0, & \theta=0,
\end{cases}
\qquad
\eta(t):=
\begin{cases}
\operatorname{sign}(t)\exp({-1/t^2}), & t\neq 0,\\
0, & t=0,
\end{cases}
\]
and define $\phi(t):=\int_0^t \eta(s)\,\mathrm ds$. Then $\eta,\phi\in\mathcal{C}^{\infty}(\R)$, $\eta=\phi'$ is strictly increasing, and $\phi''(0)=\eta'(0)=0$. Set
\[
g(\theta,x):=\big(\|x\|^2-(1+\theta^2)\big)^2,
\qquad
f(\theta,x):=\phi(x_2)+a(\theta)x_2+\rho(-x_1)+b\big(\|x\|^2-(1+\theta^2)\big),
\]
where \(b>0\) is arbitrary, $\rho(s)=\exp({-1/s})$ for $s>0$, and $\rho(s)=0$ for $s\le 0$. Thus $\calS(\theta)=\{x\in\R^2:\|x\|=r(\theta)\}$ with $r(\theta):=\sqrt{1+\theta^2}$.

The added \(b\)-term is identically zero on \(\calS(\theta)\), so it does not change the constrained minimization problem.
As in \Cref{ex:kink}, the term $\rho(-x_1)$ forces the optimistic minimizer to lie on the right semicircle, so minimizing $f(\theta,\cdot)$ over $\calS(\theta)$ reduces to
\[
\min_{|t|\le r(\theta)} \bigl(\phi(t)+a(\theta)t\bigr).
\]
Since $\eta$ is strictly increasing, this scalar problem has a unique critical point determined by $\eta(t)+a(\theta)=0$, namely
\[
t^\star(\theta)=
\begin{cases}
-\operatorname{sign}(a(\theta))\bigl(\log(1/|a(\theta)|)\bigr)^{-1/2}, & a(\theta)\neq 0,\\
0, & a(\theta)=0.
\end{cases}
\]
Moreover, $|t^\star(\theta)|\le(\log 4)^{-1/2}<1\le r(\theta)$, so this critical point is feasible and therefore is the unique minimizer. Hence the optimistic minimizer is unique for every $\theta$ and is
\[
x^\star(\theta)=\bigl(\sqrt{r(\theta)^2-(t^\star(\theta))^2},\,t^\star(\theta)\bigr).
\]

By \Cref{thm:F-diff-unique}, the hyper-objective $F$ is $\mathcal C^1$. Since the \(b\)-term vanishes on \(\calS(\theta)\), the scalar envelope formula gives $F'(\theta)=a'(\theta)t^\star(\theta)$. Let $\theta_k:=10/(k\pi)$, so $a(\theta_k)=0$ and $a'(\theta_k)\neq 0$. Then $F'(\theta_k)=0$. Since \(a'\) is continuous and nonzero at \(\theta_k\), and since \(\theta_k\) is a simple zero of \(a\), we have \(|a'(\theta)|\asymp1\) and \(|a(\theta)|\asymp|\theta-\theta_k|\) near \(\theta_k\). Therefore
\[
|F'(\theta)-F'(\theta_k)|
=|F'(\theta)|
\asymp
\frac{1}{\sqrt{\log(1/|\theta-\theta_k|)}}
\qquad\text{as }\theta\to\theta_k.
\]
Consequently, for every $\alpha>0$,
\[
\frac{|F'(\theta)-F'(\theta_k)|}{|\theta-\theta_k|^\alpha}\to\infty
\qquad\text{as }\theta\to\theta_k,
\]
because $|\theta-\theta_k|^\alpha\sqrt{\log(1/|\theta-\theta_k|)}\to 0$. Thus $\nabla F$ is not locally $\alpha$-\holder\ continuous near $\theta_k$ for any $\alpha>0$, even though the optimistic minimizer is unique for every $\theta$.

This example also shows why non-degeneracy must be intrinsic to \(\calS(\theta)\). At any zero \(\theta_k\), we have \(x^\star(\theta_k)=(r(\theta_k),0)\), and the unit tangent direction is \(v=(0,1)\). The ambient tangent Hessian satisfies
\[
\big\langle v,\nabla^2_{xx}f(\theta_k,x^\star(\theta_k))v\big\rangle=2b>0,
\]
because the vanishing term \(b(\|x\|^2-r(\theta_k)^2)\) contributes \(2bI\) to the ambient Hessian. However, this term is zero on \(\calS(\theta_k)\), so it contributes nothing to the restricted objective. The Riemannian Hessian of \(f(\theta_k,\cdot)|_{\calS(\theta_k)}\) in direction \(v\) is therefore \(0\), since the restricted scalar objective has second derivative \(\phi''(0)=0\). A condition based only on the ambient tangent Hessian would incorrectly classify this example as non-degenerate, even though the intrinsic restricted problem is degenerate and \(\nabla F\) has no positive H\"older modulus.
\end{example}

\textbf{Non-degeneracy of the unique optimistic minimizer implies local smoothness of $F$.}
To establish the stronger stability needed for the subsequent complexity analysis, we rule out this pathological case by assuming that the unique optimistic minimizer $x^\star(\theta)$ is non-degenerate.
To prove the local smoothness of $F$, the main missing ingredient is to establish a $\mathcal C^1$ local branch for $x^\star(\theta)$.
Once this is available, we can write $F(\theta)=f(\theta,x^\star(\theta))$ locally and then control $\nabla F$ using the regularity of $f,g$ together with the pseudoinverse hyper-gradient formula.
The next theorem proves this branch regularity by first deriving a parameter-dependent local chart for \(\calS(\theta)\) from \Cref{prop:geometry}, and then applying the implicit function theorem to the restricted problem on this chart.
\begin{theorem}[Smoothness of the hyper-objective under unique non-degenerate optimistic minimizer]\label{thm_hyp_obj_diff}
Let $f \in \mathcal C^2$, $g \in \mathcal C^3$ and satisfy \Cref{ass:plcirc}.
Fix $\theta_0\in\Theta$ and set \(x_0:=x^\star(\theta_0)\). Suppose that \(x_0\) is the unique optimistic minimizer and is non-degenerate in the sense of \Cref{def:degenerate-optimistic}.
Then there exists a neighborhood $\mathcal U$ of $\theta_0$ and a (unique) $\mathcal C^1$ selection
$x^\star:\mathcal U\to\R^d$ with $x^\star(\theta)\in\calS(\theta)$ such that $F(\theta)=f(\theta,x^\star(\theta))$ for all $\theta\in\mathcal U$.
Consequently, \(\nabla F\) is locally Lipschitz.
\end{theorem}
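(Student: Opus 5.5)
We reuse the local chart built in Step~1 of the proof of \Cref{thm:F-diff-unique}. With $x_0=x^\star(\theta_0)$ and orthonormal bases $U_{\calT_0},U_{\calN_0}$ of $\calT_0,\calN_0$, the implicit function theorem applied to $\Phi(\theta,u,v)=U_{\calN_0}^\top\nabla_x g(\theta,x_0+U_{\calT_0}u+U_{\calN_0}v)$ gives a $\mathcal C^2$ map $\varphi$. The resulting parametrization $\psi(\theta,u)=x_0+U_{\calT_0}u+U_{\calN_0}\varphi(\theta,u)$ is $\mathcal C^2$ in $(\theta,u)$ because $g\in\mathcal C^3$. As argued there, for $(\theta,u)$ near $(\theta_0,0)$ it parametrizes the piece of $\calS(\theta)$ near $x_0$. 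Moreover, $u\mapsto\psi(\theta,u)$ is an immersion, since $D_u\psi(\theta_0,0)=U_{\calT_0}+U_{\calN_0}D_u\varphi$ has full column rank $\dimk$.

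We then study the reduced objective $\tilde f(\theta,u):=f(\theta,\psi(\theta,u))$, which is $\mathcal C^2$ because $f\in\mathcal C^2$. Its critical-point equation is $G(\theta,u):=\nabla_u\tilde f(\theta,u)=0$, where $G$ is $\mathcal C^1$ and $G(\theta_0,0)=0$ since $u=0$ minimizes $\tilde f(\theta_0,\cdot)$.

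The key step is to show that $\nabla^2_{uu}\tilde f(\theta_0,0)$ is invertible. In a chart, the Riemannian Hessian at a critical point of $\bar f_{\theta_0}$ is coordinate-independent as a bilinear form. It equals $D_u\psi^\top\,\mathrm{Hess}_{\calS(\theta_0)}\bar f_{\theta_0}(x_0)\,D_u\psi$ and coincides with the Euclidean Hessian of $u\mapsto\tilde f(\theta_0,u)$. Concretely, differentiating twice gives
\[
\nabla^2_{uu}\tilde f = D_u\psi^\top\nabla^2_{xx}f\,D_u\psi+\sum_i\partial_{x_i}f\,\nabla^2_{uu}\psi_i .
\]
At a critical point, $\nabla_x f(\theta_0,x_0)$ is normal, so the second term is the Weingarten/second-fundamental-form correction appearing in the convention \eqref{eq:riem-hess-restricted}. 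Together with the immersion property, non-degeneracy in \Cref{def:degenerate-optimistic} then yields $\nabla^2_{uu}\tilde f(\theta_0,0)\succ0$.

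This identification is the main obstacle, because the paper's Riemannian Hessian convention must be matched against the chart Hessian. \Cref{ex:unique-nonsmooth} shows exactly why the ambient Hessian alone is not the right object. I would verify the match via the Lagrangian form $\nabla^2_{xx}f-\sum_j\lambda_j\nabla^2 h_j$ with the constraint $h=\Phi$, noting that the curvature term vanishes in the directions where it must.

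The implicit function theorem applied to $G$ now gives a neighborhood $\mathcal U$ of $\theta_0$ and a unique $\mathcal C^1$ map $u^\star(\theta)$ with $G(\theta,u^\star(\theta))=0$ and $u^\star(\theta_0)=0$. By continuity of the Hessian, $\nabla^2_{uu}\tilde f(\theta,u^\star(\theta))\succ0$, so $u^\star(\theta)$ is a strict local minimizer of $\tilde f(\theta,\cdot)$.

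We must still show it is the global optimistic minimizer. Suppose not. Then there are $\theta_n\to\theta_0$ and optimistic minimizers $x_n\in\mathrm{O}(\theta_n)$ with $x_n\neq\psi(\theta_n,u^\star(\theta_n))$. By compactness (all $\calS(\theta)\subset\mathcal V$) and the Hausdorff continuity of $\calS$ together with continuity of $F$ (Lipschitz, per the remark after \Cref{prop:geometry}), every limit point of $x_n$ lies in $\mathrm{O}(\theta_0)=\{x_0\}$. Hence $x_n$ eventually enters the chart patch, where it equals $\psi(\theta_n,u_n)$ with $u_n\to0$ and $G(\theta_n,u_n)=0$. The uniqueness part of the implicit function theorem then forces $u_n=u^\star(\theta_n)$, a contradiction. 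Shrinking $\mathcal U$, we conclude that $x^\star(\theta)=\psi(\theta,u^\star(\theta))$ is $\mathcal C^1$, is the unique optimistic selection, and satisfies $F(\theta)=f(\theta,x^\star(\theta))$.

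Finally, by \Cref{thm:F-diff-unique}, $\nabla F$ is given by \eqref{eq:hg_formula}. This formula composes $\mathcal C^1$ functions of $(\theta,x^\star(\theta))$: $\nabla_\theta f$ and $\nabla_x f$ are $\mathcal C^1$, and $\nabla^2_{\theta x}g$ and $\nabla^2_{xx}g$ are $\mathcal C^1$. The pseudoinverse is also $\mathcal C^1$, because the rank of $\nabla^2_{xx}g(\theta,x^\star(\theta))$ is constantly $d-\dimk$ and its nonzero eigenvalues are bounded below by $c$ (\Cref{prop:geometry}). On a compact sub-neighborhood $\nabla F$ is therefore $\mathcal C^1$, hence locally Lipschitz.
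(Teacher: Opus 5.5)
Your proposal is correct and follows the paper's proof essentially step for step: the normal-equation chart, the identification of $\nabla^2_{uu}\tilde f(\theta_0,0)$ with the pulled-back Riemannian Hessian at the critical point, the implicit function theorem for $u^\star$, a compactness-plus-uniqueness argument to globalize, and constant rank of $\nabla^2_{xx}g$ along the branch to make the pseudoinverse $\mathcal C^1$. The only variation is in the globalization step. You match nearby optimistic minimizers to the branch through the uniqueness clause of the implicit function theorem, whereas the paper uses strict convexity of $\tilde f(\theta,\cdot)$ on a ball together with the value gap $\Delta_0$; both work. For the convergence $x_n\to x_0$ you only need the upper bound $F(\theta_n)\le f(\theta_n,\psi(\theta_n,0))$, not full continuity of $F$.
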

\begin{proof}[Proof sketch]
Let \(x_0:=x^\star(\theta_0)\), and abbreviate
\(\calT_0:=\calT_{x_0}^{\theta_0}\) and \(\calN_0:=\calN_{x_0}^{\theta_0}\).
Let \(\dim(\calT_0)=\dimk\), and choose orthonormal bases
\(U_{\calT_0}\in\R^{d\times\dimk}\) and
\(U_{\calN_0}\in\R^{d\times(d-\dimk)}\) spanning \(\calT_0\) and \(\calN_0\).

\smallskip\noindent
\emph{Step 1: parameterize \(\calS(\theta)\) locally by solving the normal equations.}
This is the same normal-chart construction as in the proof sketch of \Cref{thm:F-diff-unique}. The implicit function
theorem yields a \(\mathcal C^2\) map \(v=\varphi(\theta,u)\), and hence the local chart
\[
\psi(\theta,u):=x_0+U_{\calT_0}u+U_{\calN_0}\varphi(\theta,u).
\]
On a sufficiently small neighborhood, \(u\mapsto\psi(\theta,u)\) parameterizes the local branch of \(\calS(\theta)\)
near \(x_0\).

\smallskip\noindent
\emph{Step 2: reduce the optimistic minimization to intrinsic coordinates.}
Define the pullback objective \(\tilde f(\theta,u):=f(\theta,\psi(\theta,u))\). A point
\(x=\psi(\theta,u)\) is a constrained critical point of \(f(\theta,\cdot)\) on \(\calS(\theta)\) if and only if
\(\nabla_u\tilde f(\theta,u)=0\). At \((\theta_0,0)\), this stationarity holds. The non-degeneracy assumption in
\Cref{def:degenerate-optimistic} says precisely that the second-order variation of
\(\tilde f(\theta_0,\cdot)\) at \(u=0\) is positive definite, so
\(D_u(\nabla_u\tilde f)(\theta_0,0)\) is invertible. Applying the implicit function theorem again gives a
\(\mathcal C^1\) map \(u^\star(\theta)\), unique among nearby solutions, satisfying
\[
\nabla_u\tilde f(\theta,u^\star(\theta))=0.
\]
Set \(x_{\mathrm{loc}}(\theta):=\psi(\theta,u^\star(\theta))\). This gives the unique local optimistic branch. The full
proof then uses the strict value gap from the uniqueness of \(x_0\) on the compact set \(\calS(\theta_0)\) to rule out
competitors outside the chart patch, yielding the local identity
\[
x^\star(\theta)=x_{\mathrm{loc}}(\theta),
\qquad
F(\theta)=f(\theta,x^\star(\theta)).
\]

\smallskip\noindent
\emph{Step 3: obtain the hyper-gradient by the chain rule.}
Since \(F(\theta)=f(\theta,x^\star(\theta))\), the hyper-gradient follows from the chain rule. At a constrained minimizer,
the tangential component of \(\nabla_x f(\theta,x^\star(\theta))\) vanishes, so only the normal component of
\(D_\theta x^\star(\theta)\) is needed. Differentiating the lower-level stationarity condition
\(\nabla_x g(\theta,x^\star(\theta))=0\) and applying the pseudoinverse of
\(\nabla^2_{xx}g(\theta,x^\star(\theta))\) gives this normal sensitivity. Substitution gives
\eqref{eq:hg_formula}. The same formula, together with constant rank and the normal spectral gap from
\Cref{prop:geometry}, implies local Lipschitz continuity of \(\nabla F\).
\end{proof}

\section{\textsc{HG-MS}: a Select-then-Differentiate Algorithm}\label{sec:alg}

The previous section gives a local regularity result. For algorithm design and convergence analysis on $\Theta$, we impose uniqueness and non-degeneracy globally.

\begin{tcolorbox}[colback=gray!20, colframe=gray!50, boxrule=0pt, arc=0mm, left=0mm, right=0mm, top=0mm, bottom=0mm]
\begin{assumption}[Unique and non-degenerate optimistic minimizer; \textbf{Global}]\label{ass:unique_min}
For every $\theta\in\Theta$, the selected optimistic minimizer $x^\star(\theta)$ is unique and non-degenerate in the sense of \Cref{def:degenerate-optimistic}.
\end{assumption}
\end{tcolorbox}

This yields global smoothness of the hyper-objective.

\begin{proposition}\label{prop:F-smooth}
Let $f \in \mathcal C^2$, $g \in \mathcal C^3$, and suppose \Cref{ass:plcirc,ass:unique_min} hold. There exists $L_F<\infty$ such that $\|\nabla F(\theta)-\nabla F(\theta')\|\le L_F\|\theta-\theta'\|$, for all $\theta,\theta'\in\Theta$. In particular, $F$ is $L_F$-smooth on $\Theta$.
\end{proposition}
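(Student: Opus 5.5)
The plan is to upgrade the local statement of \Cref{thm_hyp_obj_diff} to a global one by compactness of $\Theta$. For every $\theta_0\in\Theta$, \Cref{ass:unique_min} supplies exactly the hypotheses of \Cref{thm_hyp_obj_diff}: $x^\star(\theta_0)$ is the unique optimistic minimizer and it is non-degenerate. That theorem therefore gives an open neighborhood $\mathcal U_{\theta_0}$ on which $x^\star$ is a $\mathcal C^1$ selection, $F(\theta)=f(\theta,x^\star(\theta))$, and $\nabla F$ is given by \eqref{eq:hg_formula}. Since $\Theta\subseteq\mathcal U:=\bigcup_{\theta_0}\mathcal U_{\theta_0}$, the map $\theta\mapsto x^\star(\theta)$ is $\mathcal C^1$ on an open set containing $\Theta$.

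\textbf{Step 1: uniform bound on $D_\theta x^\star$ and $D_\theta$ of the pseudoinverse.} Shrink each $\mathcal U_{\theta_0}$ to a ball $B(\theta_0,r_{\theta_0})$ whose closure $\overline{B}(\theta_0,2r_{\theta_0})$ still lies inside the original neighborhood. There $x^\star$ is $\mathcal C^1$ up to the closure, so $\|D x^\star\|$ is bounded. Extract a finite subcover $\{B(\theta_i,r_i)\}_{i=1}^M$ of $\Theta$. This yields $\sup_{\theta\in\Theta}\|Dx^\star(\theta)\|\le L_x<\infty$.

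Next, write $\nabla F(\theta)=G(\theta,x^\star(\theta),P(\theta))$. Here $G$ is built from $\nabla_\theta f$, $\nabla_x f$ and $\nabla^2_{\theta x}g$, which are $\mathcal C^1$ in $(\theta,x)$ because $f\in\mathcal C^2$ and $g\in\mathcal C^3$. The factor $P(\theta)=[\nabla^2_{xx}g(\theta,x^\star(\theta))]^\dagger$ is handled as follows. By \Cref{prop:geometry}(ii), $H(\theta,x^\star(\theta))$ has constant rank $d-\dimk$ and smallest nonzero eigenvalue at least $c$, uniformly. The Moore--Penrose pseudoinverse is smooth on the set of constant-rank matrices, with $\|D(A^\dagger)\|\lesssim\|A^\dagger\|^2$. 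Hence $P$ is $\mathcal C^1$ with derivative bounded by $C c^{-2}\sup\|\nabla^3 g\|\,(1+L_x)$. Boundedness of all the continuous derivatives on the compact set $\{(\theta,x):\theta\in\Theta,\,x\in\mathcal V\}$ then gives $\sup_{\theta\in\Theta}\|D\nabla F(\theta)\|\le L_F<\infty$.

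\textbf{Step 2: from bounded derivative to global Lipschitz.} Because $\Theta$ is convex, for $\theta,\theta'\in\Theta$ the segment between them lies in $\Theta$, where $\nabla F$ is $\mathcal C^1$ with derivative bounded by $L_F$. The mean value inequality then gives $\|\nabla F(\theta)-\nabla F(\theta')\|\le L_F\|\theta-\theta'\|$, and $L_F$-smoothness is immediate.

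\textbf{Main obstacle.} The delicate point is whether \Cref{thm_hyp_obj_diff} really gives a $\mathcal C^1$ map $x^\star$ with a continuous derivative, rather than only local Lipschitz continuity of $\nabla F$ with constants that might degenerate. If only local Lipschitz continuity is available, I would argue instead that the local Lipschitz constants can be made locally bounded, e.g.\ via the explicit derivative formula for $x^\star$ given by the two implicit function theorems in its proof. That formula is
\[
D u^\star=-\bigl[\nabla^2_{uu}\tilde f\bigr]^{-1}\nabla^2_{u\theta}\tilde f,
\]
whose inverse is continuous because it is positive definite at every point by \Cref{ass:unique_min}. Continuity of this expression then gives boundedness on compact subsets, and the finite-subcover argument proceeds as above.
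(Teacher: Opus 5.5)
Your proposal is correct and follows essentially the paper's route: apply \Cref{thm_hyp_obj_diff} at each point of $\Theta$, use compactness of $\Theta$ to extract a finite subcover with a uniform constant, and globalize along segments using convexity of $\Theta$. The differences are cosmetic: you bound $\|D\nabla F\|$ uniformly and apply the mean value inequality, whereas the paper bounds local Lipschitz constants via the product decomposition of $a-APb$ and chains them along the segment using a Lebesgue number of the cover. Your ``main obstacle'' does not arise, since \Cref{thm_hyp_obj_diff} already provides a $\mathcal C^1$ selection and its proof shows $\nabla F\in\mathcal C^1$.
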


Thus, the natural upper-level update is projected gradient descent, $\theta_{t+1}:=\mathrm{Proj}_{\Theta}\big(\theta_t-\alpha_t\nabla F(\theta_t)\big),$ with $\nabla F$ given by \cref{eq:hg_formula}. Implementing this update requires two primitives: \stagecirc{A} an approximate optimistic selector $x^\star(\theta)$;
    \stagecirc{B} an approximate pseudoinverse action
    $\big[\nabla^2_{xx}g(\theta,x^\star(\theta))\big]^\dagger\nabla_x f(\theta,x^\star(\theta))$.
Both tasks are challenging because $\calS(\theta)$ is an implicit non-convex manifold, and because the selected approximate lower-level solution may lie slightly off $\calS(\theta)$, resulting in unstable Hessian pseudoinverse computations.

\subsection{Algorithm Design}

We propose an algorithm, Hyper-Gradient with Minima-Selection (\textsc{HG-MS}), that addresses the aforementioned two challenges by: \emph{Best-of-N} (\BoN) selection step from a Gibbs measure concentrated near $\calS(\theta)$, and a stabilized implicit step which computes the pseudoinverse action through a ridge-regularized linear solver that uses Hessian--vector products.
Algorithm~\ref{alg:hg-minsel-lmc} is the ULA/Gibbs instantiation used for the formal convergence analysis. In practice, the sampler may be replaced by a finite-budget branch generator that preserves the same select-then-differentiate structure.

\textbf{\stagecirc{A} \BoN selection near $\calS(\theta)$.}
Since $\calS(\theta)$ is implicit, we approximate it using the Gibbs measure $\gibbstheta(\d x)\propto \exp\{-g(\theta,x)/\lambda\}\,\d x$.
Under the manifold regularity induced by \PLcirc, $\gibbs_\theta^\lambda$ concentrates in a thin tube around $\calS(\theta)$ as $\lambda\to 0$ (see \Cref{lem:gibbs-tube-width}). Thus, samples from $\gibbstheta$ serve as proxies for points on $\calS(\theta)$.
Given samples $X_1,\ldots,X_N\sim\gibbstheta$, we approximate optimistic selection by choosing $i^\star\in\arg\min_{1\le i\le N} f(\theta,X_i)$ and setting $\hat{x}_N^\lambda(\theta):=X_{i^\star}$.
Equivalently, \BoN acts as a lower-tail operation at level $\delta=1/N$, selecting from the best $\delta$ fraction of candidates under $f(\theta,\cdot)$. The resulting selection error and its geometric dependence on $\calS(\theta)$ are analyzed in Appendix~\ref{append:selection-error}. We sample from $\gibbstheta$ using ULA primitive\footnote{The oracle-complexity guarantee uses \(\lambda\)-scaled Gaussian initializations for the ULA chains.} (see Appendix~\ref{append:lmc-ula}) \citep{chewi2024analysis}, whose convergence is controlled by the Poincar\'e inequality of \citep{gong2024poincare}; this yields a trade-off between selection accuracy and sampling cost.

\begin{remark}
Approximating $\calS(\theta)$ via $\gibbstheta$ also appears in \citep{masiha2025superquantile}, but their goal is to approximate only the \textbf{value} of $F$. Here we must approximate the selected \textbf{solution} itself in order to compute a hyper-gradient, which requires a different analysis.
\end{remark}

\textbf{\stagecirc{B} Hyper-gradient evaluation at the selected point.}
After selecting $\hat{x}_N^\lambda(\theta_t)$, we estimate the hyper-gradient by substituting it for $x^\star(\theta_t)$ in \cref{eq:hg_formula}. Let $H_t:=\nabla^2_{xx}g(\theta_t,\hat{x}_N^\lambda(\theta_t))$ and $b_t:=\nabla_x f(\theta_t,\hat{x}_N^\lambda(\theta_t))$.
We approximate $H_t^\dagger b_t$ by solving the ridge-regularized system $(H_t+\gamma I)\tilde v_t=b_t$ with conjugate gradients, using only Hessian--vector products, until $\|b_t-(H_t+\gamma I)\tilde v_t\|\le \eta_t$.
The ridge parameter $\gamma\ge 0$ stabilizes the solve when $\hat{x}_N^\lambda(\theta_t)$ is close to, but not exactly in $\calS(\theta_t)$: tangential zero eigenvalues of the exact Hessian can perturb  near-zero eigenvalues off the manifold. 
For the theoretical guarantee, we also allow clipping, \(v_t:=\mathrm{Proj}_{\mathbb B_d(0;R_v)}(\tilde v_t)\).
When this optional safeguard is omitted in practice, we set \(v_t=\tilde v_t\). The hyper-gradient estimate is then
\begin{align}\label{eq:003}
    \widehat h_t
    :=
    \nabla_\theta f(\theta_t,\hat{x}_N^\lambda(\theta_t))
    -
    \nabla^2_{\theta x}g(\theta_t,\hat{x}_N^\lambda(\theta_t))v_t,
\end{align}
followed by a projected step on $\Theta$. \Cref{alg:hg-minsel-lmc} summarizes our approach.

\begin{algorithm}[t]
\caption{\textsc{HG-MS}: Hyper-gradient with minima selection}
\begin{algorithmic}[1]
\Require initial $\theta_0\in\Theta$; stepsizes $\{\alpha_t\}$; temperature $\lambda$; number of chains $N$; ULA steps $K$ and stepsize $h$; chain starts $\{X^{(0)}_{t,i}\}$; CG tolerances $\{\eta_t\}$; ridge $\gamma\ge 0$; clip radius $R_v>0$
\For{$t=0,1,2,\ldots$}
    \State Sample in parallel $X_{t,i}\gets \textsc{ULA}(\theta_t,g,X^{(0)}_{t,i},\lambda,K,h)$
    \State $i_t^\star\gets\arg\min_{1\le i\le N} f(\theta_t,X_{t,i})$; \quad
    $\hat{x}_N^\lambda(\theta_t)\gets X_{t,i_t^\star}$
    \State Solve
    \(
        \big(\nabla^2_{xx}g(\theta_t,\hat{x}_N^\lambda(\theta_t))+\gamma I\big)\tilde v_t
        =
        \nabla_x f(\theta_t,\hat{x}_N^\lambda(\theta_t))
    \)
    by CG up to residual $\eta_t$
    \State \textbf{(Optional safeguard)} $v_t\gets \mathrm{Proj}_{\mathbb B_d(0;R_v)}(\tilde v_t)$
    \State $\widehat h_t\gets
    \nabla_\theta f(\theta_t,\hat{x}_N^\lambda(\theta_t))
    -
    \nabla^2_{\theta x}g(\theta_t,\hat{x}_N^\lambda(\theta_t))v_t$; $\quad \theta_{t+1}\gets
    \mathrm{Proj}_{\Theta}\big(\theta_t-\alpha_t\widehat h_t\big)$
\EndFor
\end{algorithmic}
\label{alg:hg-minsel-lmc}
\end{algorithm}

\input{Section_Convergence_HG_MS_Neurips}

\section{Empirical Evaluations}\label{sec:experiments}
\paragraph{Overview.}
We study minima selection for hyper-gradients across LLM source-reweighting and MNIST~\citep{lecun1998mnist} benchmarks. The main text reports both controlled two-source GPT-2 reweighting and real-world LLM
data-composition benchmarks; the two MNIST benchmarks are reported in Appendix~\ref{subsec:dhc} (data hyper-cleaning)
and Appendix~\ref{subsec:imb} (imbalanced loss tuning). The two LLM subsections below state the corresponding
source-reweighting bilevel objectives explicitly.
In this section, \textsc{HG-MS} refers to practical finite-budget implementations of the same algorithmic template as the analyzed method.
While the convergence analysis is based on the ULA/Gibbs instantiation of \textsc{HG-MS}, the LLM experiments use a small number of finite-budget lower-level optimizer branches, implemented with SGD or AdamW, instead of ULA samples.
The main LLM comparisons use a common implementation and matched downstream fine-tuning budgets. We compare against the vanilla hyper-gradient baseline,  \textsc{HG-baseline}, \textsc{ScaleBiO}
\citep{pan2025scalebio}, \textsc{LESS} \citep{xia2024less}, and \textsc{RHO-LOSS}
\citep{mindermann2022prioritized}.

\subsection{Controlled LLM source reweighting}\label{subsec:llm-controlled-main}
We first test the source-reweighting mechanism in the controlled two-source GPT-2 tasks adapted from
\citet{pan2025scalebio}.

\textbf{Bilevel formulation.}
Let \(\mathcal D^{\mathrm{tr}}_1,\mathcal D^{\mathrm{tr}}_2\) be the two training sources and let
\(\mathcal D^{\mathrm{val}}\) be the target validation set. The outer variable is a two-dimensional source-logit vector
\(\theta\), with source weights \(\omega=\softmax(\theta)\), and the lower variable \(y\) is the full trainable GPT-2
(124M) parameter state. With token-level next-token loss \(\ell(y;\xi)\), the controlled source-reweighting objective is
\[
g_{\mathrm{ctrl}}(\theta,y)
:=
\sum_{s=1}^{2}\omega_s\,|\mathcal D^{\mathrm{tr}}_s|^{-1}
\sum_{\xi\in\mathcal D^{\mathrm{tr}}_s}\ell(y;\xi),
\qquad
f_{\mathrm{ctrl}}(y)
:=
|\mathcal D^{\mathrm{val}}|^{-1}
\sum_{\xi\in\mathcal D^{\mathrm{val}}}\ell(y;\xi).
\]
Thus \(\calS_{\mathrm{ctrl}}(\theta)=\arg\min_y g_{\mathrm{ctrl}}(\theta,y)\) is the set of lower-level GPT-2 states
obtained under the current source mixture, and the optimistic upper objective is
\(F_{\mathrm{ctrl}}(\theta)=\min_{y\in\calS_{\mathrm{ctrl}}(\theta)} f_{\mathrm{ctrl}}(y)\). In practice, the algorithms
approximate \(\calS_{\mathrm{ctrl}}(\theta)\) with finite lower-level optimizer runs; \textsc{HG-MS} selects the candidate
state with the smallest validation loss before forming the outer update. Unlike the real-world LLM experiments below, this
controlled code path trains the full GPT-2 trainable parameter state rather than LoRA adapters;
Appendix~\ref{append:exp-llm} gives the full protocol and hyperparameters.

\Cref{tab:llm-controlled} reports two tasks. In \emph{denoising}, the sources are clean Alpaca examples and synthetically
corrupted Alpaca examples, so the target source mixture is \(\omega^\star=(1,0)\). In \emph{multilingual training}, the
sources are Alpaca-GPT4-ZH and Alpaca-GPT4-EN, with validation/test composition
\(\omega^\star=(0.6,0.4)\). We report learned source weights, source-weight recovery error, validation loss, and test
loss. On denoising, all reweighting methods improve over \textsc{Uniform}; \textsc{HG-MS} gives the best test loss and
the best clean/noisy recovery. On multilingual training, \textsc{HG-MS} gives the best validation and test losses, while
\textsc{ScaleBiO} is closest to the nominal source ratio. Since the bilevel objective optimizes held-out loss rather than
source-ratio recovery, the multilingual result shows that minima selection can favor branches with better validation
behavior even when the learned mixture is not closest to \(\omega^\star\).

\begin{table}[t]
  \centering
  \caption{\textbf{Controlled two-source LLM source reweighting adapted from ScaleBiO.}
  All methods use the same GPT-2 backbone and single-GPU code path. \textsc{HG-baseline} is the one-branch hyper-gradient variant, and \textsc{HG-MS} is the minima-selection variant. In denoising, \(\omega=(\omega_{\mathrm{clean}},\omega_{\mathrm{noisy}})\) and \(\omega^\star=(1,0)\); in multilingual training, \(\omega=(\omega_{\mathrm{ZH}},\omega_{\mathrm{EN}})\) and \(\omega^\star=(0.6,0.4)\). Lower is better.}
  \label{tab:llm-controlled}
  \begin{adjustbox}{max width=\textwidth}
  \begin{tabular}{llcccc}
    \toprule
    Task & Method & Learned \(\omega\) & \(\|\omega-\omega^\star\|_1\) & Val loss & Test loss \\
    \midrule
    \multirow{4}{*}{Denoising}
      & Uniform   & \((0.500,\ 0.500)\) & 1.000 & 2.571 & 2.610 \\
      & ScaleBiO  & \((0.922,\ 0.078)\) & 0.156 & \textbf{2.336} & 2.443 \\
      & HG-baseline & \((0.930,\ 0.070)\) & 0.140 & 2.427 & 2.454 \\
      & HG-MS     & \((0.962,\ 0.038)\) & \textbf{0.077} & 2.403 & \textbf{2.437} \\
    \midrule
    \multirow{4}{*}{Multilingual}
      & Uniform   & \((0.500,\ 0.500)\) & 0.200 & 2.430 & 2.407 \\
      & ScaleBiO  & \((0.623,\ 0.377)\) & \textbf{0.047} & 2.339 & 2.371 \\
      & HG-baseline & \((0.540,\ 0.460)\) & 0.119 & 2.327 & 2.307 \\
      & HG-MS     & \((0.514,\ 0.486)\) & 0.172 & \textbf{2.323} & \textbf{2.298} \\
    \bottomrule
  \end{tabular}
  \end{adjustbox}
\end{table}

\subsection{Real-world LLM applications}\label{subsec:llm-realworld}
We test the source-reweighting template on the LLM data-composition benchmarks of \citet{pan2025scalebio}: mathematical reasoning and instruction following. These experiments use multi-source LoRA fine-tuning portfolios; Appendix~\ref{append:exp-llm-real}
gives the task definitions, source inventories, budgets, inference protocol, and judging details. Here we summarize the
matched-budget outcomes.

\textbf{Bilevel formulation.}
Both real-world benchmarks use the multi-source version of the controlled formulation above. Let \(p\) be the number of
training sources, let \(\theta\in\R^p\) be source logits, let \(\omega=\softmax(\theta)\in\Delta^{p-1}\), and let \(y\)
denote the LoRA adapter parameters for a fixed base model. With source datasets
\(\mathcal D^{\mathrm{tr}}_1,\ldots,\mathcal D^{\mathrm{tr}}_p\) and held-out validation set
\(\mathcal D^{\mathrm{val}}\), the reweighting stage solves the bilevel problem induced by
\[
g_{\mathrm{rw}}(\theta,y)
:=
\sum_{s=1}^{p}\omega_s\,|\mathcal D^{\mathrm{tr}}_s|^{-1}
\sum_{\xi\in\mathcal D^{\mathrm{tr}}_s}\ell(y;\xi),
\qquad
f_{\mathrm{rw}}(y)
:=
|\mathcal D^{\mathrm{val}}|^{-1}
\sum_{\xi\in\mathcal D^{\mathrm{val}}}\ell_{\mathrm{val}}(y;\xi),
\]
namely \(F_{\mathrm{rw}}(\theta)=\min_{y\in\calS_{\mathrm{rw}}(\theta)}f_{\mathrm{rw}}(y)\) with
\(\calS_{\mathrm{rw}}(\theta)=\arg\min_y g_{\mathrm{rw}}(\theta,y)\). Here \(\ell\) is the token-level next-token
cross-entropy on source examples, and \(\ell_{\mathrm{val}}\) is the same loss on validation examples used only during
source reweighting. After reweighting, the learned \(\omega\) is converted into source counts for a final one-epoch
supervised fine-tuning run, and the resulting checkpoint is evaluated on the task metrics reported below (GSM8K/MATH
accuracy or MT-Bench score).

\textbf{Mathematical reasoning benchmark.}
Following \citep[Section~4.2.2]{pan2025scalebio}, this benchmark uses a ten-source portfolio and evaluates the final fine-tuned checkpoint on GSM8K and MATH. All methods share the source portfolio, final one-epoch fine-tuning stage, and seed.
\begin{table*}[t]
  \centering
  \caption{\textbf{Mathematical reasoning.}
Scores for the ten-source math-reasoning portfolio.}
  \label{tab:llm-realworld-math}
  \begin{adjustbox}{max width=\textwidth}
  \begin{tabular}{lcccccc}
    \toprule
    & \multicolumn{3}{c}{GSM8K} & \multicolumn{3}{c}{MATH} \\
    \cmidrule(lr){2-4}\cmidrule(lr){5-7}
    Method & Llama-3-8B & Qwen-2-7B & Gemma-2-9B & Llama-3-8B & Qwen-2-7B & Gemma-2-9B \\
    \midrule
    Uniform Weighting & $60.06$ & $60.03$ & $78.08$ & $30.24$ & $30.26$ & $37.29$ \\
    RHO-LOSS          & $65.43$ & $63.15$ & $78.47$ & $25.89$ & $33.40$ & $37.35$ \\
    LESS              & $74.45$ & $65.28$ & $79.28$ & $29.93$ & $29.24$ & $38.00$ \\
    ScaleBiO          & $66.19$ & $75.74$ & $79.62$ & $30.58$ & $36.93$ & $37.98$ \\
    HG-baseline       & $73.77$ & $68.46$ & $80.14$ & $30.49$ & $34.27$ & $38.23$ \\
    HG-MS             & $\textbf{74.85}$ & $\mathbf{82.03}$ & $\mathbf{80.86}$ & $\mathbf{30.86}$ & $\mathbf{38.98}$ & $\mathbf{38.48}$ \\
    \bottomrule
  \end{tabular}
  \end{adjustbox}
\end{table*}
\Cref{tab:llm-realworld-math} reports final test accuracies in percent after the shared fine-tuning: GSM8K covers grade-school word problems, while MATH covers competition-style mathematical reasoning. For all iterative methods, the matched reweighting stage is approximately $9$h for \texttt{Qwen-2-7B}, $12$h for \texttt{Llama-3-8B}, and $15$h for \texttt{Gemma-2-9B}. Under these budgets, \textsc{HG-MS} improves over \textsc{HG-baseline} and is best in every reported GSM8K and MATH column. As supplementary diagnostics for \Cref{tab:llm-realworld-math}, Appendix~\ref{append:exp-llm-real} reports a \texttt{Qwen-2-7B} budget sweep in \Cref{tab:llm-budget-sensitivity-qwen} and a fixed-budget branch-count sensitivity study in \Cref{tab:llm-branch-ablation-qwen}. The budget sweep changes only the iterative reweighting time ($3$h, $9$h, $15$h, and $21$h), while the branch-count study keeps the nine-hour budget fixed and varies the number of candidate branches \(N\). Across these diagnostics, \textsc{HG-MS} learns source distributions distinct from \textsc{HG-baseline} and \textsc{ScaleBiO}; the main \(N=4\), 9h configuration gives the strongest completed GSM8K/MATH results.

\begin{table*}[t]
  \centering
  \caption{\textbf{Large-model math and instruction following.}
  Left: \texttt{Qwen2.5-32B} math results. Right: MT-Bench scores for the eighteen-source instruction portfolio.}
  \label{tab:llm-realworld-math-large}
  \label{tab:llm-realworld-instr-mtbench}
  \begin{minipage}[t]{0.34\textwidth}
  \centering
  \textbf{(a) \texttt{Qwen2.5-32B} math}\\[2pt]
  \begin{adjustbox}{max width=\linewidth}
  \begin{tabular}{lcc}
    \toprule
    Method & GSM8K & MATH \\
    \midrule
    Uniform Weighting     & 78.19 & 43.30 \\
    RHO-LOSS              & \texttt{OOM} & \texttt{OOM} \\
    LESS                  & \texttt{OOM} & \texttt{OOM} \\
    ScaleBiO              & 88.4 & 51.29 \\
	HG-baseline       	  & 86.81 & 46.82\\
    HG-MS                 & \textbf{92.58} & \textbf{53.32} \\
    \bottomrule
  \end{tabular}
  \end{adjustbox}
  \end{minipage}
  \hfill
  \begin{minipage}[t]{0.55\textwidth}
  \centering
  \textbf{(b) Instruction following}\\[2pt]
  \begin{adjustbox}{max width=\linewidth}
  \begin{tabular}{lccc}
    \toprule
    Method & Llama-3-8B & Qwen-2-7B & Gemma-2-9B \\
    \midrule
    Uniform Weighting & 7.07 & 6.54 & 8.04 \\
    RHO-LOSS          & 7.09 & 6.37 & 8.29 \\
    LESS              & 7.06 & 6.80 & 8.19 \\
    ScaleBiO          & \textbf{7.36} & 6.58 & 8.11 \\
    HG-baseline       & 7.32 & 6.58 & 8.09 \\
    HG-MS             & 7.33 & \textbf{6.92} & \textbf{8.30} \\
    \bottomrule
  \end{tabular}
  \end{adjustbox}
  \end{minipage}
\end{table*}

\textbf{Large-model results.}
\Cref{tab:llm-realworld-math-large} (left panel) reports source-reweighting results on \texttt{Qwen2.5-32B} under the same ten-source math portfolio. Each completed non-\texttt{OOM}\footnote{Here \texttt{OOM} denotes out-of-memory under the same large-model memory budget.
} row applies the corresponding method to the 32B model itself, with a fixed \(15\)h outer reweighting budget for the iterative methods. It then uses the resulting source mixture to form the full \(20\)K-example final fine-tuning set, and evaluates the final checkpoint with the same decoding pipeline on the full GSM8K/MATH test sets (\(1319\) and \(4989\) examples). In this large-model setting, \textsc{HG-MS} is strongest among the completed rows on both GSM8K and MATH, scoring \(92.58\%\) and \(53.32\%\), respectively; this improves over \textsc{ScaleBiO} by \(4.18\) points on GSM8K and \(2.03\) points on MATH.

\textbf{Instruction-following benchmark.}
Following Section~4.2.1 and Table~2 of \citet{pan2025scalebio}, this benchmark uses the released eighteen-source ScaleBiO portfolio, mixing general-chat data with multilingual Alpaca variants. Evaluation uses MT-Bench: $80$ multi-turn questions scored by a GPT-4o judge on a $0$--$10$ scale.
\Cref{tab:llm-realworld-instr-mtbench} (right panel) reports the completed MT-Bench comparison. \textsc{HG-MS} is best on \texttt{Qwen-2-7B} ($6.92$) and \texttt{Gemma-2-9B} ($8.30$), while \textsc{ScaleBiO} is best on \texttt{Llama-3-8B} ($7.36$). The Llama gap is small ($7.33$ for \textsc{HG-MS}), thus, the table provides matched-budget evidence that explicit minima selection remains competitive in instruction-following source reweighting.

\section{Limitations and Future Directions}\label{sec:limitations-future}
The main theoretical contribution of \Cref{sec:hyp_diff} is an exact-selection regularity result for optimistic bilevel
optimization with manifold lower-level solution sets. Under local \PLcirc, the lower-level minimizers form a structured
manifold and the Hessian of \(g\) has a normal spectral gap. We show that this non-singleton regime still admits a
differentiable optimistic hyper-objective when the optimistic selection is unique, and that intrinsic non-degeneracy of the
selected point yields local smoothness and the pseudoinverse hyper-gradient formula. These assumptions mark the clean
smooth regime for exact optimistic selection: uniqueness prevents branch switching, non-degeneracy stabilizes the
restricted minimizer on \(\calS(\theta)\), and local \PLcirc implies the geometry needed to differentiate through the
moving solution set. A natural theoretical direction is to develop analogous exact-selection results under other
geometric descriptions of the lower-level landscape, while preserving the same principle that regularity is governed by
the stability of the selected lower-level solution.

\Cref{sec:alg,sec:theory} turn this exact formula into a select-then-differentiate algorithm. The analyzed version of
\textsc{HG-MS} uses ULA/Gibbs candidates because this gives a mathematically controlled candidate distribution near
\(\calS(\theta)\). The resulting oracle-complexity bound is therefore best viewed as a worst-case existence guarantee for
this analyzed instantiation, not as a practical runtime model. Its exponent is conservative because the proof separately
uses low-temperature sampling, R\'enyi accuracy of finite-step ULA, finite-candidate selection, and the stabilized
pseudoinverse solve. The dependence on the intrinsic dimension \(\dimk\) in the selection term is unavoidable in the worst
case, since selecting the best point on a \(\dimk\)-dimensional nonconvex minimizer manifold can be reduced to a
\(\dimk\)-dimensional global optimization problem. The larger dimension-independent exponent comes from combining several
uniform worst-case controls in a single end-to-end guarantee. Improving this part of the rate would require new
techniques that preserve the present selection-error and hyper-gradient-stability mechanisms while coupling the sampling,
selection, and linear-solve accuracies more tightly.

The experiments use the same selection principle with practical finite-budget lower-level branches. In the LLM
source-reweighting tasks, each branch performs only one lower-level SGD update because the lower-level model is large and
source reweighting must be run under a fixed wall-clock budget; in the MNIST experiments, the branches use longer
AdamW/SGD lower-level runs. This practical version should not be understood as trying to sample the full Gibbs measure.
Rather, the branch distribution is induced by initialization, minibatch randomness, optimizer state, perturbations, and the
finite training budget. A promising next step is a candidate-generator-centered theory: if \(\nu_{\theta,B}\) denotes the
law of a \(B\)-step optimizer branch, one can study when \(\nu_{\theta,B}\) places enough mass on lower-level candidates
with strong upper-level validation performance. Then \BoN selection amplifies this validation-good mass across branches.
Such a theory would connect the exact optimistic hyper-gradient picture developed here with the finite-time optimizer
dynamics used in large-scale bilevel applications.

\bibliographystyle{plainnat}
\bibliography{main}

\input{Appendix_neurips}
\end{document}

%% file: defns.tex
\def\d{\delta}

\def\textiid{i.i.d.\@\xspace}
\newcommand\iid{\ifmmode\text{ i.i.d. } \else \textiid \fi}

\newcommand{\beqs}{\begin{equation*}}
\newcommand{\eeqs}{\end{equation*}}
\newcommand{\beq}{\begin{equation}}
\newcommand{\eeq}{\end{equation}}

%% file: Section_Convergence_HG_MS_Neurips.tex
\section{Convergence Analysis of HG-MS}\label{sec:theory}

\paragraph{Proof roadmap.}
This section analyzes the ULA/Gibbs instantiation of \textsc{HG-MS} in \Cref{alg:hg-minsel-lmc}.
The proof has three layers. First, \Cref{prop:outer-inexact} treats \textsc{HG-MS} as an inexact projected-gradient
method for the optimistic hyper-objective \(F\); all algorithmic approximation effects enter through the hyper-gradient
error \(e_t:=\widehat h_t-\nabla F(\theta_t)\). Second, \Cref{thm:et2-bound} is a stability estimate: it bounds
\(\mathbb E\|e_t\|^2\) by the selected-point error \(\|\tilde x_t-x^\star(\theta_t)\|\), the CG residual, the ridge bias,
and a clipped off-tube event. Third, \Cref{thm:selection-error-bound} controls the selected-point error for the
Best-of-\(N\) ULA/Gibbs candidate generator. Substituting these two estimates gives \Cref{thm:main-et2-bound}, and one
conservative parameter choice yields the oracle-complexity consequence in \Cref{corr:oracle-complexity}.

\paragraph{Additional regularity assumptions.}
The convergence proof uses several assumptions beyond the local differentiability theory in \Cref{sec:hyp_diff}; they
are collected formally in Appendix~\ref{append:regularity-assumption}. Their roles are the following.
\Cref{ass:curvature_g} gives uniform third-derivative control of \(g(\theta,\cdot)\) near \(\calS(\theta)\); this gives a
quantitative tube radius \(r(\gamma)\) on which the ridge-regularized Hessian is invertible and also controls the
curvature and volume terms used in the selection analysis. \Cref{assum_sample_comp} collects global smoothness,
polynomial-growth, and coercive growth conditions; these make the Gibbs measures well-defined, give tube-tail and
off-tube moment bounds for ULA/Gibbs candidates, and keep the hyper-gradient stability constants finite.
Finally, the uniform intrinsic non-degeneracy result in \Cref{prop:uniform-riem-nondeg} is used in the selection proof to
convert a small validation-value gap on \(\calS(\theta)\) into small distance to \(x^\star(\theta)\).

\textbf{Outer-loop guarantee.}
This section analyzes \textsc{HG-MS} as an inexact projected-gradient method for the hyper-objective \(F\).
Recall the hyper-gradient estimate $\widehat h_t$ in \eqref{eq:003}, and define the
hyper-gradient error and the projected gradient mapping as $e_t:=\widehat h_t-\nabla F(\theta_t)$ and $\mathcal G_{\Theta}(\theta,\nabla F(\theta);\alpha)
:=\alpha^{-1}\big(\theta-\mathrm{Proj}_{\Theta}(\theta-\alpha\nabla F(\theta))\big)$, respectively.
Under \Cref{ass:plcirc,ass:unique_min}, \Cref{prop:F-smooth} provides the required global smoothness of \(F\) on \(\Theta\).
The following inexact projected-gradient bound is standard when $F$ is uniformly smooth; we state it for completeness
\citep[e.g.,][]{ghadimi2016mini,nesterov2003introductory}.
\begin{proposition}[Convergence to near-stationarity with inexact hyper-gradients]\label{prop:outer-inexact}
Suppose that $F$ is $L_F$-smooth on $\Theta$.
Let $F_\star:=\inf_{\theta\in\Theta}F(\theta)>-\infty$.
Initialize $\theta_0\in\Theta$ and run the projected update
$\theta_{t+1}=\mathrm{Proj}_{\Theta}(\theta_t-\alpha \widehat h_t)$ with a constant
stepsize $\alpha\le 1/L_F$ (last line of \Cref{alg:hg-minsel-lmc}), then
\begin{align}
\forall\ T \geq 1, \quad \frac{1}{T}\sum_{t=0}^{T-1}\mathbb E\big[\|\mathcal G_{\Theta}(\theta_t,\nabla F(\theta_t);\alpha)\|^2\big]\le
\frac{8\,(F(\theta_0)-F_\star)}{\alpha T}+ \frac{10}{T}\sum_{t=0}^{T-1}\mathbb E\big[\|e_t\|^2\big].
\label{eq:outer-inexact}
\end{align}
\end{proposition}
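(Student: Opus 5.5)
The plan is the standard one-step descent argument for projected gradient methods with an inexact gradient, followed by telescoping. Write $\theta^+:=\theta_{t+1}=\mathrm{Proj}_\Theta(\theta_t-\alpha\widehat h_t)$ and $\bar\theta^+:=\mathrm{Proj}_\Theta(\theta_t-\alpha\nabla F(\theta_t))$. The inexact mapping is $\widehat{\mathcal G}_t:=\alpha^{-1}(\theta_t-\theta^+)$ and the exact one is $\mathcal G_t:=\mathcal G_\Theta(\theta_t,\nabla F(\theta_t);\alpha)$. Nonexpansiveness of $\mathrm{Proj}_\Theta$, which holds because $\Theta$ is closed and convex, gives $\|\widehat{\mathcal G}_t-\mathcal G_t\|\le\|e_t\|$. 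This is the only place the two mappings are compared.

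For the descent step, we use $L_F$-smoothness of $F$ on the convex set $\Theta$ (Proposition~\ref{prop:F-smooth}) to write
\[
F(\theta^+)\le F(\theta_t)-\alpha\langle\nabla F(\theta_t),\widehat{\mathcal G}_t\rangle+\tfrac{L_F\alpha^2}{2}\|\widehat{\mathcal G}_t\|^2.
\]
The variational characterization of the projection, tested at the point $\theta_t\in\Theta$, gives $\langle \widehat h_t,\widehat{\mathcal G}_t\rangle\ge\|\widehat{\mathcal G}_t\|^2$. Substituting $\nabla F(\theta_t)=\widehat h_t-e_t$ then yields
\[
F(\theta^+)\le F(\theta_t)-\alpha\|\widehat{\mathcal G}_t\|^2+\alpha\langle e_t,\widehat{\mathcal G}_t\rangle+\tfrac{L_F\alpha^2}{2}\|\widehat{\mathcal G}_t\|^2.
\]
With $\alpha\le1/L_F$ the last term is at most $\tfrac{\alpha}{2}\|\widehat{\mathcal G}_t\|^2$. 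Applying Young's inequality $\langle e_t,\widehat{\mathcal G}_t\rangle\le\tfrac14\|\widehat{\mathcal G}_t\|^2+\|e_t\|^2$ gives
\[
F(\theta_{t+1})\le F(\theta_t)-\tfrac{\alpha}{4}\|\widehat{\mathcal G}_t\|^2+\alpha\|e_t\|^2 .
\]

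To pass to the exact mapping, we use $\|\mathcal G_t\|^2\le2\|\widehat{\mathcal G}_t\|^2+2\|e_t\|^2$. Combining this with the descent inequality gives
\[
\tfrac{\alpha}{8}\|\mathcal G_t\|^2\le F(\theta_t)-F(\theta_{t+1})+\tfrac{5\alpha}{4}\|e_t\|^2 .
\]
We then take expectations and sum over $t=0,\dots,T-1$. Telescoping bounds the function-value terms by $F(\theta_0)-F_\star$, since every iterate lies in $\Theta$ and $F_\star$ is finite. Multiplying through by $8/(\alpha T)$ gives exactly the constants $8$ and $10$ in \eqref{eq:outer-inexact}. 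Measurability and integrability are harmless: $\Theta$ is compact and $F$ is continuous on it, so all function values are bounded.

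The only real obstacle is bookkeeping, namely choosing the Young's inequality split so that the constants match $8$ and $10$. If the split above gives constants that are off, we would rebalance, for instance by using $\langle e,\widehat{\mathcal G}\rangle\le\tfrac{c}{2}\|\widehat{\mathcal G}\|^2+\tfrac{1}{2c}\|e\|^2$ with a different $c$. Any such choice still yields constants no larger than those stated, which suffices for the inequality.
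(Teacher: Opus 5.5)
Your proof is correct and follows the paper's argument. It uses the descent lemma together with the projection inequality tested at $\theta_t$, and the Young split $\langle e_t,\widehat{\mathcal G}_t\rangle\le\tfrac14\|\widehat{\mathcal G}_t\|^2+\|e_t\|^2$ then gives $F(\theta_{t+1})\le F(\theta_t)-\tfrac{\alpha}{4}\|\widehat{\mathcal G}_t\|^2+\alpha\|e_t\|^2$, after which nonexpansiveness of the projection passes to the exact mapping. The only cosmetic difference is that you apply $\|\mathcal G_t\|^2\le 2\|\widehat{\mathcal G}_t\|^2+2\|e_t\|^2$ per step before telescoping, while the paper telescopes first. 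Your split already yields exactly the constants $8$ and $10$, so no rebalancing is needed.
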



\textbf{Hyper-gradient error bound.}
We now bound the error term \(\mathbb E\|e_t\|^2\).
Write \(\tilde x_t:=\hat{x}_N^\lambda(\theta_t)\) for the hard-selected lower-level point returned by
\Cref{alg:hg-minsel-lmc}. The first ingredient is a stability bound for the stabilized hyper-gradient computation. It
does not yet use the fact that \(\tilde x_t\) came from Best-of-\(N\) selection; it only says that if the selected point is
close to \(\calS(\theta_t)\), the ridge-regularized and CG-approximated hyper-gradient is close to the exact
pseudoinverse hyper-gradient.

\begin{lemma}[Second-moment hyper-gradient error bound]\label{thm:et2-bound}
Let \Cref{ass:plcirc,ass:curvature_g,ass:unique_min,assum_sample_comp} hold.
Fix \(\gamma>0\), choose a tube radius \(r(\gamma)=\mathcal{O}(\gamma)\) as in \Cref{lem:tube-invertibility}, and define
\(\mathsf{D}_{t}:=\mathrm{dist}(\tilde x_t,\calS(\theta_t))\) and
\(\mathcal E_t:=\{\mathsf{D}_{t}\le r(\gamma)\}\).
Choose the clipping radius in \Cref{alg:hg-minsel-lmc} so that
\(R_v=\mathcal{O}((L_{f,1}+\eta_t)/\gamma)\) as in \Cref{lem:off-tube-et}.
Then for all \(t\),
\begin{equation}\label{eq:et2-explicit}
\begin{aligned}
\mathbb E\|e_t\|^2
&\le
3C_x^2\Big(1+\tfrac{2}{\gamma}+\tfrac{2}{\gamma(c+\gamma)}\Big)^{\!2}
\mathbb E\big[\|\tilde x_t-x^\star(\theta_t)\|^2\mathbf 1_{\mathcal E_t}\big]\\
&\quad+\ \tfrac{12C_{\mathrm{lin}}^2}{\gamma^2}\,\eta_t^2
\ +\ 3C_{\mathrm{reg}}^2\,\gamma^2+\ C_{\mathrm{off}}(1+R_v^2)\,
\mathbb E\big[(1+\mathsf{D}_{t}^{2\mathsf n_{\theta x}})\mathbf 1_{\mathcal E_t^c}\big],
\end{aligned}
\end{equation}
where \(C_x,C_{\mathrm{lin}},C_{\mathrm{reg}}<\infty\) are stability constants depending on the global smoothness bounds
and the normal spectral gap of \(\nabla^2_{xx}g\) on \(\Theta\), \(c>0\) is the uniform normal spectral gap from
\Cref{prop:geometry}, and \(C_{\mathrm{off}}<\infty\) depends on
\(C_{\theta x},D_{\theta x},\mathsf n_{\theta x}\) from \Cref{assum_sample_comp}.
\end{lemma}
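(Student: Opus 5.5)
We split on the event \(\mathcal E_t\) and write \(e_t=e_t\mathbf 1_{\mathcal E_t}+e_t\mathbf 1_{\mathcal E_t^c}\), so that \(\mathbb E\|e_t\|^2=\mathbb E[\|e_t\|^2\mathbf 1_{\mathcal E_t}]+\mathbb E[\|e_t\|^2\mathbf 1_{\mathcal E_t^c}]\). The off-tube part should give the last term of \eqref{eq:et2-explicit}. On \(\mathcal E_t^c\) we make no accuracy claim and only bound \(\|e_t\|\) crudely. Clipping gives \(\|v_t\|\le R_v\). The polynomial growth of \(\nabla^2_{\theta x}g\) in \Cref{assum_sample_comp} gives \(\|\nabla^2_{\theta x}g(\theta_t,\tilde x_t)\|\le C_{\theta x}(1+\mathsf D_t^{\mathsf n_{\theta x}})\), with \(D_{\theta x}\) absorbing the diameter of \(\mathcal V\). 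The quantities \(\|\nabla_\theta f\|\) and \(\|\nabla F(\theta_t)\|\) are bounded uniformly by \Cref{prop:F-smooth} and compactness. Squaring and collecting terms then yields \(C_{\mathrm{off}}(1+R_v^2)(1+\mathsf D_t^{2\mathsf n_{\theta x}})\mathbf 1_{\mathcal E_t^c}\); this is the content I expect \Cref{lem:off-tube-et} to package. The choice \(R_v=\mathcal O((L_{f,1}+\eta_t)/\gamma)\) matters on the tube: there \(\|\tilde v_t\|\le\|(H_t+\gamma I)^{-1}\|(\|b_t\|+\eta_t)\le (L_{f,1}+\eta_t)/\gamma'\), so the projection is inactive and \(v_t=\tilde v_t\).

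On \(\mathcal E_t\), write \(x^\star:=x^\star(\theta_t)\), let \(v^\star:=H(\theta_t,x^\star)^\dagger\nabla_xf(\theta_t,x^\star)\), and let \(\bar v_t:=(H_t+\gamma I)^{-1}b_t\) be the exact ridge solution at the selected point. Then \(e_t\) splits into three pieces. The first piece is \(A_1:=[\nabla_\theta f(\theta_t,\tilde x_t)-\nabla_\theta f(\theta_t,x^\star)]-[\nabla^2_{\theta x}g(\theta_t,\tilde x_t)-\nabla^2_{\theta x}g(\theta_t,x^\star)]\tilde v_t-\nabla^2_{\theta x}g(\theta_t,x^\star)[(H(\theta_t,\tilde x_t)+\gamma I)^{-1}\nabla_xf(\theta_t,\tilde x_t)-(H(\theta_t,x^\star)+\gamma I)^{-1}\nabla_xf(\theta_t,x^\star)]\), the point-perturbation term. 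The second is \(A_2:=-\nabla^2_{\theta x}g(\theta_t,\tilde x_t)(\tilde v_t-\bar v_t)\), the CG residual term. The third is \(A_3:=-\nabla^2_{\theta x}g(\theta_t,x^\star)[(H(\theta_t,x^\star)+\gamma I)^{-1}-H^\dagger]\nabla_xf(\theta_t,x^\star)\), the ridge bias. Applying \((a+b+c)^2\le3(a^2+b^2+c^2)\) produces the factors of \(3\).

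For \(A_2\), note \(\|\tilde v_t-\bar v_t\|\le\|(H_t+\gamma I)^{-1}\|\eta_t\le 2\eta_t/\gamma\). This uses \Cref{lem:tube-invertibility}, which guarantees that on the tube of radius \(r(\gamma)\) the eigenvalues of \(H_t\) are at least \(-\gamma/2\). That gives \(12C_{\mathrm{lin}}^2\eta_t^2/\gamma^2\).

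For \(A_3\), I would use the tangent/normal decomposition at \(x^\star\) from \Cref{remark_pseudoinverse}. The key structural fact is that \(\nabla_xf(\theta_t,x^\star)\in\calN^{\theta_t}_{x^\star}\), because it is a constrained minimizer. So the \(1/\gamma\) tangential blow-up of the ridge inverse never acts. On the normal space, where \(H\succeq c\), we have \(\|(H+\gamma)^{-1}-H^{-1}\|\le\gamma/c^2\). This gives \(C_{\mathrm{reg}}\gamma\).

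For \(A_1\), I use Lipschitzness of \(\nabla f\) and \(\nabla^2 g\) via \Cref{ass:curvature_g,assum_sample_comp}, together with the resolvent identity \(R(\tilde x)-R(x^\star)=R(\tilde x)(H(x^\star)-H(\tilde x))R(x^\star)\). This produces \(\|\tilde x_t-x^\star\|\) times the operator norms: \(\|R(\tilde x)\|\le 2/\gamma\) on the tube, and \(\|R(x^\star)\nabla_xf\|\le 1/(c+\gamma)\) on the normal part. These are exactly the factors \(1+\tfrac2\gamma+\tfrac{2}{\gamma(c+\gamma)}\) multiplying \(C_x\).

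The main obstacle is this last step. The naive bound on \(\|R(x^\star)\|\) is \(1/\gamma\), since the tangent kernel is present, and it would give a worse power of \(\gamma\). So I must carefully keep \(\nabla_x f(\theta_t,x^\star)\) in the normal space, and I must bound the term \([\nabla^2_{\theta x}g(\tilde x)-\nabla^2_{\theta x}g(x^\star)]\tilde v_t\) using \(\|\tilde v_t\|\le R_v\) on the tube. I also need to check that \(r(\gamma)=\mathcal O(\gamma)\) makes \(\lambda_{\min}(H_t)\ge-\gamma/2\) via the third-derivative bound. This requires \(\|H_t-H(\theta_t,\pi(\tilde x_t))\|\le L_3\mathsf D_t\), where \(\pi\) is the projection onto \(\calS(\theta_t)\), and PSD-ness of the Hessian on \(\calS(\theta_t)\).
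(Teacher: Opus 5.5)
Your architecture is the paper's. You split on $\mathcal E_t$ and bound the off-tube part crudely using clipping, which is exactly \Cref{lem:off-tube-et}. On the tube you split into point perturbation, CG residual and ridge bias, using \Cref{lem:tube-invertibility} for $\|(H_t+\gamma I)^{-1}\|\le 2/\gamma$ and normality of $\nabla_x f(\theta_t,x^\star)$ for the $1/(c+\gamma)$ and $\gamma/c^2$ factors.

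However, your three pieces do not sum to $e_t$. Write $a:=\nabla^2_{\theta x}g(\theta_t,\tilde x_t)$ and $a^\star:=\nabla^2_{\theta x}g(\theta_t,x^\star)$. Direct expansion gives
\[
A_1+A_2+A_3 \;=\; e_t-(a-a^\star)(\tilde v_t-\bar v_t).
\]
The CG error $\tilde v_t-\bar v_t$ is counted twice: once through the $\tilde v_t$ in the middle term of $A_1$, and again in $A_2$.

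This matters for the final bound. You propose to control $(a-a^\star)\tilde v_t$ via $\|\tilde v_t\|\le R_v$. That puts $R_v\asymp (L_{f,1}+\eta_t)/\gamma$ into the coefficient of $\|\tilde x_t-x^\star\|$. After squaring, the coefficient of $\mathbb E[\|\tilde x_t-x^\star\|^2\mathbf 1_{\mathcal E_t}]$ then depends on $\eta_t$ and $R_v$. That is not the stated $3C_x^2\bigl(1+\tfrac2\gamma+\tfrac{2}{\gamma(c+\gamma)}\bigr)^2$, where $C_x$ depends only on smoothness constants.

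The repair is the paper's choice. In the middle term of $A_1$, use the exact ridge solution $\bar v_t=(H_t+\gamma I)^{-1}\nabla_x f(\theta_t,\tilde x_t)$, which is the paper's $v_{t,\gamma}$, instead of $\tilde v_t$. Then:
\begin{itemize}
\item the decomposition is exact;
\item $\|\bar v_t\|\le (2/\gamma)\sup\|\nabla_x f\|$ carries no $\eta_t$, giving the $L_{\theta x,x}B_{f,x}\cdot(2/\gamma)$ contribution inside $C_x$;
\item the entire CG error sits in $A_2$, bounded by $2C_{\mathrm{lin}}\eta_t/\gamma$.
\end{itemize}

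One smaller point concerns the Lipschitz constants of $x\mapsto\nabla^2_{xx}g$ and $x\mapsto\nabla^2_{\theta x}g$ along the segment $[\tilde x_t,x^\star]$. These cannot come from the tube bound $L_{g,3}$ in \Cref{ass:curvature_g}. That segment can leave the $\rho$-tube when the projection of $\tilde x_t$ onto $\calS(\theta_t)$ is far from $x^\star$. Instead use $g\in\mathcal C^3$ from \Cref{assum_sample_comp} on the compact convex ball $\mathbb B_d(0;\mathsf D+\rho)$, as the paper does.
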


The last term in \eqref{eq:et2-explicit} is the off-tube contribution: it is active only when the selected candidate lies
farther than \(r(\gamma)\) from \(\calS(\theta_t)\). For ULA/Gibbs candidates, the tube-concentration estimates in
Appendix~\ref{append:selection-error} show that this term is exponentially small in \(r(\gamma)^2/\lambda\). 

It remains to control the selected-point error
\(\mathbb E\|\tilde x_t-x^\star(\theta_t)\|^2\) in the first term of \eqref{eq:et2-explicit}. This is the role of the
next lemma: small \(\lambda\) keeps ULA/Gibbs candidates close to \(\calS(\theta)\), Best-of-\(N\) selection chooses the
candidate with the smallest upper-level value, and the R\'enyi tolerance \(\varepsilon_{\mathrm R}\) measures the error
from using finite-step ULA rather than exact Gibbs samples.

\begin{lemma}[Expected squared selection error for parallel ULA candidates]\label{thm:selection-error-bound}
Fix \(\theta\in\Theta\) and let \(x^\star(\theta)\) denote the unique minimizer of \(f(\theta,\cdot)\) over
\(\calS(\theta)\). Assume \Cref{ass:plcirc,ass:curvature_g,assum_sample_comp,ass:unique_min} hold, \(\dimk\ge1\), and
\(0<\lambda\le\lambda_0\). Let \(X_1,\dots,X_N\) be the outputs of \(N\) independent finite-step ULA chains targeting
\(\gibbs_\theta^\lambda\), let \(\nu_i\) be the marginal law of \(X_i\), and write
\(\varepsilon_{\mathrm R}^2:=\max_{1\le i\le N}R_2(\nu_i\|\gibbs_\theta^\lambda)\). Assume
\(\varepsilon_{\mathrm R}\le1\), and let \(\tilde x\in\arg\min_{1\le i\le N}f(\theta,X_i)\) be the hard-selection output,
with a measurable tie-breaking rule.\footnote{For example, one may choose the smallest index among all candidates attaining
the minimum value of \(f(\theta,X_i)\), and set \(\tilde x\) to that candidate.} Then
\begin{align}
\mathbb E\|\tilde x-x^\star(\theta)\|^2
\ \le\ &
2e^{1/2}C_{\mathrm{tube},2}\,\lambda\log(1+N)\nonumber\\
&\quad
+\ 2\Bigg(\frac{1}{c_{\mathrm{hg}}}+\frac{4\mathsf D^2}{\Delta_{r_0}}\Bigg)
\Bigg(
2L_{f,1}\,C_{\mathrm{tube}}\,\sqrt{\lambda\log(1+N)}
+\ C_1\,N^{-1/\dimk}
\nonumber\\
&\qquad\qquad\qquad\qquad
+\ 4\sqrt{e-1}\,L_{f,1}\sqrt{N\,C_{\mathrm{PI}}}\,\varepsilon_{\mathrm R}
\Bigg).
\label{eq:final-Ex2-bound}
\end{align}
The constants \(L_{f,1}\) and \(\mathsf D\) are from \Cref{assum_sample_comp};
\(C_{\mathrm{PI}}\) is the low-temperature Poincar\'e constant from the discussion in Appendix~\ref{append:lmc-ula};
\(C_{\mathrm{tube}}\), \(C_{\mathrm{tube},2}\), and \(\lambda_0\) are from
\Cref{lem:gibbs-tube-width,lem:tube-width}; \(C_1\) is from \Cref{lem:inner-quality-gap};
\(c_{\mathrm{hg}}\) and \(r_0\) are from \Cref{lem:uniform-hg}; and \(\Delta_{r_0}>0\) is from
\Cref{lem:away-gap-positive}.
\end{lemma}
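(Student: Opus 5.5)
The plan is to split the selected-point error into a \emph{normal} part (distance of \(\tilde x\) to \(\calS(\theta)\)) and a \emph{tangential/selection} part (distance along the manifold between the projection of \(\tilde x\) and \(x^\star(\theta)\)). The second part is then converted into an upper-level value gap, and that gap is controlled by Best-of-\(N\) order statistics. Concretely, let \(\pi(\tilde x)\) be a nearest-point projection of \(\tilde x\) onto \(\calS(\theta)\). We use
\[
\|\tilde x-x^\star\|^2\le 2\,\mathrm{dist}(\tilde x,\calS(\theta))^2+2\|\pi(\tilde x)-x^\star\|^2 .
\]
The first term is the maximum squared distance among \(N\) candidates. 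We bound it by the sub-Gaussian tube concentration of \Cref{lem:gibbs-tube-width,lem:tube-width}, namely \(\mathbb E\max_i \mathrm{dist}(X_i,\calS)^2\lesssim C_{\mathrm{tube},2}\lambda\log(1+N)\). We pass from \(\nu_i\) to \(\gibbs_\theta^\lambda\) using the R\'enyi-2 change of measure: Cauchy--Schwarz gives a factor \(\exp(R_2/2)\le e^{1/2}\) since \(\varepsilon_{\mathrm R}\le1\). This yields the first term of \eqref{eq:final-Ex2-bound}.

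For the tangential part, write \(\Delta(y):=f(\theta,y)-F(\theta)\ge0\) for \(y\in\calS(\theta)\). We then use a quadratic-growth statement on \(\calS(\theta)\) with two regimes. Near \(x^\star\), within radius \(r_0\), \Cref{lem:uniform-hg} (uniform intrinsic non-degeneracy) gives \(\|y-x^\star\|^2\le \Delta(y)/c_{\mathrm{hg}}\). Outside \(r_0\), \Cref{lem:away-gap-positive} gives \(\Delta(y)\ge\Delta_{r_0}\), and \(\|y-x^\star\|\le 2\mathsf D\), so \(\|y-x^\star\|^2\le 4\mathsf D^2\Delta(y)/\Delta_{r_0}\). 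Summing the two regimes gives the factor \(\tfrac1{c_{\mathrm{hg}}}+\tfrac{4\mathsf D^2}{\Delta_{r_0}}\). It then remains to bound \(\mathbb E\,\Delta(\pi(\tilde x))\). Since \(f\) is \(L_{f,1}\)-Lipschitz, we have \(\Delta(\pi(\tilde x))\le f(\theta,\tilde x)-F(\theta)+L_{f,1}\mathrm{dist}(\tilde x,\calS)\). Because \(\tilde x\) minimizes \(f\) over the candidates, \(f(\theta,\tilde x)\le \min_i f(\theta,\pi(X_i))+L_{f,1}\max_i\mathrm{dist}(X_i,\calS)\). The two distance terms are again controlled by the tube bound (Jensen gives \(\sqrt{\lambda\log(1+N)}\)), which produces \(2L_{f,1}C_{\mathrm{tube}}\sqrt{\lambda\log(1+N)}\).

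The remaining piece, which we expect to be the main obstacle, is the Best-of-\(N\) quality gap \(\mathbb E[\min_i \Delta(\pi(X_i))]\). Under exact Gibbs samples this is \Cref{lem:inner-quality-gap}. The projected Gibbs measure has a density on the \(\dimk\)-dimensional manifold that is bounded below uniformly as \(\lambda\to0\), thanks to the curvature and volume control of \Cref{ass:curvature_g}. Hence the mass of the geodesic ball of radius \(s\) around \(x^\star\) is \(\gtrsim s^{\dimk}\), and \(\Delta\le L_{f,1}s\) there. This gives \(\mathbb P(\min_i\Delta>\epsilon)\le(1-c\epsilon^{\dimk})^N\), and integrating yields \(C_1N^{-1/\dimk}\). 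To handle the ULA laws, we compare \(\mathbb E_{\nu^{\otimes N}}\) and \(\mathbb E_{\mu^{\otimes N}}\) of the bounded-variation functional \(\min_i f(\theta,X_i)\). We write the difference as a telescoping sum over coordinates. Each coordinate swap is bounded via \(|\mathbb E_\nu h-\mathbb E_\mu h|\le\sqrt{\mathrm{Var}_\mu h\,(e^{R_2}-1)}\), and the variance is bounded by the Poincar\'e inequality, \(\mathrm{Var}_\mu h\le C_{\mathrm{PI}}L_{f,1}^2\), because \(\min\) is \(L_{f,1}\)-Lipschitz in each argument. The factor \(e^{R_2}-1\le (e-1)R_2\) holds for \(R_2\le1\). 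Done naively this produces \(N\varepsilon_{\mathrm R}\). To obtain the stated \(\sqrt N\), we instead apply the chi-square/Poincar\'e bound once to the product measure: the product \(\chi^2\) is \(\prod(1+\chi_i^2)-1\), which we keep linearized via tensorization. The product Poincar\'e constant stays \(C_{\mathrm{PI}}\), while the gradient of \(\min_i f\) has norm at most \(L_{f,1}\) in total, since only one coordinate is active. Combined with \(\chi^2(\nu^{\otimes N}\|\mu^{\otimes N})\lesssim N\varepsilon_{\mathrm R}^2\), this gives \(4\sqrt{e-1}L_{f,1}\sqrt{NC_{\mathrm{PI}}}\varepsilon_{\mathrm R}\). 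The constant bookkeeping here is fiddly. Multiplying the gap bound by the quadratic-growth factor and by 2 from the initial split completes \eqref{eq:final-Ex2-bound}.
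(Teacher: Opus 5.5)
Most of your plan coincides with the paper's proof. The split $\|\tilde x-x^\star(\theta)\|^2\le 2\,\mathrm{dist}(\tilde x,\calS(\theta))^2+2\|\bar x-x^\star(\theta)\|^2$, the R\'enyi/Cauchy--Schwarz transfer of the tube tail, the Lipschitz reduction to $\min_i f(\theta,\pi(X_i))$ plus $L_{f,1}\max_i\mathrm{dist}(X_i,\calS(\theta))$, and the small-ball nearest-neighbour bound all appear there. Your pointwise two-regime conversion of the value gap is an equivalent, slightly cleaner form of the paper's Markov argument, with the same constant.

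The gap is in the step that is supposed to produce $4\sqrt{e-1}\,L_{f,1}\sqrt{N C_{\mathrm{PI}}}\,\varepsilon_{\mathrm R}$. The $\chi^2$-divergence does not linearize under tensorization. For independent candidates,
\[
\chi^2\Big(\textstyle\bigotimes_{i}\nu_i\,\Big\|\,(\gibbs_\theta^\lambda)^{\otimes N}\Big)
=\prod_{i=1}^N\big(1+\chi^2(\nu_i\|\gibbs_\theta^\lambda)\big)-1
=\exp\Big(\textstyle\sum_{i} R_2(\nu_i\|\gibbs_\theta^\lambda)\Big)-1,
\]
which is only bounded by $e^{N\varepsilon_{\mathrm R}^2}-1$. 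This is $\lesssim N\varepsilon_{\mathrm R}^2$ only when $N\varepsilon_{\mathrm R}^2\lesssim1$, but the lemma assumes just $\varepsilon_{\mathrm R}\le1$. For $N\varepsilon_{\mathrm R}^2\gg1$, your covariance bound $L_{f,1}\sqrt{C_{\mathrm{PI}}\,(e^{N\varepsilon_{\mathrm R}^2}-1)}$ is exponentially larger than the claimed term, so the stated inequality does not follow. Under the extra hypothesis $N\varepsilon_{\mathrm R}^2\le1$ your route does work, via $e^x-1\le(e-1)x$ on $[0,1]$. That hypothesis holds for the later parameter choice, where $N\varepsilon_{\mathrm R}^2\approx\varepsilon^{8}$.

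The missing idea is the paper's coupling transfer, which only ever uses single-coordinate divergences. For each $i$, take a $\mathsf W_2$-optimal coupling of $X_i\sim\nu_i$ with $X_i^\lambda\sim\gibbs_\theta^\lambda$, and form the product coupling. By independence of the candidates, its first marginal is their true joint law, and the $X_i^\lambda$ are i.i.d.\ Gibbs samples. Then
$\mathrm{dist}(\tilde x,\calS(\theta))\le\max_i\mathrm{dist}(X_i^\lambda,\calS(\theta))+\max_i\|X_i-X_i^\lambda\|$
and
$\min_i f(\theta,X_i)\le\min_i f(\theta,X_i^\lambda)+L_{f,1}\max_i\|X_i-X_i^\lambda\|$.
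So the exact-Gibbs bound applies to the $X_i^\lambda$. The remainder is $2L_{f,1}\,\mathbb E\max_i\|X_i-X_i^\lambda\|\le 2L_{f,1}\big(\sum_i\mathsf W_2(\nu_i,\gibbs_\theta^\lambda)^2\big)^{1/2}$, with $\mathsf W_2(\nu_i,\gibbs_\theta^\lambda)\le 2\sqrt{(e-1)C_{\mathrm{PI}}}\,\varepsilon_{\mathrm R}$ from the Poincar\'e transport inequality. The $\sqrt N$ thus comes from summing squared transport distances, not from a product divergence, and the bound holds for every $\varepsilon_{\mathrm R}\le1$.

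A smaller point: any such transfer must be applied to the Lipschitz functional $\min_i f(\theta,X_i)$ \emph{before} projecting. The functional $\min_i f(\theta,\pi(X_i))$ is not Lipschitz, because nearest-point projection onto $\calS(\theta)$ is discontinuous on its medial axis. As written, you set up the ULA comparison for $\mathbb E[\min_i\Delta(\pi(X_i))]$, to which the Poincar\'e variance bound does not apply.
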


The proofs of \Cref{thm:et2-bound,thm:selection-error-bound} are given in
Appendix~\ref{append:hg-stability} and Appendix~\ref{append:selection-error}, respectively. Combining them gives the
following parameter-level bound.

\begin{theorem}[Hyper-gradient error bound]\label{thm:main-et2-bound}
Let \Cref{ass:plcirc,ass:curvature_g,assum_sample_comp,ass:unique_min} hold. Suppose \(\dimk\ge1\), \(0<\lambda\le\lambda_0\), \(\lambda\log(1+N)\le1\), \(0<\gamma\le1\), and \(\varepsilon_{\mathrm R}\le1\), where
\(\lambda_0\) is the low-temperature threshold from \Cref{thm:selection-error-bound}.
Choose the clipping radius \(R_v\) so that \(R_v\ge (2/\gamma)(L_{f,1}+\eta_t)\). We have, for all \(t\),
\begin{align}
\mathbb E\|e_t\|^2
=
\mathcal{O}\!\Bigg(
&\frac{\sqrt{\lambda\log(1+N)}+N^{-1/\dimk}+\sqrt{N\,C_{\mathrm{PI}}}\,\varepsilon_{\mathrm R}+\eta_t^2}{\gamma^2}
\ +\ \gamma^2\nonumber\\
&\quad
\ +\ (1+R_v^2)N\big(1+r(\gamma)^{2\mathsf n_{\theta x}}+\lambda^{\mathsf n_{\theta x}}\big)
\exp\!\Big(-\tfrac{c_{\mathrm{tube}}}{4}\tfrac{r(\gamma)^2}{\lambda}\Big)
\Bigg).
\label{eq:main-et2-explicit-arxiv}
\end{align}
Here \(N\)  is the number of parallel ULA candidates in \Cref{alg:hg-minsel-lmc}, and \(\varepsilon_{\mathrm R}\) denotes a uniform square-root R\'enyi tolerance:
\(\max_i R_2(\nu_{t,i}\|\gibbs_{\theta_t}^\lambda)\le \varepsilon_{\mathrm R}^2\), where \(\nu_{t,i}\) is the conditional law of the \(i\)-th candidate.
The hidden constant in the full bound depends only on problem-data constants, not on
\((N,\lambda,\varepsilon_{\mathrm R},\gamma,\eta_t,R_v)\) or \(t\). The proof appears in Appendix~\ref{append:param-tuning}.
\end{theorem}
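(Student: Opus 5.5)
The bound in \Cref{thm:main-et2-bound} follows by substituting \Cref{thm:selection-error-bound} into \Cref{thm:et2-bound} and simplifying under the parameter restrictions. Start from \eqref{eq:et2-explicit} at iteration $t$, conditioning on $\theta_t$. The candidates at iteration $t$ are generated afresh given $\theta_t$, and their conditional laws satisfy the uniform R\'enyi tolerance. So we may apply \Cref{thm:selection-error-bound} with $\theta=\theta_t$ conditionally and then take the outer expectation. The constants in that lemma ($C_{\mathrm{tube}},C_1,c_{\mathrm{hg}},\Delta_{r_0},\mathsf D,C_{\mathrm{PI}}$) are uniform over $\Theta$, so the resulting bound is uniform in $t$.

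\textbf{Step 1: the stability prefactor.} Using $0<\gamma\le 1$ and $c>0$, we have
\[
1+\tfrac{2}{\gamma}+\tfrac{2}{\gamma(c+\gamma)}\le \tfrac{1}{\gamma}\bigl(3+\tfrac{2}{c}\bigr),
\]
so the prefactor in the first term of \eqref{eq:et2-explicit} is $\mathcal O(\gamma^{-2})$. We then drop the indicator, $\mathbb E[\|\tilde x_t-x^\star\|^2\mathbf 1_{\mathcal E_t}]\le \mathbb E\|\tilde x_t-x^\star\|^2$, and insert \eqref{eq:final-Ex2-bound}. Since $\lambda\log(1+N)\le1$, the term $\lambda\log(1+N)$ is at most $\sqrt{\lambda\log(1+N)}$. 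The remaining three terms of \eqref{eq:final-Ex2-bound} appear linearly, with constant factors. This produces the $\gamma^{-2}\bigl(\sqrt{\lambda\log(1+N)}+N^{-1/\dimk}+\sqrt{NC_{\mathrm{PI}}}\,\varepsilon_{\mathrm R}\bigr)$ part.

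\textbf{Step 2: the CG and ridge terms.} The CG term $12C_{\mathrm{lin}}^2\eta_t^2/\gamma^2$ and the ridge bias $3C_{\mathrm{reg}}^2\gamma^2$ carry over unchanged. The clipping condition $R_v\ge(2/\gamma)(L_{f,1}+\eta_t)$ is compatible with the requirement in \Cref{lem:off-tube-et}. If that lemma only asks for $R_v$ of that order, with any large enough value admissible, the $(1+R_v^2)$ factor is simply kept explicit.

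\textbf{Step 3: the off-tube term (main obstacle).} We must show
\[
\mathbb E\big[(1+\mathsf D_t^{2\mathsf n_{\theta x}})\mathbf 1_{\{\mathsf D_t>r(\gamma)\}}\big]
\lesssim N\big(1+r(\gamma)^{2\mathsf n_{\theta x}}+\lambda^{\mathsf n_{\theta x}}\big)e^{-\frac{c_{\mathrm{tube}}}{4}r(\gamma)^2/\lambda}.
\]
The selected point is one of the $N$ candidates, so we apply a union bound: the expectation is at most $\sum_i \mathbb E_{\nu_i}[(1+\mathsf D(X_i)^{2\mathsf n})\mathbf 1_{\{\mathsf D(X_i)>r\}}]$. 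This yields the factor $N$ and avoids any dependence on the selection rule.

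Each summand requires passing from $\nu_i$ to $\gibbs_{\theta_t}^\lambda$. Cauchy--Schwarz with the R\'enyi-2 divergence gives $\mathbb E_\nu[h]\le \sqrt{e^{R_2(\nu\|\mu)}}\sqrt{\mathbb E_\mu[h^2]}\le e^{1/2}\sqrt{\mathbb E_\mu[h^2]}$, using $\varepsilon_{\mathrm R}\le1$. Under the Gibbs measure, we use the tube-tail estimates of \Cref{lem:gibbs-tube-width,lem:tube-width} and the coercivity in \Cref{assum_sample_comp}. These give $\mu(\mathsf D>s)\lesssim e^{-c_{\mathrm{tube}}s^2/\lambda}$ for $s\ge r$. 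Integrating the polynomial weight $\mathsf D^{4\mathsf n}$ against this Gaussian-type tail gives $\mathbb E_\mu[(1+\mathsf D^{2\mathsf n})^2\mathbf 1_{\{\mathsf D>r\}}]\lesssim (1+r^{4\mathsf n}+\lambda^{2\mathsf n})e^{-c_{\mathrm{tube}}r^2/(2\lambda)}$, where the factor $1/2$ absorbs the polynomial prefactor. Taking the square root gives the exponent $c_{\mathrm{tube}}/4$ and the polynomial factor $1+r^{2\mathsf n}+\lambda^{\mathsf n}$.

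The delicate points are the following. The Gibbs tail must hold globally, not only near $\calS(\theta)$; this is where the growth conditions of \Cref{assum_sample_comp} enter. The constants must also stay independent of $\lambda\le\lambda_0$. If the tube lemmas give the tail only in the form $\mathbb P(\mathsf D>C\sqrt{\lambda\log(1/\delta)})\le\delta$, we convert to the exponential form by inversion before integrating.

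Collecting Steps 1--3 gives \eqref{eq:main-et2-explicit-arxiv}, with a hidden constant that depends only on problem data.
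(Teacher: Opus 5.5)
Your proposal is correct and follows the paper's proof. Both condition on $\theta_t$ and drop the tube indicator to insert \eqref{eq:final-Ex2-bound}, use $B_\gamma^2=\mathcal O(\gamma^{-2})$ and $\lambda\log(1+N)\le 1$, and bound the off-tube moment by a union bound over candidates, a R\'enyi-2 Cauchy--Schwarz change of measure, and integration of the Gaussian-type Gibbs tail. The only difference is the order of steps in the off-tube part. The paper first takes the tail $\mathbb P(\mathsf{D}_{t}>s)\le N e^{1/2}\sqrt{C_{\mathrm{tube}}}\,e^{-c_{\mathrm{tube}}s^2/(2\lambda)}$ from \Cref{lem:tube-width} and then integrates it against the polynomial weight. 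You instead apply Cauchy--Schwarz to the whole weighted moment under the exact Gibbs tail and then take a square root. Both routes give the same exponent $c_{\mathrm{tube}}/4$ and the same prefactor $N\bigl(1+r(\gamma)^{2\mathsf n_{\theta x}}+\lambda^{\mathsf n_{\theta x}}\bigr)$.
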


Each term in \Cref{thm:main-et2-bound} has a distinct origin. The term \(N^{-1/\dimk}\) is the finite-candidate
optimistic-selection error along the \(\dimk\)-dimensional lower-level minimizer manifold. The terms
\(\sqrt{\lambda\log(1+N)}\) and \(\lambda\log(1+N)\) come from low-temperature Gibbs concentration near
\(\calS(\theta)\). The factor \(\sqrt{N\,C_{\mathrm{PI}}}\,\varepsilon_{\mathrm R}\) is the price of using finite-step ULA
candidates instead of exact Gibbs samples, measured by the square-root R\'enyi tolerance \(\varepsilon_{\mathrm R}\).
The \(\eta_t^2\) term is the CG residual, while \(\gamma^2\) is the bias from replacing the pseudoinverse by a
ridge-regularized inverse. The final exponential term comes from bounding the last term in \eqref{eq:et2-explicit}: the
event that the selected candidate lies outside the \(r(\gamma)\)-tube where the ridge stability argument applies.

Finally, we translate this error bound into one explicit oracle-complexity consequence. This is a worst-case
scaling for the analyzed ULA/Gibbs instantiation. We count evaluations of \(\nabla_x g\) used by ULA, evaluations of \(f\) used for hard selection,
and the Hessian--vector products used by CG; the number of ULA iterations is chosen so that the square-root R\'enyi
tolerance \(\varepsilon_{\mathrm R}\) is sufficiently small. 

\begin{theorem}[Total oracle complexity (informal)]\label{corr:oracle-complexity}
Let \Cref{ass:plcirc,ass:curvature_g,assum_sample_comp,ass:unique_min} hold and run \textsc{HG-MS} with $\alpha=1/L_F$.
Suppose \(\dimk\ge 1\).
Initialize every ULA chain from \(\lambda\)-scaled Gaussians (see \Cref{lem:gaussian-init-r3}).
For all sufficiently small \(\varepsilon\in(0,1)\), with the same parameter choices as in  Appendix~\ref{append:param-tuning} ensuring $\sup_t\mathbb E\|e_t\|^2=\mathcal{O}(\varepsilon^2)$,
\Cref{thm:main-et2-bound,prop:outer-inexact} yield an $\varepsilon$-stationarity guarantee after
$T=\mathcal{O}\big(L_F(F(\theta_0)-F_\star)/\varepsilon^2\big)$ outer iterations.
Moreover, under the explicit scaling $N=\lceil\varepsilon^{-4\dimk}\rceil$, $\lambda=\varepsilon^{8}/\log(1+N)$,
the square-root R\'enyi tolerance $\varepsilon_{\mathrm{R}}=\varepsilon^{4+2\dimk}$, \(\gamma=\varepsilon\), \(\eta_t=\varepsilon^2\), \(R_v=\Theta(\varepsilon^{-1})\), and
$K=\widetilde{\mathcal{O}}(\lambda^{-4}\varepsilon_{\mathrm{R}}^{-2})$,
each outer iteration uses $NK$ evaluations of $\nabla_x g$ (ULA), $N$ evaluations of $f$ (hard selection), and \(\#\mathrm{HVP}\) Hessian--vector products for CG.
Hence the total oracle complexity for achieving an $\varepsilon$-stationary point is
\(
T(NK+N+\#\mathrm{HVP})
=
\widetilde{\mathcal{O}}\Big(\varepsilon^{-42-8\dimk}\Big).
\)
\end{theorem}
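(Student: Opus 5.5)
The plan is to plug the stated parameter choices into \Cref{thm:main-et2-bound} and check that every term is $\mathcal O(\varepsilon^2)$. Then \Cref{prop:outer-inexact} with $\alpha=1/L_F$ gives the iteration count $T$. Finally, we multiply $T$ by the per-iteration oracle cost. \Cref{prop:F-smooth} supplies $L_F$-smoothness of $F$ on $\Theta$, so \Cref{prop:outer-inexact} applies. Its right-hand side is $8L_F(F(\theta_0)-F_\star)/T+10\sup_t\mathbb E\|e_t\|^2$. Taking $T=\mathcal O(L_F(F(\theta_0)-F_\star)/\varepsilon^2)$ and ensuring $\sup_t\mathbb E\|e_t\|^2=\mathcal O(\varepsilon^2)$ makes the averaged squared gradient mapping $\mathcal O(\varepsilon^2)$. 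Hence some iterate is $\varepsilon$-stationary in expectation.

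\textbf{Error bound.} We first verify the hypotheses of \Cref{thm:main-et2-bound}. For $\varepsilon$ small we have $\gamma=\varepsilon\le1$ and $\varepsilon_{\mathrm R}\le1$. We also have $\lambda\log(1+N)=\varepsilon^8\le1$ and $\lambda\le\lambda_0$. The clip radius $R_v=\Theta(\varepsilon^{-1})$ satisfies $R_v\ge 2(L_{f,1}+\varepsilon^2)/\gamma$. With $\gamma^{-2}=\varepsilon^{-2}$, the terms in \eqref{eq:main-et2-explicit-arxiv} are handled as follows:
\begin{itemize}[leftmargin=*]
\item $\sqrt{\lambda\log(1+N)}/\gamma^2=\varepsilon^4/\varepsilon^2=\varepsilon^2$.
\item $N^{-1/\dimk}/\gamma^2\le\varepsilon^{4}/\varepsilon^2=\varepsilon^2$.
\item $\sqrt{N C_{\mathrm{PI}}}\,\varepsilon_{\mathrm R}/\gamma^2\lesssim \varepsilon^{-2\dimk}\sqrt{C_{\mathrm{PI}}}\,\varepsilon^{4+2\dimk}/\varepsilon^2=\sqrt{C_{\mathrm{PI}}}\,\varepsilon^2$.
\item $\eta_t^2/\gamma^2=\varepsilon^2$, and the ridge bias is $\gamma^2=\varepsilon^2$.
\end{itemize}
The third item needs $C_{\mathrm{PI}}$ to be treated as a problem constant at low temperature. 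If it depends polynomially on $\lambda$, I would absorb that into the tilde or adjust $\varepsilon_{\mathrm R}$. I expect this to be the delicate point.

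\textbf{Off-tube term.} The remaining term has prefactor $(1+R_v^2)N(\cdots)$, which is polynomial in $\varepsilon^{-1}$. It multiplies $\exp(-c\,r(\gamma)^2/\lambda)$. Here $r(\gamma)=\mathcal O(\gamma)$, and I assume also $r(\gamma)\gtrsim\gamma$ from \Cref{lem:tube-invertibility}. Then $r(\gamma)^2/\lambda\gtrsim\varepsilon^{2}\log(1+N)/\varepsilon^{8}=\varepsilon^{-6}\log(1+N)$. The exponential therefore beats any polynomial, and this term is $o(\varepsilon^2)$ for small $\varepsilon$, which is where ``sufficiently small'' enters.

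\textbf{Cost accounting.} The ULA iteration count $K=\widetilde{\mathcal O}(\lambda^{-4}\varepsilon_{\mathrm R}^{-2})$ is taken from the R\'enyi convergence of ULA under the Poincar\'e inequality with the $\lambda$-scaled Gaussian initialization of \Cref{lem:gaussian-init-r3}; I take this as given from Appendix~\ref{append:lmc-ula}. Then $\lambda^{-4}=\widetilde{\mathcal O}(\varepsilon^{-32})$ and $\varepsilon_{\mathrm R}^{-2}=\varepsilon^{-8-4\dimk}$, so $NK=\widetilde{\mathcal O}(\varepsilon^{-4\dimk-32-8-4\dimk})=\widetilde{\mathcal O}(\varepsilon^{-40-8\dimk})$. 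The CG solve is on the $\gamma$-regularized system, which is well conditioned within the tube. Its cost is $\#\mathrm{HVP}=\mathcal O(\sqrt{(L+\gamma)/\gamma}\log(1/\eta_t))=\widetilde{\mathcal O}(\varepsilon^{-1/2})$, which is negligible compared with $NK$. Multiplying by $T=\mathcal O(\varepsilon^{-2})$ gives $\widetilde{\mathcal O}(\varepsilon^{-42-8\dimk})$.

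The main obstacle is the ULA step: confirming that the quoted $K$ indeed achieves $R_2\le\varepsilon_{\mathrm R}^2$ uniformly in $\theta_t$ with only polylogarithmic dependence on the initialization. I would attempt this by invoking the Poincar\'e-based R\'enyi bound with a warm-start R\'enyi divergence of order $\log(1/\lambda)$ and a discretization bias controlled by the smoothness constants scaled by $1/\lambda$.
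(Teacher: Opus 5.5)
Your proposal is correct and follows the paper's proof essentially step by step. The matching ingredients are: the same substitution of $(N,\lambda,\varepsilon_{\mathrm R},\gamma,\eta_t,R_v)$ into \Cref{thm:main-et2-bound}, with $C_{\mathrm{PI}}$ treated as a $\lambda$-uniform constant as the paper also does; the same $r(\gamma)\gtrsim\varepsilon$ argument that makes the off-tube term super-polynomially small; $T=\mathcal O(\varepsilon^{-2})$ from \Cref{prop:outer-inexact}; and the same count $TNK=\widetilde{\mathcal O}(\varepsilon^{-42-8\dimk})$ with $T\,\#\mathrm{HVP}$ negligible.

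One correction to your closing remark concerns the ULA warm start. For $\lambda$-scaled Gaussian starts centered in a fixed compact set rather than on the unknown $\calS(\theta)$, \Cref{lem:gaussian-init-r3} only gives $R_3=\mathcal O(1/\lambda)$. It is exactly this $R_3^2=\mathcal O(\lambda^{-2})$, multiplied by $L_{G,2}^2=L_{g,2}^2/\lambda^2$ in \Cref{cor:lmc-gibbs-w2}, that produces the $\lambda^{-4}$ in $K$. The $\mathcal O(\log(1/\lambda))$ warm start you describe would give $K=\widetilde{\mathcal O}(\lambda^{-2}\varepsilon_{\mathrm R}^{-2})$, but it holds only for centers on $\calS(\theta)$.
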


\begin{remark}[{On the \texorpdfstring{$\dimk$}{k}-dependence}]\label{rem:k-dependence}
The intrinsic dimension enters through the minima-selection subproblem, most visibly via the \(N^{-1/\dimk}\) approximation term in \Cref{thm:main-et2-bound}.
This matches the worst-case intuition that optimizing over a \(\dimk\)-dimensional nonconvex minimizer manifold can reduce to \(\dimk\)-dimensional global optimization and exhibit \(\Omega(\varepsilon^{-\dimk})\)-type hardness \citep[Rem.~1.1]{masiha2025superquantile}.
\end{remark}

%% file: Appendix_neurips.tex
\clearpage
\onecolumn
\appendix
\begingroup
\makeatletter
\def\@endpart{\par\bigskip} 
\part{Appendix}
\endgroup
\parttoc

\clearpage

\section{Technical Preliminaries and Regularity Assumptions}\label{append:theory} 

\subsection{Appendix notation guide}\label{append:notation-guide}
This subsection supplements the notation paragraph in the introduction. It records mathematical notation used repeatedly
in the appendix proofs.

\begin{description}[leftmargin=*,style=nextline]
\item[Distance, balls, and spheres.]
For a nonempty set \(A\subseteq\R^d\), we write
\[
\mathrm{dist}(x,A):=\inf_{y\in A}\|x-y\|.
\]
We denote the closure and interior of \(A\) by \(\overline A\) and \(\mathrm{int}(A)\), respectively.
For \(a\in\R^d\) and \(r\ge0\), \(\mathbb B_d(a;r):=\{x\in\R^d:\|x-a\|\le r\}\) denotes the closed Euclidean ball.
For \(q\ge1\), \(\mathbb S^{q-1}:=\{u\in\R^q:\|u\|=1\}\) denotes the unit sphere in \(\R^q\).

\item[Manifold volume and geodesic distance.]
If \(\mathcal M\subset\R^d\) is an \(r\)-dimensional embedded submanifold, then
\(\ud\mathrm{Vol}_{\mathcal M}(y)\) denotes the Riemannian volume element induced by the ambient Euclidean metric, and
\[
\mathrm{Vol}_{r}(\mathcal M):=\int_{\mathcal M}\ud\mathrm{Vol}_{\mathcal M}(y)
\]
denotes its total \(r\)-dimensional volume. We write \(d_{\mathcal M}(x,y)\) for the geodesic distance between
\(x,y\in\mathcal M\).

\item[Riemannian second-order notation.]
For a smooth scalar function \(h\) on \(\mathcal M\), \(\mathrm{Hess}_{\mathcal M} h(x)\) denotes the Riemannian Hessian
of \(h\) at \(x\). When \(\mathcal M=\calS(\theta)\), \(\mathrm{II}_{x}^{\theta}\) denotes the second fundamental form of
\(\calS(\theta)\) at \(x\).

\item[Lower-level objective shorthand.]
We write
\[
g_\star(\theta):=\min_{z\in\R^d}g(\theta,z).
\]
The common intrinsic dimension of \(\calS(\theta)\) is denoted by \(\dimk\). In normal-coordinate arguments, we write
\(q:=d-\dimk\) for the normal dimension.

\item[Linear algebra.]
For a matrix \(A\), \(\ker(A)\) denotes its null space and \(A^\dagger\) denotes its Moore--Penrose pseudoinverse. If
\(A\) is symmetric, \(\lambda_{\min}(A)\) denotes its smallest eigenvalue. We write \(\|A\|_{\mathrm{op}}\) for the
operator norm.

\item[Asymptotic notation.]
We use \(\mathcal{O}(\cdot)\) for upper bounds up to problem-dependent constants that are independent of the displayed
scaling parameters. We use \(\widetilde{\mathcal{O}}(\cdot)\) to additionally suppress logarithmic factors.

\item[Probability measures.]
For \(x\in\R^d\), \(\delta_x\) denotes the Dirac measure at \(x\). Given candidates \(X_1,\ldots,X_M\), their empirical
measure is
\[
\widehat\nu_M:=\frac{1}{M}\sum_{i=1}^M\delta_{X_i}.
\]
The Gibbs measure is
\[
\gibbs_\theta^\lambda(\d x)\propto \exp\{-g(\theta,x)/\lambda\}\,\d x.
\]
The marginal laws of approximate ULA candidates are denoted by \(\nu_i\).

\item[Lower-tail notation.]
For a real random variable \(Z\), its lower \(u\)-quantile is
\[
q_u(Z):=\inf\{r\in\R:\mathbb P(Z\le r)\ge u\}.
\]
The lower-tail superquantile \(\mathrm{SQ}^{\mathrm{low}}_\delta(Z)\) is defined in \Cref{def:lower-sq}.

\item[Selected candidates and tube events.]
For candidates \(X_1,\ldots,X_M\), the hard-selected point is
\[
\tilde x\in\arg\min_{1\le i\le M} f(\theta,X_i).
\]
We write \(\bar x\in\arg\min_{x\in\calS(\theta)}\|x-\tilde x\|\) for a Euclidean projection of \(\tilde x\) onto
\(\calS(\theta)\). When each candidate is projected separately, \(\Pi(X_i)\) denotes a measurable choice from
\(\arg\min_{x\in\calS(\theta)}\|x-X_i\|\). At outer iteration \(t\),
\[
\tilde x_t:=\hat{x}_N^\lambda(\theta_t),
\qquad
\mathsf{D}_{t}:=\mathrm{dist}(\tilde x_t,\calS(\theta_t)),
\qquad
\mathcal E_t:=\{\mathsf{D}_{t}\le r(\gamma)\}.
\]
We write \(\mathbf 1_A\) for the indicator of an event or set \(A\).
\end{description}

\subsection{ULA sampling and Poincar\'e preliminaries}\label{append:lmc-ula}
For completeness, we record the standard unadjusted Langevin algorithm (ULA) used as the candidate generator in the
ULA/Gibbs instantiation of \textsc{HG-MS}; see, e.g., \citet{chewi2024analysis}.

\begin{algorithm}[h]
\caption{\textsc{ULA}$(\theta,g,x^{(0)},\lambda,K,h)$}
\label{alg:lmc-ula}
\begin{algorithmic}[1]
\Require $\theta$; lower objective $g(\theta,\cdot)$; initial state $x^{(0)}$;
temperature $\lambda$; steps $K$; stepsize $h$.
\State $x\leftarrow x^{(0)}$.
\For{$s=0$ to $K-1$}
    \State Draw $\xi^{(s)}\sim\mathcal N(0,I_d)$.
    \State $x\leftarrow x-h\nabla_x g(\theta,x)+\sqrt{2\lambda h}\,\xi^{(s)}$.
\EndFor
\State \textbf{return} $x$.
\end{algorithmic}
\end{algorithm}

\paragraph{Poincar\'e constant.}
{
A probability measure \(\mu\) over \(\R^d\) is said to satisfy the Poincar\'e inequality with constant \(C_{\mathrm{PI}}\) if, for every test function \(\varphi \in H^1(\mu)\),
\begin{equation*}
    \int \Bigl(\varphi-\int \varphi \,\d \mu\Bigr)^2 \d \mu \;\le\; C_{\mathrm{PI}} \int \|\nabla \varphi\|^2 \,\d \mu,
\end{equation*}
where \(H^1(\mu)\) denotes the Sobolev space weighted by \(\mu\). Importantly, for Gibbs measures of the form \(\gibbs_\theta^\lambda(\d x)\propto \exp(-g(\theta,x)/\lambda)\,\d x\), \citet{gong2024poincare} show that under \(\PLcirc\) the corresponding Poincar\'e constant admits a bound independent of \(\lambda\) for all sufficiently small \(\lambda\le \lambda_0\). Accordingly, when \(C_{\mathrm{PI}}\) appears later in our sampling and complexity bounds, it refers to this low-temperature uniform bound inherited from \(\PLcirc\).}

\subsection{Regularity assumptions for sampling and selection}\label{append:regularity-assumption}
The assumptions in this subsection are used for the finite-time convergence and complexity analysis in
\Cref{sec:theory}; they are not needed for the local differentiability statements in \Cref{sec:hyp_diff}.
Their roles are as follows.
\Cref{ass:curvature_g} gives uniform third-derivative control of \(g(\theta,\cdot)\) in a fixed tube around
\(\calS(\theta)\). In \Cref{ass:curvature_g}, \(\rho\) denotes the radius of this fixed tube and \(L_{g,3}\) denotes the
uniform third-derivative bound in the tube. This is used in two places: first, to obtain the explicit tube radius
\[
r(\gamma)=\min\left\{\rho,\frac{\gamma}{2\max\{L_{g,3},1\}}\right\}
\]
for ridge invertibility in \Cref{lem:tube-invertibility}; and second, through the second-fundamental-form bound in
\Cref{prop:second-fund-from-g}, to justify the curvature/volume control used in the nearest-neighbor part of the
selection analysis.
\Cref{assum_sample_comp} collects the global analytic controls needed because the ULA and Gibbs candidates are not
compactly supported: smoothness and Lipschitz bounds make the hyper-gradient stability constants finite; polynomial
growth of the derivative quantities \(\partial_\theta g\) and \(\nabla^2_{\theta x}g\) gives finite moments and controls
the off-tube term in \Cref{lem:off-tube-et}; and the coercive quadratic-growth condition gives well-defined
low-temperature Gibbs measures and Gaussian-type tube tails in \Cref{lem:gibbs-tube-width}. The global Lipschitz
condition on \(f\) already implies the linear growth of \(f\) needed for tail estimates, so no separate polynomial-growth
assumption on \(f\) is imposed. Finally, the global intrinsic non-degeneracy in
\Cref{ass:unique_min} yields the uniform restricted-curvature constant \(m_{\mathrm R}\) in
\Cref{prop:uniform-riem-nondeg}. This constant is used only in the selection-error proof:
\Cref{lem:hg-from-nondeg,lem:uniform-hg,lem:barx-distance} convert a small validation-value gap on the manifold into a
small distance to \(x^\star(\theta)\).

Our sharp selection-rate analysis also uses volume comparison bounds for small geodesic balls on the compact manifolds
\(\calS(\theta)\), which require the curvature control supplied by \Cref{ass:curvature_g}.

\begin{tcolorbox}[colback=gray!20, colframe=gray!50, boxrule=0pt, arc=0mm, left=0mm, right=0mm, top=0mm, bottom=0mm]
\begin{assumption}[regularity of $g$]\label{ass:curvature_g}
There exist constants \(\rho>0\) and \(L_{g,3}<\infty\) such that for all \(\theta\in\Theta\),
\(g(\theta,\cdot)\) is \(\mathcal{C}^{3}\) on the tube \(\{x:\mathrm{dist}(x,\calS(\theta))\le \rho\}\) and
\[
\sup_{\mathrm{dist}(x,\calS(\theta))\le \rho}\ \big\|\nabla^3_{xxx}g(\theta,x)\big\|_{\mathrm{op}}\ \le\ L_{g,3}.
\]
\end{assumption}
\end{tcolorbox}

\begin{tcolorbox}[colback=gray!20, colframe=gray!50, boxrule=0pt, arc=0mm, left=0mm, right=0mm, top=0mm, bottom=0mm, breakable]
\begin{assumption}\label{assum_sample_comp}
The following statements hold for all \(\theta\in\Theta\).
\begin{itemize}[leftmargin=*]\setlength{\itemsep}{0.5pt}
\item \textbf{Regularity of \(f\).}
\(f\) is \(\mathcal{C}^{2}\) with respect to \((\theta,x)\). Moreover, \(f\) is \(L_{f,1}\)-Lipschitz and \(L_{f,2}\)-smooth with respect to \((\theta,x)\).
Since \(\Theta\) is compact, this global Lipschitz condition implies the derived linear-growth bound
\(|f(\theta,x)|\le C(1+\|x\|)\), uniformly over \(\theta\in\Theta\).

\item \textbf{Regularity of \(g\).}
\(g\) is \(\mathcal{C}^{3}\) with respect to \((\theta,x)\).
For every \(\theta\), the map \(x\mapsto g(\theta,x)\) is \(\mathcal{C}^{2}\) and \(L_{g,2}\)-smooth (i.e., \(\nabla_x g(\theta,\cdot)\) is \(L_{g,2}\)-Lipschitz).
Moreover, for all \(x\in\R^{d}\),
\[
\|\partial_{\theta}g(\theta,x)\| \;\le\; C_{g}\|x\|^{\mathsf{n}_{g}} + D_{g},
\]
for constants \(C_{g},D_{g}\ge 0\) and an integer \(\mathsf{n}_{g}\ge 1\).
We also assume polynomial growth of the mixed derivative used in the hyper-gradient:
\[
\|\nabla^2_{\theta x}g(\theta,x)\| \;\le\; C_{\theta x}\|x\|^{\mathsf{n}_{\theta x}} + D_{\theta x},
\]
for constants \(C_{\theta x},D_{\theta x}\ge 0\) and an integer \(\mathsf{n}_{\theta x}\ge 0\).
The Hessian \(\partial_x^2 g(\theta,x)\) is continuous with respect to \((\theta,x)\).
Moreover, there exist constants \(\mathsf{D}>0\) and \(\mu_{\mathsf{qg}}>0\) such that for all \(\|x\|\ge \mathsf{D}\),
\[
\frac{\mu_{\mathsf{qg}}}{2}\,\mathrm{dist}^{2}\!\bigl(x,\calS(\theta)\bigr)
\;\le\;
g(\theta,x)-\min_{z\in\R^{d}} g(\theta,z).
\]
W.l.o.g.\ we take \(\mathsf{D}\) large enough so that \(\calS(\theta)\subseteq \mathbb B_d(0;\mathsf{D})\) for all \(\theta\in\Theta\).
\end{itemize}
\end{assumption}
\end{tcolorbox}

\begin{proposition}[Uniform restricted curvature from global non-degeneracy]\label{prop:uniform-riem-nondeg}
Let \(f\in\mathcal{C}^{2}\), \(g\in\mathcal{C}^{3}\), and suppose \Cref{ass:plcirc,ass:unique_min} hold.
For every \(\theta\in\Theta\), let
\[
\bar f_\theta:\calS(\theta)\to\R,
\qquad
\bar f_\theta(x):=f(\theta,x),
\]
denote the restriction of the upper-level objective to the lower-level solution manifold.
There exists a constant \(m_{\mathrm R}>0\), independent of \(\theta\), such that
\[
\big\langle v,\mathrm{Hess}_{\calS(\theta)}\bar f_\theta(x^\star(\theta))[v]\big\rangle
\ \ge\
m_{\mathrm R}\|v\|^2,
\qquad
\forall \theta\in\Theta,\ \forall v\in\calT_{x^\star(\theta)}^\theta .
\]
\end{proposition}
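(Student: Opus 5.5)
The plan is a compactness argument. The ingredients are the pointwise positivity in \Cref{ass:unique_min}, continuity of $\theta\mapsto x^\star(\theta)$ and of the tangent spaces, and compactness of $\Theta$. First, for each $\theta\in\Theta$ define
\[
m(\theta):=\min_{v\in\calT_{x^\star(\theta)}^\theta,\ \|v\|=1}\big\langle v,\mathrm{Hess}_{\calS(\theta)}\bar f_\theta(x^\star(\theta))[v]\big\rangle .
\]
This is a minimum of a continuous function over a compact sphere, so it is attained. By non-degeneracy, $m(\theta)>0$ for every $\theta$. It therefore suffices to show that $m$ is lower semicontinuous (in fact continuous) on the compact set $\Theta$; then $m_{\mathrm R}:=\min_\Theta m>0$.

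To get continuity, I would write the Riemannian Hessian in the extrinsic form used in \Cref{eq:riem-hess-restricted}:
\[
\langle v,\mathrm{Hess}\,\bar f_\theta(x)[v]\rangle=\langle v,\nabla^2_{xx}f(\theta,x)v\rangle+\langle \nabla_x f(\theta,x),\mathrm{II}^\theta_x(v,v)\rangle .
\]
The second fundamental form can be expressed through $g$. Differentiating $\nabla_x g(\theta,\gamma(s))=0$ twice along a curve in $\calS(\theta)$ gives
\[
\mathrm{II}^\theta_x(v,v)=-H(\theta,x)^\dagger\,\nabla^3_{xxx}g(\theta,x)[v,v].
\]
Here $H^\dagger=\mathbf N(\mathbf N^\top H\mathbf N)^{-1}\mathbf N^\top$, as in \Cref{remark_pseudoinverse}. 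The same expression yields the tangent projector $P_{\calT}=I-HH^\dagger$. Because $\ker H=\calT_x^\theta$ has constant dimension $\dimk$ and the normal eigenvalues are bounded below by $c$ (\Cref{prop:geometry}), the map $(\theta,x)\mapsto H^\dagger$ is continuous on the set $\{(\theta,x):x\in\calS(\theta)\}$. This is the standard fact that the pseudoinverse is continuous under constant rank with a spectral gap. Since $f\in\mathcal C^2$ and $g\in\mathcal C^3$, the whole quadratic form, as a function of $(\theta,x)$, is continuous on that set.

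Next comes continuity of $\theta\mapsto x^\star(\theta)$ on $\Theta$. Locally this is supplied by \Cref{thm_hyp_obj_diff}, which gives a $\mathcal C^1$ branch near each $\theta_0$. Composing, the restricted form $Q(\theta,v)$, with $v$ ranging over unit vectors in $\mathrm{range}(P_{\calT}(\theta,x^\star(\theta)))$, varies continuously. To handle the moving sphere, I parametrize unit tangent vectors as $v=P_\calT w/\|P_\calT w\|$. Alternatively, and more cleanly, I set $m(\theta)=\lambda_{\min}$ of the $\dimk\times\dimk$ matrix $\mathbf T_\theta^\top(\cdot)\mathbf T_\theta$. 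Since $P_\calT$ depends continuously on $\theta$ with constant rank $\dimk$, its nonzero-eigenvalue restriction, namely the compression $P_\calT A P_\calT + (I-P_\calT)$ of the Hessian form matrix $A$, has smallest eigenvalue equal to $\min\{m(\theta),1\}$. That quantity is continuous because eigenvalues depend continuously on matrix entries. So $\theta\mapsto\min\{m(\theta),1\}$ is continuous and positive on compact $\Theta$, hence bounded below by some $m_{\mathrm R}'>0$. Setting $m_{\mathrm R}:=m_{\mathrm R}'$ gives the claim.

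The main obstacle is the continuity step: justifying continuity of $x^\star(\theta)$ and of $\mathrm{II}$ globally on $\Theta$, including at boundary points of $\Theta$. At such points \Cref{thm_hyp_obj_diff} is stated only on open neighborhoods. I would first try to apply it with $g$ regarded on a slightly larger open set, or else use a direct sequential argument. In that argument, take $\theta_n\to\theta$ and $x^\star(\theta_n)\in\mathcal V$ compact; any limit point lies in $\calS(\theta)$, by closedness of the graph of $\calS$ coming from continuity of $g$ and of $g_\star$. By optimality and the Lipschitz continuity of $F$ (\citep[Lem.~3.2]{masiha2025superquantile}), any such limit point attains $F(\theta)$, so uniqueness forces $x^\star(\theta_n)\to x^\star(\theta)$. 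Together with the continuity of the quadratic form in $(\theta,x)$, this suffices without invoking $\mathcal C^1$ branches.
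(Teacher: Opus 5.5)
Your argument is correct, and it reaches the conclusion by a genuinely different route from the paper.

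\textbf{The paper's route.} It argues intrinsically and locally. At each $\bar\theta$ it takes the $\mathcal C^1$ branch $u^\star(\theta)$ and the $\mathcal C^2$ chart $\psi_{\bar\theta}$ supplied by \Cref{thm_hyp_obj_diff}. It then writes the restricted curvature as the smallest generalized eigenvalue of the pair $\nabla^2_{uu}\tilde f_{\bar\theta}(\theta,u^\star(\theta))$ and $A_{\bar\theta}(\theta)^\top A_{\bar\theta}(\theta)$. This is legitimate because $u^\star(\theta)$ is a critical point, so second derivatives of the chart drop out. It notes continuity and positivity on each patch and finishes with a finite subcover.

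\textbf{Your route.} You argue extrinsically and globally, in three steps.
\begin{itemize}
\item The identity $\mathrm{II}^\theta_x(v,v)=-H(\theta,x)^\dagger\nabla^3_{xxx}g(\theta,x)[v,v]$ is correct: differentiate $\nabla_x g(\theta,\gamma(s))=0$ twice and use $H^\dagger H=P_{\calN_x^\theta}$. It packages the Hessian form into one symmetric matrix $\nabla^2_{xx}f-\nabla^3_{xxx}g[H^\dagger\nabla_x f,\cdot,\cdot]$. This matrix is continuous on the solution graph because of constant rank and the normal gap $c$.
\item Continuity of $x^\star$ follows from uniqueness and compactness alone.
\item The compression $P_{\calT}AP_{\calT}+(I-P_{\calT})$ has smallest eigenvalue $\min\{m(\theta),1\}$, since $d-\dimk\ge 1$ for a compact $\calS(\theta)$. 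This avoids having to choose a continuous tangent frame.
\end{itemize}

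\textbf{What each buys.} Your route never needs the implicit-function branch of \Cref{thm_hyp_obj_diff}; non-degeneracy is used only pointwise. It only requires continuity relative to $\Theta$, so boundary points of $\Theta$ cause no trouble. It also exposes the mechanism behind the paper's sufficient condition $m_{\mathrm{amb}}>\sup_\theta\|\nabla_x f(\theta,x^\star(\theta))\|\,L_{g,3}/c$, since $\|H^\dagger\|_{\mathrm{op}}\le 1/c$. The paper's route is shorter because it reuses the chart machinery already built.

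\textbf{One optional simplification.} If you prefer not to rely on the cited Lipschitz continuity of $F$, the inequality $\limsup_n F(\theta_n)\le F(\theta)$ can be obtained directly. Use the competitor $\psi(\theta_n,0)\in\calS(\theta_n)$ from a chart of \Cref{lem:parametric-S-chart} centered at $x^\star(\theta)$, exactly as in Step~2a of the proof of \Cref{thm:F-diff-unique}.
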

\begin{proof}
If \(\dimk=0\), the tangent spaces are trivial and the claim is vacuous; take any \(m_{\mathrm R}>0\).
Assume \(\dimk\ge1\). For each \(\bar\theta\in\Theta\), \Cref{thm_hyp_obj_diff} and \Cref{ass:unique_min} give a
neighborhood \(U_{\bar\theta}\), a \(\mathcal{C}^{1}\) selected branch \(x^\star(\theta)\), and a \(\mathcal{C}^{2}\) local chart
\(\psi_{\bar\theta}(\theta,u)\) for \(\calS(\theta)\) near \(x^\star(\theta)\). Let \(u^\star(\theta)\) be the local
coordinate satisfying \(x^\star(\theta)=\psi_{\bar\theta}(\theta,u^\star(\theta))\), and write
\[
\tilde f_{\bar\theta}(\theta,u):=f(\theta,\psi_{\bar\theta}(\theta,u)),
\qquad
A_{\bar\theta}(\theta):=D_u\psi_{\bar\theta}(\theta,u^\star(\theta)).
\]
For a tangent vector \(v=A_{\bar\theta}(\theta)w\), the second variation of the pullback equals the Riemannian Hessian
quadratic form, while \(\|v\|^2=w^\top A_{\bar\theta}(\theta)^\top A_{\bar\theta}(\theta)w\). Hence the local restricted
curvature can be written as the smallest generalized eigenvalue
\[
\mu_{\bar\theta}(\theta):=
\min_{w\neq0}
\frac{w^\top\nabla^2_{uu}\tilde f_{\bar\theta}(\theta,u^\star(\theta))w}
     {w^\top A_{\bar\theta}(\theta)^\top A_{\bar\theta}(\theta)w}.
\]
This quantity is continuous on \(U_{\bar\theta}\) and is positive at every point by the intrinsic non-degeneracy in
\Cref{ass:unique_min}. Shrinking \(U_{\bar\theta}\) if necessary, there is a smaller neighborhood
\(V_{\bar\theta}\) with \(\overline{V_{\bar\theta}}\subset U_{\bar\theta}\) on which
\(\inf_{\theta\in V_{\bar\theta}}\mu_{\bar\theta}(\theta)>0\).
Since \(\Theta\) is compact, choose a finite subcover \(V_{\bar\theta_1},\ldots,V_{\bar\theta_J}\) and set
\[
m_{\mathrm R}:=\min_{1\le j\le J}\inf_{\theta\in V_{\bar\theta_j}}\mu_{\bar\theta_j}(\theta)>0.
\]
This gives the stated uniform lower bound.
\end{proof}

\Cref{prop:uniform-riem-nondeg} is the uniform version of the local intrinsic non-degeneracy condition in
\Cref{def:degenerate-optimistic}. The Hessian of the restricted objective includes the curvature of the constraint
manifold, as shown by the identity below.
With the convention for the second fundamental form used in \Cref{prop:second-fund-from-g}, for \(v\in\calT_{x^\star(\theta)}^\theta\),
\begin{equation}\label{eq:riem-hess-restricted}
\big\langle v,\mathrm{Hess}_{\calS(\theta)}\bar f_\theta(x^\star(\theta))[v]\big\rangle
=
\langle v,\nabla^2_{xx}f(\theta,x^\star(\theta))v\rangle
+
\langle \nabla_x f(\theta,x^\star(\theta)),
\mathrm{II}_{x^\star(\theta)}^\theta(v,v)\rangle .
\end{equation}
Here \(\mathrm{II}_{x^\star(\theta)}^\theta\) denotes the second fundamental form of the embedded manifold \(\calS(\theta)\) at \(x^\star(\theta)\).
Thus the conclusion of \Cref{prop:uniform-riem-nondeg} holds, for instance, if the ambient tangent Hessian has a uniform
positive margin and the second-fundamental-form correction is not too negative.
Concretely, it is sufficient that for some \(m_{\mathrm{amb}}>0\) and \(\kappa<m_{\mathrm{amb}}\),
\[
\langle v,\nabla^2_{xx}f(\theta,x^\star(\theta))v\rangle
\ge m_{\mathrm{amb}}\|v\|^2,
\qquad
\langle \nabla_x f(\theta,x^\star(\theta)),
\mathrm{II}_{x^\star(\theta)}^\theta(v,v)\rangle
\ge -\kappa\|v\|^2
\]
uniformly over \(\theta\in\Theta\) and \(v\in\calT_{x^\star(\theta)}^\theta\).
Using \Cref{prop:second-fund-from-g}, a further sufficient condition is
\[
m_{\mathrm{amb}}
>
\sup_{\theta\in\Theta}\|\nabla_x f(\theta,x^\star(\theta))\|\frac{L_{g,3}}{c},
\]
where \(c\) is the uniform normal spectral gap in \Cref{prop:geometry}.

We close this subsection by comparing these assumptions with common assumptions in related analyses.
The smoothness and Lipschitz-type parts of \Cref{assum_sample_comp} are standard in hyper-gradient analyses of bilevel
optimization: they play the same role as the uniform derivative bounds used to control implicit-gradient stability and
inexact lower-level solves \citep{kwon2023penalty,chen2023bilevel}. Many bilevel convergence results additionally avoid
tail issues by imposing a bounded lower-level domain, bounded gradients, or bounded iterates. Our sampling-based method
does not have compactly supported candidates, because the Gibbs and ULA laws live on \(\R^d\). We therefore replace those
bounded-domain requirements with derivative polynomial-growth bounds, the linear-growth consequence of Lipschitzness of
\(f\), and coercive quadratic growth away from \(\calS(\theta)\). This is the standard type of tail-control condition
used in low-temperature Gibbs/Langevin analyses around minimizer sets
\citep{hasenpflug2024wasserstein,chewi2024analysis}. It is weaker than assuming all relevant quantities are bounded on
\(\R^d\), but strong enough to make the Gibbs normalizing constants finite, to prove tube concentration, and to control
the off-tube contribution in the hyper-gradient error.

The manifold-specific assumptions have a different purpose. \Cref{ass:curvature_g} is a quantitative version of local
regularity near the minimizer manifold; it is analogous to the Hessian-Lipschitz or third-order smoothness assumptions
often used when one needs stable local quadratic models, but here it is imposed only in a tube around \(\calS(\theta)\).
Together with the normal spectral gap from \(\PLcirc\), it provides the uniform curvature bounds required for the
manifold-volume and nearest-neighbor estimates used in the selection analysis; related manifold Gibbs estimates appear in
\citet{masiha2025superquantile}. Finally, \Cref{prop:uniform-riem-nondeg} is the manifold analogue of a uniform
second-order sufficient condition for the upper-level restricted problem \(f(\theta,\cdot)|_{\calS(\theta)}\). This
conclusion is vacuous in the classical singleton-minimizer regime, but in the non-singleton manifold regime it is what
converts validation-value suboptimality into distance to the unique optimistic selection. Thus the additional assumptions
in this subsection are not substitutes for the local differentiability theory in \Cref{sec:hyp_diff}; they are uniform,
quantitative conditions needed for the finite-sample \BoN selection rate and the resulting oracle-complexity bound.

\section{Related Work}\label{sec:related_work}
Bilevel optimization has a long history in mathematical programming; see, e.g.,
\citet{vicente1994bilevel,dempe2002foundations,colson2007overview,luo1996mathematical,dempe2020bilevel}.
Classical optimistic and pessimistic formulations already recognize that nonunique lower-level solutions create a genuine
\emph{selection} issue \citep{dempe2007new,ye1997exact,ye1997necessary}. Standard single-level reformulations, whether via
value functions or KKT/MPEC systems \citep{ye1995optimality,ye2010new,dempe2012kkt,zemkoho2021comparison}, provide an
important optimization viewpoint. Our focus here is a different question: when does the induced hyper-objective admit a
usable hyper-gradient?

\paragraph{Classical hyper-gradient methods.}
Most classical hyper-gradient methods study the regime in which the lower-level minimizer is a singleton, so the
lower-level response can be treated as a single-valued and differentiable map of \(\theta\), typically because their
assumptions (such as strong convexity) guarantee that the relevant lower-level Hessian is invertible. In this regime, the
hyper-objective \(F(\theta)=f(\theta,x(\theta))\) is differentiated exactly by implicit differentiation, while truncated
backpropagation is used to approximate its hyper-gradient by differentiating through finitely many lower-level iterations
\citep{pedregosa2016hypergrad,franceschi2017forward,lorraine2020optimizing,shaban2019truncated}. Closely
related methodological work studies how to differentiate through parameterized optimization problems more generally.
\citet{ablin2020super} analyze gradients of functions defined as minima and compare analytic implicit gradients with
automatic differentiation through iterative solvers. \citet{arbel2021amortized} develop amortized inexact implicit
differentiation for stochastic bilevel optimization with strongly convex inner problems. \citet{blondel2022efficient} develop a general framework for implicit differentiation of parameterized optimization problems: given an optimality system defining a single-valued solution map, they differentiate the solution by applying the implicit function theorem to that system. Their focus is modular implicit-differentiation machinery rather than minima selection from a non-singleton lower-level solution set.
These works are closely related in technique, but they still largely assume a well-defined single-valued solution map, and therefore do not address the minima-selection issue that arises when the lower-level minimizer set is non-singleton.

\paragraph{Degenerate implicit differentiation and pseudoinverse formulas.}
A separate line studies bilevel differentiation when the lower-level Hessian is singular. Most relevant for us,
\citet{arbel2022non} study bilevel games in which the lower-level problem may have multiple critical points and a
selection map chooses the particular critical point reached by a prescribed lower-level process (e.g., gradient flow).
They show that, under a parametric Morse--Bott condition, this algorithmic critical-point selection map remains
differentiable near local minima even when the lower Hessian is singular, and its derivative is characterized by a
pseudoinverse-based implicit-differentiation formula.
In a different overparameterized setting, \citet{vicol2021implicit,vicol2022implicitbias} consider lower-level objectives
with nonunique minimizers and study the minimum-norm lower-level solution selected by the optimization dynamics. Because
this selected solution is defined through a singular linear system, its response Jacobian is characterized by the
minimum-norm solution of that system, which is expressed using the Moore--Penrose pseudoinverse.

The pseudoinverse formulas in these works are technically related to ours, but the object being differentiated is
different. \citet{arbel2022non} differentiate a prescribed algorithmic critical-point selection map, and the
overparameterized line \citep{vicol2021implicit,vicol2022implicitbias} differentiates a minimum-norm or algorithm-induced
lower-level response. In contrast, we differentiate the exact optimistic value
\[
F(\theta)=\min_{x\in\calS(\theta)} f(\theta,x),
\]
where \(\calS(\theta)\) is the manifold of global lower-level minimizers and the selected point is determined by the
upper-level objective itself. Thus our contribution is to show that, under local \PLcirc\ and local uniqueness of the
optimistic minimizer, this hard optimistic value function is differentiable and its exact hyper-gradient admits a
pseudoinverse formula.

\paragraph{Alternative formulations, selection-based models, and nonsmooth bilevel objectives.}
Several recent works address nonconvex or set-valued lower levels by modifying either the object being optimized or the
solution concept. \citet{liu2021towards} propose a trajectory-based approximation framework for nonconvex followers,
introducing an initialization auxiliary and a pessimistic trajectory truncation rule so that the upper-level method
optimizes through finite lower-level dynamics rather than through a globally solved lower-level problem. \citet{arbel2022non}
introduce bilevel games with critical-point selection maps, where the lower-level object is a selected critical point
reached by a prescribed optimization process, and they study how finite-budget unrolled methods relate to equilibria of
these selection-based games. Under a Morse-type parametric qualification, \citet{bolte2025bilevel} show that the graphs of
lower-level critical points and local minima decompose into finitely many smooth branches and use this structure to define
branch-wise hyper-objectives and gradient methods along individual branches. \citet{jiang2025correspondence} argue that,
for general nonconvex lower levels, the classical hyper-objective may be unsettled if global minimizers are unattainable,
and instead propose a correspondence-driven hyper-objective tied to the output of a prescribed lower-level algorithm,
together with Gaussian smoothing and projected stochastic-gradient guarantees. \citet{chen2025setsmoothness} take a
different nonsmooth route: rather than recovering classical differentiability, they introduce a set-smoothness condition
that yields weak convexity/concavity of the induced hyper-objective and enables computation of approximate Clarke
hyper-stationary points.

Another recent line develops Moreau-envelope single-level reformulations as an alternative to classical value-function and
KKT-based reductions. \citet{bai2025optimality} derive directional necessary optimality conditions in this framework,
\citet{gao2023moreau} propose a difference-of-weakly-convex reformulation with a sequentially convergent algorithm, and
\citet{Lu25tsp} develop a two-sided smoothed primal--dual method for general nonconvex bilevel problems with KKT-type
convergence guarantees. Our focus is different from all of these directions: we retain the classical optimistic
hyper-objective \(F(\theta)=\min_{x\in\calS(\theta)} f(\theta,x)\) itself and ask when, despite the lower-level solution set
being manifold-valued, that object is differentiable and even locally smooth.

A recent sampling-based relaxation \citep{masiha2025superquantile} replaces hard minima selection by a
superquantile--Gibbs surrogate and studies a zeroth-order method that optimizes the resulting nonsmooth Lipschitz
objective through sampling-based function-value information rather than exact hyper-gradients. That approach is designed
to circumvent the nondifferentiability of hard minima selection by changing the objective being optimized. The present
paper takes a different route: we keep the hard optimistic objective, prove classical \(\mathcal{C}^{1}\) regularity and
local smoothness under uniqueness and nondegeneracy of the optimistic minimizer, derive the corresponding exact
pseudoinverse hyper-gradient, and then build a practical select-then-differentiate algorithm with an end-to-end
stationarity guarantee. In this sense, the present paper can be viewed as a differentiable counterpart to that
relaxation-based, zeroth-order line.

\paragraph{Relaxing strong convexity through global regularity or convexity assumptions.}
Recent works on bilevel optimization study nonconvex lower-level problems under \emph{global} P\L\ or closely related
uniform error-bound assumptions
\citep{chen2023bilevel,kwon2023penalty,xiao2023generalized,huang2024optimal,shen2025penalty,liu2022bome}. One attraction
of this line is that such global regularity helps guarantee a smooth and analytically tractable hyper-objective
\citep{kwon2023penalty,chen2023bilevel}. From the viewpoint of the lower-level solution set, however, these assumptions are
quite restrictive. For a \(\mathcal{C}^{2}\) lower-level objective satisfying a global P\L\ condition, the minimizer set
\(\calS(\theta)\) can only be either a singleton or an unbounded subset of \(\R^d\)
\citep{criscitiello2025,masiha2025superquantile}. Therefore, once one adds the boundedness or coercivity assumptions
typically used in this literature, the lower-level problem collapses back to the singleton-minimizer regime, and bounded
non-singleton minimizer manifolds are excluded.

This restriction is also visible in the specific assumptions used by these papers. \citet{huang2024optimal} assume a
nondegenerate lower Hessian at global minimizers, which effectively enforces local uniqueness; \citet{liu2022bome}
explicitly assume that \(\calS(\theta)\) is a singleton; and \citet{kwon2023penalty} impose a proximal error bound for
smooth \(g\) together with coercivity, which again yields a singleton solution
\citep[cf. Prop.~C.1]{chen2023bilevel}. Other extensions obtain differentiability or algorithmic guarantees under
convexity of the lower-level objective in \(x\) \citep{xiao2023generalized,shen2025penalty}; this allows non-strongly
convex followers, but it still restricts \(\calS(\theta)\) to a convex geometry rather than the nonconvex compact
manifold-valued setting that motivates our work. A related direction is \citet{chen2025setsmoothness}, who study approximate Clarke hyper-stationarity via a new
set-smoothness property of the lower-level solution map. Their framework assumes, in particular, that for every
\(\theta\), the lower-level solution set \(\calS(\theta)\) is nonempty, closed, and convex, together with a uniform
error-bound condition (equivalently, under smoothness, a global P\L\ condition). Under these assumptions, they show that
the hyper-objective is weakly convex/concave and develop an algorithm for approximate Clarke hyper-stationarity. From the
perspective of lower-level geometry, this is still quite different from our setting: their analysis is tailored to
convex solution sets, whereas we study compact nonconvex manifold-valued minimizer sets under the local
\(\PLcirc\) condition.

By contrast, our analysis is based on the local \(\PLcirc\) condition of \citet{gong2024poincare}, which is specifically
designed to allow a connected compact set of global lower-level minimizers with \(\mathcal{C}^{2}\) manifold structure. This
regime remains rich enough to cover overparameterized deep-learning landscapes where connected minima have been observed
\citep{draxler2018essentially,garipov2018loss,nguyen2019connected}, and it keeps minima selection genuinely relevant rather
than collapsing the problem back to the singleton-minimizer case.

\subsection{Comparison with penalty-based differentiability analyses}\label{append:penalty-diff}
We compare our standing assumptions to penalty-based differentiability analyses such as \citet{kwon2023penalty} and \citet{chen2023bilevel}, which study the perturbed lower-level objective
\[
h_\sigma(\theta,x)\;:=\;g(\theta,x)+\sigma f(\theta,x),\qquad \sigma>0.
\]

\paragraph{What is assumed in penalty-based differentiability works.}
A representative sufficient condition in this line of work is a \emph{$\sigma$-uniform} error-bound / P\L-type regularity of $h_\sigma(\theta,\cdot)$ in the lower-level variable $x$ for all $\sigma\in[0,\sigma_0]$, with constants that do not degenerate as $\sigma\downarrow 0$.
For example, \citet{kwon2023penalty} assume a (small-error) proximal error-bound condition for $h_\sigma(\theta,\cdot)$ that holds uniformly for all $\sigma\in[0,\sigma_0]$, together with coercivity and smoothness assumptions.
In the smooth unconstrained setting, such a proximal error bound is equivalent to a P\L\ inequality \citep[Prop.~C.1]{chen2023bilevel}.
In particular, since $\sigma=0$ is included, these assumptions enforce a P\L\ / error-bound condition for $g(\theta,\cdot)$ itself.

\paragraph{Implication: singleton lower-level minimizers.}
For \(\mathcal{C}^{2}\) objectives, a global P\L\ condition implies that the minimizer set is either a singleton or unbounded \citep{criscitiello2025}.
Therefore, under standard assumptions that rule out unbounded minimizers (e.g., coercivity, or compactness of $\calS(\theta)$), the lower-level minimizer must be unique \citep{masiha2025superquantile}.
In this regime, the optimistic bilevel objective reduces to the classical singleton-minimizer setting, where \minimaselection is redundant.
Several recent analyses in nonconvex bilevel optimization fall into this category (global P\L\ / error bounds together with coercivity/compactness) and hence effectively assume a singleton lower-level minimizer; see, e.g., \citet{huang2024optimal,kwon2023penalty}.
Other works enforce uniqueness more directly (without explicitly invoking global P\L), e.g., \citet{liu2022bome}.

\paragraph{Why our assumptions are weaker than $\sigma$-uniform local P\L\ for \(g+\sigma f\).}
The key obstruction is already visible by testing a putative P\L\ inequality on points of the minimizer manifold.
On \(\calS(\theta)\), the lower objective \(g\) is constant and \(\nabla_x g=0\). Thus the value gap of
\(h_\sigma=g+\sigma f\) between two manifold points is scaled by \(\sigma\), while the gradient contribution from
\(f\) is also scaled by \(\sigma\). The following lemma shows that any P\L\ constant on a neighborhood containing both
the optimistic point and a higher-value manifold point must therefore be \(\mathcal{O}(\sigma)\).

\begin{lemma}[No \(\sigma\)-uniform local P\L\ constant on neighborhoods containing a higher-value manifold point]\label{lem:no-sigma-uniform-pl}
Fix \(\theta\) and let \(x^\star\in\calS(\theta)\) be the unique optimistic minimizer.
Assume there exists \(\bar x\in\calS(\theta)\) such that \(f(\theta,\bar x)>f(\theta,x^\star)\) (i.e., \(f(\theta,\cdot)\) is not constant on \(\calS(\theta)\)).
Let \(U\subset\R^d\) be any neighborhood containing both \(x^\star\) and \(\bar x\).
If \(h_\sigma(\theta,\cdot)\) satisfies a (local) P\L\ inequality on \(U\) with constant \(\mu_\sigma>0\), i.e.,
\[
h_\sigma(\theta,x)-\inf_{y\in U} h_\sigma(\theta,y)\ \le\ (2\mu_\sigma)^{-1}\,\|\nabla_x h_\sigma(\theta,x)\|^2
\qquad\forall x\in U,
\]
then
\begin{equation}\label{eq:mu-sigma-upper-bound}
\mu_\sigma
\ \le\
\frac{\sigma}{2}\cdot
\frac{\|\nabla_x f(\theta,\bar x)\|^2}{f(\theta,\bar x)-f(\theta,x^\star)}.
\end{equation}
In particular, \(\mu_\sigma=\mathcal{O}(\sigma)\) as \(\sigma\downarrow 0\), so no constant bounded away from \(0\) can hold
uniformly over \(\sigma\in(0,\sigma_0]\) on any such neighborhood.
\end{lemma}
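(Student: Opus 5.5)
The plan is to test the assumed P\L\ inequality at the single point \(x=\bar x\), where the gradient of \(g\) vanishes. Since \(\bar x\in\calS(\theta)\), it is a global minimizer of \(g(\theta,\cdot)\), so \(\nabla_x g(\theta,\bar x)=0\). Hence
\[
\nabla_x h_\sigma(\theta,\bar x)=\sigma\nabla_x f(\theta,\bar x),
\]
and the right-hand side of the P\L\ inequality at \(\bar x\) equals \((2\mu_\sigma)^{-1}\sigma^2\|\nabla_x f(\theta,\bar x)\|^2\).

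Next we lower bound the left-hand side. Since \(x^\star\in U\), we have
\[
\inf_{y\in U}h_\sigma(\theta,y)\le h_\sigma(\theta,x^\star)=g_\star(\theta)+\sigma f(\theta,x^\star),
\]
while \(h_\sigma(\theta,\bar x)=g_\star(\theta)+\sigma f(\theta,\bar x)\). Subtracting, the left-hand side is at least \(\sigma\bigl(f(\theta,\bar x)-f(\theta,x^\star)\bigr)>0\).

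Combining the two bounds gives
\[
\sigma\bigl(f(\theta,\bar x)-f(\theta,x^\star)\bigr)\le \frac{\sigma^2\|\nabla_x f(\theta,\bar x)\|^2}{2\mu_\sigma}.
\]
Dividing by \(\sigma>0\) and rearranging yields \eqref{eq:mu-sigma-upper-bound}. If \(\nabla_x f(\theta,\bar x)=0\), the inequality instead forces a positive quantity to be at most \(0\). So no P\L\ constant exists at all in that case, and the bound holds vacuously with right side \(0\). The \(\mathcal O(\sigma)\) conclusion is immediate because the ratio in \eqref{eq:mu-sigma-upper-bound} does not depend on \(\sigma\).

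The argument has no real obstacle. The only points needing care are the degenerate case \(\nabla_x f(\theta,\bar x)=0\) and the observation that the infimum over \(U\) is bounded by the value at \(x^\star\) rather than equal to it. Only this inequality direction is needed.
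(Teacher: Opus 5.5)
Your proposal is correct and matches the paper's proof essentially line for line: bound $\inf_{y\in U}h_\sigma(\theta,y)$ above by $h_\sigma(\theta,x^\star)$, apply the P\L\ inequality at $\bar x$ using $\nabla_x g(\theta,\bar x)=0$ and $g(\theta,\bar x)=g(\theta,x^\star)$, and rearrange. Your treatment of the degenerate case $\nabla_x f(\theta,\bar x)=0$ is a correct small addition that the paper leaves implicit: there the hypothesis can never hold, so the implication is vacuous.
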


\begin{proof}
Fix such a neighborhood \(U\) and point \(\bar x\in\calS(\theta)\cap U\).
Since \(x^\star\in U\), we have \(\inf_{y\in U}h_\sigma(\theta,y)\le h_\sigma(\theta,x^\star)\), hence
\[
h_\sigma(\theta,\bar x)-h_\sigma(\theta,x^\star)
\ \le\
h_\sigma(\theta,\bar x)-\inf_{y\in U}h_\sigma(\theta,y).
\]
Applying the P\L\ inequality at \(x=\bar x\) and using that \(\bar x\in\calS(\theta)\) (so \(\nabla_x g(\theta,\bar x)=0\) and \(g(\theta,\bar x)=g(\theta,x^\star)\)) gives
\[
\sigma\big(f(\theta,\bar x)-f(\theta,x^\star)\big)
\ \le\
(2\mu_\sigma)^{-1}\,\|\sigma\nabla_x f(\theta,\bar x)\|^2,
\]
which rearranges to \eqref{eq:mu-sigma-upper-bound}.
\end{proof}

When \(\dimk>0\) and \(x^\star\) is non-degenerate in the sense of
\Cref{def:degenerate-optimistic}, such higher-value points occur arbitrarily close to \(x^\star\): along any nonzero
tangent direction, the restricted objective \(f(\theta,\cdot)|_{\calS(\theta)}\) increases quadratically for sufficiently
small geodesic displacement. Thus the obstruction above is genuinely local in the non-singleton manifold regime.

\section{Proof of \Cref{sec:hyp_diff}}\label{append_hyper_diff}
This appendix provides the proofs for \Cref{sec:hyp_diff}.

\subsection{Proof of \Cref{thm:F-diff-unique}}\label{append:F-diff-unique}
		Example~\ref{ex:unique-nonsmooth} shows that even when the optimistic minimizer is unique,
		the selection map $\theta\mapsto x^\star(\theta)$ can fail to be differentiable at degenerate points.
		Nevertheless, uniqueness is still enough to ensure that the \emph{value function}
		$F(\theta)=\min_{x\in\calS(\theta)}f(\theta,x)$ is differentiable, and when uniqueness holds in a neighborhood it is in fact $\mathcal{C}^{1}$.

	The following is a standard envelope/Danskin lemma; see, e.g., \citet{danskin1967theory} and
	\citet[Prop.~2.1]{oyama2018nondifferentiability}. We provide the proof for completeness.
	\begin{lemma}[A basic envelope/Danskin lemma]\label{lem:envelope-unique}
	Let \(V\subset\R^m\) be an open neighborhood of \(\theta_0\), let \(U\subset\R^\ell\) be compact, and let
	\(\phi:V\times U\to\R\) be continuous. Suppose that \(\phi(\cdot,u)\) is differentiable in \(\theta\) for all
	\(u\in U\), and that \((\theta,u)\mapsto\nabla_\theta\phi(\theta,u)\) is continuous on a neighborhood of
	\((\theta_0,u_0)\).
	Define \(F(\theta):=\min_{u\in U}\phi(\theta,u)\) for \(\theta\in V\).
	If $\arg\min_{u\in U}\phi(\theta_0,u)=\{u_0\}$, then $F$ is differentiable at $\theta_0$ and
	\[
	\nabla F(\theta_0)=\nabla_\theta\phi(\theta_0,u_0).
	\]
	\end{lemma}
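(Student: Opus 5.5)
We prove the envelope lemma (\Cref{lem:envelope-unique}) by sandwiching the increment \(F(\theta)-F(\theta_0)\) between two first-order expansions, one evaluated at \(u_0\) and one at a minimizer \(u_\theta\) of \(\phi(\theta,\cdot)\). Throughout, let \(\theta\to\theta_0\) in \(V\). For each such \(\theta\), compactness of \(U\) and continuity of \(\phi(\theta,\cdot)\) give a minimizer \(u_\theta\in\arg\min_{u\in U}\phi(\theta,u)\), chosen arbitrarily.

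\emph{Step 1 (upper bound).} Since \(F(\theta)\le\phi(\theta,u_0)\) and \(F(\theta_0)=\phi(\theta_0,u_0)\),
\[
F(\theta)-F(\theta_0)\le \phi(\theta,u_0)-\phi(\theta_0,u_0)=\langle\nabla_\theta\phi(\theta_0,u_0),\theta-\theta_0\rangle+o(\|\theta-\theta_0\|),
\]
by differentiability of \(\phi(\cdot,u_0)\) at \(\theta_0\).

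\emph{Step 2 (stability of minimizers).} We show \(u_\theta\to u_0\) as \(\theta\to\theta_0\). Take any sequence \(\theta_n\to\theta_0\). By compactness, a subsequence satisfies \(u_{\theta_n}\to\bar u\in U\). For every \(u\in U\) we have \(\phi(\theta_n,u_{\theta_n})\le\phi(\theta_n,u)\), and passing to the limit using joint continuity of \(\phi\) gives \(\phi(\theta_0,\bar u)\le\phi(\theta_0,u)\). Hence \(\bar u\) is a minimizer at \(\theta_0\), so \(\bar u=u_0\) by uniqueness. Since every subsequence has a further subsequence converging to \(u_0\), the whole family converges.

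\emph{Step 3 (lower bound).} Since \(F(\theta_0)\le\phi(\theta_0,u_\theta)\),
\[
F(\theta)-F(\theta_0)\ge\phi(\theta,u_\theta)-\phi(\theta_0,u_\theta).
\]
Here a uniform expansion is needed, because the minimizer \(u_\theta\) moves with \(\theta\). Choose a neighborhood \(B\times W\) of \((\theta_0,u_0)\), with \(B\) a convex ball, on which \(\nabla_\theta\phi\) is continuous. For \(\theta\in B\) close enough to \(\theta_0\), Step 2 gives \(u_\theta\in W\). The function \(s\mapsto\phi(\theta_0+s(\theta-\theta_0),u_\theta)\) is differentiable on \([0,1]\), so the mean value theorem yields a point \(\xi_\theta\) on the segment \([\theta_0,\theta]\) with
\[
\phi(\theta,u_\theta)-\phi(\theta_0,u_\theta)=\langle\nabla_\theta\phi(\xi_\theta,u_\theta),\theta-\theta_0\rangle.
\]
As \(\theta\to\theta_0\) we have \((\xi_\theta,u_\theta)\to(\theta_0,u_0)\), so continuity of \(\nabla_\theta\phi\) gives \(\nabla_\theta\phi(\xi_\theta,u_\theta)=\nabla_\theta\phi(\theta_0,u_0)+o(1)\). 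Therefore the lower bound equals \(\langle\nabla_\theta\phi(\theta_0,u_0),\theta-\theta_0\rangle+o(\|\theta-\theta_0\|)\).

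Combining Steps 1 and 3 gives \(F(\theta)-F(\theta_0)=\langle\nabla_\theta\phi(\theta_0,u_0),\theta-\theta_0\rangle+o(\|\theta-\theta_0\|)\), which is the claim. The main obstacle is Step 3, since the point of expansion moves with \(\theta\). It is handled by combining the stability argument of Step 2 with local joint continuity of \(\nabla_\theta\phi\) near \((\theta_0,u_0)\), which is exactly the continuity hypothesis the lemma assumes.
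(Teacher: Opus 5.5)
Your proposal is correct and matches the paper's proof: you use the same sandwich \(\phi(\theta,u_\theta)-\phi(\theta_0,u_\theta)\le F(\theta)-F(\theta_0)\le\phi(\theta,u_0)-\phi(\theta_0,u_0)\), the same compactness-plus-uniqueness argument for \(u_\theta\to u_0\), and continuity of \(\nabla_\theta\phi\) at \((\theta_0,u_0)\) for the lower bound. The only difference is cosmetic: you apply the mean value theorem along the segment, where the paper writes the increment as \(\int_0^1\langle\nabla_\theta\phi(\theta_0+sh,u_h)-a,h\rangle\,\mathrm ds\), and your version needs only differentiability along the segment rather than integrability of the derivative.
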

	\begin{proof}
	We first note that every local choice of minimizers converges to the unique minimizer at \(\theta_0\).
	Let $\theta_n\to\theta_0$ and pick any minimizer $u_n\in\arg\min_{u\in U}\phi(\theta_n,u)$.
	By compactness of $U$, along a subsequence (not relabeled) we have $u_n\to \bar u\in U$.
	For any fixed $u\in U$, optimality of $u_n$ gives $\phi(\theta_n,u_n)\le \phi(\theta_n,u)$.
	Taking $n\to\infty$ and using continuity of $\phi$ yields $\phi(\theta_0,\bar u)\le \phi(\theta_0,u)$ for all $u\in U$,
	so $\bar u\in\arg\min_{u\in U}\phi(\theta_0,u)=\{u_0\}$.
	Thus every convergent subsequence of $u_n$ converges to $u_0$, hence $u_n\to u_0$.

	We now prove Fr\'echet differentiability. Let \(h\to0\) in \(\R^m\) with \(\theta_0+h\in V\), and let
	\(u_h\in\arg\min_{u\in U}\phi(\theta_0+h,u)\). By the first part, \(u_h\to u_0\) as \(h\to0\).
	Set \(a:=\nabla_\theta\phi(\theta_0,u_0)\). Optimality of \(u_h\) and \(u_0\) gives
	\[
	\phi(\theta_0+h,u_h)-\phi(\theta_0,u_h)
	\ \le\
	F(\theta_0+h)-F(\theta_0)
	\ \le\
	\phi(\theta_0+h,u_0)-\phi(\theta_0,u_0).
	\]
	It remains to show that both the lower and upper bounds have first-order expansion \(\langle a,h\rangle+o(\|h\|)\).
	The upper bound satisfies this immediately from differentiability of \(\phi(\cdot,u_0)\) at \(\theta_0\):
	\[
	\phi(\theta_0+h,u_0)-\phi(\theta_0,u_0)-\langle a,h\rangle=o(\|h\|).
	\]
	For the lower bound, take \(\|h\|\) small enough so that
	\(\{(\theta_0+s h,u_h):s\in[0,1]\}\) lies in the neighborhood where \(\nabla_\theta\phi\) is continuous, and write
	\[
	\phi(\theta_0+h,u_h)-\phi(\theta_0,u_h)-\langle a,h\rangle
	=
	\int_0^1
	\big\langle
	\nabla_\theta\phi(\theta_0+s h,u_h)-a,\ h
	\big\rangle\,\mathrm ds.
	\]
	We claim that the integrand is uniformly \(o(\|h\|)\). Indeed, since \(u_h\to u_0\) and
	\(\theta_0+s h\to\theta_0\) uniformly in \(s\in[0,1]\), continuity of \(\nabla_\theta\phi\) at
	\((\theta_0,u_0)\) gives
	\[
	\sup_{s\in[0,1]}
	\big\|
	\nabla_\theta\phi(\theta_0+s h,u_h)-a
	\big\|
	\to0.
	\]
	Consequently,
	\[
	\left|
	\phi(\theta_0+h,u_h)-\phi(\theta_0,u_h)-\langle a,h\rangle
	\right|
	\le
	\|h\|
	\sup_{s\in[0,1]}
	\big\|
	\nabla_\theta\phi(\theta_0+s h,u_h)-a
	\big\|
	=o(\|h\|).
	\]
	Combining the upper and lower bounds yields
	\[
	F(\theta_0+h)-F(\theta_0)-\langle a,h\rangle=o(\|h\|),
	\]
	which proves differentiability of \(F\) at \(\theta_0\) with gradient \(a=\nabla_\theta\phi(\theta_0,u_0)\).
	\end{proof}

	Lemma~\ref{lem:parametric-S-chart} is the bridge from minimizing over the moving manifold \(\calS(\theta)\) to
	minimizing over fixed local coordinates: near \(x_0\), points in \(\calS(\theta)\) can be written as
	\(\psi(\theta,u)\) with \(u\) in a fixed neighborhood of \(0\in\R^{\dimk}\).
	\begin{lemma}[Local parameter-dependent chart for the minimizer manifold]\label{lem:parametric-S-chart}
Let \(g\in\mathcal{C}^{3}\) and let \Cref{ass:plcirc} hold. Fix
\(\theta_0\in\Theta\) and \(x_0\in\calS(\theta_0)\). Then, after shrinking the
neighborhood if needed, there exist neighborhoods \(\mathcal U_\theta\) of \(\theta_0\),
		\(\mathcal U_u\) of \(0\in\R^{\dimk}\), and \(\mathcal W\) of \(x_0\), together with a
		\(\mathcal{C}^{2}\) map
		\[
		\psi:\mathcal U_\theta\times\mathcal U_u\to\mathcal W,
		\qquad
		\psi(\theta_0,0)=x_0,
		\]
		such that \(u\mapsto\psi(\theta,u)\) is an embedded chart and
		\[
		\calS(\theta)\cap\mathcal W=\{\psi(\theta,u):u\in\mathcal U_u\},
		\qquad
		\theta\in\mathcal U_\theta .
		\]
		\end{lemma}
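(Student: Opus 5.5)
We will build the chart from the normal equations via the implicit function theorem, and then identify its image with \(\calS(\theta)\) near \(x_0\) using the \(\PLcirc\) geometry of \Cref{prop:geometry}. Let \(\calT_0:=\calT_{x_0}^{\theta_0}\) and \(\calN_0:=\calN_{x_0}^{\theta_0}\). Fix orthonormal bases \(U_{\calT_0}\in\R^{d\times\dimk}\) and \(U_{\calN_0}\in\R^{d\times q}\) with \(q=d-\dimk\), and define
\[
\Phi(\theta,u,v):=U_{\calN_0}^\top\nabla_x g(\theta,x_0+U_{\calT_0}u+U_{\calN_0}v).
\]
Since \(g\in\mathcal C^3\), the map \(\Phi\) is \(\mathcal C^2\), and \(\Phi(\theta_0,0,0)=0\). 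By \Cref{prop:geometry}(ii), \(D_v\Phi(\theta_0,0,0)=U_{\calN_0}^\top H(\theta_0,x_0)U_{\calN_0}\succeq cI\) is invertible. The implicit function theorem then gives neighborhoods and a unique \(\mathcal C^2\) map \(\varphi\) with \(\Phi(\theta,u,\varphi(\theta,u))=0\). We set
\[
\psi(\theta,u):=x_0+U_{\calT_0}u+U_{\calN_0}\varphi(\theta,u).
\]
Because \(U_{\calT_0}^\top\psi(\theta,u)=U_{\calT_0}^\top x_0+u\), the map \(u\mapsto\psi(\theta,u)\) is injective with injective differential. It is therefore an embedding, and so is an embedded chart onto its image \(\Sigma_\theta\).

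\emph{Inclusion \(\calS(\theta)\cap\mathcal W\subseteq\Sigma_\theta\).} Take \(\mathcal W\) to be a box \(\{x_0+U_{\calT_0}u+U_{\calN_0}v: u\in\mathcal U_u,\ \|v\|<\delta\}\) lying inside the IFT domain. Every \(x\in\calS(\theta)\) satisfies \(\nabla_x g(\theta,x)=0\), so \(\Phi=0\) at its coordinates. By IFT uniqueness in the box, its \(v\)-coordinate equals \(\varphi(\theta,u)\), which gives \(x\in\Sigma_\theta\).

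\emph{Reverse inclusion \(\Sigma_\theta\subseteq\calS(\theta)\).} This is the main obstacle. The set \(\Sigma_\theta\) is only the locus where the normal-projected gradient vanishes, and a priori it need not consist of minimizers. I would argue in three steps.

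\textbf{Step 1: \(\calS(\theta)\) passes near \(x_0\).} First I would show that \(\calS(\theta)\) has points near \(x_0\) for \(\theta\) near \(\theta_0\), i.e.\ lower Hausdorff semicontinuity. Near \(x_0\), the function \(g(\theta,\cdot)-g_\star(\theta)\) is small at \(x_0\), since \(g_\star\) is continuous by compactness of \(\mathcal V\). Running gradient flow from \(x_0\) inside the uniform local P\L\ neighborhood of \Cref{ass:plcirc}, the P\L\ length estimate bounds the distance travelled by \(\sqrt{(g(\theta,x_0)-g_\star(\theta))/\mu}\), which tends to \(0\). The flow therefore reaches a point of \(\calS(\theta)\) within \(o(1)\) of \(x_0\). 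The one delicate point is that the flow must stay in the P\L\ neighborhood; I would secure this using uniformity in \(\theta\) and continuity.

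\textbf{Step 2: \(\calS(\theta)\cap\mathcal W\) is open in \(\Sigma_\theta\).} By Step 1 and the first inclusion, \(\calS(\theta)\cap\mathcal W\) is a nonempty subset of the \(\dimk\)-manifold \(\Sigma_\theta\). It is itself a \(\dimk\)-dimensional embedded manifold, using the constant dimension in \Cref{prop:geometry}, which needs \(g\in\mathcal C^3\). By invariance of domain it is therefore open in \(\Sigma_\theta\).

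\textbf{Step 3: it is also closed in \(\Sigma_\theta\), hence equal to it.} Closedness follows from closedness of \(\calS(\theta)\). Since \(\Sigma_\theta\) is homeomorphic to the connected set \(\mathcal U_u\) (taken to be a ball), the set \(\calS(\theta)\cap\mathcal W\) is all of \(\Sigma_\theta\). For this to work I must shrink \(\mathcal W\) so that \(\calS(\theta)\cap\mathcal W\) stays away from the box's side boundary, which is where care is needed. Concretely, I would take the \(u\)-radius fixed and the \(\theta\)-neighborhood small enough that, by Step 1 applied near every point of \(\psi(\theta_0,\mathcal U_u)\subseteq\calS(\theta_0)\), the set \(\calS(\theta)\) meets every fiber over \(\mathcal U_u\). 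With that choice, restricting \(\psi\) to \(\mathcal U_\theta\times\mathcal U_u\) yields the stated chart.

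If the open/closed argument proves fragile, I would use a fallback. For \(u\) in the connected set, consider \(\theta\mapsto\) membership of \(\psi(\theta,u)\) in \(\calS(\theta)\), and use the normal spectral gap to show that \(g(\theta,\cdot)-g_\star(\theta)\) is zero along the connected branch \(\Sigma_\theta\) through a point already in \(\calS(\theta)\). Along \(\Sigma_\theta\), the gradient is tangent to \(\calT_0\) while its normal part vanishes. Near \(\calS(\theta)\), the local P\L\ inequality forces the tangential part to be small relative to the normal part, giving zero gradient and hence a constant value of \(g\) on \(\Sigma_\theta\).
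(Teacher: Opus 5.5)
Your construction follows the paper's: the normal-equation IFT chart, the inclusion $\calS(\theta)\cap\mathcal W\subseteq\Sigma_\theta$ from IFT uniqueness, and an open--closed argument on the connected graph driven by the constant dimension $\dimk$. Your invariance-of-domain step plays the role of the paper's inverse-function-theorem step for the inclusion $\calS(\theta)\hookrightarrow\widetilde{\calS}(\theta)$. As you note, everything then hinges on $\calS(\theta)\cap\mathcal W\neq\emptyset$ for $\theta$ near $\theta_0$, and your Step~1 does not establish it.

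The gradient-flow length bound needs the P\L\ inequality for $g(\theta,\cdot)$ along the whole trajectory from $x_0$. \Cref{ass:plcirc} only makes $\mu$ and $\mathcal V$ uniform; the P\L\ region for $g(\theta,\cdot)$ is some neighborhood of $\calS(\theta)$ of uncontrolled size. Knowing that $x_0$ lies in it essentially means knowing that $\mathrm{dist}(x_0,\calS(\theta))$ is small, which is exactly what Step~1 is meant to prove.

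This is not a technicality that ``uniformity in $\theta$ and continuity'' can fix. Near a non-isolated minimizer set, P\L\ is destroyed by arbitrarily small $\mathcal C^2$ perturbations. A tangential tilt such as $x_2^2+\epsilon x_1$ has no critical points near the origin, and its gradient flow moves a distance of order one while decreasing the value by only $\mathcal{O}(\epsilon)$. So local information at $x_0$ cannot rule out that the minimizer manifold has drifted away.

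Your fallback has the same gap, since it starts from a point of $\calS(\theta)$ already on the branch. It would not close the argument on its own either: P\L\ does not force $g$ to be constant on $\Sigma_\theta$ (e.g.\ $u\mapsto\|u\|^2$ satisfies P\L). Only the dimension count does that.

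The paper fills this step by citing local Hausdorff continuity of $\theta\mapsto\calS(\theta)$ \citep[Lem.~3.2]{masiha2025superquantile}. A self-contained repair must use global information, namely compactness and connectedness of $\calS(\theta_0)$:
\begin{enumerate}
\item Limits of points of $\calS(\theta_n)$ lie in $\calS(\theta_0)$, by compactness of $\mathcal V$ and continuity of $g$. Hence, for $\theta$ near $\theta_0$, $\calS(\theta)$ sits in a thin tubular neighborhood of $\calS(\theta_0)$.
\item Solve the normal equations $P_{\calN_y^{\theta_0}}\nabla_x g(\theta,y+z)=0$, $z\in\calN_y^{\theta_0}$, fiberwise over all $y\in\calS(\theta_0)$. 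This is a uniform IFT, by compactness and the uniform normal gap. It yields a compact connected topological $\dimk$-manifold $\widetilde\Sigma_\theta$, homeomorphic to $\calS(\theta_0)$, which contains $\calS(\theta)$ by IFT uniqueness.
\item $\calS(\theta)$ is a nonempty compact $\dimk$-manifold without boundary. It is therefore open in $\widetilde\Sigma_\theta$ (invariance of domain) and closed, hence equal to it. In particular it passes within $o(1)$ of $x_0$.
\end{enumerate}
Nonemptiness at that single point is all your open--closed argument on $\mathcal U_u$ needs. The extra requirement that $\calS(\theta)$ meet every fiber over $\mathcal U_u$ can be dropped.
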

		\begin{proof}
		Let \(\calT_0:=\calT_{x_0}^{\theta_0}\), \(\calN_0:=\calN_{x_0}^{\theta_0}\), and choose orthonormal matrices
		\(U_{\calT_0}\in\R^{d\times \dimk}\) and \(U_{\calN_0}\in\R^{d\times(d-\dimk)}\) spanning these spaces. For
		\((u,v)\) near \((0,0)\), write \(x(u,v):=x_0+U_{\calT_0}u+U_{\calN_0}v\) and define
		\[
		\Phi(\theta,u,v):=U_{\calN_0}^{\top}\nabla_x g(\theta,x(u,v)).
		\]
		By \Cref{prop:geometry}, \(U_{\calN_0}^{\top}\nabla^2_{xx}g(\theta_0,x_0)U_{\calN_0}\) is positive definite.
		The implicit function theorem therefore gives a \(\mathcal{C}^{2}\) map
		\(h(\theta,u)\), defined for \((\theta,u)\) near \((\theta_0,0)\), such that
		\(\Phi(\theta,u,h(\theta,u))=0\). Set
		\[
		\psi(\theta,u):=x_0+U_{\calT_0}u+U_{\calN_0}h(\theta,u).
		\]
		For each fixed \(\theta\), this parametrizes the \(\dimk\)-dimensional manifold
		\[
		\widetilde{\calS}(\theta):=\{x:\ U_{\calN_0}^{\top}\nabla_x g(\theta,x)=0\}
		\]
		near \(x_0\). This last statement is local in the following precise sense. The IFT gives open neighborhoods
		\(\mathcal U_\theta^{0}\) of \(\theta_0\), \(\mathcal U_u^{0}\) of \(0\in\R^{\dimk}\), and
		\(\mathcal U_v^{0}\) of \(0\in\R^{d-\dimk}\) such that every solution of
		\(U_{\calN_0}^{\top}\nabla_x g(\theta,x_0+U_{\calT_0}u+U_{\calN_0}v)=0\) with
		\((\theta,u,v)\in\mathcal U_\theta^{0}\times\mathcal U_u^{0}\times\mathcal U_v^{0}\) is uniquely represented as
		\(v=h(\theta,u)\). We now choose a connected open set \(\mathcal U_u\) containing \(0\) whose closure is contained in
		\(\mathcal U_u^{0}\), shrink the parameter neighborhood to
		\(\mathcal U_\theta\subseteq\mathcal U_\theta^{0}\), and choose an ambient neighborhood \(\mathcal W\) of \(x_0\)
		small enough so that every \(x\in\mathcal W\) can be written uniquely as
		\(x=x_0+U_{\calT_0}u+U_{\calN_0}v\) with \(u\in\mathcal U_u^{0}\) and \(v\in\mathcal U_v^{0}\), and such that
		the points of the graph over \(\mathcal U_u\) lie in \(\mathcal W\). With these choices, the local piece of
		\(\widetilde{\calS}(\theta)\) inside \(\mathcal W\) is exactly the graph:
		for every \(\theta\in\mathcal U_\theta\),
		\[
		\widetilde{\calS}(\theta)\cap\mathcal W
		=
		\{\psi(\theta,u):u\in\mathcal U_u\}.
		\]
		Finally, since every point of \(\calS(\theta)\) is a lower-level minimizer and hence satisfies the full first-order
		condition \(\nabla_x g(\theta,x)=0\), it satisfies the projected condition defining
		\(\widetilde{\calS}(\theta)\) and therefore
		\(\calS(\theta)\cap\mathcal W\subseteq\widetilde{\calS}(\theta)\cap\mathcal W\).

		It remains to show that this inclusion is equality on a smaller neighborhood. By the fixed-dimension conclusion in
		\Cref{prop:geometry} and the local Hausdorff continuity of the solution manifolds
		\citep[Lem.~3.2]{masiha2025superquantile}, after shrinking \(\mathcal U_\theta\) and \(\mathcal W\), for every
		\(\theta\in\mathcal U_\theta\),
		\[
		\dim\calS(\theta)=\dimk
		\qquad\text{and}\qquad
		\calS(\theta)\cap\mathcal W\neq\emptyset .
		\]
		For such a \(\theta\), define
		\[
		Z_\theta:=\{u\in\mathcal U_u:\psi(\theta,u)\in\calS(\theta)\}.
		\]
		The set \(Z_\theta\) records which points of the graph for \(\widetilde{\calS}(\theta)\) are genuine lower-level
		minimizers. It is nonempty by the local Hausdorff-continuity step above: since
		\(\calS(\theta)\cap\mathcal W\neq\emptyset\) and
		\(\calS(\theta)\cap\mathcal W\subseteq\widetilde{\calS}(\theta)\cap\mathcal W\), at least one point of
		\(\calS(\theta)\cap\mathcal W\) has the form \(\psi(\theta,u)\). It is closed relative to \(\mathcal U_u\) because
		\[
		Z_\theta=\{u\in\mathcal U_u:\psi(\theta,u)\in\calS(\theta)\}
		=
		\psi_\theta^{-1}(\calS(\theta))\cap\mathcal U_u,
		\qquad \psi_\theta(u):=\psi(\theta,u),
		\]
		where \(\psi_\theta\) is continuous and \(\calS(\theta)\) is compact, hence closed, by \Cref{prop:geometry}.

		We next show that \(Z_\theta\) is open. Take \(u_1\in Z_\theta\) and set \(x_1:=\psi(\theta,u_1)\). Near \(x_1\),
		both \(\calS(\theta)\) and \(\widetilde{\calS}(\theta)\) are embedded \(\dimk\)-dimensional submanifolds, and we already
		know that \(\calS(\theta)\subseteq\widetilde{\calS}(\theta)\). Consider the inclusion map
		\[
		i:\calS(\theta)\hookrightarrow\widetilde{\calS}(\theta),\qquad i(x)=x.
		\]
		This map is smooth because both manifolds are embedded submanifolds of the same ambient space. Its derivative at
		\(x_1\) is the identity map on tangent vectors:
		\[
		D i(x_1):\calT_{x_1}\calS(\theta)\to \calT_{x_1}\widetilde{\calS}(\theta),\qquad v\mapsto v.
		\]
		Since the two tangent spaces both have dimension \(\dimk\), this derivative is an isomorphism. The inverse function
		theorem for manifolds therefore gives a neighborhood \(\mathcal O_S\) of \(x_1\) in \(\calS(\theta)\) such that
		\(i(\mathcal O_S)\) is an open neighborhood of \(x_1\) in \(\widetilde{\calS}(\theta)\). But \(i\) is just inclusion,
		so \(i(\mathcal O_S)=\mathcal O_S\subseteq\calS(\theta)\). Thus some neighborhood of \(x_1\) inside
		\(\widetilde{\calS}(\theta)\) is contained in \(\calS(\theta)\). Pulling this neighborhood back through the chart
		\(u\mapsto\psi(\theta,u)\), we obtain a neighborhood of \(u_1\) contained in \(Z_\theta\). Since this argument works
		for every \(u_1\in Z_\theta\), each point of \(Z_\theta\) has a relative neighborhood in \(\mathcal U_u\) contained in
		\(Z_\theta\). Thus \(Z_\theta\) is open in \(\mathcal U_u\).

		Now \(Z_\theta\) is nonempty and is both closed and open relative to \(\mathcal U_u\). If
		\(Z_\theta\neq\mathcal U_u\), then \(Z_\theta\) and \(\mathcal U_u\setminus Z_\theta\) would form a separation of
		\(\mathcal U_u\): two nonempty disjoint relatively open sets whose union is \(\mathcal U_u\). This contradicts the
		connectedness of \(\mathcal U_u\). Hence \(Z_\theta=\mathcal U_u\). Therefore every point of the local graph
		\(\{\psi(\theta,u):u\in\mathcal U_u\}\) belongs to \(\calS(\theta)\). Together with the earlier inclusion
		\(\calS(\theta)\cap\mathcal W\subseteq\widetilde{\calS}(\theta)\cap\mathcal W\), this gives
		\[
		\calS(\theta)\cap\mathcal W
		=
		\{\psi(\theta,u):u\in\mathcal U_u\},
		\]
		after shrinking \(\mathcal W\) within the chart neighborhood if needed.
		\end{proof}

		\begin{proof}[Proof of \Cref{thm:F-diff-unique}]
		Let $x^\star(\theta)$ denote the unique optimistic minimizer for $\theta\in\mathcal U$.
		Set \(x_0:=x^\star(\theta_0)\).
		We prove that $F$ is $\mathcal{C}^{1}$ locally by (i) building a smooth chart for $\calS(\theta)$ near $x_0$,
		(ii) reducing the bilevel problem to a fixed-domain minimization and applying an envelope/Danskin lemma (which yields both differentiability and continuity of $\nabla F$),
		and (iii) computing the resulting gradient in the pseudoinverse form.

		\paragraph{Step 1: A $\theta$-dependent chart for $\calS(\theta)$ near $x_0$.}
			By \Cref{lem:parametric-S-chart}, after shrinking neighborhoods if needed there are an open neighborhood
			\(\mathcal U_\theta\subseteq\mathcal U\), a bounded open ball \(\mathcal U_u\subset\R^{\dimk}\) around \(0\), an open
			neighborhood \(\mathcal W\) of \(x_0\), and a \(\mathcal{C}^{2}\) map
		\[
		\psi:\mathcal U_\theta\times\mathcal U_u\to\mathcal W
		\]
		such that \(\psi(\theta_0,0)=x_0\), each \(u\mapsto\psi(\theta,u)\) is an embedded chart, and
		\[
		\calS(\theta)\cap\mathcal W=\{\psi(\theta,u):u\in\mathcal U_u\},
		\qquad \theta\in\mathcal U_\theta .
		\]

			\paragraph{Step 2: Reduce to a fixed-domain minimization and show $F\in\mathcal{C}^{1}$.}
			Step 1 only gives a $\mathcal{C}^{2}$ chart near $x_0$. Since $u\mapsto \psi(\theta,u)$ parametrizes only the part of $\calS(\theta)$ near $x_0$,
			we first show that the minimizer stays in this chart patch for $\theta$ close to $\theta_0$.
			
			\emph{Step 2a: $x^\star(\theta)$ stays near $x_0$.}
			Let $\theta_n\to\theta_0$ with $\theta_n\in\mathcal U_\theta$ and set $x_n:=x^\star(\theta_n)\in\calS(\theta_n)\subseteq\mathcal V$.
			By compactness of $\mathcal V$, along a subsequence (not relabeled) we have $x_n\to \bar x\in\mathcal V$.
			For any fixed $y\in\R^d$, since $x_n\in\arg\min_x g(\theta_n,x)$ we have $g(\theta_n,x_n)\le g(\theta_n,y)$.
			Taking $n\to\infty$ and using continuity of $g$ yields $g(\theta_0,\bar x)\le g(\theta_0,y)$ for all $y$, hence $\bar x\in\calS(\theta_0)$.
			
			Moreover, $\psi(\theta_n,0)\in\calS(\theta_n)$ is a feasible competitor for the optimistic problem at $\theta_n$, so
			\[
			f(\theta_n,x_n)=F(\theta_n)\ \le\ f(\theta_n,\psi(\theta_n,0)).
			\]
			Sending $n\to\infty$ and using continuity of $f$ and $\psi$ gives $f(\theta_0,\bar x)\le f(\theta_0,x_0)=F(\theta_0)$.
			By uniqueness of $x_0\in\arg\min_{x\in\calS(\theta_0)} f(\theta_0,x)$ and $\bar x\in\calS(\theta_0)$, we conclude $\bar x=x_0$.
			Thus every convergent subsequence of $x_n$ converges to $x_0$, hence $x_n\to x_0$.
			After possibly shrinking $\mathcal U_\theta$, we may assume $x^\star(\theta)$ lies in the chart patch for all $\theta\in\mathcal U_\theta$.
			
				\emph{Step 2b: fixed-domain reduction.}
				Write $x^\star(\theta)=\psi(\theta,u^\star(\theta))$ for some $u^\star(\theta)\in\mathcal U_u$.
				The convergence \(x^\star(\theta)\to x_0\) from Step~2a and the fact that \(u\mapsto\psi(\theta,u)\) is a local
				chart imply \(u^\star(\theta)\to0\) as \(\theta\to\theta_0\). Hence, after shrinking
				\(\mathcal U_\theta\) once more, we may choose a compact neighborhood \(U\subset\mathcal U_u\) of \(0\) such that
				\(u^\star(\theta)\in U\) for all \(\theta\in\mathcal U_\theta\).
				Define $\tilde f(\theta,u):=f(\theta,\psi(\theta,u))$.
				Then, for all $\theta\in\mathcal U_\theta$,
				\[
			F(\theta)=\min_{x\in\calS(\theta)} f(\theta,x)=\min_{u\in U}\tilde f(\theta,u).
			\]
			
			\emph{Step 2c: differentiability via an envelope lemma.}
			Since $f$ is $\mathcal{C}^{1}$ and $\psi$ is $\mathcal{C}^{2}$, $\tilde f$ is continuous and $\mathcal{C}^{1}$ in $\theta$ with continuous $\nabla_\theta\tilde f$.
			We claim that for every $\theta\in\mathcal U_\theta$, the minimizer of $u\mapsto \tilde f(\theta,u)$ over $U$ is unique and equals $u^\star(\theta)$.
			Indeed, by Step~2b,
			\[
			F(\theta)=\min_{u\in U}\tilde f(\theta,u),
			\qquad
			\tilde f(\theta,u^\star(\theta))=f(\theta,x^\star(\theta))=F(\theta),
			\]
			so $u^\star(\theta)$ is a minimizer.
			Conversely, if $\bar u\in U$ also minimizes $\tilde f(\theta,\cdot)$, then
			\[
			f(\theta,\psi(\theta,\bar u))=\tilde f(\theta,\bar u)=F(\theta),
			\]
			so $\psi(\theta,\bar u)\in\arg\min_{x\in\calS(\theta)} f(\theta,x)$.
			By the uniqueness assumption in \Cref{thm:F-diff-unique}, this implies
			$\psi(\theta,\bar u)=x^\star(\theta)=\psi(\theta,u^\star(\theta))$.
			Since $u\mapsto \psi(\theta,u)$ is a chart parametrization on the patch, it is injective, hence $\bar u=u^\star(\theta)$.
			Applying \Cref{lem:envelope-unique} at each $\theta\in\mathcal U_\theta$ yields that $F$ is differentiable on $\mathcal U_\theta$ and
			\begin{equation}\label{eq:env-grad}
			\nabla F(\theta)=\nabla_\theta \tilde f(\theta,u^\star(\theta))
			\qquad\text{for all }\theta\in\mathcal U_\theta.
			\end{equation}
			Importantly, \eqref{eq:env-grad} does \emph{not} require differentiability of $\theta\mapsto u^\star(\theta)$.
			
			\emph{Step 2d: continuity of $\nabla F$ (hence $F\in\mathcal{C}^{1}$).}
			We show that $\theta\mapsto u^\star(\theta)$ is continuous on $\mathcal U_\theta$.
			Fix $\bar\theta\in\mathcal U_\theta$ and let $\theta_n\to\bar\theta$ with $\theta_n\in\mathcal U_\theta$.
			By compactness of $U$, along a subsequence (not relabeled) we have $u^\star(\theta_n)\to \bar u\in U$.
			Optimality of $u^\star(\theta_n)$ gives $\tilde f(\theta_n,u^\star(\theta_n))\le \tilde f(\theta_n,u)$ for all $u\in U$.
			Sending $n\to\infty$ and using continuity of $\tilde f$ yields $\tilde f(\bar\theta,\bar u)\le \tilde f(\bar\theta,u)$ for all $u\in U$.
			Thus $\bar u\in\arg\min_{u\in U}\tilde f(\bar\theta,u)=\{u^\star(\bar\theta)\}$, hence $\bar u=u^\star(\bar\theta)$.
			Therefore $u^\star(\theta_n)\to u^\star(\bar\theta)$, proving continuity.
			Since $\nabla_\theta\tilde f$ is continuous, \eqref{eq:env-grad} implies that $\nabla F$ is continuous on $\mathcal U_\theta$.
			Setting $\mathcal U_0:=\mathcal U_\theta$, we conclude that $F$ is $\mathcal{C}^{1}$ on $\mathcal U_0$.

			\paragraph{Step 3: Compute $\nabla F(\theta)$ and conclude \eqref{eq:hg_formula}.}
			Fix any $\theta\in\mathcal U_0$ and write $u^\star:=u^\star(\theta)$ and $x^\star:=x^\star(\theta)=\psi(\theta,u^\star)\in\calS(\theta)$.
		By \eqref{eq:env-grad} and the chain rule for $\tilde f(\theta,u)=f(\theta,\psi(\theta,u))$ (with $u$ held fixed),
		\[
		\nabla F(\theta)
		=
		\nabla_\theta f(\theta,x^\star)
		+
		\big(\nabla_\theta \psi(\theta,u^\star)\big)^\top \nabla_x f(\theta,x^\star).
		\]
		Since $x^\star$ minimizes $f(\theta,\cdot)$ over the manifold $\calS(\theta)$ (without boundary),
		its Riemannian gradient vanishes, i.e.,
		$P_{\calT_{x^\star}^\theta}\nabla_x f(\theta,x^\star)=0$, hence $\nabla_x f(\theta,x^\star)\in \calN_{x^\star}^\theta$.
		
		On the other hand, for all $(\theta,u)\in\mathcal U_0\times \mathcal U_u$ we have $\psi(\theta,u)\in\calS(\theta)$, so
		$\nabla_x g(\theta,\psi(\theta,u))=0$.
		Differentiating this identity with respect to $\theta$ (with $u$ held fixed) gives
		\[
		H(\theta,\psi(\theta,u))\,\nabla_\theta \psi(\theta,u)+\nabla_{x\theta}^2 g(\theta,\psi(\theta,u))=0.
		\]
		Evaluating at $u=u^\star$ and multiplying by $H(\theta,x^\star)^\dagger$ yields
		\[
		P_{\calN_{x^\star}^\theta}\nabla_\theta \psi(\theta,u^\star)
		=
		-H(\theta,x^\star)^\dagger\nabla_{x\theta}^2 g(\theta,x^\star),
		\]
		where we used that $H(\theta,x^\star)^\dagger H(\theta,x^\star)=P_{\calN_{x^\star}^\theta}$ by \Cref{prop:geometry}(ii).
		Since $\nabla_x f(\theta,x^\star)\in \calN_{x^\star}^\theta$, we can insert the projector and obtain
			\begin{align*}
			    \big(\nabla_\theta \psi(\theta,u^\star)\big)^\top \nabla_x f(\theta,x^\star)
			&=
			-\big(\nabla_{x\theta}^2 g(\theta,x^\star)\big)^\top
			H(\theta,x^\star)^\dagger\nabla_x f(\theta,x^\star)\\
			&=
			-\nabla_{\theta x}^2 g(\theta,x^\star)H(\theta,x^\star)^\dagger\nabla_x f(\theta,x^\star),
			\end{align*}
		where we used $\nabla^2_{\theta x}g:=(\nabla^2_{x\theta}g)^\top$ and the symmetry of \(H(\theta,x^\star)^\dagger\).
			Substituting into the expression for $\nabla F(\theta)$ gives \eqref{eq:hg_formula}.
			\end{proof}
\begin{remark}[A convenient sufficient condition for $F\in\mathcal{C}^{1}(\Theta)$]\label{rem:global-C1}
				Assume $\Theta$ is compact and $\arg\min_{x\in\calS(\theta)} f(\theta,x)$ is a singleton for every $\theta\in\Theta$.
				If, in addition, the lower-level solution manifolds are uniformly bounded in the sense that there exists a compact set $\mathcal V\subset\R^d$ with $\calS(\theta)\subseteq\mathcal V$ for all $\theta\in\Theta$
				(e.g., $\mathcal V=\mathbb B_d(0;\mathsf D)$ under \Cref{assum_sample_comp}),
				then $F$ is $\mathcal{C}^{1}$ on $\Theta$ and \eqref{eq:hg_formula} holds for all $\theta\in\Theta$.
				Indeed, \Cref{thm:F-diff-unique} applies at each $\theta_0\in\Theta$; compactness of $\Theta$ allows us to extract a finite subcover of the resulting local $\mathcal{C}^{1}$ neighborhoods.
				\end{remark}

\subsection{Proof of \Cref{thm_hyp_obj_diff}}
\begin{proof}[Proof of \Cref{thm_hyp_obj_diff}]
Let \(x_0:=x^\star(\theta_0)\). We prove the result in four steps. First, we write the
moving manifold \(\calS(\theta)\) in fixed local coordinates. Second, we use intrinsic non-degeneracy and the implicit
function theorem to obtain a \(\mathcal C^1\) local selected branch. Third, we use uniqueness at \(\theta_0\) to show that
this local branch is the global optimistic branch for nearby parameters. Finally, we differentiate through lower-level
stationarity to obtain the pseudoinverse formula and local Lipschitzness of \(\nabla F\).

\paragraph{Step 1: Reduce the moving-manifold problem to fixed local coordinates.}
By \Cref{lem:parametric-S-chart}, after shrinking neighborhoods if necessary, there are an open neighborhood
\(\mathcal U_\theta\) of \(\theta_0\), a bounded convex open ball \(\mathcal U_u\subset\R^{\dimk}\) around \(0\) whose closure
lies in a slightly larger coordinate domain, an open neighborhood \(\mathcal W\) of \(x_0\), and a \(\mathcal C^2\) map
\[
\psi:\mathcal U_\theta\times\mathcal U_u\to\mathcal W,
\qquad
\psi(\theta_0,0)=x_0,
\]
such that, for every \(\theta\in\mathcal U_\theta\),
\[
\calS(\theta)\cap\mathcal W=\{\psi(\theta,u):u\in\mathcal U_u\}.
\]
We keep the same symbol \(\psi\) for its extension to this slightly larger coordinate domain and shrink
\(\mathcal U_u\), if needed, so that \(u\mapsto\psi(\theta,u)\) is injective on a neighborhood of
\(\overline{\mathcal U_u}\) for all \(\theta\in\mathcal U_\theta\). In these coordinates, the local restriction of the
upper-level objective to \(\calS(\theta)\) is
\[
\tilde f(\theta,u):=f(\theta,\psi(\theta,u)).
\]
Since \(f\in\mathcal C^2\) and \(\psi\in\mathcal C^2\), we have \(\tilde f\in\mathcal C^2\).

\paragraph{Step 2: Non-degeneracy gives a smooth local minimizer branch.}
At \(\theta=\theta_0\), the point \(u=0\) corresponds to \(x_0\). Since \(x_0\) minimizes \(f(\theta_0,\cdot)\) over
\(\calS(\theta_0)\), \(u=0\) is a local minimizer of \(\tilde f(\theta_0,\cdot)\), and hence
\[
\nabla_u\tilde f(\theta_0,0)=0.
\]
Let \(A:=D_u\psi(\theta_0,0)\). Then \(A\) has full column rank and
\(\mathrm{range}(A)=\calT_{x_0}^{\theta_0}\), because \(u\mapsto\psi(\theta_0,u)\) is an embedded chart for
\(\calS(\theta_0)\) around \(x_0\). Because \(u=0\) is a critical point of the restricted objective, the
second-derivative terms of the chart do not contribute to the Hessian quadratic form at \(u=0\). Therefore, for every
\(w\neq0\), with \(v:=Aw\),
\[
w^\top\nabla^2_{uu}\tilde f(\theta_0,0)w
=
\big\langle v,\mathrm{Hess}_{\calS(\theta_0)}\bar f_{\theta_0}(x_0)[v]\big\rangle .
\]
The non-degeneracy assumption makes the right-hand side positive for all \(w\neq0\), so
\(\nabla^2_{uu}\tilde f(\theta_0,0)\succ0\). Applying the \(\mathcal C^1\) implicit function theorem to
\[
\nabla_u\tilde f(\theta,u)=0
\]
gives, after shrinking \(\mathcal U_\theta\), a unique \(\mathcal C^1\) map
\[
u^\star:\mathcal U_\theta\to\mathcal U_u,\qquad u^\star(\theta_0)=0,
\]
such that \(\nabla_u\tilde f(\theta,u^\star(\theta))=0\). By continuity of \(\nabla^2_{uu}\tilde f\), we can further
shrink \(\mathcal U_\theta\) and choose a closed ball \(\mathcal B_u\subset\mathcal U_u\), with
\(u^\star(\theta)\in\mathrm{int}(\mathcal B_u)\), such that
\[
\nabla^2_{uu}\tilde f(\theta,u)\succ0,
\qquad
\theta\in\mathcal U_\theta,\ u\in\mathcal B_u .
\]
Since \(\mathcal B_u\) is convex, \(\tilde f(\theta,\cdot)\) is strictly convex on \(\mathcal B_u\). Thus
\(u^\star(\theta)\) is the unique minimizer of \(\tilde f(\theta,\cdot)\) on \(\mathcal B_u\). Define
\[
x_{\mathrm{loc}}(\theta):=\psi(\theta,u^\star(\theta)).
\]
Then \(x_{\mathrm{loc}}\) is \(\mathcal C^1\), belongs to \(\calS(\theta)\), and uniquely minimizes \(f(\theta,\cdot)\)
over the local branch \(\psi(\theta,\mathcal B_u)\).

\paragraph{Step 3: The local branch is the global optimistic branch.}
The previous step only controls the part of \(\calS(\theta)\) near \(x_0\). We now rule out competitors on the rest of
\(\calS(\theta)\). Choose a closed ball \(\mathcal B_u^0\) around \(0\) such that
\(\mathcal B_u^0\subset\mathrm{int}(\mathcal B_u)\), and then choose an open Euclidean ball
\(\mathcal W_0:=\{x:\|x-x_0\|<r\}\) such that \(\overline{\mathcal W_0}\subset\mathcal W\) and
\[
\calS(\theta_0)\cap\overline{\mathcal W_0}
\subset \psi(\theta_0,\mathrm{int}(\mathcal B_u^0)).
\]
After shrinking \(\mathcal U_\theta\), we may assume
\[
\calS(\theta)\cap\overline{\mathcal W_0}\subseteq \psi(\theta,\mathcal B_u),
\qquad \theta\in\mathcal U_\theta .
\]
Indeed, otherwise there would be \(\theta_n\to\theta_0\) and points
\(z_n=\psi(\theta_n,u_n)\in\calS(\theta_n)\cap\overline{\mathcal W_0}\) with \(u_n\notin\mathcal B_u\). Passing to a
subsequence, \(u_n\to\bar u\in\overline{\mathcal U_u}\setminus\mathrm{int}(\mathcal B_u)\), and
\[
z_n=\psi(\theta_n,u_n)\to\psi(\theta_0,\bar u)\in\calS(\theta_0)\cap\overline{\mathcal W_0}.
\]
This contradicts
\(\calS(\theta_0)\cap\overline{\mathcal W_0}\subset\psi(\theta_0,\mathrm{int}(\mathcal B_u^0))\), the inclusion
\(\mathcal B_u^0\subset\mathrm{int}(\mathcal B_u)\), and injectivity of the chart at \(\theta_0\).

Since \(x_0\) is the unique minimizer of \(f(\theta_0,\cdot)\) over the compact set \(\calS(\theta_0)\), there is a
positive gap away from \(\mathcal W_0\):
\[
\Delta_0:=
\min_{x\in\calS(\theta_0)\setminus\mathcal W_0}
\big(f(\theta_0,x)-f(\theta_0,x_0)\big)>0,
\]
with the convention that the minimum over an empty set is \(+\infty\).

We claim that, after shrinking \(\mathcal U_\theta\) once more, no optimistic minimizer at parameter
\(\theta\in\mathcal U_\theta\) lies outside \(\mathcal W_0\). Suppose otherwise. Then there are \(\theta_n\to\theta_0\)
and \(y_n\in\calS(\theta_n)\setminus\mathcal W_0\) such that
\[
f(\theta_n,y_n)\le f(\theta_n,x_{\mathrm{loc}}(\theta_n)).
\]
By \Cref{ass:plcirc}, all lower-level minimizers lie in the fixed compact set \(\mathcal V\). Passing to a subsequence,
\(y_n\to\bar y\in\mathcal V\). To see that \(\bar y\in\calS(\theta_0)\), fix any \(z\in\calS(\theta_0)\). Since
\(y_n\) minimizes \(g(\theta_n,\cdot)\), \(g(\theta_n,y_n)\le g(\theta_n,z)\). Taking \(n\to\infty\) gives
\[
g(\theta_0,\bar y)\le g(\theta_0,z)=\min_x g(\theta_0,x),
\]
so \(\bar y\in\calS(\theta_0)\). Also \(\bar y\notin\mathcal W_0\), because \(\mathcal W_0\) is open and
\(y_n\notin\mathcal W_0\). Taking limits in the displayed inequality and using
\(x_{\mathrm{loc}}(\theta_n)\to x_0\), we get
\[
f(\theta_0,\bar y)\le f(\theta_0,x_0),
\]
which contradicts the positive gap \(\Delta_0\). Therefore every global optimistic minimizer for \(\theta\) close to
\(\theta_0\) lies in \(\mathcal W_0\), hence in the charted branch \(\psi(\theta,\mathcal B_u)\). Since
\(u^\star(\theta)\) is the unique minimizer on this branch, the global optimistic minimizer is unique and equals
\[
x^\star(\theta):=x_{\mathrm{loc}}(\theta).
\]
Thus \(x^\star\) is \(\mathcal C^1\) near \(\theta_0\), and
\[
F(\theta)=f(\theta,x^\star(\theta)).
\]

\paragraph{Step 4: Differentiate the selected branch.}
Because \(x^\star(\theta)\in\calS(\theta)\), lower-level stationarity gives
\[
\nabla_x g(\theta,x^\star(\theta))=0.
\]
Differentiating this identity with respect to \(\theta\) yields
\[
\nabla_{x\theta}^{2}g(\theta,x^\star(\theta))
+\nabla_{xx}^{2}g(\theta,x^\star(\theta))\,\nabla_\theta x^\star(\theta)=0.
\]
Let \(H(\theta):=\nabla_{xx}^{2}g(\theta,x^\star(\theta))\). By \Cref{prop:geometry},
\[
\ker H(\theta)=\calT_{x^\star(\theta)}^\theta,
\qquad
H(\theta)^\dagger H(\theta)=P_{\calN_{x^\star(\theta)}^\theta}.
\]
Multiplying the differentiated stationarity identity by \(H(\theta)^\dagger\) gives the normal component of the branch
sensitivity:
\[
P_{\calN_{x^\star(\theta)}^\theta}\nabla_\theta x^\star(\theta)
=
-H(\theta)^\dagger\nabla_{x\theta}^{2}g(\theta,x^\star(\theta)).
\]
Since \(x^\star(\theta)\) minimizes \(f(\theta,\cdot)\) over \(\calS(\theta)\), the Riemannian first-order condition gives
\(\nabla_x f(\theta,x^\star(\theta))\in\calN_{x^\star(\theta)}^\theta\). Hence the tangent component of
\(\nabla_\theta x^\star(\theta)\) does not contribute to the chain rule:
\[
\begin{aligned}
\nabla F(\theta)
&=\nabla_\theta f(\theta,x^\star(\theta))
+\big(\nabla_\theta x^\star(\theta)\big)^\top\nabla_x f(\theta,x^\star(\theta))\\
&=\nabla_\theta f(\theta,x^\star(\theta))
-\nabla_{\theta x}^{2}g(\theta,x^\star(\theta))\,
H(\theta)^\dagger\nabla_x f(\theta,x^\star(\theta)).
\end{aligned}
\]
This is the pseudoinverse hyper-gradient formula. Finally, \(H(\theta)\) has constant rank along the branch and its
nonzero eigenvalues are bounded away from zero by \Cref{prop:geometry}. Since \(g\in\mathcal C^3\) and
\(x^\star\in\mathcal C^1\), the map \(\theta\mapsto H(\theta)\) is \(\mathcal C^1\), and the Moore--Penrose
pseudoinverse is \(\mathcal C^1\) on the constant-rank stratum. Therefore \(\theta\mapsto H(\theta)^\dagger\) is
\(\mathcal C^1\) after shrinking the neighborhood if needed. The displayed formula, together with \(f\in\mathcal C^2\)
and \(x^\star\in\mathcal C^1\), implies \(\nabla F\in\mathcal C^1\). On a compact smaller neighborhood of \(\theta_0\),
\(D(\nabla F)\) is bounded, so \(\nabla F\) is locally Lipschitz.
\end{proof}

\section{Proofs for Section~\ref{sec:alg}}\label{append:alg-proofs}
This appendix contains the proof of the global smoothness result used in the algorithmic development of \Cref{sec:alg}.

\subsection{Proof of \Cref{prop:F-smooth}}\label{append:F-smooth-proof}
\begin{proof}[Proof of \Cref{prop:F-smooth}]
For each $\bar\theta\in\Theta$, \Cref{ass:unique_min} and \Cref{thm_hyp_obj_diff} give an ambient open neighborhood
$W_{\bar\theta}\subseteq\R^m$ on which the local selected branch exists and $F$ is smooth.
Choose a smaller open neighborhood $\mathcal U_{\bar\theta}$ of $\bar\theta$ such that
$\overline{\mathcal U_{\bar\theta}}\cap\Theta\subset W_{\bar\theta}$, and set
$V_{\bar\theta}:=\mathcal U_{\bar\theta}\cap\Theta$.
On this compact patch there is a finite constant $L_{\bar\theta}$ such that
\begin{equation}\label{eq:local-smoothness-cover}
\|\nabla F(\theta)-\nabla F(\theta')\|
\le
L_{\bar\theta}\|\theta-\theta'\|,
\qquad
\forall\,\theta,\theta'\in V_{\bar\theta}.
\end{equation}

The local constant above can be obtained in the standard hyper-gradient way from the closed-form formula.
On such a local patch, let $x^\star_{\bar\theta}$ denote the selected $\mathcal{C}^{1}$ branch and define
$z(\theta):=(\theta,x^\star_{\bar\theta}(\theta))$. Write, for compactness,
\[
a(z):=\nabla_\theta f(z),\qquad
A(z):=\nabla^2_{\theta x}g(z),\qquad
P(z):=\big[\nabla^2_{xx}g(z)\big]^\dagger,\qquad
b(z):=\nabla_x f(z).
\]
After shrinking the local patch if necessary, choose finite constants $\kappa_{\bar\theta}$,
$B_{A,\bar\theta},B_{P,\bar\theta},B_{b,\bar\theta}$ and
$L_{a,\bar\theta},L_{A,\bar\theta},L_{P,\bar\theta},L_{b,\bar\theta}$ such that
\[
\|z(\theta)-z(\theta')\|
\le
\kappa_{\bar\theta}\|\theta-\theta'\|,
\]
and, on the local graph $\{z(\theta):\theta\in V_{\bar\theta}\}$, the functions
$a,A,P,b$ have Lipschitz constants $L_{a,\bar\theta},L_{A,\bar\theta},L_{P,\bar\theta},L_{b,\bar\theta}$,
while $A,P,b$ are bounded by $B_{A,\bar\theta},B_{P,\bar\theta},B_{b,\bar\theta}$.
The pseudoinverse constants are finite because, along this local graph, the Hessian
$\nabla^2_{xx}g$ has constant rank and its nonzero eigenvalues are bounded away from zero by the normal spectral gap in
\Cref{prop:geometry}. We use the hyper-gradient formula
\[
\nabla F(\theta)=a(z(\theta))-A(z(\theta))P(z(\theta))b(z(\theta))
\]
as follows. Fix $\theta,\theta'\in V_{\bar\theta}$ and abbreviate
$z:=z(\theta)$ and $z':=z(\theta')$. Then
\begin{align*}
\nabla F(\theta)-\nabla F(\theta')
&=
a(z)-a(z')
-\big(A(z)P(z)b(z)-A(z')P(z')b(z')\big),
\end{align*}
and the product difference is decomposed by adding and subtracting
$A(z')P(z)b(z)$ and $A(z')P(z')b(z)$:
\begin{align*}
A(z)P(z)b(z)-A(z')P(z')b(z')
&=
\big(A(z)-A(z')\big)P(z)b(z)\\
&\quad
+A(z')\big(P(z)-P(z')\big)b(z)\\
&\quad
+A(z')P(z')\big(b(z)-b(z')\big).
\end{align*}
Therefore, using the local Lipschitz and boundedness constants above,
\begin{align*}
\|\nabla F(\theta)-\nabla F(\theta')\|
&\le
\|a(z)-a(z')\|
+\|A(z)-A(z')\|\,\|P(z)\|\,\|b(z)\|\\
&\quad
+\|A(z')\|\,\|P(z)-P(z')\|\,\|b(z)\|\\
&\quad
+\|A(z')\|\,\|P(z')\|\,\|b(z)-b(z')\|\\
&\le
\Big(
L_{a,\bar\theta}
+L_{A,\bar\theta}B_{P,\bar\theta}B_{b,\bar\theta}
+B_{A,\bar\theta}L_{P,\bar\theta}B_{b,\bar\theta}
+B_{A,\bar\theta}B_{P,\bar\theta}L_{b,\bar\theta}
\Big)\|z-z'\|.
\end{align*}
Combining this with $\|z-z'\|\le\kappa_{\bar\theta}\|\theta-\theta'\|$ yields the explicit local bound
\begin{equation}\label{eq:local-LF-closed-form}
L_{\bar\theta}
\;:=\;
\kappa_{\bar\theta}
\Big(
L_{a,\bar\theta}
+L_{A,\bar\theta}B_{P,\bar\theta}B_{b,\bar\theta}
+B_{A,\bar\theta}L_{P,\bar\theta}B_{b,\bar\theta}
+B_{A,\bar\theta}B_{P,\bar\theta}L_{b,\bar\theta}
\Big),
\end{equation}
which is one admissible choice in \eqref{eq:local-smoothness-cover}.

The sets $\{V_{\bar\theta}\}_{\bar\theta\in\Theta}$ form an open cover of the compact metric space $\Theta$ (with the relative topology).
Choose a finite subcover $V_1,\ldots,V_J$, with corresponding local constants $L_1,\ldots,L_J$, and define
\begin{equation}\label{eq:explicit-LF}
L_F
:=
\max_{1\le j\le J} L_j
\quad
\left(
\text{equivalently, one may take }
L_j
=
\sup_{\substack{\theta,\theta'\in V_j\\ \theta\neq\theta'}}
\frac{\|\nabla F(\theta)-\nabla F(\theta')\|}{\|\theta-\theta'\|}
\right).
\end{equation}
This quantity is finite by the local smoothness bound \eqref{eq:local-smoothness-cover}.

It remains to show that the local constants globalize without any additional Lipschitz assumption on the selected branch.
Let $\delta>0$ be a Lebesgue number of the finite cover $\{V_j\}_{j=1}^J$ of $\Theta$: every subset of $\Theta$ with diameter
strictly smaller than $\delta$ is contained in some $V_j$.
Fix $\theta,\theta'\in\Theta$. If $\theta=\theta'$, the claim is trivial.
Otherwise, since $\Theta$ is convex, the line segment
\[
\theta_s:=\theta+s(\theta'-\theta),\qquad s\in[0,1],
\]
is contained in $\Theta$. Choose an integer $M$ such that $\|\theta-\theta'\|/M<\delta$, and set
$\theta_\ell:=\theta_{\ell/M}$ for $\ell=0,\ldots,M$.
For each $\ell$, the two-point set $\{\theta_\ell,\theta_{\ell+1}\}$ has diameter below $\delta$, so it lies in some
$V_{j_\ell}$. Applying the local bound on that set gives
\[
\|\nabla F(\theta_{\ell+1})-\nabla F(\theta_\ell)\|
\le
L_{j_\ell}\|\theta_{\ell+1}-\theta_\ell\|
\le
L_F\|\theta_{\ell+1}-\theta_\ell\|.
\]
Summing over $\ell=0,\ldots,M-1$ yields
\[
\|\nabla F(\theta')-\nabla F(\theta)\|
\le
L_F\sum_{\ell=0}^{M-1}\|\theta_{\ell+1}-\theta_\ell\|
=
L_F\|\theta'-\theta\|,
\]
which proves the global smoothness claim.
\end{proof}

\section{Proofs and Technical Details for Section~\ref{sec:theory}}\label{append:outer-proof}
This appendix gives the complete proof pipeline behind the convergence analysis in \Cref{sec:theory}. 
First, \Cref{append_outer_loop_proof} proves the inexact projected-gradient reduction in \Cref{prop:outer-inexact}: once \(F\) is smooth, convergence to stationarity is controlled by the average hyper-gradient error \(\mathbb E\|e_t\|^2\). 
Second, \Cref{append:hg-stability} proves the hyper-gradient stability bound \Cref{thm:et2-bound}. This step compares the exact pseudoinverse hyper-gradient at \(x^\star(\theta_t)\) with the ridge-regularized, CG-approximated hyper-gradient evaluated at the selected candidate \(\tilde x_t\), separating the error into selected-point error, CG residual, ridge bias, and an off-tube moment term.

The remaining subsections bound these terms in algorithmic quantities. 
\Cref{append:selection-error} proves the selection estimate \Cref{thm:selection-error-bound}: it combines curvature control for \(\calS(\theta)\), Gibbs/ULA tube concentration, a hard-selection value-gap argument, and the uniform restricted-curvature bound in \Cref{prop:uniform-riem-nondeg} to bound \(\mathbb E\|\tilde x_t-x^\star(\theta_t)\|^2\). 
Finally, \Cref{append:param-tuning} substitutes the stability and selection estimates into a single parameter-level bound for \(\mathbb E\|e_t\|^2\), chooses \((N,\lambda,\varepsilon_{\mathrm R},\gamma,\eta_t,R_v)\) so that this error is \(\mathcal{O}(\varepsilon^2)\), and derives the conservative oracle-complexity bound reported in \Cref{sec:theory}.

\subsection{Outer-loop guarantee}\label{append_outer_loop_proof}
\begin{proof}[Proof of \Cref{prop:outer-inexact}]
Fix $T\ge 1$ and write $g_t:=\widehat h_t=\nabla F(\theta_t)+e_t$.
Define the (used) gradient mapping
\[
\widetilde{\mathcal G}_t
:=\alpha^{-1}\big(\theta_t-\mathrm{Proj}_{\Theta}(\theta_t-\alpha g_t)\big)
=\alpha^{-1}(\theta_t-\theta_{t+1}).
\]

\paragraph{Step 1: a descent inequality in terms of $\widetilde{\mathcal G}_t$.}
Let $\Delta_t:=\theta_{t+1}-\theta_t=-\alpha\,\widetilde{\mathcal G}_t$.
By $L_F$-smoothness of $F$ on $\Theta$ and convexity of $\Theta$, we have
\[
F(\theta_{t+1})
\le
F(\theta_t)+\langle \nabla F(\theta_t),\Delta_t\rangle+\frac{L_F}{2}\|\Delta_t\|^2.
\]
Since $\theta_{t+1}=\mathrm{Proj}_{\Theta}(\theta_t-\alpha g_t)$ and $\theta_t\in\Theta$, the projection
optimality condition yields
\[
\langle g_t,\Delta_t\rangle\ \le\ -\frac{1}{\alpha}\|\Delta_t\|^2.
\]
	Using $\nabla F(\theta_t)=g_t-e_t$, Cauchy--Schwarz, and the weighted Young inequality
	\[
	uv\le \frac{1}{4\alpha}u^2+\alpha v^2,\qquad u,v\ge0,
	\]
	gives
\begin{align*}
F(\theta_{t+1})
&\le
F(\theta_t)+\langle g_t,\Delta_t\rangle-\langle e_t,\Delta_t\rangle+\frac{L_F}{2}\|\Delta_t\|^2\\
&\le
F(\theta_t)-\frac{1}{\alpha}\|\Delta_t\|^2+\|e_t\|\,\|\Delta_t\|+\frac{L_F}{2}\|\Delta_t\|^2\\
&\le
F(\theta_t)-\frac{1}{\alpha}\|\Delta_t\|^2+\frac{1}{4\alpha}\|\Delta_t\|^2+\alpha\|e_t\|^2+\frac{L_F}{2}\|\Delta_t\|^2\\
&=
F(\theta_t)-\Big(\frac{3}{4\alpha}-\frac{L_F}{2}\Big)\|\Delta_t\|^2+\alpha\|e_t\|^2.
\end{align*}
Using $\alpha\le 1/L_F$ gives $\frac{3}{4\alpha}-\frac{L_F}{2}\ge \frac{1}{4\alpha}$, hence
\[
F(\theta_{t+1})
\le
F(\theta_t)-\frac{1}{4\alpha}\|\Delta_t\|^2+\alpha\|e_t\|^2
=
F(\theta_t)-\frac{\alpha}{4}\|\widetilde{\mathcal G}_t\|^2+\alpha\|e_t\|^2.
\]
Summing over $t=0,\dots,T-1$ and using $F(\theta_T)\ge F_\star$ yields
\begin{equation}\label{eq:sum-Gtilde}
\sum_{t=0}^{T-1}\|\widetilde{\mathcal G}_t\|^2
\le
\frac{4\,(F(\theta_0)-F_\star)}{\alpha}
+4\sum_{t=0}^{T-1}\|e_t\|^2.
\end{equation}

\paragraph{Step 2: relate $\widetilde{\mathcal G}_t$ to the true gradient mapping.}
By nonexpansiveness of projection,
\[
\|\mathcal G_{\Theta}(\theta_t,\nabla F(\theta_t);\alpha)-\widetilde{\mathcal G}_t\|
=
\frac{1}{\alpha}\big\|\mathrm{Proj}_{\Theta}(\theta_t-\alpha g_t)-\mathrm{Proj}_{\Theta}(\theta_t-\alpha\nabla F(\theta_t))\big\|
\le \|e_t\|.
\]
Therefore $\|\mathcal G_{\Theta}(\theta_t,\nabla F(\theta_t);\alpha)\|^2\le 2\|\widetilde{\mathcal G}_t\|^2+2\|e_t\|^2$,
and combining with \eqref{eq:sum-Gtilde} gives
\[
\sum_{t=0}^{T-1}\|\mathcal G_{\Theta}(\theta_t,\nabla F(\theta_t);\alpha)\|^2
\le
\frac{8\,(F(\theta_0)-F_\star)}{\alpha}
+10\sum_{t=0}^{T-1}\|e_t\|^2.
\]
Dividing by $T$ and taking expectations yields \eqref{eq:outer-inexact}.
\end{proof}

\subsection[Hyper-gradient stability]{Hyper-gradient stability and proof of \Cref{thm:et2-bound}}\label{append:hg-stability}

\begin{figure}[t]
\centering
\resizebox{\textwidth}{!}{%
\begin{tikzpicture}[
    >=Stealth,
    box/.style={
        draw,
        rounded corners=2pt,
        align=center,
        font=\scriptsize,
        inner xsep=4pt,
        inner ysep=4pt,
        minimum height=9mm,
        text width=3.05cm
	    },
	    auxbox/.style={box, fill=gray!10},
	    mainbox/.style={box, fill=teal!10, text width=3.2cm, minimum height=10mm},
	    arrow/.style={->, line width=0.45pt},
	    auxarrow/.style={->, dashed, line width=0.4pt, draw=gray!70}
]

	\node[auxbox] (assump) at (0,0)
	{Standing assumptions\\
	\Cref{ass:plcirc,ass:curvature_g,ass:unique_min,assum_sample_comp}};
	
	\node[auxbox] (tubeinv) at (3.9,0.65)
	{Tube radius and ridge invertibility\\
	\Cref{lem:tube-invertibility}};
	\node[auxbox] (detstab) at (3.9,-0.85)
	{General stability bound\\
	\Cref{lem:hg-stability-detailed}};
	\node[auxbox] (ontube) at (7.8,0.65)
	{On-tube stability\\
	\Cref{lem:hg-stability}};
	\node[auxbox] (offtube) at (7.8,-0.85)
	{Off-tube moment control\\
	\Cref{lem:off-tube-et}};
	
	\node[mainbox] (etmain) at (11.7,-0.1)
	{Hyper-gradient error\\
	\Cref{thm:et2-bound}};
	
	\draw[auxarrow] (assump.east) -- (tubeinv.west);
	\draw[auxarrow] (assump.east) -- (detstab.west);
	\draw[auxarrow] (assump.east) -- (offtube.west);
	\draw[arrow] (tubeinv.east) -- (ontube.west);
	\draw[arrow] (detstab.east) -- (ontube.west);
	\draw[arrow] (tubeinv.east) -- (offtube.west);
	\draw[arrow] (ontube.east) -- (etmain.west);
	\draw[arrow] (offtube.east) -- (etmain.west);
	
	\end{tikzpicture}%
	}
	\caption{\textbf{Hyper-gradient error proof for \Cref{thm:et2-bound}.}
	This figure summarizes the proof strategy presented in Appendix~\ref{append:hg-stability}: tube invertibility, deterministic stability, its on-tube specialization, and off-tube clipping control are combined to prove Lemma~\ref{thm:et2-bound}.}
	\label{fig:appendix-d-hg-flow}
	\end{figure}

We prove \Cref{thm:et2-bound} in the same order as the terms in \eqref{eq:et2-explicit}.
First, \Cref{lem:tube-invertibility} gives a quantitative tube radius \(r(\gamma)\) on which the ridge matrix is uniformly invertible.
Second, \Cref{lem:hg-stability-detailed} proves a deterministic stability estimate for a ridge-regularized, inexact linear solve.
Third, \Cref{lem:hg-stability} specializes this estimate to the tube event \(\mathcal E_t\).
Fourth, \Cref{lem:off-tube-et} controls the complement \(\mathcal E_t^c\) using the clipping safeguard and the
polynomial-growth condition on \(\nabla^2_{\theta x}g\) in \Cref{assum_sample_comp}.
The final proof then splits the second moment over \(\mathcal E_t\) and \(\mathcal E_t^c\).

\Cref{fig:appendix-d-hg-flow} summarizes the proof strategy and dependencies for this section.

\begin{lemma}[Tube radius ensuring ridge invertibility]\label{lem:tube-invertibility}
Let \Cref{ass:plcirc,ass:curvature_g} hold.
Recall from \Cref{ass:curvature_g} that \(\rho\) is the radius of the tube around \(\calS(\theta)\) on which the
third-derivative bound \(L_{g,3}\) holds. Let \(\bar L_{g,3}:=\max\{L_{g,3},1\}\) and define
\[
r(\gamma):=\min\left\{\rho,\frac{\gamma}{2\bar L_{g,3}}\right\}.
\]
Then for all $\theta\in\Theta$ and all
$x\in\R^d$ satisfying $\mathrm{dist}(x,\calS(\theta))\le r(\gamma)$, we have
\[
\nabla^2_{xx}g(\theta,x)+\gamma I \ \succeq\ \tfrac{\gamma}{2}\,I,
\qquad\text{and hence}\qquad
\big\|\big(\nabla^2_{xx}g(\theta,x)+\gamma I\big)^{-1}\big\|\ \le\ \tfrac{2}{\gamma}.
\]
\end{lemma}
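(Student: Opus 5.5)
The plan is a first-order perturbation argument for the Hessian around its nearest point on the manifold. Fix \(\theta\in\Theta\) and \(x\) with \(\mathrm{dist}(x,\calS(\theta))\le r(\gamma)\). Since \(\calS(\theta)\) is compact by \Cref{prop:geometry}(i), there is a nearest point \(\bar x\in\calS(\theta)\) with \(\|x-\bar x\|=\mathrm{dist}(x,\calS(\theta))\le r(\gamma)\le\rho\). At \(\bar x\), \Cref{prop:geometry}(ii) gives \(\nabla^2_{xx}g(\theta,\bar x)\succeq 0\). Indeed, the Hessian vanishes on \(\calT_{\bar x}^\theta\) and is at least \(c\) on \(\calN_{\bar x}^\theta\). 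Also, \(\bar x\) is a global minimizer of \(g(\theta,\cdot)\), so the Hessian is positive semidefinite on all of \(\R^d\); this is all that is needed. Hence \(\nabla^2_{xx}g(\theta,\bar x)+\gamma I\succeq\gamma I\).

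Next, control the Hessian variation along the segment \(x_s:=\bar x+s(x-\bar x)\), \(s\in[0,1]\). Every \(x_s\) satisfies \(\mathrm{dist}(x_s,\calS(\theta))\le\|x_s-\bar x\|\le r(\gamma)\le\rho\), so the segment lies in the tube where \Cref{ass:curvature_g} applies and \(g(\theta,\cdot)\) is \(\mathcal C^3\). The mean-value (integral) form then gives
\[
\big\|\nabla^2_{xx}g(\theta,x)-\nabla^2_{xx}g(\theta,\bar x)\big\|_{\mathrm{op}}\le L_{g,3}\|x-\bar x\|\le \bar L_{g,3}\,\frac{\gamma}{2\bar L_{g,3}}=\frac{\gamma}{2}.
\]
By Weyl's inequality, \(\nabla^2_{xx}g(\theta,x)+\gamma I\succeq(\gamma-\gamma/2)I=\tfrac{\gamma}{2}I\). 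Since this matrix is symmetric positive definite, its inverse has operator norm at most \(2/\gamma\).

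There is no real obstacle here. The two points to verify are that the whole segment stays inside the \(\rho\)-tube, which holds because distance to \(\calS(\theta)\) is at most the distance to \(\bar x\), and that positive semidefiniteness holds at every manifold point, which follows from global minimality. The bound is uniform in \(\theta\) because \(\rho\) and \(L_{g,3}\) are uniform by \Cref{ass:curvature_g}. Replacing \(L_{g,3}\) by \(\bar L_{g,3}\ge L_{g,3}\) only makes \(r(\gamma)\) smaller, so the estimate still goes through.
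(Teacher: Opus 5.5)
Your proposal is correct and follows the paper's proof essentially step for step. You project onto a nearest point of \(\calS(\theta)\), use positive semidefiniteness of the Hessian there, and bound the Hessian variation along the segment by \(L_{g,3}\|x-\bar x\|\le\gamma/2\); the segment stays in the \(\rho\)-tube because distance to \(\calS(\theta)\) is at most distance to \(\bar x\), a point the paper only asserts. Weyl's inequality then gives the claim.
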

\begin{proof}
Let \(y\in\calS(\theta)\) be a nearest projection of \(x\), so \(\|x-y\|=\mathrm{dist}(x,\calS(\theta))\le r(\gamma)\).
By \Cref{prop:geometry}, \(\nabla^2_{xx}g(\theta,y)\succeq 0\).
Since \(r(\gamma)\le\rho\), the segment from \(y\) to \(x\) stays inside the \(\rho\)-tube around \(\calS(\theta)\).
Assumption~\ref{ass:curvature_g} therefore gives
\[
\|\nabla^2_{xx}g(\theta,x)-\nabla^2_{xx}g(\theta,y)\|
\le L_{g,3}\|x-y\|
\le \bar L_{g,3}r(\gamma)
\le \gamma/2.
\]
Weyl's inequality yields \(\lambda_{\min}(\nabla^2_{xx}g(\theta,x))\ge-\gamma/2\).
Thus \(\nabla^2_{xx}g(\theta,x)+\gamma I\succeq(\gamma/2)I\), and the inverse-norm bound follows.
\end{proof}

Before stating the stability lemma, we clarify the notation for the selected branch and the candidate point. By \Cref{ass:unique_min}, for every
\(\theta\in\Theta\) there is a unique optimistic minimizer, which we denote by \(x^\star(\theta)\). For each
\(\bar\theta\in\Theta\), \Cref{thm_hyp_obj_diff} gives a neighborhood of \(\bar\theta\) on which this same selection
coincides with a local \(\mathcal{C}^{1}\) branch. On overlaps, the local branches agree by uniqueness. Thus \(x^\star\) is a
globally defined selection on \(\Theta\), and it is locally \(\mathcal{C}^{1}\) around every point of \(\Theta\).
In \Cref{lem:hg-stability-detailed}, \(\tilde x_t\) is a fixed candidate point at which the inexact implicit gradient is evaluated.
The lemma is deterministic and does not use how \(\tilde x_t\) was generated; in the ULA/Gibbs application below,
\(\tilde x_t=\hat x_N^\lambda(\theta_t)\), the \BoN-selected point returned by the ULA candidate generator.

\begin{lemma}[General stability bound (technical)]\label{lem:hg-stability-detailed}
Let \Cref{ass:plcirc,ass:unique_min,assum_sample_comp} hold, and let $x^\star:\Theta\to\R^d$ denote the unique optimistic
selection described above.
Fix \(\gamma>0\), an iteration $t$, a parameter $\theta_t\in\Theta$, and a candidate point $\tilde x_t\in\R^d$.
Let $\mathcal V\subset\R^d$ be any compact convex set containing $\{x^\star(\theta):\theta\in\Theta\}\cup\{\tilde x_t\}$.
Assume \(\nabla^2_{xx}g(\theta_t,\tilde x_t)+\gamma I\) is invertible and satisfies
\[
\big\|\big(\nabla^2_{xx}g(\theta_t,\tilde x_t)+\gamma I\big)^{-1}\big\|\le \kappa_\gamma .
\]
Let $\tilde v_t$ satisfy the residual bound
\[
\big\|\big(\nabla^2_{xx}g(\theta_t,\tilde x_t)+\gamma I\big)\tilde v_t-\nabla_x f(\theta_t,\tilde x_t)\big\|\ \le\ \eta_t.
\]
Define
\[
\widehat h_t
:=
\nabla_\theta f(\theta_t,\tilde x_t)-\nabla^2_{\theta x}g(\theta_t,\tilde x_t)\tilde v_t,
\qquad
e_t:=\widehat h_t-\nabla F(\theta_t).
\]
Then
\[
\|e_t\|\ \le\ C_x\Bigl(1+\kappa_\gamma+\frac{\kappa_\gamma}{c+\gamma}\Bigr)\,\|\tilde x_t-x^\star(\theta_t)\|
\ +\ C_{\mathrm{lin}}\cdot\kappa_\gamma\,\eta_t \ +\ C_{\mathrm{reg}}\cdot\gamma,
\]
where one admissible choice of constants is
\[
\begin{aligned}
C_{\mathrm{lin}}
&:=\sup_{(\theta,x)\in\Theta\times\mathcal V}\|\nabla^2_{\theta x}g(\theta,x)\|,\\[2pt]
C_x
&:=L_{\theta f,x} + L_{\theta x,x}\,B_{f,x} + B_{\theta x}\,L_{f_x,x} + B_{\theta x}\,L_{g_{xx},x}\,B_{f,x},\\
C_{\mathrm{reg}}
&:=\sup_{\theta\in\Theta}
\frac{\|\nabla^2_{\theta x}g(\theta,x^\star(\theta))\|\,\|\nabla_x f(\theta,x^\star(\theta))\|}{c^2},
\end{aligned}
\]
where $c>0$ is the normal spectral gap constant from \Cref{prop:geometry},
\[
\begin{aligned}
B_{\theta x}
&:=\sup_{(\theta,x)\in\Theta\times\mathcal V}\|\nabla^2_{\theta x}g(\theta,x)\|,\qquad
B_{f,x}
:=\sup_{(\theta,x)\in\Theta\times\mathcal V}\|\nabla_x f(\theta,x)\|,
\end{aligned}
\]
and $L_{\theta f,x},L_{\theta x,x},L_{f_x,x},L_{g_{xx},x}<\infty$ are uniform Lipschitz constants on $\mathcal V$
(uniform over $\theta\in\Theta$) for
$x\mapsto \nabla_\theta f(\theta,x)$, $x\mapsto \nabla^2_{\theta x}g(\theta,x)$,
$x\mapsto \nabla_x f(\theta,x)$, and $x\mapsto \nabla^2_{xx}g(\theta,x)$, respectively.
Such constants exist and are finite because these maps are continuously differentiable and $\Theta\times\mathcal V$ is compact and convex.
\end{lemma}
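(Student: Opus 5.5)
We will compare \(\widehat h_t\) with the exact hyper-gradient \(\nabla F(\theta_t)=a(x^\star)-A(x^\star)H_\star^\dagger b(x^\star)\). Here we abbreviate \(x^\star:=x^\star(\theta_t)\), \(\tilde x:=\tilde x_t\), \(a(x):=\nabla_\theta f(\theta_t,x)\), \(A(x):=\nabla^2_{\theta x}g(\theta_t,x)\), \(b(x):=\nabla_x f(\theta_t,x)\), \(H(x):=\nabla^2_{xx}g(\theta_t,x)\), and \(H_\star:=H(x^\star)\); the formula for \(\nabla F\) is \cref{eq:hg_formula}, valid globally by \Cref{thm_hyp_obj_diff} and \Cref{ass:unique_min}.

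\emph{Step 1: split into four errors.} Let \(v^\gamma:=(H(\tilde x)+\gamma I)^{-1}b(\tilde x)\) and \(v^\gamma_\star:=(H_\star+\gamma I)^{-1}b(x^\star)\). Adding and subtracting, we write \(e_t\) as a sum of four pieces:
\[
\big(a(\tilde x)-a(x^\star)\big)\;-\;A(\tilde x)(\tilde v_t-v^\gamma)\;-\;\big(A(\tilde x)v^\gamma-A(x^\star)v^\gamma_\star\big)\;-\;A(x^\star)\big(v^\gamma_\star-H_\star^\dagger b(x^\star)\big).
\]

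\emph{Step 2: the first two pieces.} The first piece is at most \(L_{\theta f,x}\|\tilde x-x^\star\|\). The second piece is the CG error: \(\tilde v_t-v^\gamma=(H(\tilde x)+\gamma I)^{-1}r_t\) with \(\|r_t\|\le\eta_t\). Hence its norm is at most \(C_{\mathrm{lin}}\kappa_\gamma\eta_t\), where \(C_{\mathrm{lin}}\) bounds \(A\) on \(\Theta\times\mathcal V\).

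\emph{Step 3: the third piece, which is the main obstacle.} We split it as \((A(\tilde x)-A(x^\star))v^\gamma+A(x^\star)(v^\gamma-v^\gamma_\star)\). Using \(\|v^\gamma\|\le\kappa_\gamma B_{f,x}\), the first part contributes \(L_{\theta x,x}\kappa_\gamma B_{f,x}\|\tilde x-x^\star\|\).

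For the second part, the resolvent identity gives
\[
v^\gamma-v^\gamma_\star=(H(\tilde x)+\gamma I)^{-1}\big(b(\tilde x)-b(x^\star)\big)+(H(\tilde x)+\gamma I)^{-1}\big(H_\star-H(\tilde x)\big)v^\gamma_\star .
\]
The difficulty is that \(\|(H_\star+\gamma I)^{-1}\|=1/\gamma\) because of the tangential kernel, so a naive bound loses an extra factor of \(\gamma\). We avoid this with the first-order condition \(b(x^\star)\in\calN_{x^\star}^{\theta_t}\) from Step 3 of the proof of \Cref{thm:F-diff-unique}. Since \(H_\star\) preserves the normal space and has eigenvalues \(\ge c\) there (\Cref{prop:geometry}), we get \(v^\gamma_\star=(H_\star+\gamma I)^{-1}b(x^\star)\) with \(\|v^\gamma_\star\|\le B_{f,x}/(c+\gamma)\).

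The resolvent difference is then bounded by \(\kappa_\gamma L_{g_{xx},x}\|\tilde x-x^\star\|\cdot B_{f,x}/(c+\gamma)\), and the \(b\)-difference by \(\kappa_\gamma L_{f_x,x}\|\tilde x-x^\star\|\). Multiplying by \(B_{\theta x}\) and collecting, the total coefficient of \(\|\tilde x-x^\star\|\) is dominated by \(C_x(1+\kappa_\gamma+\kappa_\gamma/(c+\gamma))\). This requires grouping terms so that each of \(L_{\theta f,x}\), \(L_{\theta x,x}B_{f,x}\), \(B_{\theta x}L_{f_x,x}\) and \(B_{\theta x}L_{g_{xx},x}B_{f,x}\) is paired with one of \(1\), \(\kappa_\gamma\), \(\kappa_\gamma/(c+\gamma)\). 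The Lipschitz steps use convexity of \(\mathcal V\), since the segment from \(\tilde x\) to \(x^\star\) must stay in \(\mathcal V\).

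\emph{Step 4: ridge bias.} On \(\calN_{x^\star}^{\theta_t}\), diagonalize \(H_\star\) with eigenvalues \(\lambda_i\ge c\). The normal components of \((H_\star+\gamma I)^{-1}-H_\star^\dagger\) are \(1/(\lambda_i+\gamma)-1/\lambda_i=-\gamma/(\lambda_i(\lambda_i+\gamma))\). Therefore
\[
\|v^\gamma_\star-H_\star^\dagger b(x^\star)\|\le \gamma\,\|b(x^\star)\|/c^2 .
\]
Multiplying by \(\|A(x^\star)\|\) and taking the supremum over \(\theta\) gives \(C_{\mathrm{reg}}\gamma\). Summing the four pieces with the triangle inequality yields the claim.
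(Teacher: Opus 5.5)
Your proposal is correct and follows essentially the same route as the paper. You use the same add-and-subtract decomposition into a CG-residual term, a ridge-regularized perturbation term comparing $\tilde x_t$ with $x^\star(\theta_t)$ (split via the resolvent identity, with normality of $\nabla_x f(\theta_t,x^\star(\theta_t))$ supplying the $1/(c+\gamma)$ factor), and a spectral ridge-bias term bounded by $\gamma/c^2$; the only cosmetic difference is that you list the $\nabla_\theta f$ difference as a separate piece, whereas the paper folds it into its term (II).
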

\begin{proof}[Proof of \Cref{lem:hg-stability-detailed}]
Write $x_t^\star:=x^\star(\theta_t)$.
By \Cref{thm_hyp_obj_diff} and \Cref{ass:unique_min},
\[
\nabla F(\theta_t)
=
\nabla_\theta f(\theta_t,x_t^\star)
-\nabla^2_{\theta x}g(\theta_t,x_t^\star)
\big[\nabla^2_{xx}g(\theta_t,x_t^\star)\big]^\dagger
\nabla_x f(\theta_t,x_t^\star).
\]
Let
\[
v_{t,\gamma}
:=
\big(\nabla^2_{xx}g(\theta_t,\tilde x_t)+\gamma I\big)^{-1}\nabla_x f(\theta_t,\tilde x_t).
\]
Add and subtract the ridge-regularized expressions at \(\tilde x_t\) and \(x_t^\star\), and write
\[
e_t=(I)+(II)+(III),
\]
where
\[
(I):=\nabla^2_{\theta x}g(\theta_t,\tilde x_t)\,(v_{t,\gamma}-\tilde v_t),
\]
\[
\begin{aligned}
(II):={}&
\nabla_\theta f(\theta_t,\tilde x_t)
-\nabla^2_{\theta x}g(\theta_t,\tilde x_t)v_{t,\gamma}
-\nabla_\theta f(\theta_t,x_t^\star)\\
&\quad
+\nabla^2_{\theta x}g(\theta_t,x_t^\star)
\big(\nabla^2_{xx}g(\theta_t,x_t^\star)+\gamma I\big)^{-1}
\nabla_x f(\theta_t,x_t^\star),
\end{aligned}
\]
and
\[
(III):=
\nabla^2_{\theta x}g(\theta_t,x_t^\star)
\Big(
\big[\nabla^2_{xx}g(\theta_t,x_t^\star)\big]^\dagger
-\big(\nabla^2_{xx}g(\theta_t,x_t^\star)+\gamma I\big)^{-1}
\Big)
\nabla_x f(\theta_t,x_t^\star).
\]

For term (II), the \(\nabla_\theta f\) difference is bounded by
\[
\|\nabla_\theta f(\theta_t,\tilde x_t)-\nabla_\theta f(\theta_t,x_t^\star)\|
\le L_{\theta f,x}\|\tilde x_t-x_t^\star\|.
\]
For the product difference, add and subtract
\[
\nabla^2_{\theta x}g(\theta_t,x_t^\star)
\big(\nabla^2_{xx}g(\theta_t,\tilde x_t)+\gamma I\big)^{-1}
\nabla_x f(\theta_t,\tilde x_t)
\]
and
\[
\nabla^2_{\theta x}g(\theta_t,x_t^\star)
\big(\nabla^2_{xx}g(\theta_t,\tilde x_t)+\gamma I\big)^{-1}
\nabla_x f(\theta_t,x_t^\star).
\]
This gives
\[
\begin{aligned}
\MoveEqLeft[2]\Big\|
\nabla^2_{\theta x}g(\theta_t,\tilde x_t)
\big(\nabla^2_{xx}g(\theta_t,\tilde x_t)+\gamma I\big)^{-1}
\nabla_x f(\theta_t,\tilde x_t)\\
&\quad-
\nabla^2_{\theta x}g(\theta_t,x_t^\star)
\big(\nabla^2_{xx}g(\theta_t,x_t^\star)+\gamma I\big)^{-1}
\nabla_x f(\theta_t,x_t^\star)
\Big\|\\
&\le
\|\nabla^2_{\theta x}g(\theta_t,\tilde x_t)-\nabla^2_{\theta x}g(\theta_t,x_t^\star)\|\,
\big\|\big(\nabla^2_{xx}g(\theta_t,\tilde x_t)+\gamma I\big)^{-1}\big\|\,
\|\nabla_x f(\theta_t,\tilde x_t)\|\\
&\quad+
\|\nabla^2_{\theta x}g(\theta_t,x_t^\star)\|\,
\big\|\big(\nabla^2_{xx}g(\theta_t,\tilde x_t)+\gamma I\big)^{-1}\big\|\,
\|\nabla_x f(\theta_t,\tilde x_t)-\nabla_x f(\theta_t,x_t^\star)\|\\
&\quad+
\|\nabla^2_{\theta x}g(\theta_t,x_t^\star)\|\,
\Big\|
\Big[
\big(\nabla^2_{xx}g(\theta_t,\tilde x_t)+\gamma I\big)^{-1}\\
&\hspace{39mm}
-\big(\nabla^2_{xx}g(\theta_t,x_t^\star)+\gamma I\big)^{-1}
\Big]\nabla_x f(\theta_t,x_t^\star)
\Big\|.
\end{aligned}
\]
The first two terms are bounded by
\[
L_{\theta x,x}\kappa_\gamma B_{f,x}\|\tilde x_t-x_t^\star\|
\quad\text{and}\quad
B_{\theta x}\kappa_\gamma L_{f_x,x}\|\tilde x_t-x_t^\star\|.
\]
For the last term, use the resolvent identity to obtain
\[
\begin{aligned}
\MoveEqLeft[2]\Big\|
\Big[
\big(\nabla^2_{xx}g(\theta_t,\tilde x_t)+\gamma I\big)^{-1}
-\big(\nabla^2_{xx}g(\theta_t,x_t^\star)+\gamma I\big)^{-1}
\Big]\nabla_x f(\theta_t,x_t^\star)
\Big\|\\
&\le
\kappa_\gamma\,
\|\nabla^2_{xx}g(\theta_t,\tilde x_t)-\nabla^2_{xx}g(\theta_t,x_t^\star)\|\,
\big\|\big(\nabla^2_{xx}g(\theta_t,x_t^\star)+\gamma I\big)^{-1}
\nabla_x f(\theta_t,x_t^\star)\big\|.
\end{aligned}
\]
Since $x_t^\star\in\calS(\theta_t)$ and \(\nabla_x f(\theta_t,x_t^\star)\in\calN_{x_t^\star}^{\theta_t}\), \Cref{prop:geometry} implies
\[
\big\|\big(\nabla^2_{xx}g(\theta_t,x_t^\star)+\gamma I\big)^{-1}
\nabla_x f(\theta_t,x_t^\star)\big\|
\le
\frac{\|\nabla_x f(\theta_t,x_t^\star)\|}{c+\gamma}
\le
\frac{B_{f,x}}{c+\gamma}.
\]
Moreover,
\[
\|\nabla^2_{xx}g(\theta_t,\tilde x_t)-\nabla^2_{xx}g(\theta_t,x_t^\star)\|
\le
L_{g_{xx},x}\|\tilde x_t-x_t^\star\|.
\]
Therefore, the last product term is bounded by
\[
B_{\theta x}\kappa_\gamma L_{g_{xx},x}B_{f,x}\,
\frac{\|\tilde x_t-x_t^\star\|}{c+\gamma}.
\]
Combining these bounds yields
\[
\|(II)\|
\le C_x\Bigl(1+\kappa_\gamma+\frac{\kappa_\gamma}{c+\gamma}\Bigr)\,\|\tilde x_t-x_t^\star\|.
\]

For term (I), define the residual
\[
r_t:=
\big(\nabla^2_{xx}g(\theta_t,\tilde x_t)+\gamma I\big)\tilde v_t-\nabla_x f(\theta_t,\tilde x_t).
\]
Then
\[
\tilde v_t-v_{t,\gamma}
=
\big(\nabla^2_{xx}g(\theta_t,\tilde x_t)+\gamma I\big)^{-1}r_t,
\]
and hence
\[
\|(I)\|
=
\|\nabla^2_{\theta x}g(\theta_t,\tilde x_t)(\tilde v_t-v_{t,\gamma})\|
\le
\|\nabla^2_{\theta x}g(\theta_t,\tilde x_t)\|\,
\big\|\big(\nabla^2_{xx}g(\theta_t,\tilde x_t)+\gamma I\big)^{-1}\big\|\,
\|r_t\|
\le C_{\mathrm{lin}}\,\kappa_\gamma\,\eta_t.
\]

	For term (III), note that $x_t^\star$ is a minimizer of $f(\theta_t,\cdot)$ over the manifold $\calS(\theta_t)$, so
	the first-order condition implies $\nabla_x f(\theta_t,x_t^\star)\in\calN_{x_t^\star}^{\theta_t}$.
	By \Cref{prop:geometry}, the restriction of $\nabla^2_{xx}g(\theta_t,x_t^\star)$ to $\calN_{x_t^\star}^{\theta_t}$ has
	eigenvalues in $[c,\infty)$.
		Let $\{u_i\}_{i=1}^{d-\dimk}$ be an orthonormal eigenbasis of $\calN_{x_t^\star}^{\theta_t}$ with
		$\nabla^2_{xx}g(\theta_t,x_t^\star)u_i=\lambda_i u_i$ and $\lambda_i\ge c$.
	Since $\nabla_x f(\theta_t,x_t^\star)\in\calN_{x_t^\star}^{\theta_t}$, we can write
	$\nabla_x f(\theta_t,x_t^\star)=\sum_{i=1}^{d-\dimk} \beta_i u_i$.
	On $\calN_{x_t^\star}^{\theta_t}$, the pseudoinverse coincides with the inverse, hence
	\[
	\big[\nabla^2_{xx}g(\theta_t,x_t^\star)\big]^\dagger \nabla_x f(\theta_t,x_t^\star)
	=
	\sum_{i=1}^{d-\dimk}(\beta_i/\lambda_i)u_i,
	\]
	while
	\[
	\big(\nabla^2_{xx}g(\theta_t,x_t^\star)+\gamma I\big)^{-1}\nabla_x f(\theta_t,x_t^\star)
	=
	\sum_{i=1}^{d-\dimk}\big(\beta_i/(\lambda_i+\gamma)\big)u_i.
	\]
		Therefore
		\[
		\begin{aligned}
		&\big(\nabla^2_{xx}g(\theta_t,x_t^\star)+\gamma I\big)^{-1}\nabla_x f(\theta_t,x_t^\star)
		-\big[\nabla^2_{xx}g(\theta_t,x_t^\star)\big]^\dagger\nabla_x f(\theta_t,x_t^\star)\\
		&\qquad =
		\sum_{i=1}^{d-\dimk} \beta_i\Big(\frac{1}{\lambda_i+\gamma}-\frac{1}{\lambda_i}\Big)u_i .
		\end{aligned}
		\]
		Hence
		\[
		\begin{aligned}
		&\big\|\big(\nabla^2_{xx}g(\theta_t,x_t^\star)+\gamma I\big)^{-1}\nabla_x f(\theta_t,x_t^\star)
		-\big[\nabla^2_{xx}g(\theta_t,x_t^\star)\big]^\dagger\nabla_x f(\theta_t,x_t^\star)\big\|\\
		&\qquad\le
		\max_{i=1,\dots,d-\dimk}\Big|\frac{1}{\lambda_i+\gamma}-\frac{1}{\lambda_i}\Big|\,
		\|\nabla_x f(\theta_t,x_t^\star)\|.
		\end{aligned}
		\]
	Using $\big|\frac{1}{\lambda+\gamma}-\frac{1}{\lambda}\big|=\frac{\gamma}{\lambda(\lambda+\gamma)}$ and $\lambda_i\ge c$ gives
	\begin{align*}
	\MoveEqLeft[2]\big\|\big(\nabla^2_{xx}g(\theta_t,x_t^\star)+\gamma I\big)^{-1}\nabla_x f(\theta_t,x_t^\star)
	-\big[\nabla^2_{xx}g(\theta_t,x_t^\star)\big]^\dagger\nabla_x f(\theta_t,x_t^\star)\big\|\\
	&\le \frac{\gamma}{c(c+\gamma)}\,\|\nabla_x f(\theta_t,x_t^\star)\|
	\le \frac{\gamma}{c^2}\,\|\nabla_x f(\theta_t,x_t^\star)\|.
	\end{align*}
	Multiplying by $\|\nabla^2_{\theta x}g(\theta_t,x_t^\star)\|$ and taking the supremum over $\theta\in\Theta$ yields
	$\|(III)\|\le C_{\mathrm{reg}}\gamma$.
	Combining the three bounds gives the claim.
	\end{proof}

\noindent \Cref{lem:hg-stability} follows by taking a fixed compact set containing the \(\rho\)-tube around all \(\calS(\theta)\), restricting to
$\mathrm{dist}(\tilde x_t,\calS(\theta_t))\le r(\gamma)$, and applying \Cref{lem:tube-invertibility} to bound $\kappa_\gamma\le 2/\gamma$.
\begin{lemma}[Stability of the inexact implicit term on the tube]\label{lem:hg-stability}
Assume the hypotheses of Lemma~\ref{lem:hg-stability-detailed} hold for the same \(\theta_t,\tilde x_t,\tilde v_t\), and fix $\gamma>0$ with tube radius $r(\gamma)$ from Lemma~\ref{lem:tube-invertibility}.
Let $\mathcal V=\mathbb B_d(0;\mathsf D+\rho)$.
Fix an iteration $t$ with $\theta_t\in\Theta$ and $\mathrm{dist}(\tilde x_t,\calS(\theta_t))\le r(\gamma)$, so that $\tilde x_t\in\mathcal V$.
Assume clipping is inactive, i.e., the clipped vector used by the algorithm satisfies $v_t=\tilde v_t$.
Then the algorithmic hyper-gradient error $e_t=\widehat h_t-\nabla F(\theta_t)$ obeys
\[
\|e_t\|
\ \le\
A_\gamma\,\|\tilde x_t-x^\star(\theta_t)\|
\ +\ \tfrac{2C_{\mathrm{lin}}}{\gamma}\,\eta_t
\ +\ C_{\mathrm{reg}}\,\gamma,
\]
where $C_{\mathrm{lin}},C_{\mathrm{reg}}$ are as in Lemma~\ref{lem:hg-stability-detailed} and
\[
A_\gamma
:=
C_x\Bigl(1+\tfrac{2}{\gamma}+\tfrac{2}{\gamma(c+\gamma)}\Bigr),
\]
with $c>0$ the normal spectral gap constant from \Cref{prop:geometry} and $C_x$ as in Lemma~\ref{lem:hg-stability-detailed}.
\end{lemma}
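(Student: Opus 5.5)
This is a direct specialization of \Cref{lem:hg-stability-detailed}, so the plan is to verify its hypotheses for the choice \(\mathcal V=\mathbb B_d(0;\mathsf D+\rho)\) and then substitute \(\kappa_\gamma=2/\gamma\).

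\emph{Step 1 (the set \(\mathcal V\)).} Check that \(\mathcal V\) is compact and convex, which is clear for a closed ball. Check that it contains \(\{x^\star(\theta):\theta\in\Theta\}\): by \Cref{assum_sample_comp}, \(\calS(\theta)\subseteq\mathbb B_d(0;\mathsf D)\) for all \(\theta\), and \(x^\star(\theta)\in\calS(\theta)\). Check that it contains \(\tilde x_t\): take a nearest point \(y\in\calS(\theta_t)\), so that
\[
\|\tilde x_t\|\le\|y\|+r(\gamma)\le\mathsf D+\rho,
\]
since \(r(\gamma)\le\rho\) by the definition in \Cref{lem:tube-invertibility}.

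\emph{Step 2 (the invertibility constant).} Since \(\mathrm{dist}(\tilde x_t,\calS(\theta_t))\le r(\gamma)\), \Cref{lem:tube-invertibility} shows that \(\nabla^2_{xx}g(\theta_t,\tilde x_t)+\gamma I\succeq(\gamma/2)I\). Hence the matrix is invertible with inverse norm at most \(2/\gamma\), and we may take \(\kappa_\gamma=2/\gamma\).

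\emph{Step 3 (the residual).} The residual hypothesis \(\|(\nabla^2_{xx}g+\gamma I)\tilde v_t-\nabla_xf\|\le\eta_t\) is exactly the CG stopping rule in \Cref{alg:hg-minsel-lmc}. Because clipping is assumed inactive (\(v_t=\tilde v_t\)), the algorithmic estimate \(\widehat h_t\) in \eqref{eq:003} coincides with the \(\widehat h_t\) defined in \Cref{lem:hg-stability-detailed}.

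\emph{Step 4 (conclusion).} Substituting \(\kappa_\gamma=2/\gamma\) into the conclusion of \Cref{lem:hg-stability-detailed} gives the coefficient
\[
C_x\bigl(1+\tfrac2\gamma+\tfrac{2}{\gamma(c+\gamma)}\bigr)=A_\gamma
\]
on \(\|\tilde x_t-x^\star(\theta_t)\|\), together with the terms \(2C_{\mathrm{lin}}\eta_t/\gamma\) and \(C_{\mathrm{reg}}\gamma\). This is the claimed bound.

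\textbf{Main subtlety.} The only delicate point is that the constants \(C_x\), \(C_{\mathrm{lin}}\), \(B_{\theta x}\), and so on must be defined relative to this fixed \(\mathcal V\), not relative to \(\tilde x_t\). That way they are uniform in \(t\). Their finiteness follows from \(f\in\mathcal C^2\), \(g\in\mathcal C^3\) (\Cref{assum_sample_comp}) and compactness of \(\Theta\times\mathcal V\). The Lipschitz constants are obtained via the mean value theorem on the convex set \(\mathcal V\).
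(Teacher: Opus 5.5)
Your proposal is correct and follows the paper's proof: you check $\tilde x_t\in\mathbb B_d(0;\mathsf D+\rho)$ via $r(\gamma)\le\rho$, take $\kappa_\gamma=2/\gamma$ from \Cref{lem:tube-invertibility}, use inactive clipping to identify the algorithmic $\widehat h_t$ with the quantity in \Cref{lem:hg-stability-detailed}, and substitute. Your explicit check that $\mathcal V$ also contains every $x^\star(\theta)$ is a hypothesis of \Cref{lem:hg-stability-detailed} that the paper leaves implicit, so it is a worthwhile addition.
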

\begin{proof}
Because \(r(\gamma)\le\rho\) and \(\calS(\theta_t)\subseteq\mathbb B_d(0;\mathsf D)\), the tube condition implies \(\tilde x_t\in\mathcal V\).
Under this same condition, Lemma~\ref{lem:tube-invertibility} gives $\kappa_\gamma\le 2/\gamma$.
Since clipping is inactive, the algorithmic \(\widehat h_t\), which is formed with \(v_t\), coincides with the quantity in Lemma~\ref{lem:hg-stability-detailed}, which is formed with \(\tilde v_t\).
Applying Lemma~\ref{lem:hg-stability-detailed} with $\mathcal V=\mathbb B_d(0;\mathsf D+\rho)$ and substituting this inverse-norm bound gives the claim.
\end{proof}

\begin{lemma}[Bounding the off-tube term under clipping]\label{lem:off-tube-et}
Fix $\gamma>0$ and let \(r(\gamma)\) be the tube radius from Lemma~\ref{lem:tube-invertibility}.
Assume Algorithm~\ref{alg:hg-minsel-lmc} uses the safeguard $v_t=\mathrm{Proj}_{\mathbb B_d(0;R_v)}(\tilde v_t)$, and assume the CG output $\tilde v_t$ satisfies $\|b_t-(H_t+\gamma I)\tilde v_t\|\le \eta_t$.
Let \(\mathsf{D}_{t}:=\mathrm{dist}(\tilde x_t,\calS(\theta_t))\).
Define the tube event \(\mathcal E_t:=\{\mathsf{D}_{t}\le r(\gamma)\}\).
Under \Cref{assum_sample_comp} and the smoothness of \(F\) on compact \(\Theta\), there exists a constant \(C_{\mathrm{off}}<\infty\), independent of \(t,\gamma,\eta_t,R_v\), such that
\[
\|e_t\|^2
\ \le\
C_{\mathrm{off}}(1+R_v^2)(1+\mathsf{D}_{t}^{2\mathsf n_{\theta x}}).
\]
Consequently,
\[
\mathbb E\big[\|e_t\|^2\mathbf 1_{\mathcal E_t^c}\big]
\ \le\
C_{\mathrm{off}}(1+R_v^2)\,
\mathbb E\big[(1+\mathsf{D}_{t}^{2\mathsf n_{\theta x}})\mathbf 1_{\mathcal E_t^c}\big].
\]
Moreover, on $\mathcal E_t$ Lemma~\ref{lem:tube-invertibility} implies $\|\tilde v_t\|\le (2/\gamma)(L_{f,1}+\eta_t)$, so choosing
$R_v\ge (2/\gamma)(L_{f,1}+\eta_t)$ ensures the safeguard is inactive on $\mathcal E_t$ (i.e., $v_t=\tilde v_t$).
\end{lemma}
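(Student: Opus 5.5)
The plan is a pointwise, deterministic bound on \(\|e_t\|\) that holds on every realization. Write \(e_t=\widehat h_t-\nabla F(\theta_t)\) with
\[
\widehat h_t=\nabla_\theta f(\theta_t,\tilde x_t)-\nabla^2_{\theta x}g(\theta_t,\tilde x_t)v_t,
\]
and use the triangle inequality:
\[
\|e_t\|\le \|\nabla_\theta f(\theta_t,\tilde x_t)\|+\|\nabla^2_{\theta x}g(\theta_t,\tilde x_t)\|\,\|v_t\|+\|\nabla F(\theta_t)\|.
\]
We control the three terms in turn.

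\textbf{First term.} Since \(f\) is \(L_{f,1}\)-Lipschitz in \((\theta,x)\) by \Cref{assum_sample_comp}, we have \(\|\nabla_\theta f\|\le L_{f,1}\) everywhere.

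\textbf{Third term.} \Cref{prop:F-smooth} gives that \(F\) is \(\mathcal C^1\) on the compact set \(\Theta\), so \(\sup_{\Theta}\|\nabla F\|=:B_F<\infty\). Alternatively, the hyper-gradient formula gives this directly, using boundedness of \(\nabla_\theta f\), of \(\nabla^2_{\theta x}g\) on the compact set containing all \(\calS(\theta)\), and \(\|H^\dagger\|\le 1/c\).

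\textbf{Middle term.} Clipping gives \(\|v_t\|\le R_v\) regardless of how CG behaved. For the mixed derivative, polynomial growth gives
\[
\|\nabla^2_{\theta x}g(\theta_t,\tilde x_t)\|\le C_{\theta x}\|\tilde x_t\|^{\mathsf n_{\theta x}}+D_{\theta x}.
\]
To convert \(\|\tilde x_t\|\) into \(\mathsf D_t\), take a nearest point \(\bar x\in\calS(\theta_t)\subseteq\mathbb B_d(0;\mathsf D)\). Then \(\|\tilde x_t\|\le \mathsf D+\mathsf D_t\), and by convexity of \(s\mapsto s^{n}\),
\[
\|\tilde x_t\|^{\mathsf n_{\theta x}}\le 2^{\mathsf n_{\theta x}-1}\bigl(\mathsf D^{\mathsf n_{\theta x}}+\mathsf D_t^{\mathsf n_{\theta x}}\bigr).
\]
The case \(\mathsf n_{\theta x}=0\) is trivial.

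\textbf{Squaring.} Putting these together, \(\|e_t\|\le a+bR_v(1+\mathsf D_t^{\mathsf n_{\theta x}})\) for constants \(a,b\) depending only on \(L_{f,1}\), \(B_F\), \(C_{\theta x}\), \(D_{\theta x}\), \(\mathsf D\), \(\mathsf n_{\theta x}\). Squaring with \((p+q)^2\le 2p^2+2q^2\) and \((1+s)^2\le 2(1+s^2)\) yields the claimed bound
\[
\|e_t\|^2\le C_{\mathrm{off}}(1+R_v^2)(1+\mathsf D_t^{2\mathsf n_{\theta x}}),
\]
with \(C_{\mathrm{off}}\) independent of \(t,\gamma,\eta_t,R_v\). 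The expectation statement follows by multiplying by \(\mathbf 1_{\mathcal E_t^c}\) and taking expectations, since the bound is pointwise.

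\textbf{Safeguard inactive on \(\mathcal E_t\).} On \(\mathcal E_t\), \Cref{lem:tube-invertibility} gives \(\|(H_t+\gamma I)^{-1}\|\le 2/\gamma\). Writing \(\tilde v_t=(H_t+\gamma I)^{-1}(b_t+r_t)\) with \(\|r_t\|\le\eta_t\) and \(\|b_t\|=\|\nabla_x f\|\le L_{f,1}\), we get
\[
\|\tilde v_t\|\le \tfrac{2}{\gamma}(L_{f,1}+\eta_t)\le R_v,
\]
so the projection onto \(\mathbb B_d(0;R_v)\) is the identity.

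I expect no serious obstacle. The only points needing care are that the bound must not depend on \(\gamma\) or \(\eta_t\), which clipping guarantees off-tube where no inverse bound is available, and the conversion from \(\|\tilde x_t\|\) to \(\mathsf D_t\) via the uniform containment \(\calS(\theta)\subseteq\mathbb B_d(0;\mathsf D)\).
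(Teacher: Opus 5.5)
Your proposal is correct and follows essentially the same route as the paper. Both arguments bound $\|\widehat h_t\|$ using $\|v_t\|\le R_v$, $\|\nabla_\theta f\|\le L_{f,1}$ and the polynomial growth of $\nabla^2_{\theta x}g$, add $\sup_\Theta\|\nabla F\|<\infty$, convert via $\|\tilde x_t\|\le \mathsf D+\mathsf D_t$ from $\calS(\theta_t)\subseteq\mathbb B_d(0;\mathsf D)$, and square; your safeguard-inactivity argument via Lemma~\ref{lem:tube-invertibility} and the CG residual is identical to the paper's.
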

\begin{proof}
Since $\|v_t\|\le R_v$ by construction, \Cref{assum_sample_comp} gives
\[
\|\widehat h_t\|
\le \|\nabla_\theta f(\theta_t,\tilde x_t)\|+\|\nabla^2_{\theta x}g(\theta_t,\tilde x_t)\|\,\|v_t\|
\le L_{f,1}+\big(C_{\theta x}\|\tilde x_t\|^{\mathsf n_{\theta x}}+D_{\theta x}\big)R_v.
\]
Since $F$ is smooth on compact \(\Theta\), \(B_F:=\sup_{\theta\in\Theta}\|\nabla F(\theta)\|<\infty\).
Using \((a+b+c)^2\le 3(a^2+b^2+c^2)\), we obtain
\[
\|e_t\|^2\le C(1+R_v^2)(1+\|\tilde x_t\|^{2\mathsf n_{\theta x}})
\]
for a finite constant \(C\).
Because \(\calS(\theta_t)\subseteq\mathbb B_d(0;\mathsf D)\), we have \(\|\tilde x_t\|\le \mathsf D+\mathsf{D}_{t}\), and hence
\((1+\|\tilde x_t\|^{2\mathsf n_{\theta x}})\le C'(1+\mathsf{D}_{t}^{2\mathsf n_{\theta x}})\).
This proves the off-tube moment bound.

For the final claim, on \(\mathcal E_t\), Lemma~\ref{lem:tube-invertibility} and the residual relation
$(\nabla^2_{xx}g(\theta_t,\tilde x_t)+\gamma I)\tilde v_t=\nabla_x f(\theta_t,\tilde x_t)+r_t$ with $\|r_t\|\le\eta_t$ give
\[
\|\tilde v_t\|
\le \frac{2}{\gamma}\big(\|\nabla_x f(\theta_t,\tilde x_t)\|+\eta_t\big)
\le \frac{2}{\gamma}(L_{f,1}+\eta_t),
\]
where the last inequality uses the global Lipschitzness of \(f\).
Under the stated choice of \(R_v\), this gives \(\|\tilde v_t\|\le R_v\), so the Euclidean projection onto \(\mathbb B_d(0;R_v)\) leaves \(\tilde v_t\) unchanged.
\end{proof}

\subsubsection{Proof of \Cref{thm:et2-bound}}\label{append:proof-et2}
\begin{proof}
Fix an iteration $t$.
Split
\[
\mathbb E\|e_t\|^2
=
\mathbb E\big[\|e_t\|^2\mathbf 1_{\mathcal E_t}\big]
\ +\
\mathbb E\big[\|e_t\|^2\mathbf 1_{\mathcal E_t^c}\big].
\]

\paragraph{On the tube.}
On $\mathcal E_t$, our choice of clipping radius ensures $v_t=\tilde v_t$ (Lemma~\ref{lem:off-tube-et}).
Therefore Lemma~\ref{lem:hg-stability} gives
\[
\|e_t\|
\ \le\
A_\gamma\,\|\tilde x_t-x^\star(\theta_t)\|
\ +\ \tfrac{2C_{\mathrm{lin}}}{\gamma}\,\eta_t
\ +\ C_{\mathrm{reg}}\,\gamma.
\]
Using $(a+b+c)^2\le 3(a^2+b^2+c^2)$ and that $\mathbf 1_{\mathcal E_t}\le 1$ yields
\[
\mathbb E\big[\|e_t\|^2\mathbf 1_{\mathcal E_t}\big]
\ \le\
3A_\gamma^2\,\mathbb E\big[\|\tilde x_t-x^\star(\theta_t)\|^2\mathbf 1_{\mathcal E_t}\big]
\ +\ \tfrac{12C_{\mathrm{lin}}^2}{\gamma^2}\,\eta_t^2
\ +\ 3C_{\mathrm{reg}}^2\,\gamma^2.
\]
Recalling $A_\gamma=C_x(1+\tfrac{2}{\gamma}+\tfrac{2}{\gamma(c+\gamma)})$ from Lemma~\ref{lem:hg-stability} gives the first three terms in \eqref{eq:et2-explicit}.

\paragraph{Off the tube.}
Lemma~\ref{lem:off-tube-et} yields
\[
\mathbb E\big[\|e_t\|^2\mathbf 1_{\mathcal E_t^c}\big]
\le
C_{\mathrm{off}}(1+R_v^2)\,
\mathbb E\big[(1+\mathsf{D}_{t}^{2\mathsf n_{\theta x}})\mathbf 1_{\mathcal E_t^c}\big].
\]

Combining the two bounds gives \eqref{eq:et2-explicit}.
\end{proof}


\subsection{Selection Error and Proof of \Cref{thm:selection-error-bound}}\label{append:selection-error}

We prove \Cref{thm:selection-error-bound} by reducing the selected-point error to two simpler quantities.
Fix \(\theta\) and let \(\tilde x\) be the hard-selected candidate. Since \(\tilde x\) is generally not on the minimizer
manifold \(\calS(\theta)\), we let \(\bar x\) be its Euclidean projection onto \(\calS(\theta)\). The proof then follows the
decomposition
\[
\|\tilde x-x^\star(\theta)\|^2
\ \le\
2\,\mathrm{dist}(\tilde x,\calS(\theta))^2
+2\,\|\bar x-x^\star(\theta)\|^2 .
\]
	The first term is an \emph{off-manifold tube error}: it is controlled by showing that Gibbs/ULA candidates remain close to
	\(\calS(\theta)\) with high probability. The second term is an \emph{on-manifold selection error}: hard selection first gives
	a small value gap \(f(\theta,\bar x)-F(\theta)\), and \Cref{prop:uniform-riem-nondeg} converts this value gap
	into squared distance on \(\calS(\theta)\). More concretely, the proof uses the following sequence:
\begin{enumerate}[leftmargin=*]
\item establish geometric and sampling ingredients: curvature control, global quadratic growth of \(g\), tube concentration,
and Poincar\'e/R\'enyi transportation control for approximate ULA samples;
\item prove that hard selection gives a small on-manifold value gap for \(\bar x\);
\item convert the on-manifold value gap into \(\|\bar x-x^\star(\theta)\|^2\) using uniform quadratic growth of
\(f|_{\calS(\theta)}\) and a positive away-from-minimizer gap;
\item combine the tube error and the on-manifold distance bound to obtain \eqref{eq:final-Ex2-bound}.
\end{enumerate}
\Cref{fig:appendix-d-selection-flow} summarizes this dependency structure.

\begin{figure}[t]
\centering
\resizebox{\textwidth}{!}{%
\begin{tikzpicture}[
    >=Stealth,
    proofbox/.style={
        draw,
        rounded corners=2pt,
        align=center,
        font=\scriptsize,
        inner xsep=4pt,
        inner ysep=4pt,
        minimum height=8mm,
        text width=2.85cm,
        fill=gray!8
    },
    mainbox/.style={proofbox, fill=teal!10, text width=3.05cm, minimum height=9mm},
    stepbox/.style={
        draw=teal!70!black,
        rounded corners=3pt,
        line width=0.65pt,
        inner xsep=8pt,
        inner ysep=8pt
    },
    arrow/.style={->, line width=0.45pt},
    auxarrow/.style={->, dashed, line width=0.4pt, draw=gray!70}
]

	\node[proofbox] (assump) at (0,0)
	{Standing assumptions\\
	\Cref{ass:plcirc,ass:curvature_g,assum_sample_comp,ass:unique_min}};
\node[proofbox] (curv) at (0,-1.15)
{Curvature support\\
\Cref{prop:second-fund-from-g}};
\node[proofbox] (qgg) at (0,-2.30)
{Uniform growth of \(g\)\\
\Cref{lem:uniform-qg-g}};
	\node[proofbox] (gibbstube) at (0,-3.45)
	{Gibbs tube concentration\\
	\Cref{lem:gibbs-tube-width}};
	\node[proofbox] (nutube) at (0,-4.60)
	{Candidate tube transfer\\
	\Cref{lem:tube-width}};
	\node[proofbox] (renyi) at (0,-5.75)
	{ULA R\'enyi mixing\\
	\Cref{prop:lmc-renyi}};
	\node[proofbox] (w2) at (0,-6.90)
	{R\'enyi transport control\\
	\Cref{cor:lmc-gibbs-w2}};

\node[proofbox] (lowsq) at (4.20,0)
{Lower-tail SQ\\
\Cref{def:lower-sq}};
\node[proofbox] (tail) at (4.20,-1.15)
{Tail-min control\\
\Cref{lem:tail-to-sq}};
\node[proofbox] (hardgap) at (4.20,-3.45)
{Hard-selection value gap\\
\Cref{lem:inner-quality-gap}};

\node[proofbox] (hglocal) at (8.40,0)
{Local growth on \(\calS(\theta)\)\\
\Cref{lem:hg-from-nondeg}};
\node[proofbox] (ongrowth) at (8.40,-1.15)
{Uniform on-manifold growth\\
\Cref{lem:uniform-hg}};
\node[proofbox] (awaygap) at (8.40,-2.30)
{Away-from-minimizer gap\\
\Cref{lem:away-gap-positive}};
\node[proofbox] (distgap) at (8.40,-3.45)
{Value gap to distance\\
\Cref{lem:barx-distance}};

\node[mainbox] (split) at (12.60,-0.55)
{Triangle split\\
\(\tilde x\to\bar x\to x^\star\)};
\node[mainbox] (final) at (12.60,-1.90)
{Selection error\\
\Cref{thm:selection-error-bound}};

	\node[stepbox, fit=(assump)(curv)(qgg)(gibbstube)(nutube)(renyi)(w2),
	label={[font=\scriptsize\bfseries,text=teal!70!black]above:Step 1: geometric and sampling ingredients}] {};
\node[stepbox, fit=(lowsq)(tail)(hardgap),
label={[font=\scriptsize\bfseries,text=teal!70!black]above:Step 2: hard-selection value gap}] {};
\node[stepbox, fit=(hglocal)(ongrowth)(awaygap)(distgap),
label={[font=\scriptsize\bfseries,text=teal!70!black]above:Step 3: gap-to-distance conversion}] {};
\node[stepbox, fit=(split)(final),
label={[font=\scriptsize\bfseries,text=teal!70!black]above:Step 4: final assembly}] {};

\draw[auxarrow] (assump.east) -- (curv.west);
\draw[auxarrow] (assump.east) -- (qgg.west);
\draw[auxarrow] (assump.east) -- (hglocal.west);
\draw[auxarrow] (assump.east) -- (ongrowth.west);

	\draw[arrow] (qgg) -- (gibbstube);
	\draw[arrow] (gibbstube) -- (nutube);
	\draw[arrow] (renyi) -- (nutube);
	\draw[arrow] (renyi) -- (w2);
	\draw[arrow] (curv.east) -- (hardgap.west);
	\draw[arrow] (gibbstube.east) -- (hardgap.west);
	\draw[arrow] (nutube.east) -- (hardgap.west);
	\draw[arrow] (w2.east) -- (hardgap.west);
\draw[arrow] (lowsq) -- (tail);
\draw[auxarrow] (tail) -- (hardgap);

\draw[arrow] (hglocal) -- (ongrowth);
\draw[arrow] (ongrowth) -- (awaygap);
\draw[arrow] (ongrowth) -- (distgap);
\draw[arrow] (awaygap) -- (distgap);
	\draw[arrow] (nutube.east) -- (split.west);
\draw[arrow] (hardgap.east) -- (final.west);
\draw[arrow] (distgap.east) -- (final.west);
\draw[arrow] (split) -- (final);

\end{tikzpicture}%
}
\caption{\textbf{Four-step selection-error proof for \Cref{thm:selection-error-bound}.}
Each lemma, proposition, corollary, or definition used in Appendix~\ref{append:selection-error} appears in its own box. The larger step boxes group the results according to the proof strategy stated at the beginning of Appendix~\ref{append:selection-error}; dashed arrows mark supporting assumptions or intermediate deterministic superquantile steps.}
\label{fig:appendix-d-selection-flow}
\end{figure}
\paragraph{Notation for the selection proof.}
We fix notation for the selection argument and record the lower-tail functional used in the deterministic superquantile bound inside \Cref{lem:inner-quality-gap}.
Fix $\theta\in\Theta$ and recall
\[
\calS(\theta)=\arg\min_{x\in\R^d} g(\theta,x),\qquad
F(\theta)=\min_{x\in\calS(\theta)} f(\theta,x).
\]
Throughout, assume that the constrained problem admits a unique optimistic minimizer
\[
x^\star(\theta)\in\arg\min_{x\in\calS(\theta)} f(\theta,x),
\]
as in \Cref{ass:unique_min}. Given candidates $X_1,\dots,X_M\in\R^d$, our hard selection rule is
\[
\tilde x \in \arg\min_{i\in[1:M]} f(\theta,X_i),
\qquad
\delta:=\frac1M.
\]
Since $\tilde x$ need not lie on $\calS(\theta)$, we also define its projection
\[
\bar x \in \arg\min_{x\in\calS(\theta)}\|x-\tilde x\|,
\qquad\text{so that}\qquad
\|\tilde x-\bar x\|=\mathrm{dist}(\tilde x,\calS(\theta)).
\]

\paragraph{Empirical and Gibbs measures.}
Define the empirical measure of the candidates,
\[
\widehat\nu_M := \frac1M\sum_{i=1}^M \delta_{X_i},
\]
and recall the Gibbs measure used to sample near $\calS(\theta)$,
\[
\gibbs_\theta^\lambda(\d x)\propto \exp\{-g(\theta,x)/\lambda\}\,\d x.
\]

\paragraph{Lower-tail superquantile (best-$\delta$ CVaR).}
We use the lower-tail superquantile functional $\mathrm{SQ}^{\mathrm{low}}_\delta$.

\begin{definition}[Lower $\delta$-superquantile]\label{def:lower-sq}
Let $Z$ be a real-valued random variable with (lower) quantile function
$q_u(Z):=\inf\{t\in\R:\ \mathbb{P}(Z\le t)\ge u\}$ for $u\in(0,1)$.
Define
\[
\mathrm{SQ}^{\mathrm{low}}_\delta(Z):=\frac1\delta\int_0^\delta q_u(Z)\,\d u,
\qquad \delta\in(0,1).
\]
If $X\sim \nu$ and $Z=f(\theta,X)$, we also write
$\mathrm{SQ}^{\mathrm{low}}_\delta(f(\theta,X);\,X\sim\nu)$.
\end{definition}

\subsubsection[Bounded curvature from analytic regularity of g]{Bounded curvature from analytic regularity of $g$}\label{append:curvature-from-g}
\noindent
This subsection proves the second fundamental form bound used in the volume comparison arguments of
\citep[e.g., Lem.~4.1]{masiha2025superquantile}. The proof combines the manifold and normal-gap consequences of
\(\PLcirc\) from \Cref{prop:geometry} with the analytic regularity condition in \Cref{ass:curvature_g}.
Note that \Cref{ass:curvature_g} is stronger than the $L_{g,2}$-smoothness in \Cref{assum_sample_comp}:
it also controls how $\nabla^2_{xx}g(\theta,\cdot)$ varies with $x$ (via a uniform bound on $\nabla^3_{xxx}g$ in a tube around $\calS(\theta)$).
We use this bound later in \Cref{lem:inner-quality-gap}, where the \(M^{-1/\dimk}\) nearest-neighbor term follows from
small-ball volume scaling on \(\calS(\theta)\). Thus \Cref{prop:second-fund-from-g} is the step that verifies, from our
assumptions on \(g\), the curvature regularity needed to invoke the volume comparison result.

\begin{proposition}[Bounded second fundamental form from analytic regularity]\label{prop:second-fund-from-g}
Let \Cref{ass:plcirc,ass:curvature_g} hold, and let \(c>0\) be the uniform normal spectral gap from
\Cref{prop:geometry}. Fix any $\theta\in\Theta$. Then the second fundamental form $\mathrm{II}$ of $\calS(\theta)$ satisfies
\[
\sup_{x\in \calS(\theta)}\ \|\mathrm{II}_x\|_{\mathrm{op}}\ \le\ \frac{L_{g,3}}{c}.
\]
In particular, since \(L_{g,3}\) and \(c\) are uniform over \(\theta\in\Theta\), we obtain a uniform curvature bound over the family \(\{\calS(\theta)\}_{\theta\in\Theta}\).
\end{proposition}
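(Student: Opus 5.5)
The plan is to differentiate the lower-level stationarity identity twice along curves in \(\calS(\theta)\). This yields a formula expressing the second fundamental form through the third derivative of \(g\), inverted on the normal space, where \Cref{prop:geometry}(ii) supplies the spectral gap \(c\).

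Fix \(\theta\in\Theta\), \(x\in\calS(\theta)\), and a unit tangent vector \(v\in\calT_x^\theta\). Let \(\gamma:(-\epsilon,\epsilon)\to\calS(\theta)\) be a \(\mathcal C^2\) curve with \(\gamma(0)=x\) and \(\gamma'(0)=v\). By \Cref{prop:geometry}(i) the manifold is \(\mathcal C^2\) embedded, so such curves exist; one may take a geodesic, or use the chart \(\psi(\theta,\cdot)\) of \Cref{lem:parametric-S-chart}. With the convention used in \eqref{eq:riem-hess-restricted}, we have \(\mathrm{II}_x(v,v)=P_{\calN_x^\theta}\gamma''(0)\) for a geodesic. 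For a general curve, the normal part of \(\gamma''(0)\) also equals \(\mathrm{II}_x(v,v)\), since the tangential part of the acceleration does not affect it. Because \(\gamma(s)\in\calS(\theta)\) for all \(s\), we have \(\nabla_x g(\theta,\gamma(s))\equiv 0\).

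Differentiating once gives \(H(\theta,\gamma(s))\gamma'(s)=0\), where \(H=\nabla^2_{xx}g\). Differentiating again at \(s=0\) gives
\[
\nabla^3_{xxx}g(\theta,x)[v,v]+H(\theta,x)\gamma''(0)=0 .
\]
This step needs \(g(\theta,\cdot)\in\mathcal C^3\) near \(\calS(\theta)\), which \Cref{ass:curvature_g} provides on the \(\rho\)-tube, and it needs \(\gamma\in\mathcal C^2\). By \Cref{prop:geometry}(ii), \(\ker H(\theta,x)=\calT_x^\theta\), so \(H\gamma''(0)=H\,P_{\calN_x^\theta}\gamma''(0)=H\,\mathrm{II}_x(v,v)\). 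The restriction of \(H\) to \(\calN_x^\theta\) is invertible with \(\langle w,Hw\rangle\ge c\|w\|^2\) there. Also, \(H\) is symmetric with \(\calN_x^\theta=\mathrm{range}(H)\), so \(H\) maps \(\calN_x^\theta\) into itself. Applying \(H^\dagger\) gives
\[
\mathrm{II}_x(v,v)=-H(\theta,x)^\dagger\,\nabla^3_{xxx}g(\theta,x)[v,v],
\qquad
\|\mathrm{II}_x(v,v)\|\le \tfrac1c\,\|\nabla^3_{xxx}g(\theta,x)\|_{\mathrm{op}}\|v\|^2\le \tfrac{L_{g,3}}{c}.
\]
Here we use \(\|H^\dagger\|\le 1/c\), which follows because the nonzero eigenvalues of \(H\) are at least \(c\).

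The bound holds for all unit \(v\), and \(\mathrm{II}_x\) is a symmetric bilinear form. By polarization its operator norm is controlled by the quadratic values, and with the norm convention \(\sup_{\|v\|=1}\|\mathrm{II}_x(v,v)\|\) the bound is exact. If the full bilinear norm is intended, one can instead apply the same identity to a two-parameter surface \((s,t)\mapsto\sigma(s,t)\in\calS(\theta)\) with \(\partial_s\sigma=u\) and \(\partial_t\sigma=v\). This directly gives \(\mathrm{II}_x(u,v)=-H^\dagger\nabla^3g[u,v]\), so the same constant applies without any polarization loss. Taking the supremum over \(x\in\calS(\theta)\) finishes the argument, and uniformity in \(\theta\) is immediate since \(L_{g,3}\) and \(c\) do not depend on \(\theta\).

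The main obstacle is regularity. \Cref{prop:geometry} only gives \(\calS(\theta)\) as a \(\mathcal C^2\) manifold, so the second derivatives of curves exist but nothing more. This is enough, because only \(\gamma''(0)\) is needed together with \(\mathcal C^3\) regularity of \(g\) on the tube. A second point needing care is that \(\mathrm{II}\) must be identified with the normal component of the curve acceleration under the paper's sign convention. Since the final estimate involves only a norm, the sign does not matter.
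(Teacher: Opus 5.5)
Your proposal is correct and is essentially the paper's argument: differentiate a first-order identity along a \(\mathcal C^2\) curve in the solution manifold, project onto the normal space using \(\ker H=\) tangent space, and invert \(H\) there using the gap \(c\) and the third-derivative bound \(L_{g,3}\). The one cosmetic difference is that the paper differentiates \(H(\gamma(t))U(t)=0\) for a tangent field \(U\) along \(\gamma\), which gives the off-diagonal value \(\mathrm{II}_x(v,u)\) directly; your two-parameter-surface variant reproduces exactly this.
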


\begin{proof}
Fix $\theta$ and abbreviate \(\mathcal{S}:=\calS(\theta)\).
Let $H(x):=\nabla^2_{xx}g(\theta,x)$.
Fix $x\in \mathcal{S}$ and unit tangent vectors $u,v\in\calT_x^\theta$.
Choose a $\mathcal{C}^{2}$ curve $\gamma:(-\varepsilon,\varepsilon)\to \mathcal{S}$ with $\gamma(0)=x$ and $\dot\gamma(0)=v$,
and choose a $\mathcal{C}^{1}$ tangent vector field $U(\cdot)$ along $\gamma$ with $U(0)=u$ and $U(t)\in\calT_{\gamma(t)}^\theta$.

By \Cref{prop:geometry}, \(\ker H(\gamma(t))=\calT_{\gamma(t)}^\theta\) for all \(t\). Since \(U(t)\) is tangent along
\(\gamma\), we have \(H(\gamma(t))\,U(t)=0\).
Differentiating at $t=0$ gives
\[
0
=\frac{\ud}{\ud t}\Big(H(\gamma(t))\,U(t)\Big)\Big|_{t=0}
=\big(D_v H(x)\big)\,u+H(x)\,\dot U(0),
\]
where $D_v H(x)$ denotes the directional derivative of the Hessian in direction $v$.
Let $P_{\calN_x^\theta}$ denote the orthogonal projection onto the normal space $\calN_x^\theta$.
Since $H(x)$ is symmetric and $\ker H(x)=\calT_x^\theta$, we have $\mathrm{range}(H(x))=(\calT_x^\theta)^\perp=\calN_x^\theta$ and hence
$P_{\calN_x^\theta}H(x)=H(x)P_{\calN_x^\theta}$.
Applying $P_{\calN_x^\theta}$ to the display above yields
\[
H(x)\,P_{\calN_x^\theta}\dot U(0) \;=\; -\,P_{\calN_x^\theta}\big(D_v H(x)\big)\,u.
\]
By the definition of the second fundamental form of an embedded submanifold in Euclidean space,
$\mathrm{II}_x(v,u)=P_{\calN_x^\theta}\dot U(0)$.
Restricting $H(x)$ to $\calN_x^\theta$ and using the uniform normal spectral gap gives
\[
\|\mathrm{II}_x(v,u)\|
\le \big\|\big(H(x)|_{\calN_x^\theta}\big)^{-1}\big\|\;\big\|\big(D_v H(x)\big)\,u\big\|
\le \frac{1}{c}\,\big\|\big(D_v H(x)\big)\,u\big\|.
\]
Finally, by the bounded third derivative assumption,
\(
\|(D_v H(x))\,u\|\le \|\nabla^3_{xxx}g(\theta,x)\|_{\mathrm{op}}\|v\|\,\|u\|\le L_{g,3}.
\)
Therefore $\|\mathrm{II}_x(v,u)\|\le L_{g,3}/c$ for all unit $u,v\in\calT_x^\theta$, and taking the supremum over $x\in \mathcal{S}$ yields the claim.
\end{proof}

\subsubsection{Controlling the off-manifold term $\mathrm{dist}(\tilde x,\calS(\theta))$}

\paragraph{Proof strategy.}
The off-manifold term is controlled in two steps. First, since the hard-selected point \(\tilde x\) is one of the
candidates, \eqref{eq:dist-max-bound} reduces the problem to bounding the largest normal distance among the candidate
points. Second, \Cref{lem:uniform-qg-g} turns the lower-level objective gap \(g(\theta,x)-g_\star(\theta)\) into a
uniform quadratic lower bound in \(\mathrm{dist}(x,\calS(\theta))\). This converts the Gibbs density
\(\exp(-g(\theta,x)/\lambda)\) into a subgaussian tail for the distance to \(\calS(\theta)\), which
\Cref{lem:gibbs-tube-width} integrates to control
\(\mathbb E[\max_i \mathrm{dist}(X_i,\calS(\theta))]\) and
\(\mathbb E[\max_i \mathrm{dist}^2(X_i,\calS(\theta))]\) for exact Gibbs candidates. Finally,
\Cref{lem:tube-width} transfers the same bounds to candidates with R\'enyi-controlled marginal laws \(\nu_i\), which is the
case used for the ULA outputs.

\paragraph{Max-distance reduction.}
Because $\tilde x$ is one of the candidates, it satisfies the deterministic inequality
\begin{equation}\label{eq:dist-max-bound}
\mathrm{dist}(\tilde x,\calS(\theta))
\ \le\ \max_{1\le i\le M}\mathrm{dist}(X_i,\calS(\theta)).
\end{equation}
We now show that, under our standing assumptions, Gibbs samples concentrate around $\calS(\theta)$ with a \emph{subgaussian} tail in the
normal distance $R:=\mathrm{dist}(X,\calS(\theta))$, which yields the sharper moment bound
$\mathbb E[\max_i R_i]\lesssim \sqrt{\lambda\log M}$.

\paragraph{Uniform quadratic growth for $g(\theta,\cdot)$.}
Recall from \Cref{assum_sample_comp} that $g(\theta,\cdot)$ satisfies a quadratic growth bound \emph{outside} a compact ball.
The next lemma shows that, under local \PLcirc\ and normal nondegeneracy on the minimizer manifold, this can be upgraded to a \emph{global}
quadratic growth inequality in terms of the distance to $\calS(\theta)$, uniformly over $\theta\in\Theta$.

\begin{lemma}[Uniform quadratic growth of $g(\theta,\cdot)$ over $\Theta$]\label{lem:uniform-qg-g}
Let \Cref{ass:plcirc,assum_sample_comp} hold.
Then there exists a constant $\mu_{\mathrm{QG}}>0$ such that for all $\theta\in\Theta$ and all $x\in\R^d$,
\begin{equation}\label{eq:global-qg-g}
g(\theta,x)-\min_{z\in\R^d}g(\theta,z)
\ \ge\
\frac{\mu_{\mathrm{QG}}}{2}\,\mathrm{dist}^2\!\bigl(x,\calS(\theta)\bigr).
\end{equation}
\end{lemma}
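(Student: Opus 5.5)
We prove \eqref{eq:global-qg-g} by splitting $\R^d$ into three regions, bounding each uniformly in $\theta$, and then taking the minimum of the resulting constants.
Write $\Delta(\theta,x):=g(\theta,x)-g_\star(\theta)$ and $\mathsf d(\theta,x):=\mathrm{dist}(x,\calS(\theta))$.

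\emph{Region 1: outside the ball $\mathbb B_d(0;\mathsf D)$.}
For $\|x\|\ge\mathsf D$, \Cref{assum_sample_comp} gives $\Delta\ge\frac{\mu_{\mathsf{qg}}}{2}\mathsf d^2$ directly.

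\emph{Region 2: a thin tube $\{\mathsf d(\theta,x)\le r_1\}$, with $r_1$ uniform in $\theta$.}
Let $y\in\calS(\theta)$ be a nearest point to $x$, so that $x-y\in\calN_y^\theta$. Taylor expansion to second order at $y$ gives
\[
\Delta(\theta,x)=\tfrac12\langle x-y,H(\theta,y)(x-y)\rangle+R,
\]
and \Cref{prop:geometry}(ii) bounds the quadratic term below by $\frac c2\mathsf d^2$.
The remainder $R$ is the main technical point. Under \Cref{ass:curvature_g} we have $|R|\le \frac{L_{g,3}}6\mathsf d^3$ once $r_1\le\rho$. Without that assumption, we instead use uniform continuity of $\nabla^2_{xx}g$ on the compact set $\Theta\times\mathbb B_d(0;\mathsf D+1)$, which \Cref{assum_sample_comp} provides since the Hessian is jointly continuous. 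This gives $|R|\le\omega(\mathsf d)\mathsf d^2$ with a modulus $\omega(r)\to0$ independent of $\theta$.
Choosing $r_1$ so that $\omega(r_1)\le c/4$ yields $\Delta\ge\frac c4\mathsf d^2$ on the tube. The normal gap $c$ is uniform by \Cref{prop:geometry}, so $r_1$ is uniform as well.

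\emph{Region 3: the compact set $K:=\{(\theta,x):\theta\in\Theta,\ \|x\|\le\mathsf D,\ \mathsf d(\theta,x)\ge r_1\}$.}
Here we claim $\Delta\ge m>0$ uniformly. The map $(\theta,x)\mapsto\Delta(\theta,x)$ is continuous: $g$ is continuous, and $g_\star$ is continuous on $\Theta$ because all minimizers lie in the compact set $\mathcal V$. Suppose instead $\Delta(\theta_n,x_n)\to0$ along a sequence in $K$. Pass to a limit $(\bar\theta,\bar x)$. Then $\Delta(\bar\theta,\bar x)=0$, so $\bar x\in\calS(\bar\theta)$.

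This is the step I expect to be the main obstacle: it requires lower semicontinuity of $(\theta,x)\mapsto\mathsf d(\theta,x)$, i.e.\ that $\calS(\theta_n)$ does not drift away from $\calS(\bar\theta)$. Equivalently, we need $\mathsf d(\theta_n,x_n)\to0$ whenever $x_n\to\bar x\in\calS(\bar\theta)$, which is inner semicontinuity of the solution map at $\bar x$. I would obtain it from the parameter-dependent chart of \Cref{lem:parametric-S-chart}: the point $\psi(\theta_n,\bar u)\in\calS(\theta_n)$ converges to $\bar x$, where $\bar u$ is the chart coordinate of $\bar x$. Alternatively, the local Hausdorff continuity of $\calS(\cdot)$ cited from \citep[Lem.~3.2]{masiha2025superquantile} gives the same conclusion.
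Either way $\mathsf d(\theta_n,x_n)\to0$, contradicting $\mathsf d(\theta_n,x_n)\ge r_1$.
Since $\mathsf d\le 2\mathsf D$ on $K$ (recall $\calS(\theta)\subseteq\mathbb B_d(0;\mathsf D)$), we get $\Delta\ge \frac{m}{4\mathsf D^2}\mathsf d^2$ there.

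Combining the three regions, the claim holds with
\[
\mu_{\mathrm{QG}}:=\min\Big\{\mu_{\mathsf{qg}},\ \tfrac c2,\ \tfrac{m}{2\mathsf D^2}\Big\}.
\]
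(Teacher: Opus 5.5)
Your proposal is correct and follows essentially the same route as the paper. It uses the identical three-region decomposition: quadratic growth outside $\mathbb B_d(0;\mathsf D)$ from the assumption, a uniform normal tube handled by Taylor expansion at the nearest point plus the normal gap $c$, and a positive minimum of $g-g_\star$ on the remaining bounded region via continuity of $\theta\mapsto\calS(\theta)$, ending with the same constant $\mu_{\mathrm{QG}}$. Your uniform-modulus bound on the Taylor remainder and your sequential Region~3 argument, which needs only inner semicontinuity of $\calS(\cdot)$ rather than full continuity of the distance map, are marginally leaner than the paper's versions.

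One terminological slip: Region~3 uses \emph{upper} semicontinuity of $(\theta,x)\mapsto\mathrm{dist}(x,\calS(\theta))$ (equivalently, inner semicontinuity of the set-valued map $\calS$), not lower semicontinuity. Your ``equivalently'' clause does state the correct property.
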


\begin{proof}
Let $\mathsf D$ be the compactness radius from \Cref{assum_sample_comp}, so that
$\calS(\theta)\subseteq \mathbb B_d(0;\mathsf D)$ for all $\theta\in\Theta$.
Write $g_\star(\theta):=\min_{z}g(\theta,z)$.

\emph{Preliminaries on the minimizer family.}
Since each minimizer set is contained in \(\mathbb B_d(0;\mathsf D)\),
\[
g_\star(\theta)=\min_{\|z\|\le \mathsf D} g(\theta,z).
\]
Thus \(g_\star\) is continuous on the compact set \(\Theta\) by Berge's maximum theorem. Consequently, the solution graph
\[
\mathcal K:=\{(\theta,y):\theta\in\Theta,\ y\in\calS(\theta)\}
=\{(\theta,y)\in\Theta\times \mathbb B_d(0;\mathsf D): g(\theta,y)=g_\star(\theta)\}
\]
is compact. The standard local normal-chart argument for the zero set \(\{\nabla_x g(\theta,x)=0\}\), using the normal
spectral gap in \Cref{prop:geometry} around each \((\theta,y)\in\mathcal K\) and then patched by compactness, gives local Hausdorff continuity of
\(\theta\mapsto\calS(\theta)\) on \(\Theta\). In particular, the map
\((\theta,x)\mapsto \mathrm{dist}(x,\calS(\theta))\) is continuous on \(\Theta\times \mathbb B_d(0;\mathsf D)\), and the
normal spaces \(\calN_y^\theta\) vary continuously over \(\mathcal K\).

\emph{Step 1: local quadratic growth in a uniform tube.}
Let $c>0$ be the normal spectral gap from \Cref{prop:geometry}, i.e.,
$\langle v,\nabla_{xx}^2 g(\theta,y)v\rangle\ge c\|v\|^2$ for all $\theta\in\Theta$, $y\in\calS(\theta)$, and
$v\in\calN_y^\theta$.
Since $(\theta,x)\mapsto \nabla^2_{xx}g(\theta,x)$ is continuous (\Cref{assum_sample_comp}) and the normal bundle over
\(\mathcal K\) is compact, there exists $r_{\mathrm{loc}}>0$ such that
for all $\theta\in\Theta$, all $y\in\calS(\theta)$, and all $x$ with $\|x-y\|\le r_{\mathrm{loc}}$,
\begin{equation}\label{eq:local-normal-gap}
\langle v,\nabla_{xx}^2 g(\theta,x)v\rangle\ \ge\ \tfrac{c}{2}\,\|v\|^2
\qquad \forall v\in\calN_y^\theta.
\end{equation}
Now fix any $\theta\in\Theta$ and $x\in\R^d$ with $R:=\mathrm{dist}(x,\calS(\theta))\le r_{\mathrm{loc}}$.
Let $y\in\calS(\theta)$ be a Euclidean projection of $x$ onto $\calS(\theta)$, so that $\|x-y\|=R$.
By first-order optimality of the projection problem on the manifold, $u:=x-y$ lies in the normal space $\calN_y^\theta$.
Using $\nabla_x g(\theta,y)=0$ (since $y\in\calS(\theta)$) and the integral form of Taylor's theorem,
\[
g(\theta,x)-g(\theta,y)
=\int_0^1 (1-t)\,u^\top \nabla_{xx}^2 g(\theta,y+t u)\,u\,\ud t.
\]
Because $\|t u\|\le R\le r_{\mathrm{loc}}$, \eqref{eq:local-normal-gap} applies along the segment $y+t u$, giving
$u^\top \nabla^2_{xx} g(\theta,y+t u)u\ge (c/2)\|u\|^2$ for all $t\in[0,1]$.
Therefore,
\begin{equation}\label{eq:qg-local}
g(\theta,x)-g_\star(\theta)
\ge
g(\theta,x)-g(\theta,y)
\ge
\tfrac{c}{4}\,\|u\|^2
=\tfrac{c}{4}\,R^2.
\end{equation}

\emph{Step 2: quadratic growth outside the compact ball.}
By \Cref{assum_sample_comp}, there exist constants $\mu_{\mathsf{qg}}>0$ and $\mathsf D>0$ such that for all $\|x\|\ge \mathsf D$,
\begin{equation}\label{eq:qg-outside}
g(\theta,x)-g_\star(\theta)\ \ge\ \tfrac{\mu_{\mathsf{qg}}}{2}\, \mathrm{dist}^2\!\bigl(x,\calS(\theta)\bigr),
\qquad \forall \theta\in\Theta.
\end{equation}

\emph{Step 3: the middle region by compactness.}
Consider the set
\[
\mathcal A
:=\Big\{(\theta,x):\theta\in\Theta,\ \|x\|\le \mathsf D,\ \mathrm{dist}(x,\calS(\theta))\ge r_{\mathrm{loc}}\Big\}.
\]
By the continuity of the distance map just noted, \(\mathcal A\) is closed in
\(\Theta\times \mathbb B_d(0;\mathsf D)\), hence compact. If \(\mathcal A=\emptyset\), the middle region is vacuous; in this
case set \(\mu_{\mathrm{mid}}:=+\infty\). Otherwise, the map
$(\theta,x)\mapsto g(\theta,x)-g_\star(\theta)$ is continuous on $\Theta\times \mathbb B_d(0;\mathsf D)$ and is strictly
positive on $\mathcal A$ (since $\mathrm{dist}(x,\calS(\theta))>0$ implies $x\notin\calS(\theta)$).
In the nonempty case, therefore,
\[
m_{\min}:=\min_{(\theta,x)\in\mathcal A}\big(g(\theta,x)-g_\star(\theta)\big)>0.
\]
Moreover, if $\|x\|\le \mathsf D$ then $\mathrm{dist}(x,\calS(\theta))\le 2\mathsf D$, so on the nonempty set
\(\mathcal A\) we have
\[
g(\theta,x)-g_\star(\theta)\ \ge\ m_{\min}
\ \ge\ \frac{m_{\min}}{(2\mathsf D)^2}\,\mathrm{dist}^2(x,\calS(\theta)).
\]
For this nonempty case, set \(\mu_{\mathrm{mid}}:=2m_{\min}/(2\mathsf D)^2\).

\emph{Step 4: combine the three regions.}
Let
\[
\mu_{\mathrm{QG}}
:=\min\Big\{\tfrac{c}{2},\ \mu_{\mathsf{qg}},\ \mu_{\mathrm{mid}}\Big\}.
\]
Then \eqref{eq:qg-local}, \eqref{eq:qg-outside}, and the middle-region bound above imply \eqref{eq:global-qg-g} for all $x\in\R^d$ and all
$\theta\in\Theta$.
\end{proof}

\paragraph{Subgaussian tube concentration and R\'enyi transfer.}
The uniform quadratic growth bound from \Cref{lem:uniform-qg-g} first gives Gaussian-like concentration of the normal
distance to $\calS(\theta)$ for exact Gibbs samples. We then transfer the same type of control to candidates whose
marginal laws are close to the Gibbs law in order-$2$ R\'enyi divergence.

\begin{lemma}[Tube concentration for Gibbs samples]\label{lem:gibbs-tube-width}
Assume the setting of \Cref{lem:uniform-qg-g} and fix any $\theta\in\Theta$.
Let $X\sim \gibbs_\theta^\lambda$ and define $R:=\mathrm{dist}(X,\calS(\theta))$.
Then there exist constants $\lambda_0,c_{\mathrm{tube}},C_{\mathrm{tube}},C_{\mathrm{tube},2}>0$ (independent of $\theta$ and $\lambda$) such that, for every \(0<\lambda\le\lambda_0\),
\begin{equation}\label{eq:gibbs-tail}
\mathbb P(R\ge t)\ \le\ C_{\mathrm{tube}}\exp\!\Big(-c_{\mathrm{tube}}\,\tfrac{t^2}{\lambda}\Big),
\qquad \forall t\ge 0.
\end{equation}
Consequently,
\[
\mathbb E[R]\le C_{\mathrm{tube}}\sqrt{\lambda},
\qquad
\mathbb E[R^2]\le C_{\mathrm{tube},2}\lambda.
\]

Moreover, if $X_1,\dots,X_M\stackrel{\mathrm{i.i.d.}}{\sim}\gibbs_\theta^\lambda$ and $\tilde x$ is any measurable selection of one of the candidates, then
\begin{equation}\label{eq:dist-tilde-max}
\mathbb E\big[\mathrm{dist}(\tilde x,\calS(\theta))\big]
\ \le\
\mathbb E\Big[\max_{1\le i\le M}\mathrm{dist}(X_i,\calS(\theta))\Big]
\ \le\
C_{\mathrm{tube}}\,\sqrt{\lambda\,\log(1+M)}.
\end{equation}
\begin{equation}\label{eq:dist-tilde-max2}
\mathbb E\big[\mathrm{dist}(\tilde x,\calS(\theta))^2\big]
\ \le\
\mathbb E\Big[\max_{1\le i\le M}\mathrm{dist}(X_i,\calS(\theta))^2\Big]
\ \le\
C_{\mathrm{tube},2}\,\lambda\,\log(1+M).
\end{equation}
\end{lemma}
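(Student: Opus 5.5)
The plan is to bound the Gibbs tail of $R=\mathrm{dist}(X,\calS(\theta))$ by comparing the mass of $\{R\ge t\}$ with the mass of a thin tube around $\calS(\theta)$. We then integrate the tail to get the moment bounds, and use a union bound to get the maximal bounds.

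\emph{Numerator.} Write $Z_\lambda=\int e^{-(g(\theta,x)-g_\star(\theta))/\lambda}\,\d x$. By \Cref{lem:uniform-qg-g},
\[
\int_{\{R\ge t\}} e^{-(g-g_\star)/\lambda}\,\d x \le e^{-\mu_{\mathrm{QG}}t^2/(4\lambda)}\int e^{-\mu_{\mathrm{QG}}\,\mathrm{dist}^2(x,\calS(\theta))/(4\lambda)}\,\d x .
\]
The last integral is bounded using the coarea/tube structure: $\calS(\theta)\subseteq\mathbb B_d(0;\mathsf D)$ is compact and $\dimk$-dimensional. On the $\rho$-tube, the Weyl tube formula with curvature control from \Cref{prop:second-fund-from-g} bounds the volume of $\{R\le s\}$ by $C\,\mathrm{Vol}_\dimk(\calS(\theta))\,s^{d-\dimk}$. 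Outside the tube we use the crude bound $\mathrm{vol}(\{R\le s\})\le C(\mathsf D+s)^d$. Together these give the integral as at most $C\lambda^{(d-\dimk)/2}$ for $\lambda\le\lambda_0$. Volumes of $\calS(\theta)$ are uniformly bounded by compactness of $\mathcal K$ and the uniform curvature bound.

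\emph{Denominator.} We lower-bound $Z_\lambda$ by restricting to the normal tube of radius $\sqrt\lambda$. There, Taylor expansion with $\nabla_{xx}^2 g$ bounded gives $g-g_\star\le \tfrac{L_{g,2}}{2}R^2\le L_{g,2}\lambda/2$. The tube volume is at least $c\,\mathrm{Vol}_\dimk(\calS(\theta))\lambda^{(d-\dimk)/2}$, by the tube formula again with the reach bounded below uniformly via \Cref{prop:second-fund-from-g}. We also need $\mathrm{Vol}_\dimk(\calS(\theta))$ bounded below uniformly; this follows from continuity over compact $\Theta$ and the constant dimension $\dimk$ in \Cref{prop:geometry}.

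\emph{Tail and moments.} The ratio of the two bounds gives \eqref{eq:gibbs-tail} with $c_{\mathrm{tube}}=\mu_{\mathrm{QG}}/4$, with the powers of $\lambda$ cancelling. Integrating $\mathbb E R^p=\int_0^\infty p t^{p-1}\mathbb P(R\ge t)\,\d t$ yields $\mathbb E R\le C\sqrt\lambda$ and $\mathbb E R^2\le C\lambda$.

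\emph{Maxima.} The first inequality in \eqref{eq:dist-tilde-max}--\eqref{eq:dist-tilde-max2} is \eqref{eq:dist-max-bound}. For the second, write $\mathbb E\max_i R_i^2\le s+\int_s^\infty M\,C_{\mathrm{tube}}e^{-c_{\mathrm{tube}}u/\lambda}\,\d u$ and choose $s=\lambda\log(1+M)/c_{\mathrm{tube}}$. This gives a bound $C\lambda\log(1+M)$ with constants adjusted so that $\log(1+M)\ge\log 2$. Jensen's inequality then gives the first-moment maximal bound.

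\emph{Main obstacle.} The hardest step is the uniform (in $\theta$) two-sided tube-volume estimate. If the curvature-based tube formula is awkward to apply directly, I would first try a local argument instead. Cover $\mathcal K$ by finitely many normal charts from \Cref{lem:parametric-S-chart}, where the map $(u,v)\mapsto\psi(\theta,u)+N(\theta,u)v$ is a bi-Lipschitz diffeomorphism with uniform constants. The volume comparisons on the $\sqrt\lambda$-tube can then be done chart by chart.
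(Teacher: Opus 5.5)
Your proposal is correct and follows essentially the paper's route. Uniform tubular-coordinate volume control makes both the numerator and the normalizer $Z_\lambda$ scale like $\lambda^{(d-\dimk)/2}$, the ratio gives the subgaussian tail with $c_{\mathrm{tube}}=\mu_{\mathrm{QG}}/4$, and tail integration plus a union bound give the moment and maximal bounds. Your direct lower bound on $Z_\lambda$ via $g-g_\star\le\tfrac{L_{g,2}}{2}R^2$ on the $\sqrt{\lambda}$-tube is, if anything, more self-contained than the paper's appeal to an external normalizing-constant lemma.

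One small correction: the second-fundamental-form bound of \Cref{prop:second-fund-from-g} does not by itself give a uniform lower bound on the reach or a uniform upper bound on $\mathrm{Vol}_{\dimk}(\calS(\theta))$, because distant sheets of a curvature-bounded manifold can come arbitrarily close. The uniform tube constants therefore rest on your chart-cover fallback via \Cref{lem:parametric-S-chart}, compactness of $\mathcal K$, and the normal spectral gap. The paper likewise simply asserts these constants.
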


\begin{proof}[Proof of \Cref{lem:gibbs-tube-width}]
Fix $\theta$ and abbreviate \(\mathcal{S}:=\calS(\theta)\) and $g_\star:=\min_z g(\theta,z)$.
By \Cref{lem:uniform-qg-g}, $g(\theta,x)\ge g_\star+\frac{\mu_{\mathrm{QG}}}{2}\mathrm{dist}^2(x,\mathcal{S})$.
Therefore, for any $t\ge 0$,
\[
\mathbb P(R\ge t)
=\frac{\int_{\mathrm{dist}(x,\mathcal{S})\ge t} e^{-g(\theta,x)/\lambda}\,\ud x}{\int_{\R^d} e^{-g(\theta,x)/\lambda}\,\ud x}
\le
\frac{\int_{\mathrm{dist}(x,\mathcal{S})\ge t} \exp\!\big(-\tfrac{\mu_{\mathrm{QG}}}{2\lambda}\mathrm{dist}^2(x,\mathcal{S})\big)\,\ud x}{\int_{\R^d} e^{-(g(\theta,x)-g_\star)/\lambda}\,\ud x}.
\]
Let \(q:=d-\dimk\) be the normal dimension of \(\mathcal S\). Since \(\mathcal S\) is compact and embedded in
\(\R^d\), we have \(q\ge 1\).
We next use a uniform tubular-coordinate estimate around the family \(\{\calS(\theta)\}_{\theta\in\Theta}\).
By compactness of the solution graph, the uniform normal spectral gap in \Cref{prop:geometry}, and the uniform curvature
control in \Cref{prop:second-fund-from-g}, there are constants \(r_{\mathrm{tube}}>0\), \(0<J_{\min}\le J_{\max}<\infty\),
and \(0<V_{\min}\le V_{\max}<\infty\), independent of \(\theta\), such that the normal map
\[
\Phi_\theta(y,z)=y+z,\qquad y\in\mathcal S,\quad z\in\calN_y^\theta,\quad \|z\|<r_{\mathrm{tube}},
\]
is a diffeomorphism onto the \(r_{\mathrm{tube}}\)-tube around \(\mathcal S\), its Jacobian lies in
\([J_{\min},J_{\max}]\), and the Riemannian volume of \(\mathcal S\) lies in \([V_{\min},V_{\max}]\).

We first lower bound the denominator. Let
\[
Z_\lambda(\theta):=\int_{\R^d} e^{-(g(\theta,x)-g_\star)/\lambda}\,\ud x .
\]
The small-temperature normalizing-constant lower bound used in
\citet[Lem.~D.1]{masiha2025superquantile}, applied here in the normal coordinates above, gives the manifold-version
scaling
\begin{equation}\label{eq:gibbs-den-lower}
Z_\lambda(\theta)\ge c_{\mathrm{den}}\,\lambda^{q/2}
\end{equation}
for all sufficiently small \(\lambda\), with \(c_{\mathrm{den}}>0\) uniform in \(\theta\). The exponent is \(q/2\), rather
than \(d/2\), because the Gibbs mass only concentrates in the \(q=d-\dimk\) normal directions; integration along
\(\mathcal S\) contributes a bounded positive manifold-volume factor.

We now upper bound the numerator. Write \(a:=\mu_{\mathrm{QG}}/(2\lambda)\) and split the domain into the part inside the
uniform tube and the part outside it.

\emph{Inside the tube.}
Every point in the tube has a unique representation \(x=y+z\), where \(y\in\mathcal S\) and
\(z\in\calN_y^\theta\). Here \(\ud\mathrm{Vol}_{\mathcal S}(y)\) denotes the intrinsic \(\dimk\)-dimensional volume
measure on \(\mathcal S\) induced by the ambient Euclidean metric; for example, it is arclength when \(\dimk=1\) and
surface area when \(\dimk=2\). Thus
\[
\int_{\mathcal S}\ud\mathrm{Vol}_{\mathcal S}(y)=\mathrm{Vol}_{\dimk}(\mathcal S).
\]
In these coordinates, \(\mathrm{dist}(y+z,\mathcal S)=\|z\|\) and the Euclidean volume element decomposes as
\[
\ud x=J_\theta(y,z)\,\ud z\,\ud\mathrm{Vol}_{\mathcal S}(y),
\]
where \(J_\theta(y,z)\le J_{\max}\). Therefore, after integrating out the \(y\)-variable using
\(\mathrm{Vol}_{\dimk}(\mathcal S)\le V_{\max}\), and identifying each normal fiber
\(\calN_y^\theta\) with \(\R^q\) through an arbitrary orthonormal basis, the tube contribution is bounded by a
\(q\)-dimensional Gaussian tail:
\begin{align}
\int_{\substack{\mathrm{dist}(x,\mathcal S)\ge t\\ \mathrm{dist}(x,\mathcal S)<r_{\mathrm{tube}}}}
e^{-a\,\mathrm{dist}^2(x,\mathcal S)}\,\ud x
&\le
J_{\max}V_{\max}
\int_{\substack{z\in\R^q:\ \|z\|\ge t\\ \|z\|<r_{\mathrm{tube}}}}
e^{-a\|z\|^2}\,\ud z \nonumber\\
&\le
J_{\max}V_{\max}\,|\mathbb S^{q-1}|
\int_t^\infty e^{-a r^2}r^{q-1}\,\ud r \nonumber\\
&\le C_1\,a^{-q/2}\exp\!\big(-a t^2/2\big).
\label{eq:gibbs-num-near}
\end{align}
The second inequality is just polar coordinates in the \(q\)-dimensional normal variable \(z\): writing
\(z=r\omega\), with \(\omega\in\mathbb S^{q-1}\), gives
\(\ud z=r^{q-1}\ud r\,\ud\sigma(\omega)\), and integrating over \(\omega\) gives the factor
\(|\mathbb S^{q-1}|\). We then upper bound the radial integral by replacing the upper limit
\(r_{\mathrm{tube}}\) with \(\infty\).
For the last inequality, substitute \(u=\sqrt a\,r\) and use
\(\int_s^\infty u^{q-1}e^{-u^2}\,\ud u\le C_q e^{-s^2/2}\) for all \(s\ge0\).

\emph{Outside the tube.}
Let \(R:=\max\{t,r_{\mathrm{tube}}\}\). We give a direct bound for this part. Since
\(\mathcal S\subseteq\mathbb B_d(0;\mathsf D)\), every point with \(\|x\|>\mathsf D+R\) satisfies
\[
\mathrm{dist}(x,\mathcal S)\ge \|x\|-\mathsf D.
\]
Hence
\begin{align*}
\int_{\mathrm{dist}(x,\mathcal S)\ge R}
e^{-a\,\mathrm{dist}^2(x,\mathcal S)}\,\ud x
&\le
\int_{\substack{\mathrm{dist}(x,\mathcal S)\ge R\\ \|x\|\le \mathsf D+R}}
e^{-aR^2}\,\ud x
\ +\
\int_{\|x\|>\mathsf D+R}
e^{-a(\|x\|-\mathsf D)^2}\,\ud x  \\
&\le
C_d(\mathsf D+R)^d e^{-aR^2}
\ +\
C_d\int_R^\infty (\mathsf D+s)^{d-1}e^{-a s^2}\,\ud s,
\end{align*}
where the last line uses polar coordinates with \(s=\|x\|-\mathsf D\) for the radial tail.
Because \(R\ge r_{\mathrm{tube}}>0\), the polynomial factors are dominated by the Gaussian decay. More explicitly, using
the standard one-dimensional Gaussian-tail bound
\[
\int_R^\infty (\mathsf D+s)^{d-1}e^{-a s^2}\,\ud s
\le C_{d,\mathsf D,r_{\mathrm{tube}}}\,a^{-1}(1+R^{d-2})e^{-aR^2}
\]
for \(R\ge r_{\mathrm{tube}}\), and increasing the constant to also cover the bounded-annulus term, there exists
\(a_0<\infty\) such that for all \(a\ge a_0\),
\begin{align}
\int_{\mathrm{dist}(x,\mathcal S)\ge \max\{t,r_{\mathrm{tube}}\}}
e^{-a\,\mathrm{dist}^2(x,\mathcal S)}\,\ud x
&\le C_5\,a^{-q/2}\exp\!\big(-a t^2/2\big).
\label{eq:gibbs-num-far}
\end{align}
Indeed, the last step follows because
\[
\sup_{\substack{a\ge a_0\\ R\ge r_{\mathrm{tube}}}}
a^{q/2}(\mathsf D+R)^d e^{-aR^2/2}
<\infty,
\qquad
\sup_{\substack{a\ge a_0\\ R\ge r_{\mathrm{tube}}}}
a^{q/2-1}(1+R^{d-2}) e^{-aR^2/2}
<\infty,
\]
and \(R\ge t\).
Choose \(\lambda_0>0\) small enough so that the denominator lower bound above holds and
\(a=\mu_{\mathrm{QG}}/(2\lambda)\ge a_0\) whenever \(0<\lambda\le\lambda_0\).
Combining \eqref{eq:gibbs-num-near} and \eqref{eq:gibbs-num-far},
\[
\int_{\mathrm{dist}(x,\mathcal{S})\ge t} e^{-a\,\mathrm{dist}^2(x,\mathcal{S})}\,\ud x
\le C_{\mathrm{num}}\,a^{-q/2}\exp\!\big(-a t^2/2\big)
\le C_{\mathrm{num}}'\,\lambda^{q/2}\exp\!\Big(-\tfrac{\mu_{\mathrm{QG}}}{4\lambda}t^2\Big).
\]
Dividing this numerator bound by \eqref{eq:gibbs-den-lower} yields \eqref{eq:gibbs-tail}.
Integrating \eqref{eq:gibbs-tail} over $t\ge 0$ yields $\mathbb E[R]\le C_{\mathrm{tube}}\sqrt{\lambda}$.
Similarly,
\[
\mathbb E[R^2]
=2\int_0^\infty t\,\mathbb P(R\ge t)\,\ud t
\le
2C_{\mathrm{tube}}\int_0^\infty t\,\exp\!\Big(-c_{\mathrm{tube}}\,\tfrac{t^2}{\lambda}\Big)\,\ud t
\le C_{\mathrm{tube},2}\lambda
\]
after increasing the constant if needed.

	Finally, set \(R_i:=\mathrm{dist}(X_i,\mathcal{S})\). Since \(\tilde x\) is one of the candidates,
	\[
	\mathrm{dist}(\tilde x,\mathcal{S})\le \max_{1\le i\le M}R_i.
	\]
	By a union bound and \eqref{eq:gibbs-tail},
		\[
		\mathbb P\!\Big(\max_{1\le i\le M} R_i\ge t\Big)
		\le \sum_{i=1}^M \mathbb P(R_i\ge t)
		\le M\,C_{\mathrm{tube}}\exp\!\Big(-c_{\mathrm{tube}}\,\tfrac{t^2}{\lambda}\Big).
		\]
		To bound the expectation, use the tail integral representation
		\[
		\mathbb E\Big[\max_{1\le i\le M} R_i\Big]
		=\int_0^\infty \mathbb P\!\Big(\max_{1\le i\le M} R_i\ge t\Big)\,\ud t.
		\]
		Let $t_0:=\sqrt{\frac{\lambda}{c_{\mathrm{tube}}}\log(1+M)}$. Splitting the integral and using the bound above yields
		\[
		\mathbb E\Big[\max_{1\le i\le M} R_i\Big]
		\le t_0 + \int_{t_0}^\infty M\,C_{\mathrm{tube}}\exp\!\Big(-c_{\mathrm{tube}}\,\tfrac{t^2}{\lambda}\Big)\,\ud t.
		\]
		Using $\int_{t_0}^\infty e^{-a t^2}\,\ud t\le \frac{1}{2 a t_0}\,e^{-a t_0^2}$ for $a>0$ (with $a=c_{\mathrm{tube}}/\lambda$) gives
			\begin{align*}
			\int_{t_0}^\infty M\,C_{\mathrm{tube}}\exp\!\Big(-c_{\mathrm{tube}}\,\tfrac{t^2}{\lambda}\Big)\,\ud t
			&\le M\,C_{\mathrm{tube}}\cdot\frac{\lambda}{2c_{\mathrm{tube}}\,t_0}\exp\!\Big(-c_{\mathrm{tube}}\,\tfrac{t_0^2}{\lambda}\Big)\\
			&= \frac{M}{1+M}\cdot\frac{C_{\mathrm{tube}}}{2c_{\mathrm{tube}}}\cdot\frac{\lambda}{t_0}\\
			&\le \frac{C_{\mathrm{tube}}}{2c_{\mathrm{tube}}}\,\frac{\sqrt{\lambda}}{\sqrt{\log(1+M)}}.
			\end{align*}
		Since \(M\ge1\), \(\log(1+M)\ge \log 2\), and we conclude that
		$\mathbb E[\max_i R_i]\le C_{\mathrm{tube}}\sqrt{\lambda\log(1+M)}$ after adjusting $C_{\mathrm{tube}}$.
		Similarly,
		\[
		\mathbb E\Big[\max_{1\le i\le M} R_i^2\Big]
		=
		2\int_0^\infty t\,\mathbb P\!\Big(\max_{1\le i\le M} R_i\ge t\Big)\,\ud t.
		\]
		Splitting at the same $t_0$ and using $\int_{t_0}^\infty t e^{-a t^2}\,\ud t=\frac{1}{2a}e^{-a t_0^2}$ with $a=c_{\mathrm{tube}}/\lambda$, we get
		\[
		\mathbb E\Big[\max_{1\le i\le M} R_i^2\Big]
		\le
		t_0^2+\frac{M C_{\mathrm{tube}}\lambda}{c_{\mathrm{tube}}}\exp\!\Big(-c_{\mathrm{tube}}\,\tfrac{t_0^2}{\lambda}\Big)
		\le
		C_{\mathrm{tube},2}\lambda\log(1+M),
		\]
		which yields \eqref{eq:dist-tilde-max2} after increasing $C_{\mathrm{tube},2}$.
\end{proof}

The next lemma is the form used for the candidates produced by the ULA/Gibbs instantiation of
\Cref{alg:hg-minsel-lmc}. Those candidates are not assumed to be exact Gibbs samples: after finitely many ULA steps, the
\(i\)-th candidate has marginal law \(\nu_i\). The
lemma only requires each \(\nu_i\) to be close to the ideal Gibbs law in order-\(2\) R\'enyi divergence, and then transfers
the Gibbs tube tail from \Cref{lem:gibbs-tube-width} by a Cauchy--Schwarz change-of-measure argument. This R\'enyi premise
is not an additional modeling assumption for ULA candidates: \Cref{prop:lmc-renyi} below gives the required bound
\(R_2(\nu_i\|\gibbs_\theta^\lambda)\le \varepsilon_{\mathrm R}^2\) once each ULA chain is run for the corresponding number
of steps. Thus \Cref{lem:tube-width} is the bridge from exact Gibbs concentration to the finite-time sampler used in the
algorithm.

\begin{lemma}[Tube bounds for candidates with R\'enyi-controlled marginals]\label{lem:tube-width}
Assume the setting of \Cref{lem:gibbs-tube-width}. Let $X_1,\dots,X_M$ be candidates whose marginal laws
$\nu_1,\dots,\nu_M$ are not necessarily identical and satisfy
\[
\max_{1\le i\le M}R_2(\nu_i\|\gibbs_\theta^\lambda)\le \varepsilon_{\mathrm{R}}^2.
\]
Let $\tilde x$ be the hard-selection output
\[
\tilde x\in\arg\min_{1\le i\le M} f(\theta,X_i),
\]
with any measurable tie-breaking rule. Then
\begin{equation}\label{eq:nu-tail-max}
\mathbb P\!\Big(\max_{1\le i\le M}\mathrm{dist}(X_i,\calS(\theta))\ge t\Big)
\le
M\,e^{\varepsilon_{\mathrm{R}}^2/2}\sqrt{C_{\mathrm{tube}}}
\exp\!\Big(-\tfrac{c_{\mathrm{tube}}}{2}\tfrac{t^2}{\lambda}\Big),
\qquad t\ge0.
\end{equation}
Consequently,
\begin{equation}\label{eq:dist-tilde-max-lmc}
\mathbb E\big[\mathrm{dist}(\tilde x,\calS(\theta))\big]
\ \le\
\mathbb E\Big[\max_{1\le i\le M}\mathrm{dist}(X_i,\calS(\theta))\Big]
\ \le\
C_{\mathrm{tube}}\,e^{\varepsilon_{\mathrm{R}}^2/2}\sqrt{\lambda\,\log(1+M)},
\end{equation}
and
\begin{equation}\label{eq:dist-tilde-max2-lmc}
\mathbb E\big[\mathrm{dist}(\tilde x,\calS(\theta))^2\big]
\ \le\
\mathbb E\Big[\max_{1\le i\le M}\mathrm{dist}(X_i,\calS(\theta))^2\Big]
\ \le\
C_{\mathrm{tube},2}\,e^{\varepsilon_{\mathrm{R}}^2/2}\lambda\,\log(1+M),
\end{equation}
after increasing \(C_{\mathrm{tube}}\) and \(C_{\mathrm{tube},2}\) by universal numerical factors if necessary.
\end{lemma}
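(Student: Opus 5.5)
The plan is a per-candidate change of measure from $\nu_i$ to $\gibbs_\theta^\lambda$ using the order-$2$ R\'enyi divergence. After that, we repeat the union-bound and tail-integration argument from the proof of \Cref{lem:gibbs-tube-width}.

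Fix $i$ and $t\ge0$, and let $A_t:=\{x:\mathrm{dist}(x,\calS(\theta))\ge t\}$. If $R_2(\nu_i\|\gibbs_\theta^\lambda)<\infty$ then $\nu_i\ll\gibbs_\theta^\lambda$. Writing $\nu_i(A_t)=\int \mathbf 1_{A_t}\,\frac{\ud\nu_i}{\ud\gibbs_\theta^\lambda}\,\ud\gibbs_\theta^\lambda$, Cauchy--Schwarz gives
\[
\nu_i(A_t)\le \gibbs_\theta^\lambda(A_t)^{1/2}\Big(\int\big(\tfrac{\ud\nu_i}{\ud\gibbs_\theta^\lambda}\big)^2\ud\gibbs_\theta^\lambda\Big)^{1/2}
=\gibbs_\theta^\lambda(A_t)^{1/2}\exp\!\big(R_2(\nu_i\|\gibbs_\theta^\lambda)/2\big).
\]
Here we use the convention $R_2=\log\int(\ud\nu/\ud\mu)^2\ud\mu$. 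Inserting the Gibbs tail \eqref{eq:gibbs-tail} and $R_2\le\varepsilon_{\mathrm R}^2$ yields
\[
\mathbb P(\mathrm{dist}(X_i,\calS(\theta))\ge t)\le e^{\varepsilon_{\mathrm R}^2/2}\sqrt{C_{\mathrm{tube}}}\exp\!\big(-\tfrac{c_{\mathrm{tube}}}{2}\tfrac{t^2}{\lambda}\big).
\]
A union bound over $i=1,\dots,M$ then gives \eqref{eq:nu-tail-max}. Only the marginals are needed here, so no independence between candidates is required.

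For the moment bounds, we use first the deterministic inequality \eqref{eq:dist-max-bound}, since $\tilde x$ is one of the candidates. Next we write $\mathbb E[\max_iR_i]=\int_0^\infty\mathbb P(\max_iR_i\ge t)\,\ud t$ and split the integral at
\[
t_0:=\sqrt{\tfrac{2\lambda}{c_{\mathrm{tube}}}\log(1+M)}.
\]
The contribution from below $t_0$ is at most $t_0$. Above $t_0$, the Gaussian tail inequality $\int_{t_0}^\infty e^{-at^2}\,\ud t\le e^{-at_0^2}/(2at_0)$ with $a=c_{\mathrm{tube}}/(2\lambda)$ bounds the remainder by $Me^{\varepsilon_{\mathrm R}^2/2}\sqrt{C_{\mathrm{tube}}}\,\tfrac{1}{1+M}\cdot\tfrac{\lambda}{c_{\mathrm{tube}}t_0}$. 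This is $O\big(e^{\varepsilon_{\mathrm R}^2/2}\sqrt{\lambda/\log(1+M)}\big)$, and it is dominated by $\sqrt{\lambda\log(1+M)}$ because $\log(1+M)\ge\log2$.

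The squared bound follows the same way from $\mathbb E[\max_iR_i^2]=2\int_0^\infty t\,\mathbb P(\max_iR_i\ge t)\,\ud t$, split at the same $t_0$. Using $\int_{t_0}^\infty te^{-at^2}\,\ud t=e^{-at_0^2}/(2a)$, the result is at most $t_0^2+O\big(e^{\varepsilon_{\mathrm R}^2/2}\lambda\big)$. This is $\le C\,e^{\varepsilon_{\mathrm R}^2/2}\lambda\log(1+M)$.

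No step is a real obstacle. The only point needing care is the constant bookkeeping. The change of measure halves the exponent, so the constant changes from $c_{\mathrm{tube}}$ to $c_{\mathrm{tube}}/2$ and $t_0$ picks up a factor $\sqrt2$. It also replaces $C_{\mathrm{tube}}$ by $\sqrt{C_{\mathrm{tube}}}$. Since $e^{\varepsilon_{\mathrm R}^2/2}\ge1$, the additive $t_0$ and $t_0^2$ terms can be absorbed into the factor $e^{\varepsilon_{\mathrm R}^2/2}$. This is what justifies enlarging $C_{\mathrm{tube}}$ and $C_{\mathrm{tube},2}$ by universal numerical factors, as the statement allows.
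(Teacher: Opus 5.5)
Your proposal is correct and follows the paper's proof essentially step for step. The shared steps are the Cauchy--Schwarz change of measure $\nu_i(A_t)\le e^{\varepsilon_{\mathrm R}^2/2}\gibbs_\theta^\lambda(A_t)^{1/2}$, a union bound that uses only the marginals, tail integration split at $t_0=\sqrt{(2\lambda/c_{\mathrm{tube}})\log(1+M)}$ (the paper's $\sqrt{\beta^{-1}\log(1+M)}$ with $\beta=c_{\mathrm{tube}}/(2\lambda)$), and absorption of the additive $t_0$, $t_0^2$ terms via $e^{\varepsilon_{\mathrm R}^2/2}\ge1$. As in the paper, the enlargement of $C_{\mathrm{tube}}$ and $C_{\mathrm{tube},2}$ can also depend on $c_{\mathrm{tube}}$ rather than being purely numerical, but this is harmless bookkeeping.
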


\begin{proof}[Proof of \Cref{lem:tube-width}]
	Fix $\theta$ and abbreviate \(\mathcal{S}:=\calS(\theta)\) and
	\(R_i:=\mathrm{dist}(X_i,\mathcal{S})\). For any candidate index $i$, let
	\(A_t:=\{x:\mathrm{dist}(x,\mathcal{S})\ge t\}\).
	Under $R_2(\nu_i\|\gibbs_\theta^\lambda)\le \varepsilon_{\mathrm{R}}^2$, Cauchy--Schwarz yields
	\[
	\nu_i(A_t)
	=\int \mathbf 1_{A_t}\,\frac{\d \nu_i}{\d \gibbs_\theta^\lambda}\,\d \gibbs_\theta^\lambda
	\le e^{\varepsilon_{\mathrm{R}}^2/2}\,\gibbs_\theta^\lambda(A_t)^{1/2}.
	\]
		Combining this with the Gibbs tail \eqref{eq:gibbs-tail} gives
		$\mathbb P(R_i\ge t)\le e^{\varepsilon_{\mathrm{R}}^2/2}\sqrt{C_{\mathrm{tube}}}\exp\!\big(-\tfrac{c_{\mathrm{tube}}}{2}\tfrac{t^2}{\lambda}\big)$,
		and hence (by a union bound)
		\[
		\mathbb P\!\Big(\max_{1\le i\le M} R_i\ge t\Big)
		\le
		M\,e^{\varepsilon_{\mathrm{R}}^2/2}\sqrt{C_{\mathrm{tube}}}\exp\!\Big(-\tfrac{c_{\mathrm{tube}}}{2}\tfrac{t^2}{\lambda}\Big).
		\]
		This proves \eqref{eq:nu-tail-max}. To integrate the tail, set
		\[
		A_{\mathrm R}:=e^{\varepsilon_{\mathrm{R}}^2/2}\sqrt{C_{\mathrm{tube}}},
		\qquad
		\beta:=\frac{c_{\mathrm{tube}}}{2\lambda},
		\qquad
		t_0:=\sqrt{\beta^{-1}\log(1+M)}.
		\]
		Then \eqref{eq:nu-tail-max} gives
		\[
		\mathbb P\!\Big(\max_{1\le i\le M}R_i\ge t\Big)
		\le \min\{1,\;M A_{\mathrm R}e^{-\beta t^2}\}.
		\]
		Using the tail integral formula and splitting at \(t_0\),
		\begin{align*}
		\mathbb E\Big[\max_{1\le i\le M} R_i\Big]
		&\le t_0+M A_{\mathrm R}\int_{t_0}^{\infty}e^{-\beta t^2}\,\ud t\\
		&\le t_0+\frac{M A_{\mathrm R}}{2\beta t_0}e^{-\beta t_0^2}\\
		&\le C(1+A_{\mathrm R})\sqrt{\lambda\log(1+M)}
		\le C' e^{\varepsilon_{\mathrm{R}}^2/2}\sqrt{\lambda\log(1+M)},
		\end{align*}
		where the constants absorb \(C_{\mathrm{tube}}\), \(c_{\mathrm{tube}}\), and the fact that \(\log(1+M)\ge\log 2\).
		After increasing \(C_{\mathrm{tube}}\), this proves \eqref{eq:dist-tilde-max-lmc}.
		
		Similarly,
		\[
		\mathbb E\Big[\max_{1\le i\le M} R_i^2\Big]
		=
		2\int_0^\infty t\,\mathbb P\!\Big(\max_{1\le i\le M} R_i\ge t\Big)\,\ud t.
		\]
		Splitting at the same \(t_0\) gives
		\begin{align*}
		\mathbb E\Big[\max_{1\le i\le M} R_i^2\Big]
		&\le t_0^2+2M A_{\mathrm R}\int_{t_0}^{\infty}t e^{-\beta t^2}\,\ud t\\
		&=t_0^2+\frac{M A_{\mathrm R}}{\beta}e^{-\beta t_0^2}\\
		&\le C(1+A_{\mathrm R})\lambda\log(1+M)
		\le C' e^{\varepsilon_{\mathrm{R}}^2/2}\lambda\log(1+M).
		\end{align*}
		After increasing \(C_{\mathrm{tube},2}\), this proves \eqref{eq:dist-tilde-max2-lmc}.
	Finally, since the hard-selection output \(\tilde x\in\arg\min_i f(\theta,X_i)\) is one of the candidates,
	\[
	\mathrm{dist}(\tilde x,\mathcal{S})
	\le \max_{1\le i\le M}R_i,
	\]
	which gives the first inequalities in \eqref{eq:dist-tilde-max-lmc} and \eqref{eq:dist-tilde-max2-lmc}.
\end{proof}

\subsubsection{From hard selection to a value gap on $\calS(\theta)$}
With the tube controls in place, we next bound the value gap of the projected selected point \(\bar x\). The deterministic superquantile bound is useful for interpreting hard selection, but the final rate below uses a direct hard-min argument to avoid an unnecessary factor \(M\).

\paragraph{Tail-min control by an empirical lower superquantile.}
Hard selection is a deterministic lower-tail operation, and its output is always below the corresponding
(lower) superquantile under the empirical law.

\begin{lemma}[Tail-selection $\Rightarrow$ superquantile control]\label{lem:tail-to-sq}
Fix $\theta$, candidates $X_1,\dots,X_M$, and $\delta\in(0,1)$. Let
\(
\tilde x \in \arg\min_{i\in[1:M]} f(\theta,X_i).
\)
Then
\[
f(\theta,\tilde x)\ \le\ \mathrm{SQ}^{\mathrm{low}}_\delta\big(f(\theta,X);\,X\sim\widehat\nu_M\big).
\]
\end{lemma}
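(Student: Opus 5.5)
The plan is to compare the minimum of $f(\theta,X_i)$ with the average of the smallest values, reading $\mathrm{SQ}^{\mathrm{low}}_\delta$ under the empirical law $\widehat\nu_M$ as an average of lower quantiles, each of which is at least the minimum. Throughout, write $Z_i:=f(\theta,X_i)$ and $Z_{(1)}:=\min_{1\le i\le M}Z_i=f(\theta,\tilde x)$; the hard-selection rule gives the last equality directly, regardless of tie-breaking.

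\textbf{Step 1: identify the law of $Z$.} If $X\sim\widehat\nu_M$, then $Z=f(\theta,X)$ is a discrete random variable supported on the finite set $\{Z_1,\dots,Z_M\}$. In particular $\mathbb P(Z<Z_{(1)})=0$. Hence for every $t<Z_{(1)}$ we have $\mathbb P(Z\le t)=0$, which is strictly less than any $u\in(0,1)$.

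\textbf{Step 2: lower bound on every quantile.} By the definition $q_u(Z)=\inf\{t:\mathbb P(Z\le t)\ge u\}$, every $t$ in the admissible set must therefore satisfy $t\ge Z_{(1)}$. So $q_u(Z)\ge Z_{(1)}$ for all $u\in(0,1)$. In fact $q_u(Z)=Z_{(1)}$ for $u\le 1/M$, although only the inequality is needed.

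\textbf{Step 3: average.} Integrating over $u\in(0,\delta)$ gives
\[
\mathrm{SQ}^{\mathrm{low}}_\delta\big(f(\theta,X);X\sim\widehat\nu_M\big)=\frac1\delta\int_0^\delta q_u(Z)\,\d u\ \ge\ Z_{(1)}=f(\theta,\tilde x).
\]
The integral is well defined because $u\mapsto q_u(Z)$ is monotone and takes values in a bounded finite set.

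There is no real obstacle here. The only point requiring care is the quantile convention: the lower quantile is an infimum over $t$ with $\mathbb P(Z\le t)\ge u$, so atoms are handled correctly and the argument holds for any $\delta\in(0,1)$, not just $\delta=1/M$.
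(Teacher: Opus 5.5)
Your proposal is correct and follows essentially the same route as the paper: the empirical law of $f(\theta,X)$ is supported on $\{f(\theta,X_i)\}_{i=1}^M$, so no $t$ below the minimum can satisfy $\mathbb P(Z\le t)\ge u$, which gives $q_u(Z)\ge f(\theta,\tilde x)$ for every $u\in(0,1)$. Averaging this pointwise bound over $u\in(0,\delta)$ yields the claim.
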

\begin{proof}
Under the empirical law \(\widehat\nu_M\), the random variable \(f(\theta,X)\) takes values among
\(\{f(\theta,X_i)\}_{i=1}^M\). Let \(m:=\min_{1\le i\le M}f(\theta,X_i)=f(\theta,\tilde x)\).
For every \(u\in(0,1)\), no value below \(m\) can have empirical probability at least \(u\), hence
\(q_u(f(\theta,X))\ge m\). Averaging this pointwise inequality over \(u\in(0,\delta)\) gives the claim.
\end{proof}

\paragraph{Lifting $\tilde x$ back to $\calS(\theta)$ via Lipschitzness.}
We will compare $x^\star(\theta)$ to $\bar x\in\calS(\theta)$ (the projection of $\tilde x$) and separate
(i) an off-manifold error $\|\tilde x-\bar x\|$ from (ii) an on-manifold suboptimality gap at $\bar x$.
\begin{lemma}[Suboptimality gap for R\'enyi-controlled candidates]\label{lem:inner-quality-gap}
Fix $\theta$ and let $x^\star(\theta)$ be the unique minimizer of $f(\theta,\cdot)$ over $\calS(\theta)$.
Assume \(\dimk\ge1\) and that $x\mapsto f(\theta,x)$ is $L_{f,1}$-Lipschitz (as in \Cref{assum_sample_comp}).
Given candidates \(X_1,\ldots,X_M\), let
\[
\widehat\nu_M:=\frac1M\sum_{i=1}^M\delta_{X_i},
\qquad
\tilde x\in\arg\min_{1\le i\le M}f(\theta,X_i),
\]
with any measurable tie-breaking rule, and let $\bar x$ be any measurable Euclidean projection of \(\tilde x\) onto
\(\calS(\theta)\).
Then, for every \(\delta\in(0,1)\),
\begin{align}
0 \ \le\ f(\theta,\bar x)-f(\theta,x^\star(\theta))
\ \le\ & L_{f,1}\,\mathrm{dist}(\tilde x,\calS(\theta))
\nonumber\\
&\quad + \Big[\mathrm{SQ}^{\mathrm{low}}_\delta\big(f(\theta,X);\,X\sim\widehat\nu_M\big)
	        - f(\theta,x^\star(\theta))\Big].
\label{eq:value-gap-basic}
\end{align}
Moreover, for any reference measure $\nu$ on $\R^d$ with finite first moment,
\begin{align}
\mathrm{SQ}^{\mathrm{low}}_\delta\big(f(\theta,X);\,X\sim\widehat\nu_M\big)
- \mathrm{SQ}^{\mathrm{low}}_\delta\big(f(\theta,X);\,X\sim\nu\big)
\ \le\ \frac{L_{f,1}}{\delta}\;\mathsf{W}_1(\widehat\nu_M,\nu),
\label{eq:sq-w1-lip}
\end{align}
and consequently, choosing $\nu=\gibbs_\theta^\lambda$ yields
\begin{align}
f(\theta,\bar x)-f(\theta,x^\star(\theta))
\ \le\ & L_{f,1}\,\mathrm{dist}(\tilde x,\calS(\theta))
\nonumber\\
&\quad + \frac{L_{f,1}}{\delta}\;\mathsf{W}_1(\widehat\nu_M,\gibbs_\theta^\lambda)
+ \Big[\mathrm{SQ}^{\mathrm{low}}_\delta\big(f(\theta,X);\,X\sim\gibbs_\theta^\lambda\big)
	        - f(\theta,x^\star(\theta))\Big].
\label{eq:value-gap-gibbs}
\end{align}

	Moreover, assume additionally that \(0<\lambda\le\lambda_0\), where \(\lambda_0\) is chosen small enough for both the
	tube bound in \Cref{lem:gibbs-tube-width} and the projected-Gibbs small-ball bound used below, and that
	\(X_1,\dots,X_M\) are independent candidates with marginal laws
\(\nu_1,\dots,\nu_M\), that \(\gibbs_\theta^\lambda\) satisfies a Poincar\'e inequality with constant at most
\(C_{\mathrm{PI}}\), and that
\[
\varepsilon_{\mathrm R}^2:=\max_{1\le i\le M}R_2(\nu_i\|\gibbs_\theta^\lambda)<\infty.
\]
Assume \(\varepsilon_{\mathrm R}\le1\).
Then, for the hard-selection output \(\tilde x\) and projection \(\bar x\) defined above, there exists a constant
$C_1>0$ (depending only on the on-manifold geometry near $x^\star(\theta)$; under the uniform assumptions above it can be
chosen uniformly over \(\theta\)) such that
		\begin{equation}\label{eq:inner-gap-hardmin}
		\mathbb E\big[f(\theta,\bar x)-F(\theta)\big]
		\ \le\
		2L_{f,1}\,C_{\mathrm{tube}}\,\sqrt{\lambda\log(1+M)}
		\ +\ C_1\,M^{-1/\dimk}
		\ +\ 4\sqrt{e-1}\,L_{f,1}\sqrt{M\,C_{\mathrm{PI}}}\,\varepsilon_{\mathrm R},
		\end{equation}
		where $C_{\mathrm{tube}}$ is the tube constant from \Cref{lem:gibbs-tube-width}.
\end{lemma}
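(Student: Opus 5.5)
The plan is to prove the three displays in order; the first two are deterministic. For \eqref{eq:value-gap-basic}, the lower bound $0\le f(\theta,\bar x)-f(\theta,x^\star(\theta))$ holds because $\bar x\in\calS(\theta)$ and $x^\star(\theta)$ minimizes $f(\theta,\cdot)$ there. For the upper bound:
\begin{itemize}[leftmargin=*]
\item Lipschitzness of $f(\theta,\cdot)$ gives $f(\theta,\bar x)\le f(\theta,\tilde x)+L_{f,1}\|\tilde x-\bar x\|=f(\theta,\tilde x)+L_{f,1}\mathrm{dist}(\tilde x,\calS(\theta))$.
\item \Cref{lem:tail-to-sq} bounds $f(\theta,\tilde x)$ by $\mathrm{SQ}^{\mathrm{low}}_\delta$ under $\widehat\nu_M$.
\end{itemize}

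For \eqref{eq:sq-w1-lip}, I would use the Rockafellar--Uryasev type variational form of the lower superquantile,
\[
\mathrm{SQ}^{\mathrm{low}}_\delta(Z)=\sup_{t\in\R}\Big\{t-\tfrac1\delta\mathbb E[(t-Z)_+]\Big\}.
\]
For each fixed $t$, the map $x\mapsto (t-f(\theta,x))_+$ is $L_{f,1}$-Lipschitz. Kantorovich--Rubinstein duality then bounds the difference of the two expectations by $L_{f,1}\mathsf W_1(\widehat\nu_M,\nu)$ uniformly in $t$. Taking suprema gives the factor $L_{f,1}/\delta$. Display \eqref{eq:value-gap-gibbs} follows by adding and subtracting the Gibbs superquantile.

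For \eqref{eq:inner-gap-hardmin} I would not go through the superquantile, which would cost a factor $M=\delta^{-1}$ via $\mathsf W_1/\delta$. Instead I argue directly on the hard minimum. Write $\Pi(X_i)$ for projections onto $\calS(\theta)$ and $R_i:=\mathrm{dist}(X_i,\calS(\theta))$. Then
\[
f(\theta,\bar x)-F(\theta)\le L_{f,1}R_{\tilde\imath}+\min_i f(\theta,X_i)-F(\theta)\le 2L_{f,1}\max_i R_i+\min_i h(X_i),
\]
where $h(x):=f(\theta,\Pi(x))-F(\theta)\ge0$. I would not treat $h$ as globally Lipschitz, since $\Pi$ is only well behaved in the tube. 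So in the change-of-measure step below I instead use the $L_{f,1}$-Lipschitz function $\phi(x_1,\dots,x_M):=\min_i f(\theta,x_i)-F(\theta)$, and absorb the projection error through the $\max_i R_i$ term. That double counting is where the constants $2$ and $4$ come from.

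The $\max_i R_i$ term is controlled by \Cref{lem:tube-width}. With $\varepsilon_{\mathrm R}\le1$, the prefactor $e^{\varepsilon_{\mathrm R}^2/2}$ is an absolute constant absorbed into $C_{\mathrm{tube}}$, giving $2L_{f,1}C_{\mathrm{tube}}\sqrt{\lambda\log(1+M)}$.

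The remaining term, $\mathbb E[\min_i h(X_i)]$, I handle in two stages: first for exact i.i.d.\ Gibbs candidates, then by changing measure to the actual ULA laws $\nu_i$.

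\emph{Exact Gibbs stage.} Use the tail formula $\mathbb E[\min_i h(X_i)]=\int_0^\infty(1-p(s))^M\,\d s$, where $p(s):=\gibbs_\theta^\lambda(h\le s)$. Since $h(x)\le L_{f,1}\|\Pi(x)-x^\star(\theta)\|$, we have
\[
p(s)\ge \gibbs_\theta^\lambda\big(\|\Pi(X)-x^\star(\theta)\|\le s/L_{f,1}\big).
\]
I then need a small-ball lower bound $\gtrsim (s/L_{f,1})^{\dimk}$ for the projected Gibbs law on $\calS(\theta)$, uniformly in $\theta$ and in $\lambda\le\lambda_0$. The route is to write the Gibbs mass in the tubular coordinates $x=y+z$ of \Cref{lem:gibbs-tube-width}. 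The normal fibre integral over the tube has size $\asymp\lambda^{q/2}$, with Jacobian in $[J_{\min},J_{\max}]$, and this matches the normalizer lower bound \eqref{eq:gibbs-den-lower} together with a matching upper bound on the normalizer. Hence the projected density on $\calS(\theta)$ is bounded below by a constant times the volume measure. Geodesic-ball volume comparison, using the curvature bound of \Cref{prop:second-fund-from-g}, then gives $\mathrm{Vol}(B_{\calS}(x^\star,r))\gtrsim r^{\dimk}$ for $r$ below a uniform radius. This yields $(1-p(s))^M\le \exp(-cMs^{\dimk})$ for small $s$, hence the $C_1M^{-1/\dimk}$ term. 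For large $s$, $p(s)$ is bounded below by a constant, so that range contributes an exponentially small amount that is also absorbed.

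\emph{Change-of-measure stage.} I compare $\mathbb E_{\otimes\nu_i}\phi$ with $\mathbb E_{(\gibbs_\theta^\lambda)^{\otimes M}}\phi$. The product Gibbs law satisfies a Poincar\'e inequality with the same constant $C_{\mathrm{PI}}$ by tensorization, and $\|\nabla\phi\|\le L_{f,1}$ almost everywhere. Cauchy--Schwarz then gives
\[
\big|\mathbb E_\nu\phi-\mathbb E_\mu\phi\big|\le\sqrt{\chi^2(\nu\|\mu)}\sqrt{C_{\mathrm{PI}}}\,L_{f,1}.
\]
R\'enyi divergence is additive over products, so $\chi^2=e^{R_2}-1\le e^{M\varepsilon_{\mathrm R}^2}-1$. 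This is bounded by $(e-1)M\varepsilon_{\mathrm R}^2$ only when $M\varepsilon_{\mathrm R}^2\le1$. The hypothesis $\varepsilon_{\mathrm R}\le1$ alone does not give that, so as stated the bound is not yet justified in the regime $M\varepsilon_{\mathrm R}^2>1$. I would first try a telescoping hybrid over the candidates, replacing one coordinate at a time using the single-coordinate bound $e^{\varepsilon_{\mathrm R}^2}-1\le(e-1)\varepsilon_{\mathrm R}^2$. If that only yields a factor $M$ rather than $\sqrt M$, I would fall back to assuming $M\varepsilon_{\mathrm R}^2\le1$. That extra assumption is compatible with the parameter choice in \Cref{corr:oracle-complexity}, where $\varepsilon_{\mathrm R}^2=\varepsilon^{8+4\dimk}$ and $N\approx\varepsilon^{-4\dimk}$. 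Applying the comparison both to the $\phi$ term and when relating back to the projected-min term produces the constant $4\sqrt{e-1}$.

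I expect the main obstacle to be the uniform small-ball lower bound for the projected Gibbs measure near $x^\star(\theta)$. It requires matching upper and lower normalizing-constant estimates in tubular coordinates at scale $\lambda^{q/2}$, together with uniform volume comparison on the family $\calS(\theta)$. The second delicate point is keeping the R\'enyi transfer at order $\sqrt M$ rather than $M$.
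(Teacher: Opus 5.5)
The genuine gap is the transfer from exact Gibbs candidates to the laws $\nu_i$, and you flag it yourself. A $\chi^2$ change of measure on the product space gives $L_{f,1}\sqrt{C_{\mathrm{PI}}}\,(e^{M\varepsilon_{\mathrm R}^2}-1)^{1/2}$. Coordinate-by-coordinate telescoping gives a factor $M$. Neither yields the $\sqrt{M}\,\varepsilon_{\mathrm R}$ term of \eqref{eq:inner-gap-hardmin} under the stated hypothesis $\varepsilon_{\mathrm R}\le 1$. Falling back to $M\varepsilon_{\mathrm R}^2\le 1$ proves a strictly weaker lemma.

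The rest of your plan is sound. The deterministic bounds \eqref{eq:value-gap-basic}--\eqref{eq:value-gap-gibbs} are fine. So is the exact-Gibbs nearest-neighbour stage, which matches the paper except that the paper cites the projected-Gibbs small-ball bound rather than deriving it in tubular coordinates.

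The missing idea is to transport coordinatewise in $\mathsf W_2$. The relevant functionals are Lipschitz in the norm $\max_i\|x_i-y_i\|$ over candidates. Because the candidates are independent, take for each $i$ an optimal $\mathsf W_2$ coupling of $X_i\sim\nu_i$ with $X_i^\lambda\sim\gibbs_\theta^\lambda$, and form the product coupling. The $X_i^\lambda$ are then i.i.d. Gibbs, and the joint law of $(X_1,\dots,X_M)$ is unchanged.

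Deterministically, $\max_i\mathrm{dist}(X_i,\calS(\theta))\le\max_i\mathrm{dist}(X_i^\lambda,\calS(\theta))+\max_i\|X_i-X_i^\lambda\|$ and $\min_i f(\theta,X_i)\le\min_i f(\theta,X_i^\lambda)+L_{f,1}\max_i\|X_i-X_i^\lambda\|$. Moreover,
\[
\mathbb E\max_i\|X_i-X_i^\lambda\|\le\Big(\sum_{i=1}^M\mathsf W_2(\nu_i,\gibbs_\theta^\lambda)^2\Big)^{1/2}.
\]
Each summand is controlled by the single-chain Poincar\'e/R\'enyi transport inequality, $\mathsf W_2(\nu_i,\gibbs_\theta^\lambda)\le 2\sqrt{C_{\mathrm{PI}}(e^{R_2(\nu_i\|\gibbs_\theta^\lambda)}-1)}\le 2\sqrt{(e-1)C_{\mathrm{PI}}}\,\varepsilon_{\mathrm R}$. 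This uses only $\varepsilon_{\mathrm R}\le 1$ per chain. The $\ell_\infty\le\ell_2$ step over candidates is what produces $\sqrt{M}$ instead of $M$ or $e^{M\varepsilon_{\mathrm R}^2}$.

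Apply this coupling to both terms of $f(\theta,\bar x)-F(\theta)\le L_{f,1}\,\mathrm{dist}(\tilde x,\calS(\theta))+\min_i f(\theta,X_i)-F(\theta)$. This gives exactly $2L_{f,1}\cdot 2\sqrt{(e-1)C_{\mathrm{PI}}}\,\sqrt{M}\,\varepsilon_{\mathrm R}$. The remaining terms are evaluated at exact Gibbs samples, so your exact-Gibbs bound applies directly. The tube term therefore carries the constant $C_{\mathrm{tube}}$ of \Cref{lem:gibbs-tube-width}, with no need for \Cref{lem:tube-width} and its $e^{\varepsilon_{\mathrm R}^2/2}$ factor.
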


\begin{proof}
	We first prove the deterministic bounds. Since \(\bar x\in\calS(\theta)\) and \(x^\star(\theta)\) minimizes
	\(f(\theta,\cdot)\) over \(\calS(\theta)\), the left inequality in \eqref{eq:value-gap-basic} holds. By Lipschitzness and
	the definition of \(\bar x\),
	\[
	f(\theta,\bar x)
	\le
	f(\theta,\tilde x)+L_{f,1}\,\mathrm{dist}(\tilde x,\calS(\theta)).
	\]
	Using \Cref{lem:tail-to-sq} to bound \(f(\theta,\tilde x)\) by the empirical lower superquantile gives
	\eqref{eq:value-gap-basic}.
	
	To prove \eqref{eq:sq-w1-lip}, use the variational representation
	\[
	\mathrm{SQ}^{\mathrm{low}}_\delta(Z)
	=
	\sup_{\tau\in\R}\left\{\tau-\frac1\delta\mathbb E[(\tau-Z)_+]\right\}.
	\]
	For any coupling of \(X\sim\widehat\nu_M\) and \(Y\sim\nu\), the map \(z\mapsto(\tau-f(\theta,z))_+\) is
	\(L_{f,1}\)-Lipschitz, and therefore, for every \(\tau\),
	\[
	\tau-\frac1\delta\mathbb E[(\tau-f(\theta,X))_+]
	\le
	\tau-\frac1\delta\mathbb E[(\tau-f(\theta,Y))_+]
	+\frac{L_{f,1}}{\delta}\mathbb E\|X-Y\|.
	\]
	Taking the supremum over \(\tau\) and then the infimum over couplings gives \eqref{eq:sq-w1-lip}. Combining
	\eqref{eq:value-gap-basic} and \eqref{eq:sq-w1-lip} with \(\nu=\gibbs_\theta^\lambda\) gives
	\eqref{eq:value-gap-gibbs}.

		The deterministic bound \eqref{eq:value-gap-gibbs} is valid for any $\delta$, but when $\delta=1/M$ it yields the loose factor
		$\delta^{-1}=M$ multiplying the empirical discrepancy term. We therefore analyze the hard-min selection directly.
		Write $\tilde x\in\arg\min_i f(\theta,X_i)$.
		By Lipschitzness and the definition of $\bar x$,
	\[
	f(\theta,\bar x)\ \le\ f(\theta,\tilde x)+L_{f,1}\,\mathrm{dist}(\tilde x,\calS(\theta)).
	\]
	Taking expectations gives
	\begin{equation}\label{eq:inner-gap-split}
	\mathbb E\big[f(\theta,\bar x)-F(\theta)\big]
	\ \le\
	L_{f,1}\,\mathbb E\big[\mathrm{dist}(\tilde x,\calS(\theta))\big]
	+\mathbb E\Big[\min_{1\le i\le M} f(\theta,X_i)\Big]-F(\theta).
	\end{equation}
	
		\paragraph{Step 1: hard-min bound under exact Gibbs sampling.}
		Assume first that $X_1,\dots,X_M\stackrel{\mathrm{i.i.d.}}{\sim}\gibbs_\theta^\lambda$.
		For each $i$, let $\Pi(X_i)\in\arg\min_{x\in\calS(\theta)}\|x-X_i\|$ be a Euclidean projection onto $\calS(\theta)$ (choose any measurable selection).
	By Lipschitzness,
	\[
	f(\theta,X_i)\ \le\ f(\theta,\Pi(X_i)) + L_{f,1}\,\mathrm{dist}(X_i,\calS(\theta)).
	\]
	Taking the minimum over $i$ and using $\min_i (a_i+b_i)\le \min_i a_i + \max_i b_i$ gives
	\[
	\min_{1\le i\le M} f(\theta,X_i)
	\ \le\
	\min_{1\le i\le M} f(\theta,\Pi(X_i))
	+\ L_{f,1}\max_{1\le i\le M}\mathrm{dist}(X_i,\calS(\theta)).
	\]
	Plugging this into \eqref{eq:inner-gap-split} and using $\mathrm{dist}(\tilde x,\calS(\theta))\le \max_i \mathrm{dist}(X_i,\calS(\theta))$ yields
	\begin{equation}\label{eq:inner-gap-max}
	\mathbb E\big[f(\theta,\bar x)-F(\theta)\big]
	\ \le\
	2L_{f,1}\,\mathbb E\Big[\max_{1\le i\le M}\mathrm{dist}(X_i,\calS(\theta))\Big]
	+\ \Big(\mathbb E\Big[\min_{1\le i\le M} f(\theta,\Pi(X_i))\Big]-F(\theta)\Big).
	\end{equation}
		The tube bound in \Cref{lem:gibbs-tube-width} gives
	\[
	\mathbb E\Big[\max_{1\le i\le M}\mathrm{dist}(X_i,\calS(\theta))\Big]
	\le C_{\mathrm{tube}}\sqrt{\lambda\log(1+M)}.
	\]

		For the on-manifold term, note that the projected points \(\Pi(X_i)\) lie on \(\calS(\theta)\). Under our standing
		regularity assumptions, the projected Gibbs law converges, as \(\lambda\downarrow0\), to the limiting on-manifold law
		\(\gibbs_\theta^0\), whose density with respect to Riemannian volume on \(\calS(\theta)\) is continuous and strictly
		positive near \(x^\star(\theta)\); see \citep[Prop.~3.7]{masiha2025superquantile}. Therefore, after decreasing
		\(\lambda_0\) if needed, the projected Gibbs laws satisfy a uniform small-ball lower bound near \(x^\star(\theta)\).
		Combined with geodesic volume scaling on \(\calS(\theta)\) \citep[Lem.~4.1]{masiha2025superquantile}--whose curvature
		requirement is verified in \Cref{prop:second-fund-from-g}--there are constants \(c_0>0\) and \(r_{\mathrm{nn}}>0\)
		such that, for every \(0<\lambda\le\lambda_0\),
		\[
		\mathbb P\big(d_{\calS(\theta)}(\Pi(X_1),x^\star(\theta))\le r\big)\ge c_0 r^{\dimk},
		\qquad 0<r\le r_{\mathrm{nn}}.
		\]
	Let \(D_M:=\min_{1\le i\le M}d_{\calS(\theta)}(\Pi(X_i),x^\star(\theta))\). The small-ball bound gives
	\[
	\mathbb P(D_M>r)
	\le (1-c_0r^{\dimk})^M
	\le \exp(-Mc_0r^{\dimk}),
	\qquad 0<r\le r_{\mathrm{nn}},
	\]
		Let \(\mathrm{diam}_{\calS}\) denote the finite geodesic diameter of \(\calS(\theta)\). Integrating this tail bound gives
		\begin{align*}
		\mathbb E[D_M]
		&=\int_0^{\mathrm{diam}_{\calS}}\mathbb P(D_M>r)\,\ud r\\
		&\le \int_0^{r_{\mathrm{nn}}} \exp(-Mc_0r^{\dimk})\,\ud r
		+\mathrm{diam}_{\calS}\exp(-Mc_0r_{\mathrm{nn}}^{\dimk})\\
		&\le C_{\mathrm{nn}}M^{-1/\dimk},
		\end{align*}
		where the last line uses \(\dimk\ge1\) and absorbs the exponentially small term into the same polynomial rate for
		all \(M\ge1\).
	Since \(f(\theta,\cdot)\) is \(L_{f,1}\)-Lipschitz on \(\calS(\theta)\),
	\[
	\mathbb E\Big[\min_{1\le i\le M} f(\theta,\Pi(X_i))\Big]-F(\theta)
	\le L_{f,1}\mathbb E[D_M]
	\le C_1M^{-1/\dimk}.
	\]
	Moreover, the display preceding \eqref{eq:inner-gap-max} also gives
	\begin{align*}
	&L_{f,1}\,\mathbb E\Big[\max_{1\le i\le M}\mathrm{dist}(X_i,\calS(\theta))\Big]
	+\mathbb E\Big[\min_{1\le i\le M} f(\theta,X_i)\Big]-F(\theta)\\
	&\qquad\le
	2L_{f,1}\,\mathbb E\Big[\max_{1\le i\le M}\mathrm{dist}(X_i,\calS(\theta))\Big]
	+\mathbb E\Big[\min_{1\le i\le M} f(\theta,\Pi(X_i))\Big]-F(\theta).
	\end{align*}
	Substituting the tube and nearest-neighbor bounds into this display gives the exact-Gibbs reference bound
	\begin{equation}\label{eq:inner-gap-exact-gibbs}
	\begin{aligned}
	&L_{f,1}\,\mathbb E\Big[\max_{1\le i\le M}\mathrm{dist}(X_i,\calS(\theta))\Big]\\
	&\qquad
	+\mathbb E\Big[\min_{1\le i\le M} f(\theta,X_i)\Big]-F(\theta)\\
	&\le
	2L_{f,1}\,C_{\mathrm{tube}}\,\sqrt{\lambda\log(1+M)}
	+\ C_1\,M^{-1/\dimk}.
	\end{aligned}
	\end{equation}

	\paragraph{Step 2: transfer to R\'enyi-controlled candidates.}
		We now return to independent candidates $X_1,\dots,X_M$ with marginal laws $\nu_i$ satisfying
		$R_2(\nu_i\|\gibbs_\theta^\lambda)\le\varepsilon_{\mathrm R}^2$.
		For each $i$, choose an optimal coupling of \(X_i\sim\nu_i\) with a reference variable
		\(X_i^\lambda\sim\gibbs_\theta^\lambda\) so that
		\[
		\mathbb E\|X_i-X_i^\lambda\|^2
		\le \mathsf W_2(\nu_i,\gibbs_\theta^\lambda)^2.
		\]
		Taking the product of these pairwise couplings gives a joint coupling whose first marginal is
		\(\nu_1\otimes\cdots\otimes\nu_M\), the law of the actual independent candidates, and whose second marginal is
		\((\gibbs_\theta^\lambda)^{\otimes M}\). Thus the reference variables \(X_1^\lambda,\ldots,X_M^\lambda\) are i.i.d.\
		from \(\gibbs_\theta^\lambda\), and expectations of functions of \(X_1,\ldots,X_M\) are unchanged under this coupling.
	We use the two deterministic comparisons
	\[
	\begin{aligned}
	\mathrm{dist}(\tilde x,\calS(\theta))
	&\le \max_{1\le i\le M}\mathrm{dist}(X_i,\calS(\theta))\\
	&\le \max_{1\le i\le M}\mathrm{dist}(X_i^\lambda,\calS(\theta))
	+\max_{1\le i\le M}\|X_i-X_i^\lambda\|
	\end{aligned}
	\]
	and
	\[
	\min_{1\le i\le M}f(\theta,X_i)
	\le \min_{1\le i\le M}f(\theta,X_i^\lambda)+L_{f,1}\max_{1\le i\le M}\|X_i-X_i^\lambda\|,
	\]
	and combine them with \eqref{eq:inner-gap-split}, applied to the actual candidates, to obtain
	\begin{align*}
	\mathbb E\big[f(\theta,\bar x)-F(\theta)\big]
	&\le
	L_{f,1}\,\mathbb E\Big[\max_{1\le i\le M}\mathrm{dist}(X_i^\lambda,\calS(\theta))\Big]
	+\mathbb E\Big[\min_{1\le i\le M} f(\theta,X_i^\lambda)\Big]-F(\theta)\\
	&\quad
	+2L_{f,1}\,\mathbb E\Big[\max_{1\le i\le M}\|X_i-X_i^\lambda\|\Big].
	\end{align*}
	The first two terms on the right are controlled by the exact-Gibbs reference bound \eqref{eq:inner-gap-exact-gibbs}. It remains
	to bound the coupling error
	\[
	2L_{f,1}\,\mathbb E\Big[\max_{1\le i\le M}\|X_i-X_i^\lambda\|\Big].
	\]
	Finally, $\max_i a_i\le (\sum_i a_i^2)^{1/2}$ gives
	\[
	\mathbb E\Big[\max_{1\le i\le M}\|X_i-X_i^\lambda\|\Big]
	\le
	\Big(\sum_{i=1}^M \mathbb E\|X_i-X_i^\lambda\|^2\Big)^{1/2}
	\le
	\Bigg(\sum_{i=1}^M \mathsf W_2(\nu_i,\gibbs_\theta^\lambda)^2\Bigg)^{1/2},
	\]
	and the Poincar\'e/R\'enyi transportation inequality \citep{liu2020poincare} gives
	\[
	\mathsf W_2(\nu_i,\gibbs_\theta^\lambda)
	\le
	2\,C_{\mathrm{PI}}^{1/2}\big(e^{R_2(\nu_i\|\gibbs_\theta^\lambda)}-1\big)^{1/2}
	\le
	2\sqrt{(e-1)C_{\mathrm{PI}}}\,\varepsilon_{\mathrm R}.
	\]
	The last inequality uses \(\varepsilon_{\mathrm R}\le1\), so
	\(e^{\varepsilon_{\mathrm R}^2}-1\le (e-1)\varepsilon_{\mathrm R}^2\).
	Therefore
	\[
	2L_{f,1}\,
	\mathbb E\Big[\max_{1\le i\le M}\|X_i-X_i^\lambda\|\Big]
	\le
	4\sqrt{e-1}\,L_{f,1}\sqrt{M\,C_{\mathrm{PI}}}\,\varepsilon_{\mathrm R}.
	\]
	Combining this estimate with \eqref{eq:inner-gap-exact-gibbs} proves \eqref{eq:inner-gap-hardmin}.
\end{proof}

\subsubsection{Converting the on-manifold gap to distance (link to local identifiability)}

\paragraph{Quadratic growth on $\calS(\theta)$ from Riemannian non-degeneracy.}
To convert an on-manifold value gap into a distance bound, we use a local growth inequality for the restriction of
$f(\theta,\cdot)$ to the manifold $\calS(\theta)$ around its optimistic minimizer $x^\star(\theta)$.
The next lemma is the local deterministic step: positive Riemannian Hessian of the restricted objective implies
quadratic growth along the manifold.

\begin{lemma}[Local quadratic growth from Riemannian non-degeneracy]\label{lem:hg-from-nondeg}
Fix $\theta$ and suppose that $f(\theta,\cdot)$ is $\mathcal{C}^{2}$ in a neighborhood of $\calS(\theta)$ and that
$x^\star(\theta)\in\calS(\theta)$ is a local minimizer of \(f(\theta,\cdot)|_{\calS(\theta)}\).
Assume moreover that the Riemannian Hessian of the restricted objective is positive definite at \(x^\star(\theta)\), i.e.,
there exists \(m_\theta>0\) such that
\[
\big\langle v,\mathrm{Hess}_{\calS(\theta)}\bar f_\theta(x^\star(\theta))[v]\big\rangle
\ \ge\
m_\theta\|v\|^2,
\qquad
\forall v\in\calT_{x^\star(\theta)}^\theta,
\]
where \(\bar f_\theta:=f(\theta,\cdot)|_{\calS(\theta)}\).
Then there exist constants $c_{\mathrm{hg}}>0$ and $r_0>0$ such that for all $x\in\calS(\theta)$ with
$d_{\calS(\theta)}(x,x^\star(\theta))\le r_0$,
\begin{equation}\label{eq:quadratic-growth-onS}
f(\theta,x)-f(\theta,x^\star(\theta))
\ \ge\
c_{\mathrm{hg}}\; d_{\calS(\theta)}(x,x^\star(\theta))^2.
\end{equation}
\end{lemma}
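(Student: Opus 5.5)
I would work in Riemannian normal coordinates centered at $x^\star:=x^\star(\theta)$, using the exponential map of the compact $\mathcal C^2$ manifold $\calS(\theta)$. Concretely, let $\exp_{x^\star}:\calT_{x^\star}^\theta\to\calS(\theta)$ be the exponential map. There is an injectivity radius $r_1>0$ such that $\exp_{x^\star}$ restricted to $\{\|w\|<r_1\}$ is a diffeomorphism onto the geodesic ball of radius $r_1$, and $d_{\calS(\theta)}(\exp_{x^\star}(w),x^\star)=\|w\|$ there. Set $\phi(w):=f(\theta,\exp_{x^\star}(w))$. The claim then reduces to a statement about a function on a ball in $\calT_{x^\star}^\theta\cong\R^{\dimk}$, namely $\phi(w)-\phi(0)\ge c_{\mathrm{hg}}\|w\|^2$ for $\|w\|\le r_0$.

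The key facts are as follows. First, $\nabla\phi(0)=0$, because $x^\star$ is a local minimizer of the restricted objective, so its Riemannian gradient (the tangential projection of $\nabla_x f$) vanishes. Second, $\nabla^2\phi(0)=\mathrm{Hess}_{\calS(\theta)}\bar f_\theta(x^\star)$ as a quadratic form. Along the geodesic $\gamma(t)=\exp_{x^\star}(tw)$ we have $\tfrac{\d^2}{\d t^2}f(\theta,\gamma(t))|_{t=0}=\langle w,\nabla^2_{xx}f\,w\rangle+\langle\nabla_x f,\ddot\gamma(0)\rangle$, and $\ddot\gamma(0)=\mathrm{II}_{x^\star}(w,w)$ since geodesics have purely normal acceleration. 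This matches \eqref{eq:riem-hess-restricted}. Hence $\nabla^2\phi(0)\succeq m_\theta I$.

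Then I apply Taylor's theorem with integral remainder:
\[
\phi(w)-\phi(0)=\int_0^1(1-s)\,w^\top\nabla^2\phi(sw)\,w\,\d s .
\]
By continuity of $\nabla^2\phi$ at $0$, I choose $r_0\le r_1/2$ with $\nabla^2\phi(w)\succeq (m_\theta/2)I$ for $\|w\|\le r_0$. This gives $\phi(w)-\phi(0)\ge (m_\theta/4)\|w\|^2$, so $c_{\mathrm{hg}}=m_\theta/4$.

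The main obstacle is regularity. $\calS(\theta)$ is only $\mathcal C^2$ by \Cref{prop:geometry}, so the exponential map is only guaranteed to be $\mathcal C^1$, and $\phi$ need not be $\mathcal C^2$. My first workaround is to note that under the paper's assumptions $g$ is smooth enough near $\calS(\theta)$ ($\mathcal C^3$, and $\mathcal C^4$ on $\mathcal N(\mathcal M')$ by the \PLcirc\ definition), which makes $\calS(\theta)$ $\mathcal C^3$ via the normal-equation chart of \Cref{lem:parametric-S-chart}. That in turn makes $\exp$ of class $\mathcal C^2$ and $\phi\in\mathcal C^2$.

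Alternatively, I can avoid the exponential map altogether. Use the $\mathcal C^2$ chart $\psi(\theta,\cdot)$ from \Cref{lem:parametric-S-chart} and define $\tilde f(u)=f(\theta,\psi(\theta,u))$. As in Step 2 of the proof of \Cref{thm_hyp_obj_diff}, we have $\nabla^2_{uu}\tilde f(0)=A^\top \mathrm{Hess}\,A$ with $A=D_u\psi$ of full rank, so the Taylor argument gives $\tilde f(u)-\tilde f(0)\ge c\|u\|^2$ locally. I then compare $\|u\|$ with geodesic distance. Since $\psi$ is a bi-Lipschitz embedding near $0$, $\|\psi(u)-x^\star\|\ge a\|u\|$. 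Since Euclidean and geodesic distances are comparable on small balls of a compact embedded manifold, $d_{\calS}\le b\|\psi(u)-x^\star\|\le b'\|u\|$. Finally, I choose $r_0$ small enough that the geodesic ball $\{d_{\calS}\le r_0\}$ lies inside the chart image; this uses the local description $\calS(\theta)\cap\mathcal W=\psi(\theta,\mathcal U_u)$.
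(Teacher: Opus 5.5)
Your normal-coordinate route is, in substance, the paper's proof. The paper takes a minimizing geodesic $\gamma$ from $x^\star(\theta)$ to $x$, which is your radial curve $s\mapsto\exp_{x^\star}(sw)$. It uses $\phi''(s)=\langle\dot\gamma(s),\mathrm{Hess}_{\calS(\theta)}\bar f_\theta(\gamma(s))[\dot\gamma(s)]\rangle$ and bounds this below by $\frac{m}{2}\,d_{\calS(\theta)}(x,x^\star)^2$, using continuity of the Riemannian Hessian on a small geodesic ball; this gives $c_{\mathrm{hg}}=m/4$.

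Your justification of that route, however, is off by one derivative. A $\mathcal C^k$ submanifold has a $\mathcal C^{k-1}$ induced metric and $\mathcal C^{k-2}$ Christoffel symbols, so its exponential map is only guaranteed to be $\mathcal C^{k-2}$. For the $\mathcal C^3$ manifold you correctly extract from $g(\theta,\cdot)\in\mathcal C^4$ near $\calS(\theta)$, $\exp_{x^\star}$ is therefore merely $\mathcal C^1$. So $\phi=f(\theta,\exp_{x^\star}(\cdot))$ need not be $\mathcal C^2$, and ``continuity of $\nabla^2\phi$ at $0$'' is not available. For a merely $\mathcal C^2$ manifold, even uniqueness of geodesics, and hence $\exp$ itself, is not guaranteed.

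The repair is to use only the radial second derivative $\frac{\mathrm d^2}{\mathrm ds^2}\phi(sw)=\langle\dot\gamma(s),\mathrm{Hess}_{\calS(\theta)}\bar f_\theta(\gamma(s))[\dot\gamma(s)]\rangle$, with $\|\dot\gamma(s)\|=\|w\|$. Combine it with continuity of the Riemannian Hessian, which follows from \eqref{eq:riem-hess-restricted}. That is exactly what the paper does.

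Your chart-based alternative is a complete and genuinely different proof. Center the chart of \Cref{lem:parametric-S-chart} at $(\theta,x^\star(\theta))$ and set $\tilde f(u)=f(\theta,\psi(\theta,u))$. You then Taylor-expand along straight segments in $u$ on a convex ball where $\nabla^2_{uu}\tilde f\succeq\frac{\lambda_0}{2}I$, with $\lambda_0:=\lambda_{\min}(\nabla^2_{uu}\tilde f(0))>0$, and convert to intrinsic distance only at the end. Two steps simplify:
\begin{itemize}
\item The conversion only needs $d_{\calS(\theta)}(\psi(\theta,u),x^\star)\le\int_0^1\|D_u\psi(\theta,su)\,u\|\,\mathrm ds\le L\|u\|$, with $L$ a bound on $\|D_u\psi(\theta,\cdot)\|$ over the ball. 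This is just the definition of intrinsic distance, so the Euclidean/geodesic comparability you invoke is unnecessary.
\item For containment, this chart satisfies $U_{\calT_0}^\top(\psi(\theta,u)-x^\star)=u$. Once the Euclidean $r_0$-ball around $x^\star$ lies in $\mathcal W$, you get $\|u\|\le\|\psi(\theta,u)-x^\star\|\le d_{\calS(\theta)}(\psi(\theta,u),x^\star)\le r_0$ directly, with no separate bi-Lipschitz argument.
\end{itemize}
Moreover, $\nabla^2_{xx}g(\theta,x^\star)\,U_{\calT_0}=0$ makes the chart's implicit function have zero $u$-derivative at $0$. Hence $D_u\psi(\theta,0)=U_{\calT_0}$ and $\lambda_0\ge m_\theta$.

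What this buys is that only the $\mathcal C^2$ chart and $f\in\mathcal C^2$ enter. You need no exponential map, no Hopf--Rinow, and no geodesic second-variation identity. The paper uses that machinery without comment, and it is classical only once $\calS(\theta)$ is somewhat more regular than the $\mathcal C^2$ stated in \Cref{prop:geometry}; the $\mathcal C^4$ regularity in the \PLcirc\ condition does supply this. The price is the constant $\lambda_0/(4L^2)$, which depends on the chart's distortion, rather than the intrinsic $m/4$. The intrinsic form is what the paper reuses verbatim for the uniform version, \Cref{lem:uniform-hg}; your version would additionally need $L$ and the chart radius controlled uniformly in $\theta$.
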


\begin{proof}
Let $\mathcal M:=\calS(\theta)$ be the embedded submanifold with the induced Riemannian metric, and define the restriction
$\bar f:\mathcal M\to\R$ by $\bar f(x):=f(\theta,x)$.
Since $f(\theta,\cdot)$ is $\mathcal{C}^{2}$ on a neighborhood of $\mathcal M$ and $\mathcal M$ is $\mathcal{C}^{2}$ embedded,
$\bar f$ is $\mathcal{C}^{2}$ on $\mathcal M$.

By the local-minimizer assumption, $x^\star:=x^\star(\theta)$ is a (local) minimizer of $\bar f$ on $\mathcal M$, so the Riemannian gradient vanishes:
$\mathrm{grad}_{\mathcal M}\bar f(x^\star)=0$.
Moreover, the assumed Riemannian non-degeneracy gives positive curvature of the restricted objective in every tangent direction.
Define the minimal Riemannian Hessian eigenvalue
\[
m
\ :=\
\min_{\substack{v\in\mathcal T_{x^\star}\mathcal M\\ \|v\|=1}}
\big\langle v,\ \mathrm{Hess}_{\mathcal M}\bar f(x^\star)[v]\big\rangle
\ >\ 0.
\]
Since $\mathrm{Hess}_{\mathcal M}\bar f$ is continuous, there exists a neighborhood $\mathcal U\subset\mathcal M$ of $x^\star$
such that for all $y\in\mathcal U$ and all $w\in\mathcal T_y\mathcal M$,
\begin{equation}\label{eq:hess-lb-local}
\big\langle w,\ \mathrm{Hess}_{\mathcal M}\bar f(y)[w]\big\rangle\ \ge\ \tfrac{m}{2}\,\|w\|^2.
\end{equation}
Let $r_0>0$ be such that the closed geodesic ball
\(\{y\in\mathcal M:d_{\mathcal M}(y,x^\star)\le r_0\}\) is contained in $\mathcal U$.

Fix any $x\in\mathcal M$ with $d_{\mathcal M}(x,x^\star)\le r_0$.
Since $\mathcal M$ is compact (and hence complete), the Hopf--Rinow theorem guarantees the existence of a minimizing geodesic
$\gamma:[0,1]\to\mathcal M$ from $x^\star$ to $x$.
See, e.g., \citep[Hopf--Rinow Theorem]{lee2006riemannian}.
Parameterize $\gamma$ at constant speed so that $\|\dot\gamma(t)\|=d_{\mathcal M}(x,x^\star)$ for all $t\in[0,1]$.
Because $\gamma$ is minimizing, each subsegment is minimizing, and hence
\(d_{\mathcal M}(\gamma(t),x^\star)\le d_{\mathcal M}(x,x^\star)\le r_0\) for all \(t\in[0,1]\). Thus
\(\gamma([0,1])\subset\mathcal U\).

Define $\phi(t):=\bar f(\gamma(t))$.
Then $\phi'(t)=\langle \mathrm{grad}_{\mathcal M}\bar f(\gamma(t)),\dot\gamma(t)\rangle$ and in particular $\phi'(0)=0$.
Differentiating once more and using that $\gamma$ is a geodesic yields the standard identity
\[
\phi''(t)
\ =\
\big\langle \dot\gamma(t),\ \mathrm{Hess}_{\mathcal M}\bar f(\gamma(t))[\dot\gamma(t)]\big\rangle.
\]
Therefore, by \eqref{eq:hess-lb-local} and constant speed,
\[
\phi''(t)
\ \ge\
\tfrac{m}{2}\,\|\dot\gamma(t)\|^2
\ =\
\tfrac{m}{2}\,d_{\mathcal M}(x,x^\star)^2
\qquad \forall t\in[0,1].
\]
Using $\phi'(0)=0$ and the integral form of Taylor's theorem,
\[
\phi(1)-\phi(0)
=\int_0^1 (1-t)\,\phi''(t)\,\ud t
\ge \int_0^1 (1-t)\,\tfrac{m}{2}\,d_{\mathcal M}(x,x^\star)^2\,\ud t
=\tfrac{m}{4}\,d_{\mathcal M}(x,x^\star)^2.
\]
Recalling $\phi(1)=\bar f(x)$ and $\phi(0)=\bar f(x^\star)$ gives \eqref{eq:quadratic-growth-onS} with $c_{\mathrm{hg}}:=m/4$.
\end{proof}

\paragraph{Uniform version over $\Theta$.}
The restricted-curvature constant from \Cref{prop:uniform-riem-nondeg} makes the quadratic-growth constants uniform over
\(\Theta\).

\begin{lemma}[Uniform quadratic growth on $\calS(\theta)$ over $\Theta$]\label{lem:uniform-hg}
Assume the setting of \Cref{sec:theory} and let \(m_{\mathrm R}>0\) be the uniform restricted-curvature constant from
\Cref{prop:uniform-riem-nondeg}.
Let $x^\star:\Theta\to\R^d$ be the global selected minimizer from \Cref{ass:unique_min}.
Then there exist constants $c_{\mathrm{hg}}>0$ and $r_0>0$ such that for all $\theta\in\Theta$ and all
$x\in\calS(\theta)$ with $d_{\calS(\theta)}(x,x^\star(\theta))\le r_0$,
\[
f(\theta,x)-f(\theta,x^\star(\theta))
\ \ge\
c_{\mathrm{hg}}\; d_{\calS(\theta)}(x,x^\star(\theta))^2.
\]
\end{lemma}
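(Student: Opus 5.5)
The plan is to make every ingredient of the proof of \Cref{lem:hg-from-nondeg} uniform in \(\theta\). The proof of that lemma needs three things: a lower bound \(m\) on the Riemannian Hessian at \(x^\star(\theta)\); a radius on which the Hessian stays above \(m/2\); and the existence of minimizing geodesics. The first is already uniform by \Cref{prop:uniform-riem-nondeg}, which gives \(m_{\mathrm R}\). Geodesics come from compactness of each \(\calS(\theta)\) and Hopf--Rinow, as before. The only issue is the second ingredient: we need a single radius \(r_0\) such that, for every \(\theta\), every \(y\in\calS(\theta)\) with \(d_{\calS(\theta)}(y,x^\star(\theta))\le r_0\), and every \(w\in\calT_y^\theta\),
\[
\big\langle w,\mathrm{Hess}_{\calS(\theta)}\bar f_\theta(y)[w]\big\rangle\ \ge\ \tfrac{m_{\mathrm R}}{2}\|w\|^2 .
\]
Once this holds, the geodesic Taylor argument of \Cref{lem:hg-from-nondeg} goes through verbatim and gives \(c_{\mathrm{hg}}=m_{\mathrm R}/4\).

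To obtain the uniform radius, I will use the ambient formula \eqref{eq:riem-hess-restricted}. It holds at every point \(y\in\calS(\theta)\) once the first-order term is replaced by the normal projection, so the quadratic form equals
\[
\langle w,\nabla^2_{xx}f(\theta,y)w\rangle+\langle P_{\calN_y^\theta}\nabla_x f(\theta,y),\mathrm{II}^\theta_y(w,w)\rangle .
\]
I then compare this with its value at \(x^\star(\theta)\) by transporting \(w\) to a tangent vector at \(x^\star(\theta)\), for example \(P_{\calT^\theta_{x^\star}}w\) followed by normalization. Two estimates make the comparison uniform. First, \(\nabla^2_{xx}f\) and \(\nabla_x f\) are uniformly continuous on the compact set \(\Theta\times\mathbb B_d(0;\mathsf D)\). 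Second, the tangent projectors, normal projectors, and second fundamental forms vary in a uniformly continuous way along \(\calS(\theta)\). For the second fundamental form, the representation \(\mathrm{II}_y(v,u)=-(H|_{\calN})^{-1}P_{\calN}(D_vH)u\) from the proof of \Cref{prop:second-fund-from-g} expresses it through \(\nabla^2_{xx}g\), \(\nabla^3_{xxx}g\) and the normal gap \(c\). For the projectors, \(\ker\nabla^2_{xx}g=\calT^\theta_y\) with the spectral gap \(c\) lets one write \(P_{\calN}\) as a spectral projector, via a Riesz contour integral of \(\nabla^2_{xx}g(\theta,y)\) around \([c,\infty)\). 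This is Lipschitz in \(y\) with a constant depending only on \(c\) and the bound on \(\nabla^3 g\) from \Cref{ass:curvature_g}.

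Combining these, the Hessian form at \(y\) differs from the one at \(x^\star(\theta)\) by at most \(\omega(\|y-x^\star(\theta)\|)\|w\|^2\), where the modulus \(\omega\) does not depend on \(\theta\). Since \(\|y-x^\star\|\le d_{\calS(\theta)}(y,x^\star)\), choosing \(r_0\) with \(\omega(r_0)\le m_{\mathrm R}/2\) finishes the argument.

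The main obstacle is this uniform equicontinuity of the Riemannian Hessian across the family of manifolds, specifically the regularity of the second fundamental form. With only \(g\in\mathcal C^3\), the second fundamental form is merely continuous, so uniform continuity in \(y\) has to come from compactness of the solution graph \(\mathcal K\) (used in \Cref{lem:uniform-qg-g}) rather than from a Lipschitz bound. If that route gets messy, I will fall back on a compactness-by-contradiction argument. Suppose no uniform \(r_0\) exists. Then there are \(\theta_n\), points \(y_n\in\calS(\theta_n)\) with \(d(y_n,x^\star(\theta_n))\to0\), and unit tangent vectors \(w_n\) with Hessian form below \(m_{\mathrm R}/2\). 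Pass to a subsequence \(\theta_n\to\bar\theta\). Then use the continuity of \(x^\star\) from \Cref{thm_hyp_obj_diff} together with the \(\mathcal C^2\) local charts \(\psi_{\bar\theta}\) of \Cref{lem:parametric-S-chart}. These charts express the pullback Hessian \(\nabla^2_{uu}\tilde f\) continuously in \((\theta,u)\), as in the proof of \Cref{prop:uniform-riem-nondeg}. Taking the limit contradicts the bound \(\ge m_{\mathrm R}\) at \(x^\star(\bar\theta)\).
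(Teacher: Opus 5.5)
Your proposal is correct, and its skeleton matches the paper's: a uniform bound $\tfrac{m_{\mathrm R}}{2}\|w\|^2$ on a uniform geodesic ball, followed by the minimizing-geodesic Taylor argument with $c_{\mathrm{hg}}=m_{\mathrm R}/4$. The routes differ only in how the uniform radius $r_0$ is obtained.

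The paper's argument is qualitative. For each $\bar\theta$ it invokes continuity of the Riemannian Hessian in the parameter-dependent charts $\psi_{\bar\theta}(\theta,u)$ along the $\mathcal C^1$ branch $x^\star(\theta)$. This gives a local pair $(V_{\bar\theta},r_{\bar\theta})$, and the paper takes the minimum radius over a finite subcover of $\Theta$. Your fallback is the same argument in sequential, by-contradiction form.

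Your primary route is genuinely different. It compares the Hessian form at $y$ with the one at $x^\star(\theta)$ on the same manifold $\calS(\theta)$. It uses the ambient formula, Lipschitz spectral projectors of $\nabla^2_{xx}g$, and the identity $\mathrm{II}^\theta_y(v,u)=-\big[\nabla^2_{xx}g(\theta,y)\big]^\dagger\big(\nabla^3_{xxx}g(\theta,y)[v]\big)u$. This needs neither charts nor continuity of $x^\star$ in $\theta$, only the uniform constant $m_{\mathrm R}$. It also yields an explicit $r_0$ in terms of $c$, $L_{g,3}$, $m_{\mathrm R}$ and the moduli of continuity of $\nabla_x f$, $\nabla^2_{xx}f$, $\nabla^3_{xxx}g$ on $\Theta\times\mathbb B_d(0;\mathsf D+\rho)$. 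In exchange, it leans more heavily on the quantitative assumptions of \Cref{sec:theory}: the tube bound in \Cref{ass:curvature_g} and joint $\mathcal C^3$ regularity of $g$. The paper's chart argument is shorter but non-constructive.

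One point to state explicitly in your fallback. At $y_n\neq x^\star(\theta_n)$, the coordinate Hessian $\nabla^2_{uu}\tilde f$ is not the Riemannian Hessian; they differ by a Christoffel term multiplied by $\nabla_u\tilde f(\theta_n,u_n)$. The Christoffel symbols are bounded, since the chart is $\mathcal C^2$, and $\nabla_u\tilde f(\theta_n,u_n)\to 0$ as $y_n\to x^\star(\bar\theta)$. So the limit argument still closes, but this step should be written out.
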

\begin{proof}
If \(\dimk=0\), then each connected compact zero-dimensional manifold \(\calS(\theta)\) is a singleton. Hence the only
point \(x\in\calS(\theta)\) is \(x^\star(\theta)\), and the claim is trivial for any fixed positive
\(c_{\mathrm{hg}}\) and \(r_0\). We therefore assume \(\dimk\ge1\).

For each $\theta\in\Theta$, let $\mathcal M_\theta:=\calS(\theta)$ and define the restriction
$\bar f_\theta:\mathcal M_\theta\to\R$ by $\bar f_\theta(x):=f(\theta,x)$.
By \Cref{ass:unique_min}, \(x^\star(\theta)\) is the unique minimizer of \(\bar f_\theta\) on \(\mathcal M_\theta\).
By \Cref{prop:uniform-riem-nondeg}, for every \(\theta\in\Theta\),
\[
\big\langle v,\ \mathrm{Hess}_{\mathcal M_\theta}\bar f_\theta(x^\star(\theta))[v]\big\rangle
\ge m_{\mathrm R}\|v\|^2,
\qquad
\forall v\in\mathcal T_{x^\star(\theta)}\mathcal M_\theta .
\]

We next make the neighborhood on which this lower bound persists uniform. For each \(\bar\theta\in\Theta\), continuity of
the Riemannian Hessian in local charts gives a parameter neighborhood \(V_{\bar\theta}\) and a radius
\(r_{\bar\theta}>0\) such that, for every \(\theta\in V_{\bar\theta}\), every
\(y\in\mathcal M_\theta\) with \(d_{\mathcal M_\theta}(y,x^\star(\theta))\le r_{\bar\theta}\), and every
\(w\in\mathcal T_y\mathcal M_\theta\),
\[
\big\langle w,\ \mathrm{Hess}_{\mathcal M_\theta}\bar f_\theta(y)[w]\big\rangle\ \ge\ \tfrac{m_{\mathrm R}}{2}\,\|w\|^2.
\]
Choose a finite subcover \(V_{\bar\theta_1},\ldots,V_{\bar\theta_J}\) of \(\Theta\) and set
\[
r_0:=\min_{1\le j\le J} r_{\bar\theta_j}>0.
\]
Then, for all $\theta\in\Theta$, all $y\in\mathcal M_\theta$ with $d_{\mathcal M_\theta}(y,x^\star(\theta))\le r_0$, and all $w\in\mathcal T_y\mathcal M_\theta$,
\[
\big\langle w,\ \mathrm{Hess}_{\mathcal M_\theta}\bar f_\theta(y)[w]\big\rangle\ \ge\ \tfrac{m_{\mathrm R}}{2}\,\|w\|^2.
\]
Fix any $\theta\in\Theta$ and $x\in\mathcal M_\theta$ with $d_{\mathcal M_\theta}(x,x^\star(\theta))\le r_0$.
Since $\mathcal M_\theta$ is compact, the Hopf--Rinow theorem yields a minimizing geodesic $\gamma:[0,1]\to\mathcal M_\theta$ from $x^\star(\theta)$ to $x$.
Parameterize $\gamma$ at constant speed so that $\|\dot\gamma(t)\|=d_{\mathcal M_\theta}(x,x^\star(\theta))$ for all $t\in[0,1]$.
Since \(\gamma\) is minimizing, \(d_{\mathcal M_\theta}(\gamma(t),x^\star(\theta))\le r_0\) for all \(t\in[0,1]\), so the
uniform Hessian lower bound above applies along the whole curve.
As in the proof of \Cref{lem:hg-from-nondeg}, define $\phi(t):=\bar f_\theta(\gamma(t))$.
Then $\phi'(0)=0$ and
\[
\phi''(t)
\ =\
\big\langle \dot\gamma(t),\ \mathrm{Hess}_{\mathcal M_\theta}\bar f_\theta(\gamma(t))[\dot\gamma(t)]\big\rangle
\ \ge\
\tfrac{m_{\mathrm R}}{2}\,d_{\mathcal M_\theta}(x,x^\star(\theta))^2,
\]
so the same integral argument yields
\[
f(\theta,x)-f(\theta,x^\star(\theta))
\ =\
\phi(1)-\phi(0)
\ \ge\
\tfrac{m_{\mathrm R}}{4}\,d_{\mathcal M_\theta}(x,x^\star(\theta))^2.
\]
The claim follows with $c_{\mathrm{hg}}:=m_{\mathrm R}/4$.
\end{proof}

\paragraph{Removing the locality restriction.}
The quadratic growth bound from \Cref{lem:uniform-hg} only applies within a geodesic neighborhood of radius $r_0$ around $x^\star(\theta)$.
To remove the explicit restriction $d_{\calS(\theta)}(\bar x, x^\star(\theta))\le r_0$, we use a value gap away from this neighborhood.

\begin{lemma}[Uniform value gap away from the selected minimizer]\label{lem:away-gap-positive}
Let $(c_{\mathrm{hg}},r_0)$ be the constants from \Cref{lem:uniform-hg} and define
\[
\calS_{r_0}(\theta)
:=
\{x\in\calS(\theta): d_{\calS(\theta)}(x,x^\star(\theta))\ge r_0\},
\qquad
\Delta_{r_0}
:=
\inf_{\theta\in\Theta}\ \min_{x\in\calS_{r_0}(\theta)}\big(f(\theta,x)-F(\theta)\big),
\]
with the convention that empty sets are ignored in the infimum. If all \(\calS_{r_0}(\theta)\) are empty, choose any fixed positive value for \(\Delta_{r_0}\).
Then \(\Delta_{r_0}>0\).
\end{lemma}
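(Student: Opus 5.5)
I would argue by contradiction, combining compactness of the solution graph with uniqueness of the optimistic selection (\Cref{ass:unique_min}) and continuity of the selected branch. First, for each fixed \(\theta\) with \(\calS_{r_0}(\theta)\neq\emptyset\), the inner minimum is attained: \(\calS_{r_0}(\theta)\) is a closed subset of the compact manifold \(\calS(\theta)\), since \(x\mapsto d_{\calS(\theta)}(x,x^\star(\theta))\) is continuous. It is also strictly positive, because \(x^\star(\theta)\) is the unique minimizer of \(f(\theta,\cdot)\) on \(\calS(\theta)\) and \(\calS_{r_0}(\theta)\) excludes it. Hence the only way \(\Delta_{r_0}\) can vanish is through the infimum over \(\theta\).

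Suppose \(\Delta_{r_0}=0\). Then there are \(\theta_n\in\Theta\) and \(x_n\in\calS_{r_0}(\theta_n)\) with \(f(\theta_n,x_n)-F(\theta_n)\to0\). By compactness of \(\Theta\) and of \(\mathcal V\supseteq\calS(\theta_n)\), we may pass to a subsequence with \(\theta_n\to\bar\theta\) and \(x_n\to\bar x\). The same limiting argument used in Step 2a of the proof of \Cref{thm:F-diff-unique} gives \(\bar x\in\calS(\bar\theta)\): pass to the limit in \(g(\theta_n,x_n)\le g(\theta_n,z)\). Next, \(F\) is continuous (indeed \(\mathcal C^1\) by \Cref{rem:global-C1}, or Lipschitz), so \(f(\bar\theta,\bar x)=F(\bar\theta)\). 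Uniqueness then forces \(\bar x=x^\star(\bar\theta)\). Meanwhile \(x^\star\) is continuous, being locally \(\mathcal C^1\) by \Cref{thm_hyp_obj_diff}, so \(x^\star(\theta_n)\to x^\star(\bar\theta)\). It follows that \(\|x_n-x^\star(\theta_n)\|\to0\).

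The main obstacle is converting this vanishing Euclidean distance into vanishing geodesic distance \(d_{\calS(\theta_n)}(x_n,x^\star(\theta_n))\), uniformly along the moving manifolds. That would contradict \(d_{\calS(\theta_n)}(x_n,x^\star(\theta_n))\ge r_0\). To handle it, I would use the parameter-dependent chart \(\psi\) of \Cref{lem:parametric-S-chart} around \((\bar\theta,x^\star(\bar\theta))\). For \(n\) large, both \(x_n\) and \(x^\star(\theta_n)\) lie in \(\calS(\theta_n)\cap\mathcal W=\psi(\theta_n,\mathcal U_u)\), say \(x_n=\psi(\theta_n,u_n)\) and \(x^\star(\theta_n)=\psi(\theta_n,u^\star_n)\). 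Since \(\psi(\theta,u)=x_0+U_{\calT_0}u+U_{\calN_0}h(\theta,u)\), we have \(u=U_{\calT_0}^\top(x-x_0)\), so \(\|u_n-u_n^\star\|\le\|x_n-x^\star(\theta_n)\|\to0\). Take \(\mathcal U_u\) convex and shrink neighborhoods so that \(\|D_u\psi\|\le L\) on the patch. The straight coordinate segment from \(u_n^\star\) to \(u_n\) then maps to a curve in \(\calS(\theta_n)\) of length at most \(L\|u_n-u_n^\star\|\to0\), which bounds the geodesic distance and gives the contradiction. Therefore \(\Delta_{r_0}>0\), and the empty-set convention handles the remaining case trivially.
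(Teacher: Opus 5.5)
Your proof is correct and follows the same compactness-plus-uniqueness argument as the paper. The paper argues via compactness of the set $\mathcal A_{r_0}$, whereas you argue sequentially, and your chart-based bound $d_{\calS(\theta_n)}(x_n,x^\star(\theta_n))\le L\|x_n-x^\star(\theta_n)\|$ proves a step the paper only asserts. The paper claims the geodesic distance is continuous on the solution graph, which it needs for $\mathcal A_{r_0}$ to be closed; your bound establishes the control this actually requires, and only near the selected branch.
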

\begin{proof}
If all \(\calS_{r_0}(\theta)\) are empty, the claim is immediate by convention.
Otherwise, consider
\[
\mathcal A_{r_0}
:=
\{(\theta,x):\theta\in\Theta,\ x\in\calS(\theta),\ d_{\calS(\theta)}(x,x^\star(\theta))\ge r_0\}.
\]
Under the uniform regularity assumptions used in \Cref{lem:uniform-hg}, the compact manifolds \(\calS(\theta)\) vary continuously in local charts over compact \(\Theta\); hence their graph is compact and the geodesic distance to the continuous selected branch \(x^\star(\theta)\) is continuous on this graph. Therefore \(\mathcal A_{r_0}\) is compact. It is also disjoint from the selected graph \(\{(\theta,x^\star(\theta)):\theta\in\Theta\}\).

The gap map \((\theta,x)\mapsto f(\theta,x)-F(\theta)\) is continuous and strictly positive on \(\mathcal A_{r_0}\): if it vanished at some \((\theta,x)\in\mathcal A_{r_0}\), then \(x\) would be another minimizer of \(f(\theta,\cdot)\) over \(\calS(\theta)\), contradicting the uniqueness in \Cref{ass:unique_min}. A positive continuous function on a compact set has a positive minimum, which is exactly \(\Delta_{r_0}\).
\end{proof}

\begin{lemma}[Bounding $\mathbb E\|\bar x-x^\star(\theta)\|^2$ without a locality condition]\label{lem:barx-distance}
Let $(c_{\mathrm{hg}},r_0)$ be the constants from \Cref{lem:uniform-hg}, and let \(\Delta_{r_0}>0\) be the uniform away-gap from \Cref{lem:away-gap-positive}.
Then for any $\theta\in\Theta$,
\[
\begin{aligned}
\mathbb E\|\bar x-x^\star(\theta)\|^2
&\ \le\
\Bigg(\frac{1}{c_{\mathrm{hg}}}+\frac{4\mathsf D^2}{\Delta_{r_0}}\Bigg)\,
\mathbb E\big[f(\theta,\bar x)-F(\theta)\big].
\end{aligned}
\]
\end{lemma}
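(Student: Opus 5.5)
The bound will be proved pointwise (deterministically, for each realization of $\bar x$) and then integrated. We split according to whether $\bar x$ lies in the geodesic ball of radius $r_0$ around $x^\star(\theta)$. Write $\Delta:=f(\theta,\bar x)-F(\theta)\ge 0$; this is nonnegative because $\bar x\in\calS(\theta)$ and $F(\theta)$ is the minimum of $f(\theta,\cdot)$ over $\calS(\theta)$. The target pointwise inequality is
\[
\|\bar x-x^\star(\theta)\|^2\le\Big(\frac{1}{c_{\mathrm{hg}}}+\frac{4\mathsf D^2}{\Delta_{r_0}}\Big)\Delta .
\]
Taking expectations then gives the claim, since both sides are measurable functions of $\bar x$.

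\emph{Near case, $d_{\calS(\theta)}(\bar x,x^\star(\theta))\le r_0$.} We apply \Cref{lem:uniform-hg} to get $\Delta\ge c_{\mathrm{hg}}\,d_{\calS(\theta)}(\bar x,x^\star(\theta))^2$. Euclidean distance is dominated by geodesic distance on an embedded submanifold, because any curve on $\calS(\theta)$ joining the two points has length at least the chord. Hence $\|\bar x-x^\star(\theta)\|^2\le d_{\calS(\theta)}^2\le \Delta/c_{\mathrm{hg}}$.

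\emph{Far case, $d_{\calS(\theta)}(\bar x,x^\star(\theta))> r_0$.} Here $\bar x\in\calS_{r_0}(\theta)$, so \Cref{lem:away-gap-positive} gives $\Delta\ge\Delta_{r_0}>0$, i.e.\ $\Delta/\Delta_{r_0}\ge1$. Since $\calS(\theta)\subseteq\mathbb B_d(0;\mathsf D)$ by \Cref{assum_sample_comp}, both $\bar x$ and $x^\star(\theta)$ lie in this ball. Therefore $\|\bar x-x^\star(\theta)\|^2\le (2\mathsf D)^2=4\mathsf D^2\le 4\mathsf D^2\,\Delta/\Delta_{r_0}$.

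Each case gives one of the two summands, and the sum dominates either one, so the pointwise inequality holds in both cases. There is no real obstacle. The only points to check are the chord-versus-geodesic comparison and that the far case coincides with the definition of $\calS_{r_0}(\theta)$; the near case uses $\le r_0$ and the far case uses $\ge r_0$, so the boundary is covered either way. Measurability of $\bar x$ was assumed when the projection was chosen.
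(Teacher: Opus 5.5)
Your proof is correct and follows essentially the paper's argument. The paper splits on the value-gap event $\{f(\theta,\bar x)-F(\theta)<\Delta_{r_0}\}$, which forces $d_{\calS(\theta)}(\bar x,x^\star(\theta))<r_0$, and bounds the probability of the complement by Markov's inequality; that Markov step is just the integrated form of your pointwise far-case bound $1\le \Delta/\Delta_{r_0}$, so splitting on geodesic distance instead, as you do, is equally valid and yields the same constant.
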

\begin{proof}
Fix $\theta\in\Theta$ and define the suboptimality gap $\Delta(\bar x):=f(\theta,\bar x)-F(\theta)\ge 0$.
Consider the event $\mathcal A:=\{\Delta(\bar x)<\Delta_{r_0}\}$.
By definition of $\Delta_{r_0}$, on $\mathcal A$ we must have $d_{\calS(\theta)}(\bar x,x^\star(\theta))<r_0$, so
\Cref{lem:uniform-hg} gives
\[
d_{\calS(\theta)}(\bar x,x^\star(\theta))
\ \le\
\left(\frac{\Delta(\bar x)}{c_{\mathrm{hg}}}\right)^{1/2}.
\]
Since the ambient Euclidean distance is bounded by the geodesic distance, this implies
$\|\bar x-x^\star(\theta)\|^2\le \Delta(\bar x)/c_{\mathrm{hg}}$ on $\mathcal A$.
On $\mathcal A^c$, use $\|\bar x-x^\star(\theta)\|^2\le 4\mathsf D^2$ (since $\calS(\theta)\subseteq \mathbb B_d(0;\mathsf D)$).
Therefore,
\[
\mathbb E\|\bar x-x^\star(\theta)\|^2
\ \le\
\frac{1}{c_{\mathrm{hg}}}\mathbb E\big[\Delta(\bar x)\big]
\ +\ 4\mathsf D^2\,\mathbb P(\mathcal A^c).
\]
Finally, $\mathbb P(\mathcal A^c)=\mathbb P(\Delta(\bar x)\ge \Delta_{r_0})\le \mathbb E[\Delta(\bar x)]/\Delta_{r_0}$ by Markov,
yielding the claim.
\end{proof}

\subsubsection{Sampler accuracy from R\'enyi divergence}
The hard-selection value-gap bound above is stated directly for candidates whose laws are R\'enyi-close to the ideal Gibbs law.
The next statements explain how finite-step ULA supplies this R\'enyi accuracy and recall the Poincar\'e/R\'enyi transportation control used in the proof of \Cref{lem:inner-quality-gap}.

We restate a quantitative order-$2$ R\'enyi mixing bound for ULA from \citet[Prop.~5.6]{masiha2025superquantile} and record its implication for Wasserstein error under a Poincar\'e inequality.

\begin{proposition}[ULA convergence in order-$2$ R\'enyi divergence {\citep[Prop.~5.6]{masiha2025superquantile}}]\label{prop:lmc-renyi}
Consider the unadjusted Langevin algorithm (ULA) iteration
\[
X^{k+1}=X^{k}-h\nabla G(X^{k})+\sqrt{2h}\,\xi^{k},
\qquad \xi^{k}\sim\mathcal N(0,I_d),
\]
targeting $\pi(\mathrm dx)\propto e^{-G(x)}\,\mathrm dx$.
Assume $\nabla G$ is $L_{G,2}$-Lipschitz and that $\pi$ satisfies a Poincar\'e inequality with constant $C_{\mathrm{PI}}$.
Let $\widehat\mu_n$ be the law of $X^{n}$.
Then for any sufficiently small $\varepsilon\in(0,1)$ and an appropriate stepsize choice (as required by the ULA theory),
one can ensure $R_2(\widehat\mu_n\|\pi)\le \varepsilon^2$ after
\begin{equation}\label{eq:lmc-renyi-steps}
n \;=\; \Theta\!\left(
C_{\mathrm{PI}}^{2}L_{G,2}^{2}\,d\,\varepsilon^{-2}\Big(R_3(\widehat\mu_0\|\pi)^2+\log^2(1/\varepsilon)\Big)
\right)
\end{equation}
iterations, where $R_q(\cdot\|\cdot)$ denotes the order-$q$ R\'enyi divergence.
Moreover, using the transportation-variance implication of the Poincar\'e inequality \citep{liu2020poincare} to relate
\(W_2\) and \(R_2\), we obtain
\begin{equation}\label{eq:lmc-w2-from-renyi}
\mathsf W_2(\widehat\mu_n,\pi)
\ \le\ 2\,C_{\mathrm{PI}}^{1/2}\,\big(e^{R_2(\widehat\mu_n\|\pi)}-1\big)^{1/2}
\ \le\ 2\,C_{\mathrm{PI}}^{1/2}\,C^{1/2}\,\varepsilon,
\end{equation}
where one may take the numerical constant $C:=2(e^{1/2}-1)$.
\end{proposition}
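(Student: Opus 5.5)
The proof has two parts. The iteration-count claim \eqref{eq:lmc-renyi-steps} is the order-$2$ R\'enyi guarantee for ULA under a Poincar\'e inequality, which I would import rather than reprove: it is exactly \citep[Prop.~5.6]{masiha2025superquantile}, itself an instance of the Poincar\'e-to-R\'enyi analysis of ULA in \citet{chewi2024analysis}. For completeness I would sketch its three ingredients.
\begin{enumerate}[leftmargin=*]
\item \emph{Continuous-time decay.} Along the Langevin diffusion targeting $\pi$, the R\'enyi divergence $R_q(\cdot\|\pi)$ decays under (PI). It decays linearly while it is large and exponentially, at rate of order $1/(qC_{\mathrm{PI}})$, once it is below a constant.
\item \emph{Discretization error.} A Girsanov or interpolation argument compares the ULA path with the diffusion path. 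With stepsize $h$ it yields a R\'enyi bias of order $L_{G,2}^2 d h$ over a time horizon $nh$.
\item \emph{Combining the two.} The weak triangle inequality for R\'enyi divergences merges the decay and the discretization bias, at the cost of raising the order from $2$ to $3$ for the initial divergence.
\end{enumerate}
Choosing $h\asymp \varepsilon^2/(C_{\mathrm{PI}}L_{G,2}^2 d\,\log(\cdot))$ and $nh\asymp C_{\mathrm{PI}}\bigl(R_3(\widehat\mu_0\|\pi)+\log(1/\varepsilon)\bigr)$ then gives \eqref{eq:lmc-renyi-steps}.

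The main obstacle is the discretization step. There one must control the order-$q$ R\'enyi cost of the change of measure between the ULA and diffusion paths using only the Lipschitz bound on $\nabla G$, without log-concavity. I would follow the cited proof verbatim rather than redo it, and check only that its hypotheses hold here: $\nabla G = \lambda^{-1}\nabla_x g$ is Lipschitz, and the Poincar\'e constant is uniform for $\lambda\le\lambda_0$.

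The second claim, \eqref{eq:lmc-w2-from-renyi}, is elementary and I would prove it directly. By the transportation--variance inequality of \citet{liu2020poincare}, (PI) with constant $C_{\mathrm{PI}}$ implies
\[
\mathsf W_2(\nu,\pi)\le 2\sqrt{C_{\mathrm{PI}}\,\chi^2(\nu\|\pi)} .
\]
Since $\chi^2(\nu\|\pi)=e^{R_2(\nu\|\pi)}-1$, this gives the first inequality. For the second, write $s:=R_2(\widehat\mu_n\|\pi)$, so $0\le s\le\varepsilon^2$. I would take $\varepsilon$ small enough that $\varepsilon^2\le 1/2$. By convexity of $x\mapsto e^x-1$ on $[0,1/2]$, its graph lies below the chord through $(0,0)$ and $(1/2,e^{1/2}-1)$. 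Hence
\[
e^{s}-1\le 2(e^{1/2}-1)\,s\le C\varepsilon^2,\qquad C=2(e^{1/2}-1).
\]
Taking square roots gives the stated bound. This is where the requirement that $\varepsilon$ be sufficiently small enters.
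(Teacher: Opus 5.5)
Your proposal is correct and matches the paper, which likewise imports the iteration count from the cited Prop.~5.6 and obtains the $\mathsf W_2$ bound from Liu's transportation--variance inequality together with $\chi^2=e^{R_2}-1$; your chord bound for $x\mapsto e^x-1$ on $[0,1/2]$ is exactly what produces $C=2(e^{1/2}-1)$. One bookkeeping slip in your sketch: because the discretization bias grows with the horizon $nh$, the factor you wrote as $\log(\cdot)$ in the stepsize has to be $R_3(\widehat\mu_0\|\pi)+\log(1/\varepsilon)$ to recover the stated $R_3^2+\log^2(1/\varepsilon)$ dependence.
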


\begin{lemma}[Order-$3$ R\'enyi warm start for \(\lambda\)-scaled Gaussian initialization]\label{lem:gaussian-init-r3}
Let \Cref{assum_sample_comp} hold, and let \(g_\star(\theta):=\min_x g(\theta,x)\).
Fix a compact set \(\mathcal Z\subset\R^d\) and a variance parameter
\[
0<\tau<\frac{3}{2L_{g,2}}.
\]
For \(z\in\mathcal Z\), let
\[
\widehat\mu_{0,z}^{\lambda}:=\mathcal N(z,\tau\lambda I_d),
\qquad
0<\lambda\le1.
\]
Then there are constants \(C_{\mathrm{init}},C_{\mathrm{init}}'<\infty\), independent of
\(\theta,z,\lambda\), such that
\begin{equation}\label{eq:gaussian-init-r3}
R_3\!\left(\widehat\mu_{0,z}^{\lambda}\middle\|\gibbs_\theta^\lambda\right)
\le
\frac{C_{\mathrm{init}}}{\lambda}
\ +\ C_{\mathrm{init}}'\bigl(1+\log(1/\lambda)\bigr).
\end{equation}
Moreover, if \(z\in\calS(\theta)\), then the stronger bound
\begin{equation}\label{eq:gaussian-init-r3-ons}
R_3\!\left(\widehat\mu_{0,z}^{\lambda}\middle\|\gibbs_\theta^\lambda\right)
\le
C_{\mathrm{init}}'\bigl(1+\log(1/\lambda)\bigr)
\end{equation}
holds after increasing \(C_{\mathrm{init}}'\) if necessary.
\end{lemma}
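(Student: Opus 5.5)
The plan is to write the Rényi divergence explicitly and split the log-density ratio into a quadratic part and a normalizing-constant part. Let $p_z$ be the density of $\mathcal N(z,\tau\lambda I_d)$. Let $\pi(x)=e^{-(g(\theta,x)-g_\star(\theta))/\lambda}/Z_\lambda(\theta)$, with $Z_\lambda(\theta):=\int e^{-(g(\theta,x)-g_\star(\theta))/\lambda}\,\ud x$ as in the proof of \Cref{lem:gibbs-tube-width}. Then
\[
R_3(p_z\|\pi)=\tfrac12\log\int p_z^3\pi^{-2}\,\ud x
=\tfrac12\log\Big(Z_\lambda(\theta)^2(2\pi\tau\lambda)^{-3d/2}\int e^{-\frac{3\|x-z\|^2}{2\tau\lambda}+\frac{2(g(\theta,x)-g_\star(\theta))}{\lambda}}\,\ud x\Big).
\]
Two ingredients are needed: an upper bound on $Z_\lambda(\theta)$, and an upper bound on the Gaussian-type integral. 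Both should be uniform in $\theta\in\Theta$ and $z\in\mathcal Z$.

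\textbf{Quadratic bound on $g$.} By $L_{g,2}$-smoothness of $g(\theta,\cdot)$ (\Cref{assum_sample_comp}),
\[
g(\theta,x)-g_\star(\theta)\le g(\theta,z)-g_\star(\theta)+\langle\nabla_x g(\theta,z),x-z\rangle+\tfrac{L_{g,2}}{2}\|x-z\|^2 .
\]
The quadratic coefficient in the exponent is then $-\tfrac{3}{2\tau\lambda}+\tfrac{L_{g,2}}{\lambda}$. This is negative exactly because $\tau<3/(2L_{g,2})$, so the integral is a Gaussian integral with precision $\kappa/\lambda$, where $\kappa:=3/\tau-2L_{g,2}>0$. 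Completing the square gives
\[
\exp\Big(\tfrac{2(g(\theta,z)-g_\star)}{\lambda}+\tfrac{2\|\nabla_x g(\theta,z)\|^2}{\kappa\lambda}\Big)\,(2\pi\lambda/\kappa)^{d/2}.
\]
On the compact set $\Theta\times\mathcal Z$, the quantities $g(\theta,z)-g_\star(\theta)$ and $\|\nabla_x g(\theta,z)\|$ are bounded; continuity of $g_\star$ was established in the proof of \Cref{lem:uniform-qg-g}. This yields the $C_{\mathrm{init}}/\lambda$ term. The powers of $\lambda$ contribute $O(d\log(1/\lambda))$.

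\textbf{Bound on $Z_\lambda$.} Use \Cref{lem:uniform-qg-g} in the opposite direction to the denominator estimate \eqref{eq:gibbs-den-lower}:
\[
Z_\lambda\le\int e^{-\mu_{\mathrm{QG}}\mathrm{dist}^2(x,\calS(\theta))/(2\lambda)}\,\ud x .
\]
Bound this by the tube/far-field split from the proof of \Cref{lem:gibbs-tube-width} with $t=0$, giving $Z_\lambda\le C\lambda^{q/2}\le C$ for $\lambda\le1$. A cruder bound also suffices: since $\mathrm{dist}(x,\calS(\theta))\ge\|x\|-\mathsf D$, the integral is at most $C(\mathsf D+1)^d$ for $\lambda\le 1$. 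Thus $\log Z_\lambda^2$ is $O(1)$. All $\lambda$-power factors are polynomial, so they contribute only $C'_{\mathrm{init}}(1+\log(1/\lambda))$. This proves \eqref{eq:gaussian-init-r3}.

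\textbf{The case $z\in\calS(\theta)$.} When $z\in\calS(\theta)$, both $g(\theta,z)-g_\star(\theta)=0$ and $\nabla_x g(\theta,z)=0$. The $1/\lambda$ term therefore vanishes, leaving only the logarithmic terms, which gives \eqref{eq:gaussian-init-r3-ons}.

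\textbf{Main obstacle.} The delicate step is getting these terms with constants uniform in $\theta$, with no hidden negative power of $\lambda$ beyond logarithms. The $(2\pi\tau\lambda)^{-3d/2}$ and $(\lambda/\kappa)^{d/2}$ factors combine to $\lambda^{-d}$, and the upper bound $Z_\lambda^2\le C\lambda^{q}$ only partially cancels this. The result is $\tfrac12\log$ of a polynomial in $1/\lambda$, which is still $O(\log(1/\lambda))$, so the rate is unaffected. I would check that every constant depends only on $d,\tau,L_{g,2},\mathsf D,\mu_{\mathrm{QG}}$ and the compact sets.
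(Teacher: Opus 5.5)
Your proposal is correct and follows the paper's proof essentially step for step: you use the explicit formula $R_3=\tfrac12\log\int p^3\pi^{-2}$, the $L_{g,2}$-smoothness upper bound followed by Gaussian integration, an upper bound on the normalizing constant via \Cref{lem:uniform-qg-g} and $\mathrm{dist}(x,\calS(\theta))\ge(\|x\|-\mathsf D)_+$, compactness of $\Theta\times\mathcal Z$ for uniformity, and the vanishing of the $1/\lambda$ terms when $z\in\calS(\theta)$; your $\kappa=3/\tau-2L_{g,2}$ is twice the paper's $a=3/(2\tau)-L_{g,2}$, so the exponents coincide. The optional refinement $Z_\lambda\le C\lambda^{q/2}$ only sharpens the logarithmic coefficient and holds only for $\lambda\le\lambda_0$, so the crude bound you also give is the one that covers the full range $0<\lambda\le1$, exactly as the paper does.
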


\begin{proof}
Write \(p_{z,\lambda}\) for the density of \(\widehat\mu_{0,z}^{\lambda}\), and write
\[
Z_\theta^\lambda:=\int_{\R^d}\exp\{-g(\theta,x)/\lambda\}\,\ud x
\]
for the Gibbs normalizing constant. Since
\(\gibbs_\theta^\lambda(\ud x)=(Z_\theta^\lambda)^{-1}e^{-g(\theta,x)/\lambda}\ud x\), the order-$3$ R\'enyi
divergence is
\[
R_3\!\left(\widehat\mu_{0,z}^{\lambda}\middle\|\gibbs_\theta^\lambda\right)
=
\frac12\log\left[
(Z_\theta^\lambda)^2
\int_{\R^d}p_{z,\lambda}(x)^3\exp\{2g(\theta,x)/\lambda\}\,\ud x
\right].
\]
First, by \Cref{lem:uniform-qg-g} and \(\calS(\theta)\subseteq\mathbb B_d(0;\mathsf D)\),
\[
Z_\theta^\lambda
\le
e^{-g_\star(\theta)/\lambda}
\int_{\R^d}
\exp\!\left\{-\frac{\mu_{\mathrm{QG}}}{2\lambda}\mathrm{dist}(x,\calS(\theta))^2\right\}\ud x
\le
C_Z e^{-g_\star(\theta)/\lambda}
\]
for all \(0<\lambda\le1\), with \(C_Z\) uniform in \(\theta\). The last inequality follows because
\(\mathrm{dist}(x,\calS(\theta))\ge(\|x\|-\mathsf D)_+\), and the resulting Gaussian tail is integrable uniformly for
\(\lambda\le1\).

Next, set \(u=x-z\). By \(L_{g,2}\)-smoothness in \(x\),
\[
g(\theta,z+u)
\le
g(\theta,z)+\langle \nabla_x g(\theta,z),u\rangle+\frac{L_{g,2}}{2}\|u\|^2.
\]
Since
\[
p_{z,\lambda}(z+u)^3=(2\pi\tau\lambda)^{-3d/2}
\exp\!\left\{-\frac{3}{2\tau\lambda}\|u\|^2\right\},
\]
and \(a:=3/(2\tau)-L_{g,2}>0\), Gaussian integration gives
\[
\begin{aligned}
&\int_{\R^d}p_{z,\lambda}(x)^3\exp\{2g(\theta,x)/\lambda\}\,\ud x\\
&\qquad\le
C\,\lambda^{-d}
\exp\!\left\{
\frac{2g(\theta,z)}{\lambda}
+\frac{\|\nabla_x g(\theta,z)\|^2}{a\lambda}
\right\}.
\end{aligned}
\]
Combining the two displays yields
\[
R_3\!\left(\widehat\mu_{0,z}^{\lambda}\middle\|\gibbs_\theta^\lambda\right)
\le
\frac{g(\theta,z)-g_\star(\theta)}{\lambda}
\ +\ \frac{\|\nabla_x g(\theta,z)\|^2}{2a\lambda}
\ +\ \frac d2\log(1/\lambda)+C.
\]
The first two terms are uniformly bounded by \(C_{\mathrm{init}}/\lambda\), because
\(\Theta\times\mathcal Z\) is compact and \(g_\star\) is continuous on \(\Theta\).
This proves \eqref{eq:gaussian-init-r3}. If \(z\in\calS(\theta)\), then
\(g(\theta,z)=g_\star(\theta)\) and \(\nabla_x g(\theta,z)=0\), so only the logarithmic term remains, proving
\eqref{eq:gaussian-init-r3-ons}.
\end{proof}

\begin{corr}[Specialization to Gibbs sampling (R\'enyi accuracy implies Wasserstein control)]\label{cor:lmc-gibbs-w2}
Apply \Cref{prop:lmc-renyi} with $G(x)=g(\theta,x)/\lambda$ and $\pi=\gibbs_\theta^\lambda$.
Then $L_{G,2}=L_{g,2}/\lambda$ and the ULA update
\[
X^{k+1}=X^{k}-h\nabla_x g(\theta,X^{k})+\sqrt{2\lambda h}\,\xi^{k}
\]
corresponds to the same ULA scheme with stepsize $h' = \lambda h$ for $G$.
In particular, for any target R\'enyi tolerance $\varepsilon_{\mathrm R}\in(0,1)$, running the chain for
\[
n
=
\widetilde{\mathcal{O}}\!\left(
d\,C_{\mathrm{PI}}^{2}\lambda^{-2}\varepsilon_{\mathrm R}^{-2}
\bigl(1+R_3(\widehat\mu_0\|\gibbs_\theta^\lambda)^2\bigr)
\right)
\]
iterations ensures \(R_2(\widehat\mu_n\|\gibbs_\theta^\lambda)\le \varepsilon_{\mathrm R}^{2}\).
If, moreover, \(\widehat\mu_0=\widehat\mu_{0,z}^{\lambda}\) is the \(\lambda\)-scaled Gaussian initialization in
\Cref{lem:gaussian-init-r3} with \(z\in\mathcal Z\), then it suffices to run
\begin{equation}\label{eq:lmc-n-final}
n\ =\ \widetilde{\mathcal{O}}\!\left(d\,C_{\mathrm{PI}}^{2}\,\lambda^{-4}\,\varepsilon_{\mathrm R}^{-2}\right)
\end{equation}
iterations. If the Gaussian center satisfies \(z\in\calS(\theta)\), then the sharper logarithmic bound
\eqref{eq:gaussian-init-r3-ons} gives the improved sufficient count
\[
n\ =\ \widetilde{\mathcal{O}}\!\left(d\,C_{\mathrm{PI}}^{2}\,\lambda^{-2}\,\varepsilon_{\mathrm R}^{-2}\right).
\]
In all cases, once
\[
R_2(\widehat\mu_n\|\gibbs_\theta^\lambda)\ \le\ \varepsilon_{\mathrm R}^{2},
\]
we have
\[
\mathsf W_2(\widehat\mu_n,\gibbs_\theta^\lambda)
\ \le\
2\sqrt{(e-1)C_{\mathrm{PI}}}\,\varepsilon_{\mathrm R},
\qquad\text{and hence}\qquad
\mathsf W_1(\widehat\mu_n,\gibbs_\theta^\lambda)
\ \le\
2\sqrt{(e-1)C_{\mathrm{PI}}}\,\varepsilon_{\mathrm R}.
\]
\end{corr}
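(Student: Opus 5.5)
The plan is to obtain the corollary by a change of variables in \Cref{prop:lmc-renyi}, followed by substitution of the warm-start bound of \Cref{lem:gaussian-init-r3}.

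\emph{Step 1: match the two ULA schemes.} Set \(G(x):=g(\theta,x)/\lambda\). Then \(\pi\propto e^{-G}\) is exactly \(\gibbs_\theta^\lambda\), and \(\nabla G=\nabla_x g/\lambda\) is \(L_{G,2}=L_{g,2}/\lambda\)-Lipschitz by \Cref{assum_sample_comp}. The ULA step for \(G\) with stepsize \(h'\) is
\[
x-h'\nabla G(x)+\sqrt{2h'}\,\xi=x-(h'/\lambda)\nabla_x g(\theta,x)+\sqrt{2h'}\,\xi .
\]
Writing \(h'=\lambda h\) turns this into \(x-h\nabla_x g+\sqrt{2\lambda h}\,\xi\), which is the update in \Cref{alg:lmc-ula}. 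Any stepsize admissible for the \(G\)-chain in \Cref{prop:lmc-renyi} therefore translates into an admissible \(h\).

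\emph{Step 2: iteration count.} The Poincar\'e constant in \Cref{prop:lmc-renyi} is that of \(\pi=\gibbs_\theta^\lambda\), and the discussion in Appendix~\ref{append:lmc-ula} bounds it by \(C_{\mathrm{PI}}\), independently of \(\lambda\le\lambda_0\). This uniformity is what I expect to be the only delicate input: without it the \(\lambda\)-dependence below would change. Substituting \(L_{G,2}^2=L_{g,2}^2\lambda^{-2}\) into \eqref{eq:lmc-renyi-steps} gives
\[
n=\Theta\bigl(C_{\mathrm{PI}}^2L_{g,2}^2\lambda^{-2}d\,\varepsilon_{\mathrm R}^{-2}(R_3^2+\log^2(1/\varepsilon_{\mathrm R}))\bigr).
\]
Absorbing \(L_{g,2}\) and the \(\log^2\) factor into \(\widetilde{\mathcal O}\) yields the first claimed count.

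\emph{Step 3: specialize the initialization.} For the \(\lambda\)-scaled Gaussian start, \eqref{eq:gaussian-init-r3} gives \(R_3\le C_{\mathrm{init}}/\lambda+C'_{\mathrm{init}}(1+\log(1/\lambda))\). Hence \(R_3^2=\widetilde{\mathcal O}(\lambda^{-2})\), and the count becomes \(\widetilde{\mathcal O}(dC_{\mathrm{PI}}^2\lambda^{-4}\varepsilon_{\mathrm R}^{-2})\), which is \eqref{eq:lmc-n-final}. If the center satisfies \(z\in\calS(\theta)\), then \eqref{eq:gaussian-init-r3-ons} gives \(R_3^2=\mathcal O(\log^2(1/\lambda))\). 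This factor is absorbed into the tilde, leaving \(\lambda^{-2}\).

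\emph{Step 4: transport bounds.} Once \(R_2\le\varepsilon_{\mathrm R}^2\le 1\), convexity of \(s\mapsto e^s\) on \([0,1]\) gives \(e^{R_2}-1\le(e-1)\varepsilon_{\mathrm R}^2\). The Poincar\'e transportation inequality of \citet{liu2020poincare}, in the form \eqref{eq:lmc-w2-from-renyi}, then gives \(\mathsf W_2\le 2\sqrt{(e-1)C_{\mathrm{PI}}}\,\varepsilon_{\mathrm R}\). Finally, \(\mathsf W_1\le\mathsf W_2\) by Jensen's inequality applied to any coupling.
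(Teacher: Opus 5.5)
Your proposal is correct and follows the same route the paper has in mind (the paper gives no separate proof; the corollary is meant as a direct specialization). You rescale via \(G=g(\theta,\cdot)/\lambda\) and \(h'=\lambda h\) in \Cref{prop:lmc-renyi}, take the \(\lambda\)-uniform Poincar\'e constant as given, substitute the warm-start bounds of \Cref{lem:gaussian-init-r3}, and then apply \(e^{s}-1\le(e-1)s\) on \([0,1]\) together with the Poincar\'e/R\'enyi transport inequality and \(\mathsf W_1\le\mathsf W_2\). The paper uses exactly the same inequality in the proof of \Cref{lem:inner-quality-gap}.
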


\subsubsection{Final assembly of the selection-error bound}
We now assemble the tube bound, the hard-selection value gap, the on-manifold growth estimate, and the ULA accuracy bound into a single bound on the squared (Euclidean) selection error
$\mathbb E\|\tilde x-x^\star(\theta)\|^2$.

\begin{proof}[Proof of \Cref{thm:selection-error-bound}]
Let \(\bar x\in\arg\min_{x\in\calS(\theta)}\|x-\tilde x\|\) be a measurable Euclidean projection of \(\tilde x\) onto
\(\calS(\theta)\).
By the Euclidean triangle inequality and $(a+b)^2\le 2a^2+2b^2$,
\[
\|\tilde x-x^\star(\theta)\|^2
\le
2\|\tilde x-\bar x\|^2+2\|\bar x-x^\star(\theta)\|^2
=
2\,\mathrm{dist}(\tilde x,\calS(\theta))^2+2\|\bar x-x^\star(\theta)\|^2.
\]
Taking expectations and applying \eqref{eq:dist-tilde-max2-lmc} from \Cref{lem:tube-width}, together with
\(e^{\varepsilon_{\mathrm R}^2/2}\le e^{1/2}\) since \(\varepsilon_{\mathrm R}\le1\), gives
\[
\mathbb E\|\tilde x-x^\star(\theta)\|^2
\le
2e^{1/2}C_{\mathrm{tube},2}\,\lambda\log(1+N)
+2\,\mathbb E\|\bar x-x^\star(\theta)\|^2.
\]
Then \Cref{lem:barx-distance} yields
\[
\mathbb E\|\bar x-x^\star(\theta)\|^2
\le
\Bigg(\frac{1}{c_{\mathrm{hg}}}+\frac{4\mathsf D^2}{\Delta_{r_0}}\Bigg)\,
\mathbb E\big[f(\theta,\bar x)-F(\theta)\big].
\]
Finally, the R\'enyi-controlled hard-selection bound \eqref{eq:inner-gap-hardmin} from \Cref{lem:inner-quality-gap} gives
\[
\mathbb E\big[f(\theta,\bar x)-F(\theta)\big]
\le
2L_{f,1}\,C_{\mathrm{tube}}\,\sqrt{\lambda\log(1+N)}
+C_1\,N^{-1/\dimk}
+4\sqrt{e-1}\,L_{f,1}\sqrt{N\,C_{\mathrm{PI}}}\,\varepsilon_{\mathrm R},
\]
which gives \eqref{eq:final-Ex2-bound}.
\end{proof}

\paragraph{Plug-in form.}
With \Cref{prop:lmc-renyi}, one can run the sampler until the candidate laws satisfy
$R_2(\nu_i\|\gibbs_\theta^\lambda)\le \varepsilon_{\mathrm R}^2$.
This yields an explicit end-to-end bound on $\mathbb E\|\tilde x-x^\star(\theta)\|^2$ in terms of:
(i) the number of candidates $N$,
(ii) the Gibbs temperature $\lambda$,
(iii) the achieved R\'enyi tolerance $\varepsilon_{\mathrm R}$ through the additive
\(\sqrt{N\,C_{\mathrm{PI}}}\varepsilon_{\mathrm R}\) contribution,
and (iv) the on-manifold approximation term $N^{-1/\dimk}$.

\subsection{Parameter choices and oracle complexity}\label{append:param-tuning}
This appendix states the parameter-level hyper-gradient error bound obtained by combining \eqref{eq:et2-explicit} with the selection-error bound \eqref{eq:final-Ex2-bound}, and then records a convenient parameter choice ensuring
$\sup_t \mathbb E\|e_t\|^2=\mathcal{O}(\varepsilon^2)$.
Together with \Cref{prop:outer-inexact}, this yields the $\varepsilon$-stationarity guarantee and conservative polynomial oracle bound reported in \Cref{sec:theory}.

\begin{lemma}[Hyper-gradient error in terms of algorithmic parameters]\label{lem:app-et2-bigO}
Using the notation of \Cref{thm:et2-bound,thm:selection-error-bound}, suppose the assumptions therein hold.
Assume additionally that \(\varepsilon_{\mathrm R}\le1\) and \(0<\gamma\le1\).
Then, uniformly over \(t\),
\begin{equation}\label{eq:app-et2-bigO}
\begin{aligned}
\mathbb E\|e_t\|^2
&=
\mathcal{O}\!\Bigg(
\frac{1}{\gamma^2}
\Big[
\lambda\log(1+N)
\ +\ \sqrt{\lambda\log(1+N)}
\ +\ N^{-1/\dimk}
\ +\ \sqrt{N\,C_{\mathrm{PI}}}\,\varepsilon_{\mathrm R}
\ +\ \eta_t^2
\Big]
\ +\ \gamma^2\\
&\qquad
\ +\ (1+R_v^2)N
\big(1+r(\gamma)^{2\mathsf n_{\theta x}}+\lambda^{\mathsf n_{\theta x}}\big)\\
&\qquad\qquad\qquad
\times\exp\!\Big(-\tfrac{c_{\mathrm{tube}}}{4}\tfrac{r(\gamma)^2}{\lambda}\Big)
\Bigg).
\end{aligned}
\end{equation}
\end{lemma}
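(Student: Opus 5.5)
The plan is to substitute the selection-error bound \eqref{eq:final-Ex2-bound} into the stability bound \eqref{eq:et2-explicit}, and then bound the off-tube moment term using the tube tails from \Cref{lem:tube-width}.

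\emph{On-tube terms.} For \(0<\gamma\le1\), the prefactor satisfies
\[
3C_x^2\bigl(1+\tfrac{2}{\gamma}+\tfrac{2}{\gamma(c+\gamma)}\bigr)^2=\mathcal{O}(\gamma^{-2}),
\]
since each summand is \(\mathcal{O}(1/\gamma)\) with constants depending on \(c\). We then drop the indicator and use
\[
\mathbb E[\|\tilde x_t-x^\star(\theta_t)\|^2\mathbf 1_{\mathcal E_t}]\le \mathbb E\|\tilde x_t-x^\star(\theta_t)\|^2 .
\]
\Cref{thm:selection-error-bound} applies conditionally on \(\theta_t\): the ULA chains at iteration \(t\) are independent given the past, and their conditional laws satisfy the R\'enyi tolerance. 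It gives the bracket
\[
\lambda\log(1+N)+\sqrt{\lambda\log(1+N)}+N^{-1/\dimk}+\sqrt{NC_{\mathrm{PI}}}\,\varepsilon_{\mathrm R},
\]
with constants uniform in \(\theta\). The tower property removes the conditioning. The CG term \(12C_{\mathrm{lin}}^2\eta_t^2/\gamma^2\) and the ridge term \(3C_{\mathrm{reg}}^2\gamma^2\) carry over directly. This step also fixes the clipping radius: we must take \(R_v\ge(2/\gamma)(L_{f,1}+\eta_t)\), as required by \Cref{lem:off-tube-et}, so that clipping is inactive on \(\mathcal E_t\).

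\emph{Off-tube term.} This is the main obstacle. We must bound
\[
\mathbb E\bigl[(1+\mathsf D_t^{2\mathsf n_{\theta x}})\mathbf 1_{\mathcal E_t^c}\bigr],
\qquad \mathsf D_t\le\max_i \mathrm{dist}(X_{t,i},\calS(\theta_t)).
\]
Write \(r=r(\gamma)\) and \(p=\mathsf n_{\theta x}\). Let \(R\) be the max of the candidate distances, and define \(P(s):=\mathbb P(R\ge s)\). The tail bound \eqref{eq:nu-tail-max} gives
\[
P(s)\le N e^{1/2}\sqrt{C_{\mathrm{tube}}}\exp\!\bigl(-\tfrac{c_{\mathrm{tube}}}{2}\tfrac{s^2}{\lambda}\bigr).
\]
We then use the layer-cake identity
\[
\mathbb E[R^{2p}\mathbf 1_{R>r}] = r^{2p}P(r)+\int_r^\infty 2p\,s^{2p-1}P(s)\,ds .
\]
For the integral, write \(s^2=r^2+(s^2-r^2)\). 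Half of the exponent supplies the factor \(\exp(-\tfrac{c_{\mathrm{tube}}}{4}\tfrac{r^2}{\lambda})\). The remaining Gaussian factor \(\exp(-\tfrac{c_{\mathrm{tube}}}{4}\tfrac{s^2}{\lambda})\) integrates against \(s^{2p-1}\) to at most \(\mathcal{O}(\lambda^{p})\).

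The result is
\[
\mathcal{O}\!\Bigl(N\bigl(1+r^{2p}+\lambda^{p}\bigr)\exp\!\bigl(-\tfrac{c_{\mathrm{tube}}}{4}\tfrac{r^2}{\lambda}\bigr)\Bigr).
\]
Here we use \(\exp(-\tfrac{c}{2}\tfrac{r^2}{\lambda})\le\exp(-\tfrac{c}{4}\tfrac{r^2}{\lambda})\) to unify the exponents. The term \(\mathbb P(\mathcal E_t^c)\) is handled the same way. Multiplying by \(C_{\mathrm{off}}(1+R_v^2)\) gives the last term in the bound.

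\emph{Assembly.} Summing the on-tube and off-tube pieces yields the claimed estimate. All hidden constants depend only on problem data (\(C_x, c, C_{\mathrm{lin}}, C_{\mathrm{reg}}, C_{\mathrm{off}}\), and the tube and selection constants). They are uniform in \(t\) because every ingredient is uniform over \(\theta\in\Theta\). The only care needed is the conditioning argument and the fact that \(\lambda\le\lambda_0\) is required for the tube lemmas. Note that the \(\lambda\log(1+N)\) term is retained explicitly in this lemma. It is absorbed in \Cref{thm:main-et2-bound} under \(\lambda\log(1+N)\le1\).
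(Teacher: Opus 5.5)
Your proposal is correct and follows the paper's proof essentially step for step. As in the paper, you condition on $\theta_t$, plug the selection bound \eqref{eq:final-Ex2-bound} into \eqref{eq:et2-explicit} after dropping the indicator and using $B_\gamma^2=\mathcal{O}(\gamma^{-2})$, and control the off-tube moment through the maximum-tail bound \eqref{eq:nu-tail-max} and the same layer-cake identity; your exponent step conflates two devices ($s^2=r^2+(s^2-r^2)$ versus halving the exponent and using $s\ge r$), but either one yields the stated factor $(1+r^{2p}+\lambda^{p})\exp\bigl(-\tfrac{c_{\mathrm{tube}}}{4}\tfrac{r^2}{\lambda}\bigr)$.
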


\begin{proof}
We first condition on the current outer iterate \(\theta_t\). The constants in \Cref{thm:selection-error-bound} and
\Cref{lem:tube-width} are uniform over \(\theta\in\Theta\), so the resulting conditional bounds can be averaged over
\(\theta_t\) without changing the constants. Combining \eqref{eq:et2-explicit} with \eqref{eq:final-Ex2-bound} from
\Cref{thm:selection-error-bound} and the bound
$\mathbb E\big[\|\tilde x_t-x^\star(\theta_t)\|^2\mathbf 1_{\mathcal E_t}\big]\le \mathbb E\|\tilde x_t-x^\star(\theta_t)\|^2$,
we obtain the following bound, uniformly over \(t\):
\begin{equation}\label{eq:et2-plugged}
\begin{aligned}
\mathbb E\|e_t\|^2
\le\;&
3C_x^2 B_\gamma^2\,\mathcal R_{\mathrm{sel}}(N,\lambda,\varepsilon_{\mathrm R})
\ +\ \frac{12C_{\mathrm{lin}}^2}{\gamma^2}\,\eta_t^2
\ +\ 3C_{\mathrm{reg}}^2\gamma^2\\
&\quad
\ +\ C_{\mathrm{off}}(1+R_v^2)\,
\mathbb E\big[(1+\mathsf{D}_{t}^{2\mathsf n_{\theta x}})\mathbf 1_{\mathcal E_t^c}\big],
\end{aligned}
\end{equation}
where
\[
B_\gamma
:=
1+\frac{2}{\gamma}+\frac{2}{\gamma(c+\gamma)}
\]
and
\begin{equation}\label{eq:Rsel-expanded}
\begin{aligned}
\mathcal R_{\mathrm{sel}}(N,\lambda,\varepsilon_{\mathrm R})
:=\;&
2e^{1/2}C_{\mathrm{tube},2}\,\lambda\log(1+N)\\
&+
2\Bigg(\frac{1}{c_{\mathrm{hg}}}
\ +\ \frac{4\mathsf D^2}{\Delta_{r_0}}\Bigg)
\Big(
2L_{f,1}C_{\mathrm{tube}}\sqrt{\lambda\log(1+N)}
\ +\ C_1N^{-1/\dimk}\\
&\hspace{4.9cm}
\ +\ 4\sqrt{e-1}\,L_{f,1}\sqrt{N\,C_{\mathrm{PI}}}\,\varepsilon_{\mathrm R}
\Big).
\end{aligned}
\end{equation}

To control the off-tube moment, the candidate-marginal tube bound from Appendix~\ref{append:selection-error}
(Lemma~\ref{lem:tube-width}) and \(\varepsilon_{\mathrm R}\le1\) yield, for
\(\mathsf{D}_{t}=\mathrm{dist}(\tilde x_t,\calS(\theta_t))\),
\[
\mathbb P(\mathsf{D}_{t}>s)
\le
N\,e^{1/2}\,\sqrt{C_{\mathrm{tube}}}
\exp\!\Big(-\tfrac{c_{\mathrm{tube}}}{2}\tfrac{s^2}{\lambda}\Big),
\qquad s\ge 0.
\]
Indeed, \(\mathsf{D}_{t}\le \max_i\mathrm{dist}(X_{t,i},\calS(\theta_t))\), so the maximum-tail bound in
\Cref{lem:tube-width} applies. Integrating the tail gives
\begin{equation}\label{eq:offtube-tail-moment}
\begin{aligned}
\mathbb E\big[(1+\mathsf{D}_{t}^{2\mathsf n_{\theta x}})\mathbf 1_{\mathcal E_t^c}\big]
\le\;&
C_{\mathrm{tail}}\,N
\big(1+r(\gamma)^{2\mathsf n_{\theta x}}+\lambda^{\mathsf n_{\theta x}}\big)\\
&\quad\times
\exp\!\Big(-\tfrac{c_{\mathrm{tube}}}{4}\tfrac{r(\gamma)^2}{\lambda}\Big),
\end{aligned}
\end{equation}
where \(C_{\mathrm{tail}}\) depends only on \(\mathsf n_{\theta x}\) and problem-data constants. One way to see
\eqref{eq:offtube-tail-moment} is to use the identity
\[
\mathbb E\big[\mathsf{D}_{t}^{2p}\mathbf 1_{\{\mathsf{D}_{t}>r\}}\big]
=
r^{2p}\mathbb P(\mathsf{D}_{t}>r)
\ +\ \int_r^\infty 2p\,s^{2p-1}\mathbb P(\mathsf{D}_{t}>s)\,\ud s,
\]
with \(p=\mathsf n_{\theta x}\) and \(r=r(\gamma)\). In particular, if
\(\mathbb P(\mathsf{D}_{t}>s)\le K\exp(-a s^2/\lambda)\), then
\[
\mathbb P(\mathsf{D}_{t}>r)
\ +\ \mathbb E\big[\mathsf{D}_{t}^{2p}\mathbf 1_{\{\mathsf{D}_{t}>r\}}\big]
\le
C_pK\big(1+r^{2p}+\lambda^p\big)
\exp\!\Big(-\tfrac{a}{2}\tfrac{r^2}{\lambda}\Big),
\]
which gives \eqref{eq:offtube-tail-moment} with \(a=c_{\mathrm{tube}}/2\) and
\(K=N e^{1/2}\sqrt{C_{\mathrm{tube}}}\).

Substituting \eqref{eq:offtube-tail-moment} into \eqref{eq:et2-plugged} gives
\begin{equation}\label{eq:et2-before-simplification}
\begin{aligned}
\mathbb E\|e_t\|^2
\le\;&
C\,B_\gamma^2
\Big[
\lambda\log(1+N)
\ +\ \sqrt{\lambda\log(1+N)}
\ +\ N^{-1/\dimk}
\ +\ \sqrt{N\,C_{\mathrm{PI}}}\,\varepsilon_{\mathrm R}
\Big]\\
&\quad
+\frac{12C_{\mathrm{lin}}^2}{\gamma^2}\eta_t^2
\ +\ 3C_{\mathrm{reg}}^2\gamma^2\\
&\quad
\ +\ C(1+R_v^2)N
\big(1+r(\gamma)^{2\mathsf n_{\theta x}}+\lambda^{\mathsf n_{\theta x}}\big)
\exp\!\Big(-\tfrac{c_{\mathrm{tube}}}{4}\tfrac{r(\gamma)^2}{\lambda}\Big),
\end{aligned}
\end{equation}
for a problem-data constant \(C<\infty\) independent of
\(N,\lambda,\varepsilon_{\mathrm R},\gamma,\eta_t,R_v\), and \(t\).
Since \(c>0\) is fixed and \(0<\gamma\le1\),
\[
B_\gamma^2
=
\mathcal{O}\!\Big(\frac{1}{\gamma^2}\Big).
\]
	Applying this simplification to \eqref{eq:et2-before-simplification}, and absorbing numerical and problem-data constants,
	gives \eqref{eq:app-et2-bigO}.
	Since \Cref{thm:main-et2-bound} is the same bound with the exponentially small off-tube term abbreviated as
	\(\mathsf{OT}_{\gamma,\lambda,N,R_v}\), this proves the hyper-gradient error bound stated in \Cref{sec:theory}.
	\end{proof}

\paragraph{Choosing parameters to achieve $\mathbb E\|e_t\|^2=\mathcal{O}(\varepsilon^2)$.}
In the non-singleton regime \(\dimk\ge1\), fix a sufficiently small target $\varepsilon\in(0,1)$.
As in \Cref{thm:selection-error-bound}, \(\varepsilon_{\mathrm R}\) denotes the square-root R\'enyi tolerance, so the sampler accuracy condition is \(R_2(\nu_i\|\gibbs_{\theta_t}^\lambda)\le \varepsilon_{\mathrm R}^2\).
We take \(\varepsilon\) small enough that the choices below satisfy \(0<\lambda\le\lambda_0\), \(\varepsilon_{\mathrm R}\le1\), and \(0<\gamma\le1\).
To ensure $\sup_t\mathbb E\|e_t\|^2=\mathcal{O}(\varepsilon^2)$ it suffices to make each term on the right-hand side of \eqref{eq:app-et2-bigO} of order at most $\varepsilon^2$.
One convenient choice is:
(i) set $\gamma=\varepsilon$ so that $C_{\mathrm{reg}}\gamma=\mathcal{O}(\varepsilon)$,
(ii) solve the ridge system to residual $\eta_t=\gamma\varepsilon=\varepsilon^2$, so that \(\eta_t^2/\gamma^2=\mathcal{O}(\varepsilon^2)\),
(iii) choose \(R_v\ge (2/\gamma)(L_{f,1}+\eta_t)\), so \(R_v=\mathcal{O}(\varepsilon^{-1})\), and
(iv) choose the sampling/selection parameters so that the remaining bracketed terms satisfy
\[
\lambda\log(1+N)
\ +\ \sqrt{\lambda\log(1+N)}
\ +\ N^{-1/\dimk}
\ +\ \sqrt{N\,C_{\mathrm{PI}}}\,\varepsilon_{\mathrm R}
= \mathcal{O}(\varepsilon^4).
\]
A sufficient scaling is
\[
N=\left\lceil\varepsilon^{-4\dimk}\right\rceil,
\qquad
\lambda=\frac{\varepsilon^{8}}{\log(1+N)},
\qquad
\varepsilon_{\mathrm R}=\varepsilon^{4+2\dimk}.
\]
Then $\lambda\log(1+N)=\varepsilon^{8}$, $\sqrt{\lambda\log(1+N)}=\varepsilon^{4}$, and $N^{-1/\dimk}=\mathcal{O}(\varepsilon^{4})$.
Moreover, treating \(C_{\mathrm{PI}}\) as a problem-data constant,
\[
\sqrt{N\,C_{\mathrm{PI}}}\,\varepsilon_{\mathrm R}
=
\mathcal{O}\!\Big(\varepsilon^{-2\dimk}\cdot \varepsilon^{4+2\dimk}\Big)
=
\mathcal{O}(\varepsilon^{4}).
\]
Thus the displayed sampling/selection term is $\mathcal{O}(\varepsilon^4)$, and the corresponding contribution to
$\mathbb E\|e_t\|^2$ is $\mathcal{O}(\varepsilon^2)$ after multiplication by the prefactor $\mathcal{O}(1/\gamma^2)=\mathcal{O}(1/\varepsilon^2)$.
It remains to check the off-tube moment term. By Lemma~\ref{lem:tube-invertibility},
\[
r(\gamma)=\min\left\{\rho,\frac{\gamma}{2\bar L_{g,3}}\right\},
\]
so for all sufficiently small \(\varepsilon\), \(r(\gamma)\ge c_r\varepsilon\) for a problem-data constant \(c_r>0\).
Consequently,
\[
\frac{r(\gamma)^2}{\lambda}
\ \ge\
c_r'\,\varepsilon^{-6}\log(1+N).
\]
Since \(R_v=\mathcal{O}(\varepsilon^{-1})\), the off-tube term
\[
(1+R_v^2)N
\big(1+r(\gamma)^{2\mathsf n_{\theta x}}+\lambda^{\mathsf n_{\theta x}}\big)
\exp\!\Big(-\tfrac{c_{\mathrm{tube}}}{4}\tfrac{r(\gamma)^2}{\lambda}\Big)
\]
is bounded by a polynomial in \(\varepsilon^{-1}\) times
\(\exp(-c\,\varepsilon^{-6}\log(1+N))\) for a problem-data constant \(c>0\).
It is therefore super-polynomially small in \(\varepsilon^{-1}\) and, in particular, is \(\mathcal{O}(\varepsilon^2)\).

Substituting this control into \Cref{prop:outer-inexact} (with stepsize $1/L_F$) yields an $\varepsilon$-stationary guarantee after
\[
T=\mathcal{O}\!\left(\frac{L_F\,(F(\theta_0)-F_\star)}{\varepsilon^2}\right)
\]
outer iterations.

\paragraph{Total oracle complexity.}
Per outer iteration, \textsc{HG-MS} uses (a) $NK$ evaluations of $\nabla_x g$ for the sampler (\Cref{alg:lmc-ula}),
(b) $N$ evaluations of $f(\theta_t,X_{t,i})$ for hard selection, and (c) $\#\mathrm{HVP}$ Hessian--vector products for CG to reach residual tolerance $\eta_t$.
First consider the sampler/function-evaluation cost,
\[
\mathcal{O}\big(T(NK+N)\big).
\]
Assume each ULA chain is initialized from the \(\lambda\)-scaled Gaussian warm start in
\Cref{lem:gaussian-init-r3}, with centers in a fixed compact set. If the ULA sampler is run for $K$ steps so that the
square-root R\'enyi tolerance satisfies
$\varepsilon_{\mathrm{R}}=\varepsilon^{4+2\dimk}$, then \Cref{cor:lmc-gibbs-w2} gives
$K=\widetilde{\mathcal{O}}(d\,C_{\mathrm{PI}}^{2}\,\lambda^{-4}\varepsilon_{\mathrm{R}}^{-2})$.
Under the above scaling of \((N,\lambda,\varepsilon_{\mathrm R})\), this gives
$K=\widetilde{\mathcal{O}}\!\big(d\,C_{\mathrm{PI}}^{2}\,\log^{4}(1+N)\,\varepsilon^{-40-4\dimk}\big)$ and hence
\[
T\,N\,K
=\widetilde{\mathcal{O}}\!\big(d\,C_{\mathrm{PI}}^{2}\,N\log^{4}(1+N)\,\varepsilon^{-42-4\dimk}\big)
=\widetilde{\mathcal{O}}\!\big(d\,C_{\mathrm{PI}}^{2}\,\varepsilon^{-42-8\dimk}\big),
\]
which dominates the additional \(TN\) hard-selection evaluations.
If HVPs are counted as oracle calls, the total accounting becomes
\[
\mathcal{O}\big(T(NK+N+\#\mathrm{HVP})\big)
=
\widetilde{\mathcal{O}}\!\big(d\,C_{\mathrm{PI}}^{2}\,\varepsilon^{-42-8\dimk}\big)
+\mathcal{O}(T\,\#\mathrm{HVP}).
\]
The following remark computes the CG/HVP count and shows that the displayed sampler term dominates it under the parameter choice above.

\begin{remark}[HVP count for the CG solve]\label{rem:hvp-count}
On the tube event, the ridge system solved by CG has coefficient matrix
\[
A_t:=\nabla^2_{xx}g(\theta_t,\tilde x_t)+\gamma I.
\]
Lemma~\ref{lem:tube-invertibility} gives \(\lambda_{\min}(A_t)\ge\gamma/2\), while the \(L_{g,2}\)-smoothness of
\(g(\theta,\cdot)\) gives \(\lambda_{\max}(A_t)\le L_{g,2}+\gamma\). Hence
\[
\kappa(A_t)
\le
\frac{2(L_{g,2}+\gamma)}{\gamma}.
\]
The standard CG estimate for an SPD system \(Av=b\),
\[
\|v_k-v_\star\|_A
\le
2\left(\frac{\sqrt{\kappa(A)}-1}{\sqrt{\kappa(A)}+1}\right)^k
\|v_0-v_\star\|_A,
\]
see \citet[Thm.~5.3]{nocedal2006numerical}, implies the residual bound
\[
\|b-Av_k\|
\le
2\sqrt{\kappa(A)}
\left(\frac{\sqrt{\kappa(A)}-1}{\sqrt{\kappa(A)}+1}\right)^k
\|b\|,
\]
when initialized at \(v_0=0\).
Each CG iteration applies \(A_t\) once, which requires one Hessian-vector product with
\(\nabla^2_{xx}g(\theta_t,\tilde x_t)\) plus vector operations.
Therefore, to reach the residual tolerance \(\eta_t\) in $\|b_t-(H_t+\gamma I)\tilde v_t\|\le \eta_t$, it suffices to take
\[
\#\mathrm{HVP}
=
\mathcal{O}\!\left(
\sqrt{\kappa(A_t)}
\log\frac{\sqrt{\kappa(A_t)}\,\|\nabla_x f(\theta_t,\tilde x_t)\|}{\eta_t}
\right)
=
\mathcal{O}\!\left(
\gamma^{-1/2}\log\frac{\gamma^{-1/2}L_{f,1}}{\eta_t}
\right),
\]
where the last step uses \(\|\nabla_x f(\theta_t,\tilde x_t)\|\le L_{f,1}\).
With \(\gamma=\varepsilon\) and \(\eta_t=\varepsilon^2\), this gives
\[
\#\mathrm{HVP}=\widetilde{\mathcal{O}}(\varepsilon^{-1/2}),
\qquad
T\,\#\mathrm{HVP}=\widetilde{\mathcal{O}}(\varepsilon^{-5/2}),
\]
which is negligible compared with the sampler-dominated cost
\(\widetilde{\mathcal{O}}(d\,C_{\mathrm{PI}}^{2}\varepsilon^{-42-8\dimk})\).
The final multiplication by \(\nabla^2_{\theta x}g(\theta_t,\tilde x_t)v_t\) requires one additional mixed Hessian-vector product per outer iteration, which is also dominated by the sampler term.
\end{remark}

\section{Experimental details}\label{append:exp-details}
\subsection{Common protocol}
\paragraph{Metrics.}
For data hyper-cleaning we report \emph{accuracy} (\% correct).
For imbalanced loss tuning we report \emph{balanced accuracy} (macro recall), i.e., the average of per-class recall,
reported as a percentage.
For controlled GPT-2 source reweighting we report token-level validation/test loss and source-weight recovery error.
For real-world LLM mathematical reasoning we report GSM8K and MATH accuracy (\% correct), and for instruction following
we report MT-Bench score on the standard \(0\)--\(10\) scale.
\paragraph{Confidence intervals.}
For the MNIST experiments, tables/plots report mean $\pm$ $95\%$ confidence intervals across independent seeds, using a two-sided $t$-interval.
The real-world LLM results in \Cref{tab:llm-realworld-math} are single-seed matched-budget comparisons. We complement them
with uncertainty estimates computed from the saved evaluation files; see \Cref{tab:llm-eval-uncertainty}. For GSM8K and
MATH, we treat each test example as correct or incorrect and compute a Wilson \(95\%\) binomial interval. For MT-Bench, we
resample the saved question-level GPT-4o judge scores over the \(80\) standard questions. The table reports the range of
\(95\%\) confidence-interval half-widths across the cells in each benchmark, measured in accuracy percentage points for
GSM8K/MATH and MT-Bench score points for instruction following. These intervals quantify uncertainty from the finite
evaluation set and do not estimate variance from rerunning source reweighting or final fine-tuning with different seeds.

\begin{table}[t]
  \centering
  \caption{\textbf{Evaluation uncertainty for the real-world LLM tables.}
  The ranges summarize the \(95\%\) confidence-interval half-widths over the reported cells in the corresponding benchmark. GSM8K/MATH intervals use Wilson binomial intervals over saved binary correctness records; MT-Bench intervals use nonparametric bootstrap intervals over saved question-level judge scores.}
  \label{tab:llm-eval-uncertainty}
  \begin{adjustbox}{max width=\linewidth}
  \begin{tabular}{llcc}
    \toprule
    Benchmark & CI construction & Evaluation units & 95\% half-width range \\
    \midrule
    GSM8K, 7B/9B math table & Wilson binomial, \(n=1319\) & accuracy points & \(2.07\)--\(2.64\) \\
    MATH, 7B/9B math table & Wilson binomial, \(n=4989\) & accuracy points & \(1.22\)--\(1.35\) \\
    GSM8K, 32B panel & Wilson binomial, \(n=1319\) & accuracy points & \(1.42\)--\(2.23\) \\
    MATH, 32B panel & Wilson binomial, \(n=4989\) & accuracy points & \(1.37\)--\(1.39\) \\
    MT-Bench instruction table & Question-level bootstrap, \(n=80\) & MT-Bench points & \(0.30\)--\(0.50\) \\
    \bottomrule
  \end{tabular}
  \end{adjustbox}
\end{table}
\paragraph{Time budget.}
For time-budget plots and budget-sweep diagnostics, we measure wall-clock elapsed time for each algorithm. Fixed-budget
tables compare methods after the same per-method wall-clock budget within each benchmark/backbone setting.
\paragraph{Code.}
The experimental code and configuration files are available at
\url{https://github.com/msmasiha1997/Experiment_HG_MS_paper}.

For the MNIST experiments, we compare \textsc{HG-MS} against
\textsc{HG-baseline} \citep{lorraine2020optimizing}, V-PBGD and G-PBGD \citep{kwon2023penalty,shen2023penalty}, and
IAPTT-GM \citep{xiao2023generalized,shaban2019truncated}, and we report fixed-budget means $\pm$ $95\%$ confidence
intervals across seeds. Throughout, $\mathrm{CE}(\cdot,\cdot)$ denotes cross-entropy (negative log-likelihood).

\paragraph{Lower-level solvers and candidate generators.}
\Cref{tab:lower-solver-summary} summarizes the lower-level optimizer used by the iterative baselines and the candidate
generator used by \textsc{HG-MS} in each experiment. For both MNIST benchmarks, the lower-level optimizer family is
included in the same validation-loss pilot sweep for \textsc{HG-MS} and for the bilevel baselines: each \textsc{HG-MS}
branch and each baseline lower-level solver is run with the better of AdamW and SGD under the selected configuration. For the
controlled GPT-2 and real-world LLM experiments, the corresponding lower-level optimizer choices are stated in the table
and detailed in the benchmark-specific subsections below.

\begin{table}[t]
  \centering
  \caption{\textbf{Lower-level solvers and \textsc{HG-MS} candidate generators.}
  Summary of the optimizer used to generate \textsc{HG-MS} candidates and the corresponding lower-level optimizer used by
  iterative baselines. The iteration counts are lower-level optimizer updates per outer update or per candidate branch,
  up to the benchmark-specific time budget; detailed learning rates, batch sizes, and regularization values are given in
  the corresponding subsections.}
  \label{tab:lower-solver-summary}
  \begin{adjustbox}{max width=\linewidth}
  \begin{tabular}{p{0.21\linewidth}p{0.29\linewidth}p{0.31\linewidth}p{0.19\linewidth}}
    \toprule
    Experiment & \textsc{HG-MS} candidate generator & Lower optimizer for iterative baselines & Lower-level updates \\
    \midrule
    MNIST data hyper-cleaning
    & AdamW/SGD selected by validation-loss pilot sweep; reported \textsc{HG-MS} uses \(8\) optimizer branches
    & AdamW/SGD selected by validation-loss pilot sweep for \textsc{HG-baseline}, V-PBGD, G-PBGD, and IAPTT-GM; reported \textsc{HG-baseline} uses AdamW unroll
    & \textsc{HG-MS}/\textsc{HG-baseline}: \(20\); V-PBGD/G-PBGD/IAPTT-GM: same wall-clock budget \\
    \addlinespace
    MNIST imbalanced loss tuning
    & AdamW/SGD selected by validation-loss pilot sweep; reported \textsc{HG-MS} uses one base model plus \(4\) perturbed optimizer branches
    & AdamW/SGD selected by validation-loss pilot sweep for \textsc{HG-baseline}, V-PBGD, G-PBGD, and IAPTT-GM; reported \textsc{HG-baseline}/\textsc{HG-MS} lower optimizer is AdamW
    & \(60\) warm-start; \textsc{HG-baseline}: \(30\); \textsc{HG-MS} extra branches: \(5\) by default; V-PBGD: \(20\); IAPTT-GM: \(5\) \\
    \addlinespace
    Controlled GPT-2 source reweighting
    & SGD optimizer branches on the full GPT-2 parameter state
    & \textsc{HG-baseline} and \textsc{ScaleBiO}: SGD lower optimizer under the same validation-loss pilot-sweep protocol
    & Default \(1\); verified multilingual \textsc{HG-MS}: \(8\); configurations selected by validation loss \\
    \addlinespace
    Real-world LLM math, 7B/8B/9B
    & \(N=4\) one-update SGD LoRA branches
    & \textsc{HG-baseline} and \textsc{ScaleBiO}: one SGD lower-model update
    & \(1\) for \textsc{HG-MS} and iterative bilevel baselines \\
    \addlinespace
    Real-world LLM instruction following
    & \(N=8\) one-update SGD LoRA branches
    & \textsc{HG-baseline} and \textsc{ScaleBiO}: one SGD lower-model update
    & \(1\) for \textsc{HG-MS} and iterative bilevel baselines \\
    \addlinespace
    Real-world \texttt{Qwen2.5-32B} math
    & \(N=4\) one-update SGD LoRA branches
    & \textsc{HG-baseline} and \textsc{ScaleBiO}: one SGD lower-model update
    & \(1\) for \textsc{HG-MS} and iterative bilevel baselines \\
    \bottomrule
  \end{tabular}
  \end{adjustbox}
\end{table}

\subsection{Data hyper-cleaning (MNIST label noise)}\label{subsec:dhc}
\paragraph{Setup.}
We study \emph{data hyper-cleaning} on MNIST~\citep{lecun1998mnist}, a standard bilevel benchmark in which the upper-level variable assigns a
soft weight to each (potentially corrupted) training label.
The lower-level (follower) trains a classifier by minimizing a weighted training loss on the (possibly noisy) training
set, while the upper-level (leader) minimizes the validation loss on a small clean validation set.
We use a 2-layer sigmoid MLP with architecture \(784\to300\to10\) and follow the split used in our code: we subsample the MNIST \emph{training}
split and randomly partition it into $5{,}000$ lower-level training points, $500$ validation points, and $10{,}000$ test
points (all drawn from the same underlying MNIST training set).
We corrupt a fraction $\rho\in\{0.4,0.6,0.8\}$ of the training labels uniformly at random.
This yields a challenging regime (small training set with substantial label noise) in which explicit hyper-cleaning is
meaningful and differences between upper-level updates become visible under a fixed time budget.

\paragraph{Bilevel model.}
Let $\theta\in\R^{n_{\mathrm{tr}}}$ be training-example hyper-weights and let $y$ denote the follower network parameters.
Writing $w=\sigma(\theta)$ (sigmoid reparameterization), we consider the bilevel problem
\[
\min_{\theta\in\R^{n_{\mathrm{tr}}}}\ \min_{y\in\arg\min_{z} g(\theta,z)} f(\theta,y),
\]
which is the optimistic formulation: for each $\theta$, among all minimizers of the weighted training loss $g(\theta,\cdot)$ we select
the one with smallest validation loss $f(\theta,\cdot)$.
The lower-level objective is a weighted training loss
\[
g(\theta,y)\;=\;\frac{1}{n_{\mathrm{tr}}}\sum_{i=1}^{n_{\mathrm{tr}}} w_i\cdot \mathrm{CE}\big(h_y(x_i^{\mathrm{tr}}),\ \widetilde c_i^{\mathrm{tr}}\big)\;+\;\omega_{\mathrm{dhc}}\|y\|_2^2,
\]
and the upper-level objective is the validation loss on clean labels,
\[
f(y)\;=\;\frac{1}{n_{\mathrm{val}}}\sum_{j=1}^{n_{\mathrm{val}}}\mathrm{CE}\big(h_y(x_j^{\mathrm{val}}),\ c_j^{\mathrm{val}}\big).
\]

\paragraph{Methods and optimization.}
We use the following MNIST baselines: \textsc{HG-baseline}, V-PBGD, G-PBGD, and IAPTT-GM.
All methods use the same 2-layer sigmoid MLP architecture.
This label-noise regime can produce lower-level candidates with similar corrupted-training losses but different clean
validation losses. \textsc{HG-MS} is designed to select the candidate with the smallest clean validation loss before
computing the hyper-gradient.

\paragraph{Implementation details.}
The AdamW/SGD lower-optimizer choice is selected by the validation-loss pilot sweep described in the common protocol.
For \textsc{HG-baseline}, the selected lower-level solver is a differentiable AdamW unroll with \(20\) inner steps per outer update,
lower learning rate \(5\times10^{-3}\), AdamW default betas, and weight decay \(0\). The upper optimizer is AdamW on the
data-weight logits with learning rate \(0.1\) and weight decay \(0\). The lower regularization coefficient above is
\(\omega_{\mathrm{dhc}}=10^{-4}\).
For \textsc{HG-MS}, we use \(8\) candidate branches, and each branch uses the selected lower-optimizer family from the
same AdamW/SGD sweep. Each branch takes \(20\) lower updates per outer step with branch
learning rate \(2\times10^{-2}\), minibatch size \(256\), and branch weight decay \(0\). The reported runs use no parameter initialization noise by default
\((\texttt{branch\_init\_noise\_std}=0)\) and no learning-rate diversity across branches. The upper learning rate starts at
\(0.1\) and uses the default cosine schedule to \(0.01\); the explicit \(\rho=0.8\), \(120\)s script uses an inverse
schedule with decay steps \(50\). The conjugate-gradient solve in \textsc{HG-MS} uses ridge \(10^{-2}\), residual tolerance
\(10^{-3}\), at most \(30\) iterations, and full-batch HVP/CG when \(\texttt{cg\_batch\_size}=0\).

\paragraph{Time-budget comparison.}
\Cref{fig:dhc-time} reports mean and $95\%$ confidence intervals under a fixed wall-clock budget
\emph{per algorithm} on a single NVIDIA H100 GPU (60s for $\rho\in\{0.4,0.6\}$; 120s for $\rho=0.8$),
using $5$ runs for each corruption rate.
In the hardest setting $\rho=0.8$, \textsc{HG-MS} improves the mean test accuracy from $64.5\%$ (\textsc{HG-baseline})
to $66.1\%$, and outperforms the penalty/alternating baselines under the same time budget.

\begin{table}[t]
  \centering
  \caption[\textbf{Data hyper-cleaning (MNIST): train/test accuracies.}]{%
  \textbf{Data hyper-cleaning (MNIST): train/test accuracies under different corruption rates.}
  Mean $\pm$ 95\% CI accuracies (in \%) at a fixed wall-clock budget per algorithm:
  60s for $\rho\in\{0.4,0.6\}$ (5 runs each), and 120s for $\rho=0.8$ (5 runs).}
  \label{tab:dhc-acc}
  \begin{adjustbox}{max width=\linewidth}
  \begin{tabular}{lcccccc}
    \toprule
    & \multicolumn{2}{c}{$\rho=0.4$} & \multicolumn{2}{c}{$\rho=0.6$} & \multicolumn{2}{c}{$\rho=0.8$} \\
    Method & Train & Test & Train & Test & Train & Test \\
    \midrule
    HG-baseline & 79.69$\pm$1.78 & 81.22$\pm$1.45 & 70.19$\pm$7.56 & 73.09$\pm$4.86 & 63.48$\pm$2.48 & 64.54$\pm$2.62 \\
    HG-MS       & \textbf{89.00$\pm$1.60} & \textbf{86.98$\pm$1.02} & \textbf{81.12$\pm$2.71} & \textbf{80.08$\pm$2.52} & \textbf{66.47$\pm$2.69} & \textbf{66.08$\pm$3.51} \\
    V-PBGD      & 61.62$\pm$4.56 & 60.43$\pm$4.69 & 62.12$\pm$2.65 & 61.23$\pm$3.44 & 60.29$\pm$2.16 & 60.07$\pm$2.08 \\
    G-PBGD      & 70.73$\pm$4.48 & 69.21$\pm$4.93 & 67.40$\pm$7.94 & 66.26$\pm$7.86 & 58.84$\pm$3.87 & 58.77$\pm$4.93 \\
    IAPTT-GM    & 71.46$\pm$2.76 & 71.04$\pm$3.45 & 69.86$\pm$2.74 & 69.68$\pm$3.29 & 63.84$\pm$2.95 & 63.48$\pm$2.14 \\
    \bottomrule
  \end{tabular}
  \end{adjustbox}
\end{table}

\begin{figure*}[t]
  \centering
  \begin{subfigure}{0.32\linewidth}
    \centering
    \includegraphics[width=\linewidth]{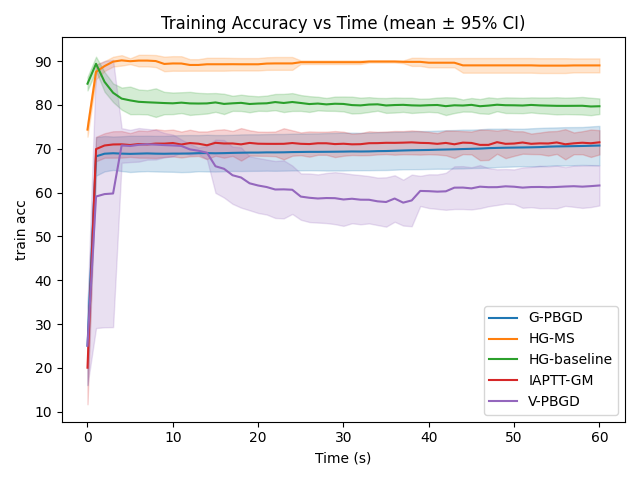}
    \caption{Train, $\rho=0.4$.}
  \end{subfigure}\hfill
  \begin{subfigure}{0.32\linewidth}
    \centering
    \includegraphics[width=\linewidth]{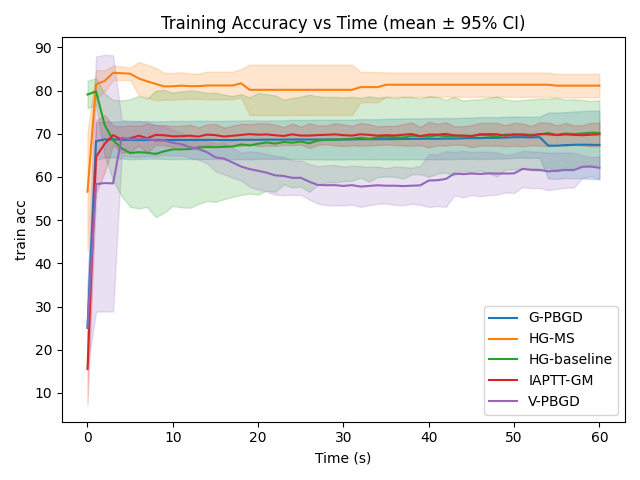}
    \caption{Train, $\rho=0.6$.}
  \end{subfigure}
  \hfill
  \begin{subfigure}{0.32\linewidth}
    \centering
    \includegraphics[width=\linewidth]{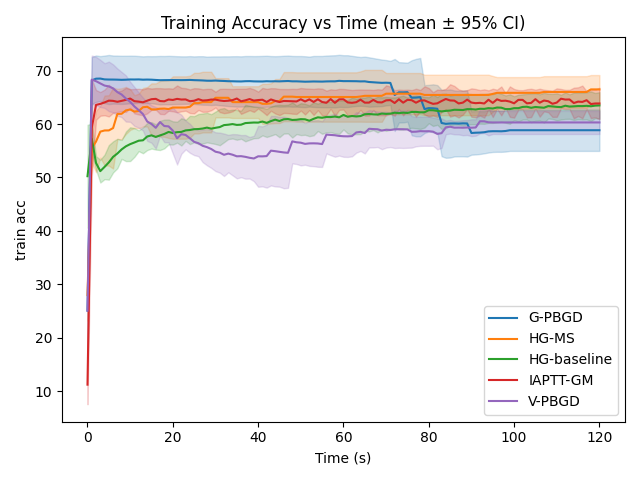}
    \caption{Train, $\rho=0.8$.}
  \end{subfigure}

  \vspace{0.25em}

  \begin{subfigure}{0.32\linewidth}
    \centering
    \includegraphics[width=\linewidth]{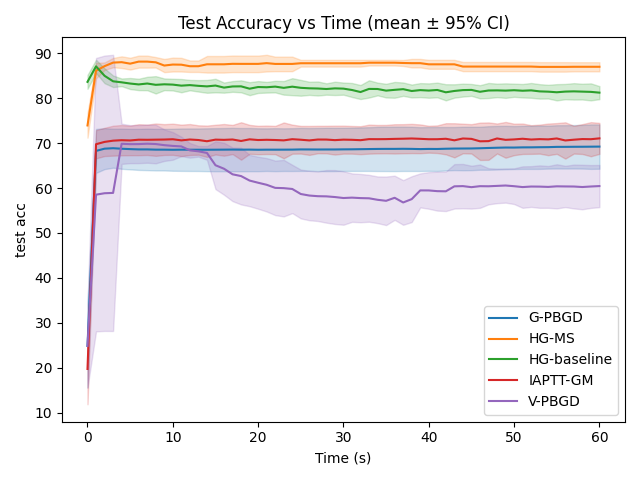}
    \caption{Test, $\rho=0.4$.}
  \end{subfigure}\hfill
  \begin{subfigure}{0.32\linewidth}
    \centering
    \includegraphics[width=\linewidth]{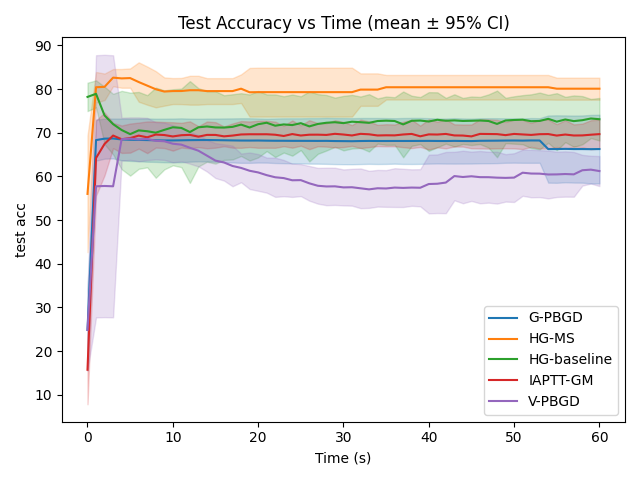}
    \caption{Test, $\rho=0.6$.}
  \end{subfigure}\hfill
  \begin{subfigure}{0.32\linewidth}
    \centering
    \includegraphics[width=\linewidth]{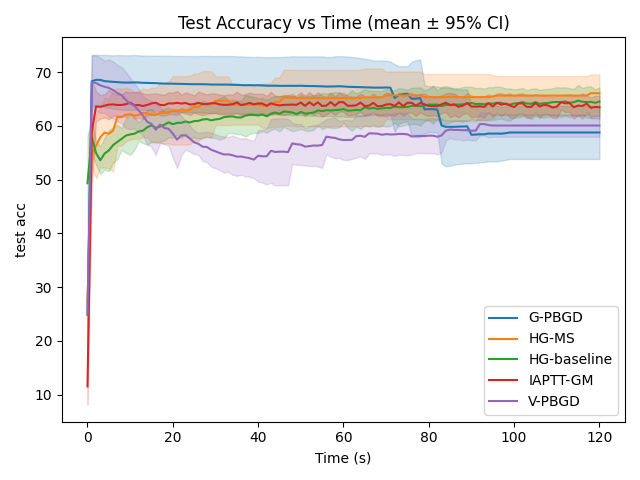}
    \caption{Test, $\rho=0.8$.}
  \end{subfigure}

  \caption{\textbf{Data hyper-cleaning (MNIST).} Train (top row) and test (bottom row) accuracies vs.\ time at
  corruption rates $\rho\in\{0.4,0.6,0.8\}$. Accuracies are plotted as mean $\pm$ $95\%$ confidence intervals (fixed
  budget per algorithm: $60\,\mathrm{s}$ for $\rho\in\{0.4,0.6\}$ with 5 runs, and $120\,\mathrm{s}$ for $\rho=0.8$ with 5 runs).}
  \label{fig:dhc-time}
\end{figure*}

\subsection{Parametric loss tuning for imbalanced data (MNIST)}\label{subsec:imb}
\paragraph{Setup.}
We study bilevel \emph{loss tuning} for class-imbalanced MNIST classification.
From the MNIST training split, we construct an imbalanced lower-level training set by keeping each example of digit
$c\in\{0,\ldots,9\}$ with probability $p_c=b^{c}$ (we use $b=0.3$), which yields a long-tailed label distribution in which
larger digits are much rarer.
We also hold out a small \emph{class-balanced} validation set (1000 images total, roughly 100 per digit) for the upper-level
objective, and we report performance on the standard MNIST test set.
Because the training distribution is imbalanced while validation/test are class-balanced, we evaluate using
\emph{balanced accuracy} (macro-averaged recall).
The upper-level variable parameterizes a class-wise transformation of the logits inside the cross-entropy loss,
\[
\ell_{\delta,\gamma}(c;\,x)\;=\;\mathrm{CE}\!\Big(\,h(x)\odot \gamma+\delta,\;c\,\Big),
\]
where $h(x)\in\R^{10}$ are the classifier logits, $\delta\in\R^{10}$ is a per-class bias, and $\gamma\in\R^{10}$ is a
per-class scale. In the implementation, we optimize unconstrained raw variables
\((\tilde\delta,\tilde\gamma)\in\R^{10}\times\R^{10}\) and use the transformed loss parameters
\[
\delta=B_\delta\tanh(\tilde\delta),
\qquad
\gamma=\mathbf 1+B_\gamma\tanh(\tilde\gamma),
\]
with fixed bounds \(B_\delta=0.5\) and \(B_\gamma=0.15\). Thus every class scale is positive and the unshifted scale
\(\gamma=\mathbf 1\) is attained at \(\tilde\gamma=0\).
The lower level minimizes the imbalanced training loss $g(\delta,\gamma,\cdot)$, while the upper level minimizes a
class-balanced validation objective $f(\delta,\gamma,\cdot)$ (implemented via inverse-frequency class weights).
In our implementation, the same logit shift/scale $(\delta,\gamma)$ is applied in both $g$ and $f$, so $\theta$ affects the upper-level objective both directly and through the trained network parameters.
Training uses data augmentation (denoted $\tilde x$), while validation and test examples are left unchanged.

\paragraph{Bilevel model.}
Let \(\theta=(\tilde\delta,\tilde\gamma)\in\R^{10}\times\R^{10}\) be the raw upper-level variable, let
\(\delta=\delta(\theta)\) and \(\gamma=\gamma(\theta)\) denote the transformed loss parameters above, and let $y$ denote
the CNN parameters.
Let $n_{\mathrm{tr}}$ and $n_{\mathrm{val}}$ denote the sizes of the (imbalanced) training set and the (class-balanced) validation set.
We solve the optimistic bilevel problem
\[
\min_{\theta}\ \min_{y\in\arg\min_{z} g(\theta,z)} f(\theta,y),
\]
i.e., for each $\theta$, among all minimizers of the training loss $g(\theta,\cdot)$ we select the one with the smallest
validation loss $f(\theta,\cdot)$.
The lower-level objective is
\[
g(\theta,y)\;=\;\frac{1}{n_{\mathrm{tr}}}\sum_{i=1}^{n_{\mathrm{tr}}}\mathrm{CE}\!\big(h_y(\tilde x_i)\odot\gamma+\delta,\ c_i\big)\;+\;\frac{\omega_y}{2}\|y\|_2^2,
\]
and the upper-level objective is
\[
f(\theta,y)\;=\;\frac{1}{n_{\mathrm{val}}}\sum_{j=1}^{n_{\mathrm{val}}}u_{c_j}\cdot \mathrm{CE}\!\big(h_y(x_j)\odot\gamma+\delta,\ c_j\big)\;+\;\frac{\omega_\delta}{2}\|\delta\|_2^2\;+\;\frac{\omega_\gamma}{2}\|\gamma-\mathbf 1\|_2^2,
\]
where $u_c$ is an inverse-frequency class weight computed on the validation set, and the regularizers are applied to the
transformed loss parameters \(\delta(\theta)\) and \(\gamma(\theta)\).
In the reported \textsc{HG-MS}/\textsc{HG-baseline} implementation, the regularization coefficients are
\(\omega_y=5\times10^{-4}\) and \(\omega_\delta=\omega_\gamma=10^{-2}\).
Here $\tilde x_i$ denotes the train-time augmented view of $x_i$, while validation and test examples are uncorrupted.

\paragraph{Methods.}
We use the common baselines listed in the overview above.
For \textsc{HG-MS}, we maintain a small ensemble of lower-level candidates trained on \(g(\cdot)\): one base run plus
\(4\) optimizer branches with slightly perturbed optimizer settings. At each outer step, \textsc{HG-MS} selects the
candidate that minimizes the validation objective \(f(\cdot)\). In this finite-budget nonconvex training pipeline,
different branches can produce candidate models with different validation losses and balanced accuracies, so explicit
selection can change the point through which the hyper-gradient is computed.

\paragraph{Implementation details.}
The CNN has architecture
\(\mathrm{Conv2d}(1,32,5,\mathrm{pad}=2)\to\mathrm{ReLU}\to\mathrm{MaxPool}\),
\(\mathrm{Conv2d}(32,64,5,\mathrm{pad}=2)\to\mathrm{ReLU}\to\mathrm{MaxPool}\), flatten,
\(\mathrm{Linear}(64\cdot7\cdot7,256)\to\mathrm{ReLU}\to\mathrm{Linear}(256,10)\).
The train and validation batch sizes are both \(64\), with full-batch validation used when
\(\texttt{val\_full\_batch=True}\).
The AdamW/SGD lower-optimizer choice is selected by the validation-loss pilot sweep described in the common protocol for
all \textsc{HG-MS} branches and for the lower-level solvers used by \textsc{HG-baseline}, V-PBGD, G-PBGD, and IAPTT-GM.
For \textsc{HG-baseline}/\textsc{HG-MS}, the selected lower optimizer is AdamW with learning rate \(10^{-3}\), weight decay
\(5\times10^{-4}\), betas \((0.9,0.999)\), cosine warm restart scheduler \(T_0=20\), minimum learning rate
\(0.2\) times the base learning rate, and gradient clipping at \(1.0\). The upper optimizer is AdamW by default with
learning rate \(10^{-2}\), upper \(L_2\) coefficient \(10^{-4}\), and upper-gradient clipping at \(0.5\).
We use \(60\) lower warm-start steps and then \(30\) lower steps per outer update in the default
\texttt{minsel\_effective}/time-budget setting. \textsc{HG-MS} uses the base model plus \(4\) extra branches, for \(5\)
candidates total. The branches use initialization noise \(0.01\); in the default \texttt{minsel\_effective} setting,
branch learning-rate and weight-decay multipliers are log-spaced from \(0.5\) to \(2.0\), while the showcase settings use
the wider range \([0.25,4.0]\). In time-budget mode, the default number of branch lower steps is \(5\) unless overridden by the
environment.

For the penalty/alternating baselines, the reported configurations are selected by the same validation-loss pilot
protocol. V-PBGD and G-PBGD use train/validation batch sizes \(64/64\), lower regularization
\(5\times10^{-4}\), upper regularizers \(\omega_\delta=\omega_\gamma=10^{-2}\), \(lrx=2\times10^{-2}\),
\(lry=5\times10^{-2}\), \(\gamma_{\max}=10\), and momentum \(0.9\); V-PBGD additionally uses
\(\texttt{lr\_inner}=5\times10^{-2}\) and \(\texttt{inner\_itr}=20\). IAPTT-GM uses the default time-budget settings
\(K_{\mathrm{inner}}=5\), \(\texttt{lr\_inner}=0.03\), \(lrx=lrz=5\times10^{-4}\), lower regularization
\(5\times10^{-4}\), \(\omega_\delta=\omega_\gamma=10^{-4}\), the same CNN model, and the Adam-style inner option enabled.

\paragraph{Relation to the manifold theory.}
This benchmark tests the select-then-differentiate principle in a finite-budget nonconvex training pipeline, where the
inner problems are not solved exactly and different optimizer branches can reach candidate models with different
validation losses and balanced accuracies. This is precisely the practical regime in which explicit candidate selection
can matter: \textsc{HG-MS} first selects the candidate with the best upper-level value and then computes the hyper-gradient
through that selected candidate. The experiment is therefore complementary to the manifold theory in \Cref{sec:hyp_diff}.
Because the explicit \(L_2\)-regularization may make the exact global lower-level minimizers isolated (\(\dimk=0\)), we do
not use this benchmark as direct evidence for positive-dimensional minimizer manifolds. Rather, it shows that the same
select-then-differentiate mechanism can be useful even in the singleton/isolated-minimizer limit when lower-level training
is finite-budget and nonconvex.

\paragraph{Time-budget comparison.}
\Cref{fig:imb-acc} reports mean and $95\%$ confidence intervals under a fixed wall-clock budget of $120\,\mathrm{s}$
\emph{per algorithm} on a single NVIDIA H100 GPU.
In this regime, \textsc{HG-MS} improves test balanced accuracy by about $0.97$ percentage points over
\textsc{HG-baseline} at the same wall-clock budget.

\begin{table}[t]
  \centering
  \caption{\textbf{Imbalanced loss tuning (MNIST): final balanced accuracies.}
  Mean $\pm$ $95\%$ CI balanced accuracy (\%) at the last time point ($120\,\mathrm{s}$ per algorithm).}
  \label{tab:imb-acc}
  \begin{adjustbox}{max width=\linewidth}
  \begin{tabular}{lcc}
    \toprule
    Method & Train BalAcc & Test BalAcc \\
    \midrule
    HG-baseline       & 99.46$\pm$0.20 & 95.48$\pm$0.77 \\
    HG-MS (ours)      & \textbf{99.51$\pm$0.26} & \textbf{96.45$\pm$0.11} \\
    V-PBGD            & 91.31$\pm$2.19 & 90.54$\pm$1.95 \\
    G-PBGD            & 75.88$\pm$2.17 & 77.40$\pm$1.41 \\
    IAPTT-GM          & 82.74$\pm$2.62 & 82.13$\pm$2.30 \\
    \bottomrule
  \end{tabular}
  \end{adjustbox}
\end{table}

\begin{figure}[t]
  \centering
  \includegraphics[width=\linewidth]{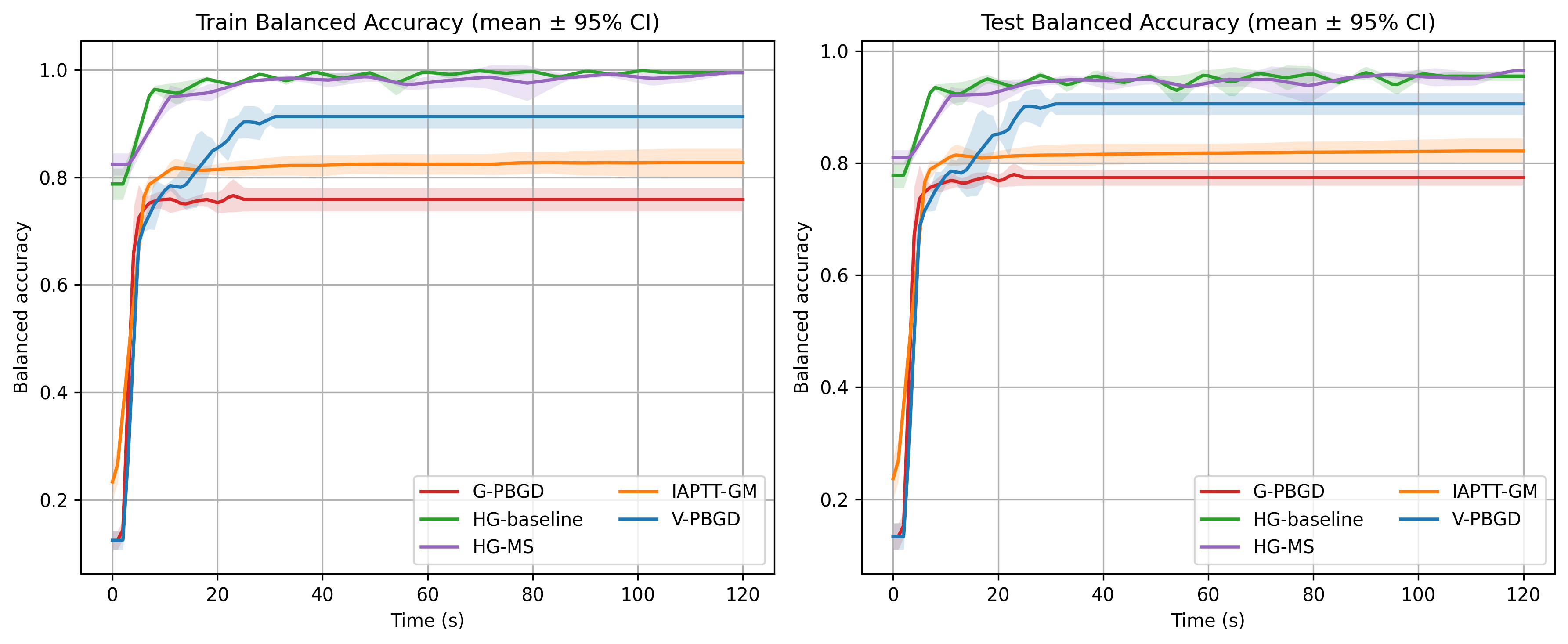}
  \caption{\textbf{Imbalanced loss tuning (MNIST).} Train/test balanced accuracy vs.\ time (mean and $95\%$ CI; 120s per algorithm).}
  \label{fig:imb-acc}
\end{figure}

\subsection{Completed outer updates in the MNIST fixed-budget runs}\label{append:mnist-outer-counts}
To make the fixed-budget comparisons more transparent, \Cref{tab:mnist-outer-updates} reports the completed outer
updates counted from the saved logs. For data hyper-cleaning, we report the mean over \(5\) runs with the min--max range
in parentheses. For imbalanced loss tuning, we report the mean over \(3\) repetitions at \(120\)s; a min--max range is
included when it was recorded. These counts are not meant to equalize the cost of one outer step across methods:
\textsc{HG-MS} completes fewer outer updates because each update is substantially heavier, as it trains multiple
candidate branches, performs explicit minima selection, and then computes the selected hyper-gradient.

\begin{table}[t]
  \centering
  \caption{\textbf{Completed outer updates in MNIST fixed-budget experiments.}
  Entries are mean completed outer updates, with min--max ranges in parentheses when available.}
  \label{tab:mnist-outer-updates}
  \begin{adjustbox}{max width=\linewidth}
  \begin{tabular}{lccc}
    \toprule
    \multicolumn{4}{c}{\textbf{Data hyper-cleaning: mean over 5 runs}} \\
    \midrule
    Method & \(\rho=0.4\), \(60\)s & \(\rho=0.6\), \(60\)s & \(\rho=0.8\), \(120\)s \\
    \midrule
    \textsc{HG-MS} & \(312\ (310--313)\) & \(311\ (310--312)\) & \(626\ (625--628)\) \\
    \textsc{HG-baseline} & \(1{,}191\ (1{,}185--1{,}199)\) & \(1{,}200\ (1{,}185--1{,}205)\) & \(2{,}402\ (2{,}380--2{,}416)\) \\
    V-PBGD & \(18{,}747\ (17{,}983--19{,}007)\) & \(18{,}680\ (17{,}826--18{,}904)\) & \(37{,}016\ (36{,}995--37{,}104)\) \\
    G-PBGD & \(18{,}381\ (18{,}337--18{,}452)\) & \(18{,}303\ (18{,}271--18{,}324)\) & \(36{,}233\ (36{,}196--37{,}284)\) \\
    IAPTT-GM & \(5{,}263\ (5{,}148--5{,}427)\) & \(5{,}287\ (5{,}075--5{,}712)\) & \(10{,}745\ (10{,}051--11{,}556)\) \\
    \midrule
    \multicolumn{4}{c}{\textbf{Imbalanced loss tuning: mean over 3 repetitions at \(120\)s}} \\
    \midrule
    Method & \multicolumn{3}{c}{Completed outer updates} \\
    \midrule
    \textsc{HG-MS} & \multicolumn{3}{c}{\(155\ (154--156)\)} \\
    \textsc{HG-baseline} & \multicolumn{3}{c}{\(620\)} \\
    V-PBGD & \multicolumn{3}{c}{\(11{,}164\)} \\
    G-PBGD & \multicolumn{3}{c}{\(10{,}998\)} \\
    IAPTT-GM & \multicolumn{3}{c}{\(3{,}568\)} \\
    \bottomrule
  \end{tabular}
  \end{adjustbox}
\end{table}

\subsection{Controlled GPT-2 source reweighting}\label{subsec:llm}\label{append:exp-llm}
\paragraph{Bilevel formulation.}
We use the two-source small-scale LLM tasks from ScaleBiO \citep{pan2025scalebio} to test whether the outer variable learns a nontrivial source mixture. Let \(\theta\in\R^2\) be source logits, \(\omega=\softmax(\theta)\in\Delta^1\) the induced source weights, and \(y\) the GPT-2 parameter vector. The lower-level and upper-level losses are
\[
g(\theta,y)
\;:=\;
\sum_{s=1}^2 \omega_s\,
\frac{1}{n_s}\sum_{i=1}^{n_s}\ell(y;\xi^{\mathrm{tr}}_{s,i}),
\qquad
f(y)
\;:=\;
\frac{1}{n_{\mathrm{val}}}\sum_{j=1}^{n_{\mathrm{val}}}\ell(y;\xi^{\mathrm{val}}_{j}).
\]
Here \(\mathcal D^{\mathrm{tr}}_s=\{\xi^{\mathrm{tr}}_{s,i}\}_{i=1}^{n_s}\) are the two training sources, \(\mathcal D^{\mathrm{val}}=\{\xi^{\mathrm{val}}_j\}_{j=1}^{n_{\mathrm{val}}}\) is the target validation set, and \(\ell(y;\xi)\) is the token-level language-model negative log-likelihood. Thus the lower level trains on a source-weighted mixture, while the upper level evaluates held-out target loss.

\paragraph{Backbone and shared code path.}
The controlled experiments use GPT-2 (124M) \citep{radford2019language} as the lower-level model, with maximum sequence
length \(128\). Unlike the real-world LLM experiments below, this controlled GPT-2 code path does \emph{not} use LoRA:
it trains the full GPT-2 trainable parameter state loaded by \texttt{AutoModelForCausalLM}. Thus LoRA rank, scaling, and
dropout are not applicable to this controlled experiment.
The implementation follows the select-then-differentiate structure of \Cref{alg:hg-minsel-lmc}: at each outer step, it
constructs lower-level candidate GPT-2 parameter states, scores those candidates on the upper-level validation objective,
selects the candidate with the smallest validation loss, and computes the source-weight hyper-gradient through the
selected candidate. These branches are the practical counterpart of the \(N\) candidate samples in \Cref{alg:hg-minsel-lmc};
they are not exact ULA samples, and we do not inject Langevin Gaussian noise in these runs. Candidate diversity instead
comes from finite-budget stochastic training effects such as minibatch sampling, data order, optimizer stochasticity, and
branch-level perturbations. Thus the formal convergence guarantee applies to the ULA-based version analyzed in
\Cref{sec:theory}, while the LLM experiments test the same select-then-differentiate principle in the standard training
pipeline. For the outer update, we score candidate branches on shared validation data rather than on separate tiny random
batches; otherwise branch selection is dominated by sampling noise instead of true held-out performance. All methods share
the same single-GPU PyTorch implementation in the repository, so the comparison isolates the outer optimization rule
rather than changes in training infrastructure.
\paragraph{Task construction.}
For \emph{data denoising}, we follow the two-source construction of \citet{pan2025scalebio}: the training set contains $1000$ clean Alpaca examples~\citep{taori2023alpaca} and $9000$ Alpaca examples with synthetically corrupted outputs, where each target sequence is replaced by ``.''. The validation and test sets each contain $1000$ clean Alpaca examples. Thus the desired source weight is $\omega^\star=(1,0)$. For \emph{multilingual training}, the training set contains $40{,}000$ Alpaca-GPT4-ZH examples and $40{,}000$ Alpaca-GPT4-EN examples~\citep{peng2023instruction}, while the validation and test sets each contain $600$ ZH plus $400$ EN examples, so the target source weight is $\omega^\star=(0.6,0.4)$.
\paragraph{Methods and optimization details.}
We compare \textsc{Uniform}, \textsc{ScaleBiO}, \textsc{HG-baseline}, and \textsc{HG-MS}. \textsc{Uniform} fixes
\(\omega=(0.5,0.5)\). Source logits are initialized at \(\theta=0\), hence the initial learned mixture is uniform, unless
a fixed source mixture is explicitly passed to the script. For each tunable method, we selected the reported
controlled-task configuration from a small method-specific pilot grid using only held-out validation loss. The grid varied
learning-rate and update parameters appropriate to each method; test loss was not inspected for hyperparameter selection.
In the reported \textsc{HG-MS} configurations, branch selection uses only the upper-level validation loss.

The default controlled script uses SGD as the lower optimizer, batch size \(64\), selection validation batch size \(128\),
evaluation batch size \(64\), one inner step, two branches, branch noise \(10^{-3}\), model learning rate \(10^{-5}\),
outer learning rate \(10^{-2}\), weight decay \(10^{-2}\), and source-logit gradient clipping at \(10\). The verified
multilingual \textsc{HG-MS} configuration uses model learning rate \(10^{-3}\), outer learning rate \(1.5\), batch
size \(8\), selection batch size \(64\), \(8\) selection batches, \(8\) inner steps, and \(2\) candidate branches. In both
controlled tasks, the outer hyper-gradient is computed through the selected branch using shared validation batches.
\paragraph{Metrics.}
For both tasks we report token-level validation and test losses, together with the source-weight recovery error $\|\omega-\omega^\star\|_1$. In denoising, the latter remains a direct diagnostic because the clean-data optimum is unambiguous. In multilingual training, however, we treat held-out loss as the primary downstream metric and use $\|\omega-\omega^\star\|_1$ only as a calibration diagnostic, since finite-horizon nonconvex training can make the loss-optimal source mixture differ from the nominal validation composition.

\paragraph{Results.}
\Cref{tab:llm-controlled} in the main text reports the completed controlled runs and discusses the resulting source
mixtures and validation/test losses.

\subsection{Detailed protocols for the real-world LLM experiments}\label{append:exp-llm-real}
\paragraph{Shared setup for the main real-world tables.}
The real-world LLM tables in the main text use single-seed, single-H100 LoRA pipelines in which the outer variable is the source-logit vector $\theta$ and the simplex weights are $\omega=\softmax(\theta)$. Unless stated otherwise, every reported run uses one H100 in bf16 precision\footnote{A 16-bit floating-point format is used to train/evaluate neural networks faster and with less GPU memory than float32. The bf16 has a much larger exponent range, so it is usually more numerically stable for LLM training. On H100 GPUs, bf16 is well supported and is the standard choice for large-model LoRA fine-tuning.}, and each method/backbone pair is run independently on that single-GPU path. For every method, the learned source weights are converted into a final sampled supervised-fine-tuning set, and the target model is then fine-tuned for one fixed epoch. The final supervised fine-tuning optimizer is AdamW with learning rate \(8\times10^{-6}\), no scheduler or warmup, default AdamW weight decay \(0.01\), and no explicit max-gradient-norm clipping in the checked code paths. Thus the comparison is intended to isolate the source-reweighting rule rather than the final fine-tuning budget. All main-table runs reported in \Cref{tab:llm-realworld-math,tab:llm-realworld-instr-mtbench} use seed $2424$. We interpret the real-world LLM results as controlled matched-budget comparisons under a fixed implementation and seed; \Cref{tab:llm-eval-uncertainty} reports evaluation-set uncertainty, while the MNIST experiments provide multi-seed confidence intervals. The non-iterative baselines (\textsc{Uniform}, \textsc{LESS}, and \textsc{RHO-LOSS}) use the standard adapted pipeline without an outer reweighting horizon. For both tables, the iterative methods (\textsc{ScaleBiO}, \textsc{HG-baseline}, and \textsc{HG-MS}) use matched per-backbone wall-clock reweighting budgets of approximately $9$h for \texttt{Qwen-2-7B}, $12$h for \texttt{Llama-3-8B}, and $15$h for \texttt{Gemma-2-9B}.

\paragraph{Choice of wall-clock budgets.}
The iterative LLM methods are finite-budget procedures: the reweighting horizon determines the learned source distribution, which is then used to sample the final fine-tuning set. We choose the wall-clock budgets to give the iterative methods substantial reweighting time under the single-H100 implementation, while keeping the comparison feasible and matched. The budget is fixed for a given task/backbone and then shared by \textsc{ScaleBiO}, \textsc{HG-baseline}, and \textsc{HG-MS}; because the backbones have different per-step costs, matching is by wall-clock time rather than by the number of completed outer updates.

The \(9\)h budget used for the main \texttt{Qwen-2-7B} mathematical-reasoning comparison was fixed before evaluating the final checkpoints on the GSM8K and MATH test sets. The explicit \texttt{Qwen-2-7B} math sweep in \Cref{tab:llm-budget-sensitivity-qwen} is included as an audit trail and sensitivity diagnostic, not as a test-set-based procedure for choosing the main stopping budget. Longer 15h and 21h runs are therefore reported to show budget sensitivity under the same pipeline, while the main table keeps the pre-specified 9h budget. The same fixed-budget principle is used for the other LLM task/backbone budgets: the chosen horizons are intended to give the iterative algorithms sufficient reweighting time under a matched wall-clock constraint, not to reward additional wall-clock after inspecting downstream test metrics.

\paragraph{Relation to the ScaleBiO protocol.}
Although the table layouts follow the corresponding ScaleBiO tables, the numbers below are adapted single-H100 LoRA reproductions under our shared code path and final fine-tuning protocol, not direct copies of the original ScaleBiO results.
Concretely, \Cref{tab:llm-realworld-math} follows the layout of Table~4 of \citet{pan2025scalebio}; in \Cref{tab:llm-realworld-math-large}, panel~(a) follows the layout of their Table~5 and panel~(b) follows the layout of their Table~2.
The comparison with ScaleBiO should therefore be read as an adapted reproduction under our single-H100 LoRA implementation rather than as a direct transcription of the original ScaleBiO tables.
For dataset transparency, \Cref{tab:instr-source-portfolio,tab:math-source-portfolio} mirror the dataset-summary tables from Appendix~B.2 of \citet{pan2025scalebio}; the reported sizes, dataset types, and licenses follow their Tables~10 and~11, while the experimental protocol text below states where our adapted reproduction differs.

\begin{table*}[t]
  \centering
  \caption{\textbf{Instruction-following source portfolio.}
  Training sources used for the eighteen-source instruction-following benchmark, following Table~10 of \citet{pan2025scalebio}. The first nine rows are the high-quality/general instruction or chat sources; the last nine rows are multilingual Alpaca variants.}
  \label{tab:instr-source-portfolio}
  \begin{adjustbox}{max width=\textwidth}
  \begin{tabular}{llcl}
    \toprule
    Source & \#Samples & Kind & License \\
    \midrule
    AlpacaGPT4 & 52K & Instruction & Apache-2.0 \\
    ShareGPT4 & 6K & Conversation & Apache-2.0 \\
    SlimOrca & 518K & Instruction & MIT \\
    AlpacaChat & 20K & Conversation & Apache-2.0 \\
    OpenOrcaGPT4 & 1M & Instruction & MIT \\
    WildChat & 1M & Conversation & AI2 ImpACT \\
    LMSYS-Chat & 1M & Conversation & LMSYS-Chat-1M \\
    GPTeacher & 89K & Instruction & MIT \\
    Airoboros & 59K & Conversation & CC-BY-4.0 \\
    \midrule
    Alpaca-es & 52K & Instruction & CC-BY-4.0 \\
    Alpaca-de & 50K & Instruction & Apache-2.0 \\
    Alpaca-ja & 52K & Instruction & CC-BY-NC-SA-4.0 \\
    Alpaca-ko & 50K & Instruction & CC-BY-NC-4.0 \\
    Alpaca-ru & 30K & Instruction & CC-BY-4.0 \\
    Alpaca-it & 52K & Instruction & CC-BY-NC-SA-4.0 \\
    Alpaca-fr & 55K & Instruction & Apache-2.0 \\
    Alpaca-zh & 49K & Instruction & CC-BY-4.0 \\
    Alpaca-pt & 52K & Instruction & CC-BY-NC-4.0 \\
    \bottomrule
  \end{tabular}
  \end{adjustbox}
\end{table*}

\begin{table*}[t]
  \centering
  \caption{\textbf{Mathematical-reasoning source portfolio and validation sources.}
  Training sources above the midrule and validation/tuning sources below the midrule, following Table~11 of \citet{pan2025scalebio}.}
  \label{tab:math-source-portfolio}
  \begin{adjustbox}{max width=\textwidth}
  \begin{tabular}{llcl}
    \toprule
    Source & \#Samples & Kind & License \\
    \midrule
    DART-Math & 591K & Math & MIT \\
    SlimOrca & 518K & Instruction & MIT \\
    UltraInteract SFT & 289K & Reasoning & MIT \\
    MathInstruct & 262K & Reasoning & MIT \\
    Orca-Math word problems & 200K & Math & MIT \\
    WizardLM Evol-Instruct V2 & 196K & Instruction & MIT \\
    Magicoder Evol-Instruct & 110K & Coding & Apache-2.0 \\
    ShareGPT-Vicuna unfiltered & 94K & Instruction & Apache-2.0 \\
    GPTeacher-General-Instruct & 89K & Instruction & MIT \\
    GPT4-LLM-Cleaned & 55K & Instruction & Apache-2.0 \\
    \midrule
    GSM8K & 7.5K & Math & MIT \\
    Competition Math & 12.5K & Math & MIT \\
    Self-OSS-Instruct & 50.7K & Coding & ODC-By \\
    MMLU & 116K & Science & MIT \\
    ARC-Easy & 5.2K & Instruction & CC-BY-SA-4.0 \\
    ARC-Challenge & 2.6K & Instruction & CC-BY-SA-4.0 \\
    \bottomrule
  \end{tabular}
  \end{adjustbox}
\end{table*}

\paragraph{Baselines and compared methods.}
We compare against both non-bilevel data-selection methods and bilevel source-reweighting methods. \textsc{Uniform Weighting} is the source-agnostic control: it samples the final fine-tuning set uniformly across the available sources (10K examples for instruction following and 20K examples for mathematical reasoning) and then uses the same final supervised fine-tuning and evaluation pipeline as every other method. \textsc{LESS} \citep{xia2024less} is an influence-aware data-selection baseline for targeted instruction tuning; in our adapted pipeline, it scores candidate examples from the same source portfolio using gradient/influence alignment with the target validation data and then selects the highest-scoring examples for final fine-tuning. \textsc{RHO-LOSS} \citep{mindermann2022prioritized} is a reference-model-based prioritized training baseline; it ranks examples by a reducible-loss criterion that subtracts a reference model's estimated irreducible loss from the target model loss, then uses the resulting scores to form the final training set. \textsc{ScaleBiO} \citep{pan2025scalebio} is the main bilevel competitor: it learns source logits through the scalable first-order bilevel reweighting update proposed in that work, converts the learned simplex weights into source counts, and samples the final training set from those counts. \textsc{HG-baseline} is our one-branch implicit hyper-gradient baseline, following the standard hyper-gradient principle of differentiating through a single lower-level trajectory \citep{lorraine2020optimizing}; it uses the same implementation as \textsc{HG-MS} but removes the multi-branch minima-selection step. \textsc{HG-MS} is our proposed select-then-differentiate method, which trains several lower-level candidate branches under the current source weights, selects the branch with the best held-out objective, and computes the source-weight hyper-gradient through that selected branch.

ScaleBiO also reports SOBA \citep{dagreou2022framework} as a bilevel baseline, but it runs out of memory in their real-world LLM tables. We do not include SOBA as a main-table row in our single-H100 reproduction for the same practical reason: its auxiliary-state and Hessian-system bookkeeping is not competitive with the memory budget needed for LoRA fine-tuning of the target LLMs. This omission keeps the main comparison focused on methods that can complete the shared single-H100 final-training pipeline.

\paragraph{HG-MS implementation details for the real-world LLM runs.}
All real-world LLM \textsc{HG-MS} experiments instantiate the select-then-differentiate structure of
\Cref{alg:hg-minsel-lmc} with source logits \(\theta\) as the upper-level variable, LoRA adapter weights as the
lower-level variable, and \(N\) candidate LoRA branches at each outer step. Concretely, each branch starts from the
current adapter state, takes one SGD lower-level update on the source-weighted training loss \(g(\theta,\cdot)\), is scored on
held-out validation batches for \(f\), and the selected branch is the branch with the smallest validation selection
objective. These branches are generated by the same one-step SGD lower-model update used by the iterative bilevel baselines
rather than by an exact ULA sampler with injected Langevin noise. Branch diversity comes from independent minibatches/seeds and Gaussian parameter
noise for branch index \(>0\); the real-world LLM code paths do not use branch learning-rate multiplier or optimizer
diversity. The hyper-gradient update for \(\theta\) is then computed through this selected branch, matching the
selection-and-differentiation order of \Cref{alg:hg-minsel-lmc}.

The branch counts used in the reported real-world LLM experiments are fixed as follows. For the mathematical-reasoning
experiments using the ten-source portfolio in \Cref{tab:math-source-portfolio}, \textsc{HG-MS} uses \(N=4\) branches for
the reported \texttt{Llama-3-8B}, \texttt{Qwen-2-7B}, and \texttt{Gemma-2-9B} rows; the one-branch hyper-gradient baseline
uses \(N=1\). The large-model \texttt{Qwen2.5-32B} configuration uses \(N=4\) branches. For the instruction-following
experiments using the eighteen-source portfolio in \Cref{tab:instr-source-portfolio}, the reported \textsc{HG-MS} runs
use \(N=8\) branches and \textsc{HG-baseline} uses \(N=1\).

All iterative real-world methods initialize source logits at \(\theta=0\), so the initial source distribution is uniform
unless a fixed source mixture is explicitly loaded. For the source-logit update, \textsc{HG-MS} and
\textsc{HG-baseline} use Adam, while \textsc{ScaleBiO} uses SGD. This is separate from the lower-model update: for the
real-world LLM experiments, \textsc{HG-MS}, \textsc{HG-baseline}, and \textsc{ScaleBiO} all use one SGD lower-model update
per branch or inner step. The method-specific outer learning rates are reported in
\Cref{tab:llm-implementation-hparams}.

\begin{table}[t]
  \centering
  \caption{\textbf{LLM reweighting implementation hyperparameters.}
  Outer learning rates for source-logit updates in the real-world LLM runs.}
  \label{tab:llm-implementation-hparams}
  \begin{adjustbox}{max width=\linewidth}
  \begin{tabular}{llc}
    \toprule
    Experiment & Method & Outer LR \\
    \midrule
    Math, \texttt{Qwen-2-7B} & \textsc{HG-MS}/\textsc{HG-baseline} & \(0.05\) \\
    Math, \texttt{Qwen-2-7B} & \textsc{ScaleBiO} & \(1.0\) \\
    Math, \texttt{Llama-3-8B}/\texttt{Gemma-2-9B} & \textsc{HG-MS}/\textsc{HG-baseline} & \(0.05\) \\
	Math, \texttt{Llama-3-8B}/\texttt{Gemma-2-9B} & \textsc{ScaleBiO} & \(0.1\) \\
    Instruction & \textsc{HG-MS} & \(0.05\) \\
    Instruction & \textsc{HG-baseline} & \(10.0\) \\
    Instruction & \textsc{ScaleBiO} & \(0.01\) \\
    Large-model \texttt{Qwen2.5-32B} & \textsc{HG-MS}/\textsc{HG-baseline}	 & \(0.03\) \\
    Large-model \texttt{Qwen2.5-32B} & \textsc{ScaleBiO} & \(0.05\) \\
    \bottomrule
  \end{tabular}
  \end{adjustbox}
\end{table}

The corresponding lower-model optimizer settings are summarized in \Cref{tab:llm-lower-model-hparams}. Each iterative row
uses one SGD lower-model update per branch or inner step; the table separates this lower-model learning rate from the
source-logit learning rates in \Cref{tab:llm-implementation-hparams}. For \textsc{ScaleBiO},
\(\texttt{scalebio\_alpha}\) is the coefficient multiplying its first-order source-reweighting surrogate, not an outer
learning rate.

\begin{table}[t]
  \centering
  \caption{\textbf{Lower-model optimizer settings in the real-world LLM runs.}
  All rows use one SGD lower-model update per branch or inner step.}
  \label{tab:llm-lower-model-hparams}
  \begin{adjustbox}{max width=\textwidth}
  \begin{tabular}{llcl}
    \toprule
    Experiment & Method & Lower-model LR & Additional settings \\
    \midrule
    Math, \texttt{Qwen-2-7B} & \textsc{HG-MS}/\textsc{HG-baseline} & \(10^{-5}\) & \textsc{HG-MS}: \(N=4\), branch noise \(10^{-3}\) \\
    Math, \texttt{Qwen-2-7B} & \textsc{ScaleBiO} & \(10^{-4}\) & \(\texttt{scalebio\_alpha}=10\) \\
    Math, \texttt{Llama-3-8B} & \textsc{HG-MS}/\textsc{HG-baseline} & $10^{-4}$ & \textsc{HG-MS}: \(N=4\), branch noise $10^{-5}$ \\
    Math, \texttt{Llama-3-8B} & \textsc{ScaleBiO} & $10^{-4}$ & \(\texttt{scalebio\_alpha}=10\) \\
    Math, \texttt{Gemma-2-9B} & \textsc{HG-MS}/\textsc{HG-baseline} & $10^{-4}$ & \textsc{HG-MS}: \(N=4\), branch noise $10^{-5}$ \\
    Math, \texttt{Gemma-2-9B} & \textsc{ScaleBiO} & $10^{-4}$ & \(\texttt{scalebio\_alpha}=10\) \\
    Instruction & \textsc{HG-MS}/\textsc{HG-baseline} & \(2\times10^{-4}\) & \textsc{HG-MS}: \(N=8\), branch noise \(5\times10^{-4}\) \\
    Instruction & \textsc{ScaleBiO} & \(10^{-5}\) & \(\texttt{scalebio\_alpha}=100\) \\
    Large-model \texttt{Qwen2.5-32B} & \textsc{HG-MS}/\textsc{HG-baseline} & \(8\times10^{-6}\) & HG-MS: \(N=4\), branch noise \(5\times10^{-4}\) \\
    Large-model \texttt{Qwen2.5-32B} & \textsc{ScaleBiO} & \(5\times10^{-5}\) & \(\texttt{scalebio\_alpha}=100\) \\
    \bottomrule
  \end{tabular}
  \end{adjustbox}
\end{table}

In the real-world code paths, the ScaleBiO source-logit update uses the softmax source weights and a scaled difference
between weighted and unweighted source training losses: in the functional implementation this is
\(\alpha(\texttt{train\_loss\_w}-\texttt{train\_loss\_u})\), while in the 32B FSDP implementation it is
\(\alpha\big((\softmax(\theta)\odot\ell_w)^\top\mathbf 1-(\softmax(\theta)\odot\ell_u)^\top\mathbf 1\big)\), followed by
the configured source-logit optimizer and optional gradient clipping. We did not find additional normalization beyond the
softmax source weights and this optional clipping.

\paragraph{Final inference and judging.}
Across methods in a given real-world table, final checkpoints are evaluated with the same prompts, decoding code, and answer-extraction or judging procedure. The reported GSM8K/MATH evaluations and MT-Bench answer-generation stage use one generated response per prompt with greedy decoding (\texttt{do\_sample=false}), with no sampling, beam search, or best-of-\(N\) inference. The \BoN terminology in the theory and algorithmic description refers to the minima-selection mechanism inside \textsc{HG-MS}, not to the downstream LLM inference protocol. For instruction following, model answers are generated once and then scored by the same GPT-4o MT-Bench judge \citep{zheng2023judging} with temperature zero.

\paragraph{Main mathematical-reasoning protocol (\Cref{tab:llm-realworld-math}).}
The math benchmark uses the same ten-source portfolio as \citet{pan2025scalebio}; the full source and validation-source inventory is reported in \Cref{tab:math-source-portfolio}. The ten training sources are DART-Math, SlimOrca, UltraInteract, MathInstruct, Orca-Math, WizardLM, Magicoder, ShareGPT-Vicuna, GPTeacher, and GPT4-LLM-Cleaned. Each backbone in the main table is equipped with LoRA adapters on the attention projections (\texttt{q\_proj}, \texttt{k\_proj}, \texttt{v\_proj}, \texttt{o\_proj}) and is run through the same single-H100 adapted pipeline. The math pipeline uses LoRA rank $16$, LoRA scaling $32$, dropout $0.05$, maximum sequence length $512$, and greedy generation-based evaluation. The target evaluation benchmarks are GSM8K \citep{cobbe2021training}, a grade-school math word-problem benchmark, and MATH \citep{hendrycks2021measuring}, a competition-style mathematical problem-solving benchmark. During reweighting, we cache $500$ training examples per source and use a GSM8K-only validation path; after reweighting, we sample a final $20$K-example training set from the learned source mixture and fine-tune for one epoch with learning rate \(8\times 10^{-6}\), micro-batch size $1$, and gradient accumulation $32$. The completed main-table runs evaluate the final checkpoint on the full GSM8K and MATH test sets. For the iterative methods, the outer-stage wall-clock budget is matched within each backbone and is approximately $9$h for \texttt{Qwen-2-7B}, $12$h for \texttt{Llama-3-8B}, and $15$h for \texttt{Gemma-2-9B}.

The \texttt{Qwen2.5-32B} results in \Cref{tab:llm-realworld-math-large} use the same ten-source portfolio and final GSM8K/MATH evaluation target. The completed iterative rows apply the corresponding source-reweighting method to the 32B model using the same LoRA code path, SDPA attention, gradient checkpointing, and matched large-model reweighting setup; the completed \textsc{Uniform} row uses equal source counts. For the iterative 32B rows, the outer reweighting budget was fixed in advance at \(15\)h per method before final GSM8K/MATH test evaluation. Each completed row then uses the resulting source mixture to form the full \(20\)K-example final fine-tuning set and evaluates on the full GSM8K/MATH test sets. The completed \textsc{HG-MS} row attains \(92.58\%\) on GSM8K and \(53.32\%\) on MATH, the best completed scores in both columns. \textsc{RHO-LOSS} and \textsc{LESS} are marked as OOM in this large-model setting and remain in the table for comparability with the ScaleBiO large-model layout.

\paragraph{Main instruction-following protocol (\Cref{tab:llm-realworld-instr-mtbench}).}
The instruction-following benchmark uses the released eighteen-source ScaleBiO portfolio, whose source inventory is reported in \Cref{tab:instr-source-portfolio}:
\texttt{AlpacaGPT4}, \texttt{ShareGPT4}, \texttt{SlimOrca}, \texttt{AlpacaChat}, \texttt{OpenOrcaGPT4}, \texttt{WildChat}, \texttt{LMSYS-Chat}, \texttt{GPTeacher}, \texttt{Airoboros}, and the nine translated Alpaca variants (\texttt{es}, \texttt{de}, \texttt{ja}, \texttt{ko}, \texttt{ru}, \texttt{it}, \texttt{fr}, \texttt{zh}, \texttt{pt}). All completed instruction-following runs use LoRA adapters \citep{hu2022lora} on \texttt{q\_proj}, \texttt{k\_proj}, \texttt{v\_proj}, and \texttt{o\_proj}, maximum sequence length \(256\), LoRA rank \(4\), scaling \(8\), LoRA dropout $0.05$, and one epoch of final supervised fine-tuning with learning rate \(8\times 10^{-6}\), micro-batch size $1$, and gradient accumulation $64$. Gradient checkpointing is enabled for the iterative jobs when needed by the single-H100 memory budget. During reweighting, we cache up to roughly $3000$ examples per source from the released ScaleBiO assets, use a final $10$K-example sampled training set, and cap MT-Bench prompts at $1024$ input tokens with up to $512$ generated tokens. The iterative methods use the matched per-backbone wall-clock reweighting budgets stated above before final fine-tuning: approximately $9$h for \texttt{Qwen-2-7B}, $12$h for \texttt{Llama-3-8B}, and $15$h for \texttt{Gemma-2-9B}. The final evaluation is MT-Bench \citep{zheng2023judging} on the standard $80$ FastChat questions, using a GPT-4o judge. In our implementation, MT-Bench generations are produced once per finished run and judged offline in a separate pass so that all methods share the same prompt set and judge model. The standard FastChat MT-Bench questions, generated answers, and GPT-4o scores used for the final reported evaluation are never used for source reweighting, checkpoint selection, hyperparameter tuning, or early stopping; those choices use only the released ScaleBiO MT-Bench-style validation and tuning splits described below.

For the instruction-following table, the time-budget comparison is therefore fixed at the reweighting stage:
\textsc{ScaleBiO}, \textsc{HG-baseline}, and \textsc{HG-MS} receive the same single-H100 outer-loop budget within each
backbone: approximately $9$h for \texttt{Qwen-2-7B}, $12$h for \texttt{Llama-3-8B}, and $15$h for
\texttt{Gemma-2-9B}. This is followed by the same one-epoch final fine-tuning and the same MT-Bench generation/judging pipeline. The
non-iterative baselines do not use this outer-loop budget because they do not run bilevel source reweighting.

Following the ScaleBiO setup, the instruction portfolio intentionally mixes broadly useful instruction/chat data with multilingual Alpaca variants that are less directly aligned with MT-Bench. Each source is filtered by the maximum sequence length before entering the cached reweighting pool. The released ScaleBiO MT-Bench-style validation split is used as the outer validation set for source reweighting, and a separate MT-Bench-style tuning split is used to select the best reweighting checkpoint and tune method hyperparameters. This matches the role of the MT-Bench-style reference and tuning data described in Appendix~B.2 of \citet{pan2025scalebio}, while our final reported score is computed only after one response is generated for each of the standard FastChat MT-Bench questions and judged by GPT-4o.

The method-specific data use is as follows. \textsc{Uniform} samples the final $10$K training examples uniformly across the eighteen sources. \textsc{LESS} and \textsc{RHO-LOSS} score examples from the same portfolio using their adapted influence/reference-model criteria and then form the final training set. \textsc{ScaleBiO}, \textsc{HG-baseline}, and \textsc{HG-MS} learn source logits during the matched per-backbone reweighting stage, convert the selected source distribution into integer source counts, sample the final $10$K training examples accordingly, and then run the identical one-epoch LoRA fine-tuning and MT-Bench evaluation pipeline.

\paragraph{Final source-mixture audit for mathematical reasoning.}
The model-specific tables in this paragraph report the final mathematical-reasoning source proportions stored in the run
metadata for the main LLM tables; see
\Cref{tab:llm-main-math-mixtures-llama,tab:llm-main-math-mixtures-qwen,tab:llm-main-math-mixtures-gemma,tab:llm-large-math-mixtures}.
These diagnostics are separate from the downstream test scores: they are included to make the learned or selected source
mixtures auditable. For \textsc{Uniform}, the mixture is fixed by construction, so we report the common per-source mass
rather than an arbitrary top-three ranking. For \textsc{LESS} and \textsc{RHO-LOSS}, the entries are empirical source
proportions in the selected final SFT set, not learned source logits. For \textsc{ScaleBiO}, \textsc{HG-baseline}, and
\textsc{HG-MS}, the entries are the largest components of the source distribution used to sample the final SFT set.
We focus the source-mixture audit on mathematical reasoning, where the source shifts are most directly connected to the
GSM8K/MATH improvements; the saved instruction-following source weights are included in the released experiment artifacts.

\begin{table}[t]
  \centering
  \caption{\textbf{Final mathematical-reasoning source mixtures for \texttt{Llama-3-8B}.}
  Each non-uniform row reports the three largest final source proportions over the ten-source math portfolio.}
  \label{tab:llm-main-math-mixtures-llama}
  \small
  \begin{adjustbox}{max width=\textwidth}
  \begin{tabular}{ll}
    \toprule
    Method & Final source mixture \\
    \midrule
    Uniform Weighting & uniform over ten sources, each 0.100 \\
    RHO-LOSS & SlimOrca 0.352; GPT4-LLM-Cleaned 0.202; WizardLM 0.142 \\
    LESS & Orca-Math 0.119; MathInstruct 0.117; DART-Math 0.104 \\
    ScaleBiO & Magicoder 0.829; SlimOrca 0.096; GPTeacher 0.044 \\
    HG-baseline & MathInstruct 0.809; SlimOrca 0.091; GPT4-LLM-Cleaned 0.005 \\
    HG-MS & MathInstruct 0.902; DART-Math 0.036; Orca-Math 0.010 \\
    \bottomrule
  \end{tabular}
  \end{adjustbox}
\end{table}

\begin{table}[t]
  \centering
  \caption{\textbf{Final mathematical-reasoning source mixtures for \texttt{Qwen-2-7B}.}
  Each non-uniform row reports the three largest final source proportions over the ten-source math portfolio.}
  \label{tab:llm-main-math-mixtures-qwen}
  \small
  \begin{adjustbox}{max width=\textwidth}
  \begin{tabular}{ll}
    \toprule
    Method & Final source mixture \\
    \midrule
    Uniform Weighting & uniform over ten sources, each 0.100 \\
    RHO-LOSS & MathInstruct 0.299; SlimOrca 0.215; GPT4-LLM-Cleaned 0.133 \\
    LESS & MathInstruct 0.140; DART-Math 0.128; Orca-Math 0.125 \\
    ScaleBiO & MathInstruct 0.996; SlimOrca 0.001; GPT4-LLM-Cleaned 0.001 \\
    HG-baseline & MathInstruct 0.996; GPT4-LLM-Cleaned 0.003; SlimOrca 0.000 \\
    HG-MS & DART-Math 0.940; MathInstruct 0.056; Orca-Math 0.004 \\
    \bottomrule
  \end{tabular}
  \end{adjustbox}
\end{table}

\begin{table}[t]
  \centering
  \caption{\textbf{Final mathematical-reasoning source mixtures for \texttt{Gemma-2-9B}.}
  Each non-uniform row reports the three largest final source proportions over the ten-source math portfolio.}
  \label{tab:llm-main-math-mixtures-gemma}
  \small
  \begin{adjustbox}{max width=\textwidth}
  \begin{tabular}{ll}
    \toprule
    Method & Final source mixture \\
    \midrule
    Uniform Weighting & uniform over ten sources, each 0.100 \\
    RHO-LOSS & SlimOrca 0.267; GPT4-LLM-Cleaned 0.178; WizardLM 0.144 \\
    LESS & Orca-Math 0.110; MathInstruct 0.109; DART-Math 0.103 \\
    ScaleBiO & UltraInteract 0.859; Magicoder 0.083; SlimOrca 0.033 \\
    HG-baseline & MathInstruct 0.975; SlimOrca 0.004; GPT4-LLM-Cleaned 0.004 \\
    HG-MS & MathInstruct 0.941; DART-Math 0.016; Orca-Math 0.010 \\
    \bottomrule
  \end{tabular}
  \end{adjustbox}
\end{table}

\begin{table}[t]
  \centering
  \caption{\textbf{Final source mixtures for the \texttt{Qwen2.5-32B} mathematical-reasoning rows.}
  Completed non-\texttt{OOM} rows use the ten-source math portfolio; \textsc{RHO-LOSS} and \textsc{LESS} do not have completed large-model final mixtures under the reported memory budget.}
  \label{tab:llm-large-math-mixtures}
  \begin{adjustbox}{max width=0.9\textwidth}
  \begin{tabular}{ll}
    \toprule
    Method & Final source mixture \\
    \midrule
    Uniform Weighting & uniform over ten sources, each 0.100 \\
    RHO-LOSS & \texttt{OOM}; no completed final source mixture \\
    LESS & \texttt{OOM}; no completed final source mixture \\
    ScaleBiO & GPT4-LLM-Cleaned 0.166; SlimOrca 0.120; GPTeacher 0.105 \\
    HG-baseline & Orca-Math 0.159; UltraInteract 0.155; DART-Math 0.153 \\
    HG-MS & DART-Math 0.165; Orca-Math 0.135; MathInteract 0.115 \\
    \bottomrule
  \end{tabular}
  \end{adjustbox}
\end{table}

\paragraph{Run diagnostics.}
To make the fixed-budget mathematical-reasoning comparison auditable, each iterative run stores the elapsed reweighting
wall-clock time, the number of completed outer steps, and the learned source weights used to sample the final fine-tuning
set. \Cref{tab:llm-budget-sensitivity-qwen} gives the budget-sensitivity reporting table for the \texttt{Qwen-2-7B} math
setting, and \Cref{tab:llm-branch-ablation-qwen} reports the corresponding fixed-budget branch-count sensitivity check.
We fill the completed diagnostic rows from the available run metadata.

\Cref{tab:llm-budget-sensitivity-qwen} should be read as a budget ledger rather than as an additional benchmark with a new protocol. Each row uses the same \texttt{Qwen-2-7B} math-reasoning pipeline described above, changes only the outer reweighting budget for the iterative method, then performs the same one-epoch final fine-tuning and greedy GSM8K/MATH evaluation. The ``Outer steps'' column records how many source-weight updates fit into the wall-clock budget, while the ``Top-3 learned source weights'' column reports the largest entries of the final source distribution used to sample the final training set.

\begin{table*}[t]
  \centering
  \caption{\textbf{Budget-sensitivity ledger for \texttt{Qwen-2-7B} math reasoning.}}
  \label{tab:llm-budget-sensitivity-qwen}
  \begin{adjustbox}{max width=\textwidth}
  \begin{tabular}{llclc}
    \toprule
    Method & Budget & Outer steps & Top-3 learned source weights & GSM8K / MATH \\
    \midrule
    ScaleBiO  & 3h  & 10527 & Magicoder 0.848; GPTeacher 0.037; WizardLM 0.026 & 66.19 / 30.70 \\
    HG-baseline & 3h  & 1950  & GPT4-LLM-Cleaned 0.565; MathInstruct 0.370; SlimOrca 0.025 & 61.41 / 29.97 \\
    HG-MS     & 3h  & 1233  & MathInstruct 0.993; GPT4-LLM-Cleaned 0.002; SlimOrca 0.002 & 68.61 / 33.75 \\
    \midrule
    ScaleBiO  & 9h  & 13804 & MathInstruct 0.996; SlimOrca 0.001; GPT4-LLM-Cleaned 0.001 & 75.74 / 36.93 \\
    HG-baseline & 9h  & 6129  & MathInstruct 0.996; GPT4-LLM-Cleaned 0.003; SlimOrca 0.000 & 68.46 / 34.27 \\
    HG-MS     & 9h  & 3754  & DART-Math 0.940; MathInstruct 0.056; Orca-Math 0.004 & \textbf{82.03} / \textbf{38.98} \\
    \midrule
    ScaleBiO  & 15h & 23039 & Magicoder 0.980; DART-Math 0.010; MathInstruct 0.003 & 70.66 / 29.65 \\
    HG-baseline & 15h & 10150 & MathInstruct 1.000; GPT4-LLM-Cleaned 0.000; SlimOrca 0.000 & 70.05 / 34.14 \\
    HG-MS     & 15h & 6283  & DART-Math 0.828; Orca-Math 0.156; MathInstruct 0.016 & 81.96 / 37.38 \\
    \midrule
    ScaleBiO  & 21h & 32208 & Magicoder 0.978; DART-Math 0.010; MathInstruct 0.005 & 69.96 / 30.73 \\
    HG-baseline & 21h & 14220 & MathInstruct 0.992; GPT4-LLM-Cleaned 0.002; SlimOrca 0.001 & 69.73 / 34.09 \\
    HG-MS     & 21h & 8813  & DART-Math 0.830; Orca-Math 0.142; MathInstruct 0.026 & 81.98 / 37.28 \\
    \bottomrule
  \end{tabular}
  \end{adjustbox}
\end{table*}

\paragraph{Interpretation of the budget ledger.}
The learned distributions differ substantially across methods. \textsc{ScaleBiO} and \textsc{HG-baseline} usually become nearly single-source solutions: at 9h both put almost all mass on MathInstruct (\(0.996\)), and the longer \textsc{HG-baseline} runs remain similarly concentrated on MathInstruct. In contrast, \textsc{HG-MS} moves to a different source mixture. Its main 9h run places most mass on DART-Math (\(0.940\)) while keeping a secondary MathInstruct component (\(0.056\)); at 15h and 21h it remains DART-Math dominated but assigns a larger secondary mass to Orca-Math (\(0.156\) and \(0.142\), respectively). Thus the gain of \textsc{HG-MS} is not just another collapse to the same source preferred by \textsc{ScaleBiO}/\textsc{HG-baseline}; minima selection changes which training sources are emphasized in the final fine-tuning set.

The main \texttt{Qwen-2-7B} math row uses the 9h \textsc{HG-MS} run because that stopping budget was fixed before the final GSM8K/MATH test evaluation. The 15h and 21h rows were run under the same final fine-tuning and evaluation protocol to check sensitivity to a longer reweighting horizon. In this run family, extending the \textsc{HG-MS} budget does not improve final accuracy even though it continues to update the source mixture; this is a post-hoc diagnostic observation, not the criterion used to choose the main-table stopping budget.

\paragraph{Fixed-budget branch-count sensitivity for \textsc{HG-MS}.}
To probe the role of explicit minima selection, we add a fixed-budget branch-count sensitivity study for the \texttt{Qwen-2-7B} math-reasoning setting. The study keeps the ten-source portfolio, seed, LoRA configuration, GSM8K-only validation path, nine-hour reweighting budget, final \(20\)K-example fine-tuning set, and full GSM8K/MATH evaluation fixed, while changing the number \(N\) of candidate lower-level branches inside \textsc{HG-MS}. Because the comparison is wall-clock matched, changing \(N\) does not isolate branch count alone: larger \(N\) gives a richer minima-selection set per outer update but leaves fewer source-weight updates within the same budget. The \(N=1\) row corresponds to the one-branch \textsc{HG-baseline}, the \(N=4\) row is the completed main-table \textsc{HG-MS} configuration, and the \(N=2\) and \(N=8\) rows test whether the observed gain comes from a moderate amount of branch diversity or from simply increasing branch count. We therefore report outer steps and interpret this single-seed table as a finite-budget diagnostic rather than as an exhaustive tuning study over \(N\).

\begin{table*}[t]
  \centering
  \caption{\textbf{Fixed-budget branch-count sensitivity for \texttt{Qwen-2-7B} math reasoning.}
  All rows use the same nine-hour reweighting budget and one-epoch final fine-tuning protocol; \(N\) controls the number of candidate branches per outer update, so larger \(N\) trades richer branch selection against fewer source-weight updates.}
  \label{tab:llm-branch-ablation-qwen}
  \begin{adjustbox}{max width=\textwidth}
  \begin{tabular}{lclcc}
    \toprule
    Method & Branches \(N\) & Top-3 learned source weights & Outer steps & GSM8K / MATH \\
    \midrule
    HG-baseline & 1 & MathInstruct 0.996; GPT4-LLM-Cleaned 0.003; SlimOrca 0.000 & 6129 & 68.46 / 34.27 \\
    HG-MS       & 2 & Orca-Math 0.492; DART-Math 0.423; MathInstruct 0.085 & 5211 & 76.65 / 31.96 \\
    HG-MS       & 4 & DART-Math 0.940; MathInstruct 0.056; Orca-Math 0.004 & 3754 & \textbf{82.03} / \textbf{38.98} \\
    HG-MS       & 8 & DART-Math 0.760; Magicoder 0.190; Orca-Math 0.008 & 2516 & 66.19 / 29.37 \\
    \bottomrule
  \end{tabular}
  \end{adjustbox}
  
\end{table*}